\setlist[itemize]{noitemsep,nolistsep}
\setlist[enumerate]{noitemsep,nolistsep}
\let\mathcal\mathscr
\def\Z{{\bf Z}}
\def\F{{\bf F}}
\def\C{{\bf C}}
\def\P{{\bf P}}
\def\hk{hyperk\"ahler}
\def\hK{\hk}
\def\SS{S^{[2]}}
\def\gv{ {\mathsf g}({\mathsf V})}
\def\phi{\varphi}
\def\cI{\mathcal{I}}
\def\cD{\mathcal{D}}
\def\mD{\mathbb{D}}
\def\cE{\mathcal{E}}
\def\cF{\mathcal{F}} 
\def\cG{\mathcal{G}}
\def\cI{\mathcal{I}}
\def\cK{\mathcal{K}}
\def\cL{\mathcal{L}}
\def\cM{\mathcal{M}}
\def\cMDV{\cM_{\rm DV}}
\def\tMDV{\widetilde\cM_{\rm DV}}
\def\cO{\mathcal{O}}
\def\cP{\mathcal{P}}
\def\cQ{\mathcal{Q}}
\def\cS{\mathcal{S}}
\def\cT{\mathcal{T}}
\def\cV{\mathcal{V}}
\def\cY{\mathcal{Y}}
\def\cX{\mathcal{X}}
\def\lra{\longrightarrow}
\def\llra{\hbox to 10mm{\tofill}}
\def\lllra{\hbox to 15mm{\tofill}}
\def\llla{\hbox to 10mm{\leftarrowfill}}
\def\lllla{\hbox to 15mm{\leftarrowfill}}
\def\dra{\dashrightarrow}
\def\thra{\twoheadrightarrow}
\def\hra{\hookrightarrow}
\def\lhra{\ensuremath{\lhook\joinrel\relbar\joinrel\to}}
\def\isom{\simeq}
\def\eps{\varepsilon}
\def\emptyset{\varnothing}
\def\gp{\mathfrak p}
\def\gq{\mathfrak q}
\DeclareMathOperator{\isomdra}{\stackrel{{}_{\scriptstyle\sim}}{\dra}}
\DeclareMathOperator{\isomlra}{\stackrel{{}_{\scriptstyle\sim}}{\lra}}
\DeclareMathOperator{\Aut}{Aut}
\DeclareMathOperator{\Out}{Out}
\def\div{\mathop{\rm div}\nolimits}
\DeclareMathOperator{\End}{End}
\DeclareMathOperator{\disc}{disc}
\DeclareMathOperator{\ev}{ev}
\DeclareMathOperator{\GL}{GL}
\DeclareMathOperator{\DD}{\mathsf{D}}
\DeclareMathOperator{\Gr}{\mathsf{Gr}}
\DeclareMathOperator{\Hilb}{Hilb}
\DeclareMathOperator{\Hom}{Hom}
\DeclareMathOperator{\Id}{Id}
\def\Im{\mathop{\rm Im}\nolimits}
\DeclareMathOperator{\Ker}{Ker}
\DeclareMathOperator{\len}{length}
\DeclareMathOperator{\lin}{\underset{\mathrm lin}{\equiv}}
\DeclareMathOperator{\Mov}{\overline{Mov}}
\DeclareMathOperator{\NS}{NS}
\DeclareMathOperator{\PGL}{PGL}
\DeclareMathOperator{\Pic}{Pic}
\DeclareMathOperator{\pr}{\mathsf{pr}}
\DeclareMathOperator{\SO}{SO}
\DeclareMathOperator{\Sp}{Sp}
\DeclareMathOperator{\Nef}{Nef}
\DeclareMathOperator{\rank}{rank}
\DeclareMathOperator{\Sing}{Sing}
\DeclareMathOperator{\Sym}{Sym}
\DeclareMathOperator{\SL}{SL}
\DeclareMathOperator{\gsp}{\mathfrak{sp}}
\DeclareMathOperator{\gsl}{\mathfrak{sl}}
\DeclareMathOperator{\Tr}{Tr}
\def\llra{\hbox to 10mm{\tofill}}
\def\lllra{\hbox to 15mm{\tofill}}
\def\bw#1{\textstyle{\bigwedge\hskip-0.9mm^{#1}}}
\def\sbw#1#2{\small{\bigwedge\hskip-0.9mm^{#1}}\hskip0.2mm{#2}}
\newtheorem{lemm}{Lemma}[section]
\newtheorem{theo}[lemm]{Theorem}
\newtheorem{coro}[lemm]{Corollary}
\newtheorem{prop}[lemm]{Proposition}
\theoremstyle{definition}
\newtheorem{defi}[lemm]{Definition}
\newtheorem{rema}[lemm]{Remark}
\newtheorem{exam}[lemm]{Example}
\theoremstyle{remark}
\newtheorem*{remark*}{Remark}
\newtheorem*{note*}{Note}
\def\moins{\smallsetminus}
\def\L{{\Lambda}}
\def\Lkkk[#1]{{\Lambda_{\KKK^{[#1]}}}}
\def\kkk[#1]{{\KKK^{[#1]}}}
\DeclareMathOperator{\KKK}{{K3}}
\def\sss[#1]{{S^{[#1]}}}
\newcommand{\aff}{{\Delta}}
\newcommand{\gquot}{/\!\!/}
\newcommand{\wh}{\widehat}
\newcommand{\wt}{\widetilde}
\newcommand{\green}[1]{\leavevmode{\color{green}{#1}}}
\newcommand{\la}{\langle}
\newcommand{\ra}{\rangle}
\newcommand{\ov}{\overline}
\newcommand{\gm}{\mathfrak{m}}
\newcommand{\cZ}{{\mathscr Z}}
\DeclareMathOperator{\sing}{Sing}
\DeclareMathOperator{\diag}{diag}
\DeclareMathOperator{\Bl}{Bl}
\DeclareMathOperator{\exc}{Exc}
\def\setminus{\smallsetminus}
\def\cong{\simeq}
\def\rem{}
\def\OO{\cO}
\def\PP{\P}
\def\CC{\C}
\def\dual{{\vee}}
\def\Qtrois{{Q_3}}  
\def\hilb{{Q_3^{[2]}}} 
\def\chow{{Q_3^{(2)}}} 
\def\L2{L}
\def\sL{\mathsf L}
\def\Tauto#1{\cT_{#1}}
\def\hilbS{S^{[2]}}  
\def\isospecS{\widetilde{S\times S}}  
\def\quotauto{{\cQ}} 
\def\soustauto{{\cS}} 
\title{Hilbert squares of K3 surfaces and Debarre--Voisin varieties}
 \author[O. Debarre]{Olivier Debarre}
 \address{Universit\'e de Paris, Sorbonne Universit\'e, CNRS, Institut de Math\'ematiques de Jussieu-Paris Rive Gauche, 75013 Paris, France
} \email{{\tt  olivier.debarre@imj-prg.fr}}
 \author[F. Han]{Fr\'ed\'eric Han}
 \address{Universit\'e de Paris, Sorbonne Universit\'e, CNRS, Institut de Math\'ematiques de Jussieu-Paris Rive Gauche, 75013 Paris, France
} \email{{\tt  frederic.han@imj-prg.fr}}
 \author[K. O'Grady]{Kieran O'Grady}
 \address{Sapienza Universit\`a di Roma, Dip.to di Matematica, P.le A.~Moro 5, 00185 Italia
} \email{{\tt  ogrady@mat.uniroma1.it}}
 \author[C. Voisin]{Claire Voisin}
 \address{Coll\`ege de France, 3 rue d'Ulm, 75005 Paris, France
} \email{{\tt  claire.voisin@imj-prg.fr}}
\date{\today}
 \subjclass[2010]{14J32, 14J35, 14M15, 14J70, 14J28}
\begin{document}

\begin{shownto}{long}
\green{
\tableofcontents. 
}
\end{shownto}

\begin{abstract}   Debarre--Voisin  \hk\  fourfolds are
built  from   alternating $3$-forms on a $10$-dimensional complex vector space, which we call   trivectors.\ They are analogous to the
Beauville--Donagi fourfolds associated with cubic fourfolds.\
In this article, we  study  several trivectors whose associated Debarre--Voisin variety is degenerate, in the sense that it is either reducible or has excessive dimension.\
We show that the Debarre--Voisin
varieties specialize, along  general  $1$-parameter degenerations  to these trivectors, to   varieties isomorphic or birationally isomorphic to the Hilbert square of a
  K3 surface.
\end{abstract}

\maketitle

\section{Introduction}

\rem{
Throughout this article, the notation $U_m$, $V_m$, or $W_m$ means an $m$-dimensional complex vector space.\ Let  $\sigma\in \bw3V_{10}^\dual$  be a nonzero alternating $3$-form (which we call a {\em trivector}).\ The Debarre--Voisin variety  associated with  $\sigma$ is
the scheme 
\begin{equation}  \label{eqDV}
K_\sigma:=\{[W_6]\in \Gr(6,V_{10}) \mid \sigma\vert_{ W_6}=0\} 
\end{equation}
whose points are the  6-dimensional vector subspaces of $V_{10}$ on which  $\sigma$ vanishes identically.\ 

It was proved in \cite{devo} that  for $\sigma$   general, the schemes $K_\sigma$, equipped with the polarization~$\cO_{K_\sigma}(1)$ (of square $22$ and divisibility 2; see Section~\ref{sec21}) induced by the Pl\"ucker polarization on $\Gr(6,V_{10}) $, form a locally complete family
of  smooth  polarized  \hk\ fourfolds which are deformation equivalent to  Hilbert squares  
of   $K3$ surfaces   (one says that $K_\sigma$ is  of $K3^{[2]}$-type).\ This was done by proving that when $\sigma$ specializes to
  a general element of 
  the discriminant hypersurface in $\bw3V_{10}^\dual$ where  the Pl\"ucker hyperplane section
 \begin{equation}\label{eqxs}
X_{\sigma}:=\{[U_3]\in \Gr(3,V_{10}) \mid \sigma\vert_{U_3}=0\} 
\end{equation}
becomes  singular, the scheme $K_\sigma$ becomes  singular along a surface but birationally isomorphic to the   Hilbert square 
of a  $K3$ surface
(the fact that   $K_\sigma$ is  of $K3^{[2]}$-type was reproved in \cite{KLSV}
by a different argument still based on the same specialization of $\sigma$).\ 

The projective 20-dimensional irreducible GIT quotient
\begin{equation*}
\cMDV=\PP(\bw3  V_{10}^\dual) \gquot \SL(V_{10})
\end{equation*}
is a coarse moduli space for 
 trivectors $\sigma$.\ Let~$\cF$ be the quasi-projective 20-dimensional irreducible period domain
for smooth polarized \hk\ varieties that are deformations of $(K_\sigma,\cO_{K_\sigma}(1))$.\ The corresponding period map
$$\gq\colon \cMDV \dra \cF 
$$
  is regular on the  open subset of $ \cMDV$  corresponding to   points $[\sigma]$ such that  $K_{\sigma}$ is a smooth fourfold.\ It is known to be
dominant (hence generically finite)  and was recently shown to be birational (\cite{ognew}).\ Consider the Baily--Borel projective compactification $\cF\subset \overline\cF$ (whose boundary has dimension $1$)  and a resolution 
\begin{equation}\label{diag}
\vcenter{
\xymatrix@R=5mm@M=2mm
{
\tMDV\ar[d]_-\eps\ar@{->>}[r]^-{\bar\gq} &\overline\cF\\
 \cMDV  \ar@{-->}[r]^-{\gq}&\cF\ar@{^(->}[u] 
}}
\end{equation}
 of the indeterminacies of $\gq$, where $\eps$ is birational.\ We define an {\em HLS}  {\em divisor} (for Hassett--Looijenga--Shah) to be an   irreducible hypersurface in $\overline\cF$ which  is the image by $\bar\gq$ of an exceptional divisor of $\eps$ (that is, whose image in $ \cMDV$ has codimension $>1$).\ These divisors reflect some   difference between the GIT   and the Baily--Borel compactifications and there are obviously only finitely many of them.\

 The main result of this article is the following (for the definition of the Heegner divisors $\cD_{2e}\subset\overline\cF $, see Section~\ref{sec21}).
 
 \begin{theo}\label{mainth}
The Heegner divisors $\cD_2$, $\cD_6$, $\cD_{10}$, and $\cD_{18}$ in $\overline\cF $ are HLS divisors.
\end{theo}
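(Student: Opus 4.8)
The plan is to verify the defining property of an HLS divisor directly for each of the four values. To show that $\cD_{2e}$ (with $2e\in\{2,6,10,18\}$, i.e.\ $e\in\{1,3,5,9\}$) is HLS, it suffices to produce an irreducible locus $Z_e\subset\cMDV$ of codimension $\geq 2$ whose generic point is a trivector with \emph{degenerate} Debarre--Voisin variety, such that the exceptional divisor $E_e$ of $\eps$ lying over $Z_e$ satisfies $\bar\gq(E_e)=\cD_{2e}$. Since $\eps(E_e)=Z_e$ has codimension $>1$ in $\cMDV$, this is exactly the HLS condition. Geometrically, $\bar\gq(E_e)$ records the limiting periods of the smooth fourfolds $K_{\sigma_t}$ as $\sigma_t$ approaches a general point of $Z_e$ along a general direction, so the task is to compute these limits and show they fill the whole Heegner divisor $\cD_{2e}$.

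First I would identify, for each value, an explicit trivector $\sigma_0$ (up to the $\SL(V_{10})$-action) for which $K_{\sigma_0}$ fails to be a smooth fourfold, either acquiring excess dimension or breaking into components, and I would pin down the codimension of its orbit closure $Z_e$, checking $\codim Z_e\geq 2$. The degenerate geometry of each $\sigma_0$ should produce a distinguished K3 surface $S$, for instance arising from the Pl\"ucker hyperplane section $X_{\sigma_0}$ of \eqref{eqxs} or from an auxiliary incidence construction attached to the special configuration of $\sigma_0$. The precise shape of $S$ (its polarization degree, its realization inside a Grassmannian or as a complete intersection) is what will later dictate the numerical value $2e$.

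Next I would analyze the limit of the smooth fourfolds $K_{\sigma_t}$ along a general line $\sigma_t\to\sigma_0$. After a base change and a modification making the family semistable, properness of the relevant moduli space of polarized \hk\ fourfolds yields a limit $Y_0$, which I expect to be isomorphic or birational to $S^{[2]}$. The period of $Y_0$ is then read off from $H^2(S^{[2]},\Z)=H^2(S,\Z)\oplus\Z\delta$ with $q(\delta)=-2$: the degeneration forces an extra algebraic class beyond the polarization $h$ (of square $22$ and divisibility $2$), and the rank-two saturated sublattice of $\NS(Y_0)$ spanned by $h$ and this class should have discriminant matching $2e$. This lattice computation places the limit period on $\cD_{2e}$, and running the construction in families --- letting $[\sigma_0]$ range over $Z_e$ and the smoothing direction vary --- should show that these limit periods dominate, hence sweep out, the whole $19$-dimensional divisor $\cD_{2e}$.

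The main obstacle is the case-by-case geometric analysis underlying the third step: for each $e$ one must (i) locate the correct degenerate trivector and rigorously verify the codimension bound $\codim Z_e\geq 2$; (ii) prove that the limit fourfold is genuinely isomorphic or birational to the Hilbert square $S^{[2]}$, controlling the birational modification and the possible change of polarization when it occurs; and (iii) carry out the discriminant computation singling out $2e$ rather than some other even value. Controlling the limit Hodge structure (via a limit mixed Hodge structure argument or via an explicit description of the flat limit), and checking that the limit periods cover all of $\cD_{2e}$ rather than a proper subvariety, are the delicate points where the four cases are likely to require genuinely different geometric input.
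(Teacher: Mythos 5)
Your outline is, in its skeleton, exactly the paper's strategy: the loci $Z_e$ are the $\SL(V_{10})$-orbits of special polystable trivectors with positive-dimensional stabilizers ($\Sp(4)$ for $\cD_6$, $G_2\times\SL(3)$ for $\cD_{18}$, $\SL(2)$ for $\cD_{10}$, $\SL(3)$ for $\cD_2$), which map to single points of $\cMDV$, so the codimension condition is automatic (Kirwan blow-up); the normal space to each orbit is identified with the linear system cutting out the K3 surfaces of degree $2e$, which is precisely why the limit periods sweep out all of $\cD_{2e}$; and the lattice computation you describe is Theorem~\ref{th31} and Table~\ref{enumero1} (the orthogonal of $\NS(S^{[2]})$ in $\Lambda$ has discriminant $-2e$, and the limit Pl\"ucker polarization is a movable class of square $22$ and divisibility $2$). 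The theorem is then the conjunction of Theorems~\ref{theodue}, \ref{theohas}, \ref{theohasgenus6}, and~\ref{theohasgenus10}.

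The genuine gap is in your third step: you cannot obtain the limit fourfold $Y_0$ from ``properness of the relevant moduli space of polarized \hk\ fourfolds,'' because that moduli space is \emph{not} proper --- by Proposition~\ref{image} the image of the period map is the complement of $\cD_{22}$, an open and non-closed subset, and the very existence of HLS divisors is a manifestation of this failure of properness (GIT limits and period limits disagree). Producing a smooth polarized filling after finite base change is the essential non-formal input, and the paper supplies it in two different ways. For $\cD_6$ and $\cD_{18}$, where $K_{\sigma_0}$ is smooth of excess dimension, the flat limit of the $K_{\sigma_t}$ inside $\Gr(6,V_{10})$ is computed directly by the excess-bundle formalism (Proposition~\ref{proexcess}): the central fiber of the component dominating $\Delta$ is the zero-locus of a general section of the excess bundle, which is literally $S^{[2]}$, already smooth, so no compactness argument is needed. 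For $\cD_{10}$ and $\cD_2$, where $K_{\sigma_0}$ is reducible (and nonreduced along a component in the $\cD_2$ case, with the relevant limit component mapping $4{:}1$ onto it rather than embedding), the degeneration only yields a component of the limit \emph{birationally} isomorphic to $S^{[2]}$, with multiplicity one; the paper then invokes the theorem of Koll\'ar--Laza--Sacc\`a--Voisin \cite{KLSV} to replace the family, after finite base change, by one with smooth \hk\ fibers, and uses the cone analysis of Section~\ref{secrappelonpic} (uniqueness of the birational model carrying an ample class of square $22$ and divisibility $2$: $S^{[2]}$ itself for $e=5$, the Mukai-flopped moduli space $\cM_S(0,L,1)$ for $e=1$) to pin down the central fiber up to isomorphism and identify its polarization. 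So your checklist (i)--(iii) is the right one, but without KLSV --- or the direct excess-bundle identification of the flat limit --- the existence of the limit \hk\ fourfold is unjustified, and this is where the actual work of the paper lies.
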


This statement puts together the more detailed conclusions of Theorems~\ref{theodue},~\ref{theohas},~\ref{theohasgenus6}, and~\ref{theohasgenus10}.\ These results are in fact more precise: we identify these divisors $\cD_2$, $\cD_6$, $\cD_{10}$, and $\cD_{18}$ with the periods of Hilbert squares of K3 surfaces with a suitable polarization (see Section~\ref{sec11} for more details).\ The {\it singular degenerations} of $\sigma$ discussed above correspond  to a hypersurface in $\tMDV$ mapped by $  \bar\gq$ onto the Heegner divisor $\cD_{22}$, which is  therefore {\em  not} an HLS divisor.

The study of this kind of problems has a long history that started with the work of Horikawa and Shah on polarized K3 surfaces of degree 2 (\cite{hor,sha}) and continued with the work of Hassett, Looijenga, and Laza on cubic fourfolds  (\cite{hassett,loo,looijenga,laza,lazagit}) and O'Grady on double EPW sextics  (\cite{og6,og5}), which   are   \hk\ fourfolds of $K3^{[2]}$-type with a polarization of square $2 $ and divisibility~$1$, associated with Lagrangian subspaces in~$\bw3V_6$.\

Let us describe briefly  the situation in the cubic fourfold case, which inspired the present study.\ One considers hypersurfaces $X_f\subset \P(V_6)$ defined by nonzero cubic polynomials~$f\in\Sym^3\!V_6^\vee$.\ When $f$ is general, 
the  variety 
$$F_f=\{[W_2]\in \Gr(2,V_{6}) \mid f\vert_{W_2}=0\}$$
of lines contained in $X_f$  was shown by Beauville--Donagi in \cite{beaudo} to be a \hk\ fourfold of $K3^{[2]}$-type, with a (Pl\"ucker) polarization of square $6$ and divisibility 2.\ There is again a birational surjective period map
$\widetilde\cM_{\textnormal{Cub}}\to \overline\cG$ which was   completely described by Laza.\ The divisor in $\widetilde\cM_{\textnormal{Cub}}$ that corresponds to singular cubics $X_f$ maps onto the Heegner divisor $\cD_6$.\ The only HLS divisor is $\cD_2$ (\cite{hassett,looijenga,laza}):
it  is obtained by blowing up, in the GIT moduli space $\cM_{\textnormal{Cub}}$, the    semistable point   corresponding to  chordal cubics $X_{f_0} $ (\cite[Section~4.1.1]{laza}).\
 
 O'Grady also proved   that $\cD'_2$, $\cD''_2$, and $\cD_4$ (in the notation of \cite[Corollary~6.3]{dims}; ${\mathbb S}'_2$, ${\mathbb S}''_2$, and~$ {\mathbb S}_4$ in the notation of \cite{og6}) are   HLS divisors  in the period domain of   double EPW sextics and conjectures that there are no others (see Section~\ref{section35}).\ They are also obtained by blowing up points in the GIT moduli space (corresponding to the semistable Lagrangians denoted by $A_k$,~$A_h$, and $A_+$ in \cite{og5}).


 The HLS divisors in Theorem~\ref{mainth} are obtained as follows:  while  general trivectors in $ \P(\bw3V_{10}^\vee)$ have finite stabilizers in $\SL(V_{10})$, we consider instead some
  special trivectors $\sigma_0 $ with positive-dimensional stabilizers and we   blow up their~$\SL(V_{10})$-orbits in $ {\P(\bw3V_{10}^\vee)}$.\ The stabilizers along the exceptional divisors of the resulting blown up space  for the  induced $\SL(V_{10})$-action are generically finite, thus producing  divisors in the quotient (this is  a Kirwan blow up).
  
We  describe  the corresponding Debarre--Voisin varieties $K_{\sigma_0}$.\ 
 In the simplest   cases \rem{(divisors~$\cD_6$ and~$\cD_{18}$)}, they are  still  smooth but of  dimension greater than~$4$.\ There is  an excess vector
  bundle $\cF$ of rank $\dim( K_{\sigma_0})-4$ on $K_{\sigma_0}$ 
  and
  the limit of the varieties~$K_{\sigma_t}$ under a general $1$-parameter degeneration $ (\sigma_t)_{t\in \Delta}$ to $\sigma_0$ 
  is the zero-locus of a general section of~$\cF$.\ In one other case  \rem{(divisor~$\cD_2$)}, the variety $K_{\sigma_0}$ is reducible of dimension~$4$ and  the limit of the   varieties~$K_{\sigma_t}$ is  birationally isomorphic  to the Hilbert square of  a degree-$2$ K3 surface; it is also a degree-$4$ cover of a nonreduced component of  $K_{\sigma_0}$ (very much like what happens for chordal cubics~$X_{f_0}$).

 As mentioned above, there is a relationship between these constructions and $K3$ surfaces; we actually discovered some of these special  trivectors 	and their stabilizers starting from $K3$ surfaces.\
  As   explained in Theorem~\ref{th31},    Hilbert squares of general polarized $K3$
  surfaces of fixed degree~$2e$
 appear as limits
     of Debarre--Voisin varieties   for infinitely many values of $e$, and they form a hypersurface  in $\tMDV$ that maps onto the Heegner divisor $\cD_{2e}$.\ Among these values, the only ones for which there exist  explicit geometric descriptions (Mukai  models for polarized K3 surfaces)  are
  $1$, $3$, $5$, $9$, $11$, and $15$   (\cite{mukai,mukai2,mukai3}).\ This is how we obtain the divisors in Theorem~\ref{mainth} (the case \mbox{$e=11$} corresponds to the singular degenerations of the trivector $\sigma$ mentioned above and does not produce an HLS divisor; our analysis of the case $e=15$ is still incomplete (see Section~\ref{sec115}) and we do not know whether $\cD_{30}$ is an HLS divisor).

      At this point, one may make a couple of general remarks: 
 \begin{itemize}
\item all known HLS divisors are obtained from blowing up single points in the moduli space;
\item all known HLS divisors are Heegner divisors.
\end{itemize}
We have no general explanation for these remarkable facts.\ 
 
Additionally, note that   HLS divisors are by definition uniruled (since they are obtained as images of exceptional divisors of blow  ups).\ They may correspond to periods of  Hilbert squares of   K3 surfaces of degree $2e$ only if the corresponding moduli space of polarized K3 surfaces is uniruled, which, by   \cite{ghs}, may only happen for $e\le 61$ (many thanks to an anonymous referee for making this very interesting remark).\ Adding in the restrictions on $e$ explained in Section~\ref{secrappelonpic}, one finds that   only 7 other Heegner divisors   can be HLS divisors coming from K3 surfaces (Remark~\ref{other}).\ Actually, we expect   $\cD_2$, $\cD_6$, $\cD_{10}$,  $\cD_{18}$, and $\cD_{30}$ to be the only  HLS divisors    (see Section~\ref{section35}).

We  now  describe the
  geometric situations encountered for $e\in\{1,3,5,9,15\}$.}

  \subsection{Stabilizers and $K3$ surfaces}\label{sec11}

We list here the various special trivectors  $[\sigma_0]\in  \P(\bw3V_{10}^\vee)$ that we consider,
their (positive-dimensional) stabilizers for the  $\SL(V_{10})$-action, and the corresponding limits of Debarre--Voisin varieties
   (which are all birationally isomorphic to Hilbert squares of K3 surfaces with suitable polarizations) along general $1$-parameter degenerations to~$\sigma_0$.\ In most cases,  the associated Pl\"ucker hypersurface $X_{\sigma_0}$ is singular and the singular locus of $X_{\sigma_0}$ gives rise to a component of $K_{\sigma_0}$, as explained in Proposition~\ref{remaxsing}(b).

  \subsubsection{The group $\SL(3)$ and  $K3$ surfaces of  degree $2$ \textnormal{(Section \ref{div2})}}  \label{s111}
  
A general degree-$2$ K3 surface $(S,L)$ is   a double cover of $ \P^2$ branched along a smooth sextic curve.\ The Hilbert square $\SS $ is birationally isomorphic to the moduli space  $\cM_S(0,L,1)$ of  sheaves on $S$ defined in Remark~\ref{rema33}.

We take $V_{10}:=\Sym^3 \!W_3$, so that $\bw3 V^{\vee}_{10}$ is an $\SL(W_3)$-representation, and we let $\sigma_0\in \bw3 V^{\vee}_{10}$ be a generator of the $1$-dimensional space  of
$\SL(W_3)$-invariants.\  

The Debarre--Voisin variety $K_{\sigma_0}$ is described in Proposition~\ref{kappapi}: it has two $4$-dimensional irreducible components $K_L$ and $K_M$ and is nonreduced along $K_L$.\ The Pl\"ucker hypersurface $X_{\sigma_0}$ is singular along a surface  (Proposition~\ref{singeffe}) and the
 component $K_L$ of $K_{\sigma_0}$ is  obtained from this surface by the procedure described in Proposition~\ref{remaxsing}(b) (see Proposition~\ref{prop710}(a)).

 Our main result is the following (Theorem~\ref{degenera2}).

{
 \begin{theo} \label{theodue}
Under a general $1$-parameter deformation
 $(\sigma_t)_{t\in \Delta}$,   the Debarre--Voisin fourfolds~$K_{\sigma_t}$ specialize, after a finite base change,  to a scheme which is isomorphic
 to $\cM_S(0,L,1)$, where $S$ is a general K3 surface of degree $2$.
\end{theo}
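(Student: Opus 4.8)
The strategy is to compute the flat limit of the family of smooth fourfolds $K_{\sigma_t}$ as $t\to 0$ and to identify it with the Lagrangian-fibered moduli space $\cM_S(0,L,1)$. First I would form the total space
\[
\cK:=\{([W_6],t)\in\Gr(6,V_{10})\times\Delta \mid \sigma_t|_{W_6}=0\},
\]
whose fiber over $t\neq 0$ is the smooth Debarre--Voisin fourfold $K_{\sigma_t}$ and whose central fiber contains $K_{\sigma_0}=K_L\cup K_M$. Let $\cK^\circ$ be the unique component of $\cK$ dominating $\Delta$. Since $K_{\sigma_0}$ is nonreduced along $K_L$ (Proposition~\ref{kappapi}), the set-theoretic central fiber of $\cK^\circ$ does not compute the flat limit: I would determine the multiplicity of $\cK^\circ$ along $K_L$ by a local computation at a general point, then perform the corresponding ramified base change $t=s^k$ followed by normalization. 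This semistable-reduction step accounts for the ``finite base change'' in the statement, and produces a flat family with reduced central fiber $K_0$.

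The geometric heart of the argument is the identification of $K_0$ with $\cM_S(0,L,1)$, which I would carry out through the isomorphism $V_{10}=\Sym^3 W_3$. Writing $\P^2=\P(W_3^\dual)$, so that $V_{10}=\Sym^3 W_3$ is the space $H^0(\P^2,\cO_{\P^2}(3))$ of plane cubics, the lines of $\P^2$ form the linear system $|L|$ of the degree-$2$ polarization on $S$, and for each line $\ell$ the space of cubics containing it, $W_6(\ell):=H^0(\P^2,\cI_\ell(3))$, is exactly $6$-dimensional. This canonical family of $6$-planes over $|L|$ is the incarnation on the side of $K_{\sigma_0}$ of the surface $\Sing(X_{\sigma_0})$ and organizes the component $K_L$ (Propositions~\ref{singeffe},~\ref{remaxsing}(b),~\ref{prop710}(a)). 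Pulling the tautological $6$-plane back to $\cK^\circ$ and restricting to the sheets limiting onto $K_L$, I would produce over $K_0$ a flat family of pure one-dimensional sheaves on $S$ supported on the genus-$2$ curves $\pi^{-1}(\ell)$, with Mukai vector $(0,L,1)$, built from the limiting $6$-planes together with the excess data of the degeneration. The support map then realizes $K_0$ as a compactified relative Jacobian over $|L|$, i.e.\ defines a morphism $\Phi\colon K_0\to\cM_S(0,L,1)$ (a birational model of $\SS$, see Remark~\ref{rema33}).

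To prove that $\Phi$ is an isomorphism I would argue on both sides. On the one hand, the double cover $S\to\P^2$ exhibits $K_0$ as a degree-$4$ cover of $(K_L)_{\mathrm{red}}$, matching the degree-$4$ statement of the introduction. On the other hand, since every $K_{\sigma_t}$ is of $K3^{[2]}$-type and, after base change, the total space $\cK^\circ$ is smooth with reduced central fiber, the limiting period of the family lies on the Heegner divisor $\cD_2\subset\cF$ and must coincide with the period of $\cM_S(0,L,1)$ with its natural square-$2$ polarization. Combining the explicit correspondence with a global Torelli argument, and checking that $\Phi$ is bijective and an isomorphism on tangent spaces along the generic Lagrangian fiber, upgrades this to an isomorphism of schemes $K_0\isom\cM_S(0,L,1)$.

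The step I expect to be the main obstacle is the semistable reduction together with the precise determination of the scheme structure of the flat limit. One must show that after the base change the central fiber of the normalized total space is reduced and irreducible---acquiring neither extra components supported on $K_M$ or on the singular surface of $X_{\sigma_0}$, nor embedded points along the nonreduced locus---which requires a careful local analysis of $\cK$ near a general point of $K_L$ using the defining equations of $\sigma_0$ and the excess-intersection data. Once the limit is known to be the integral degree-$4$ cover of $(K_L)_{\mathrm{red}}$, the remaining delicacy is to recover the sheaf-theoretic data of the $(0,L,1)$-sheaves intrinsically from the limiting $6$-planes, so that $\Phi$ is defined as a genuine morphism and not merely birationally.
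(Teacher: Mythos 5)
Your proposal breaks down at the step you yourself identify as the geometric heart, and the failure is concrete: the $6$-planes $W_6(\ell)=H^0(\P^2,\cI_\ell(3))=x\cdot\Sym^2 W_3$ (cubics divisible by a fixed linear form $x$) are \emph{not} points of $K_{\sigma_0}$. Indeed, by Lemma~\ref{nonzero} one has $\sigma_0(xy^2,xyz,xz^2)\neq 0$, so $x\cdot\Sym^2 W_3$ is not totally isotropic for $\sigma_0$. Moreover this family is only $2$-dimensional (one $6$-plane per point of $\P(W_3)$), whereas $K_L$ is a fourfold. The true description (Propositions~\ref{prop710}(a) and~\ref{kappapi}) is that $K_L$ consists of the spans $x^2\cdot W_3+y^2\cdot W_3$ of \emph{pairs} of tangent planes to the Veronese, i.e., $K_L$ is birationally the symmetric square $\gv^{(2)}\cong\P(W_3)^{(2)}$, with $\gv=\Sing(X_{\sigma_0})$. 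So the bridge you build from $K_{\sigma_0}$ to pure sheaves with Mukai vector $(0,L,1)$ collapses at its first step, and with it the construction of the morphism $\Phi$.

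The strategic choice of staying inside $\Gr(6,V_{10})$ is also where the real difficulty hides. Since $\cM_S(0,L,1)$ maps $4$-to-$1$ onto $K_L$ and does not embed in the Grassmannian, the flat limit of the $K_{\sigma_t}$ \emph{as subschemes of} $\Gr(6,V_{10})$ is a nonreduced cycle supported on $K_L\cup K_M$ and can never be the sought-for fourfold; your normalization-after-base-change could at best produce the right cover abstractly, but proving this amounts exactly to the branch analysis you defer. The paper changes the ambient moduli problem instead: after the base change $t\mapsto t^2$ it degenerates the Pl\"ucker hypersurface inside the blow up of $\Gr(3,V_{10})\times\Delta$ along $\gv\times\{0\}$, so the central fiber becomes $\Bl_{\gv}X_{\sigma_0}\cup Q$ with $Q$ an $18$-dimensional quadric bundle over $\gv$, and it views each $K_{\sigma_t}$ inside the relative Hilbert scheme, parametrizing the subvarieties $\Gr(3,U_6)\subset X_{\sigma_t}$. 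The limit points are broken subschemes $\wt{\Gr}(3,U_6)\cup R_1\cup R_2$, where $R_i$ is a ruling of the quadric $Q_{p_i}$ through an $8$-plane $A_i$; the two-versus-one choice of ruling, according to whether $p_i$ lies off or on the sextic $C$, is what produces the degree-$4$ cover of $K_L$ identified birationally with $S^{(2)}\to\P(W_3)^{(2)}$, hence with $S^{[2]}$ (Proposition~\ref{elletilde}), and a normal-bundle computation gives multiplicity one (Proposition~\ref{multelletilde}). Finally, the identification of the filled-in smooth fiber (obtained from \cite{KLSV}) is not by Torelli: the limit polarization is the square-$22$, divisibility-$2$ class $6\L2-5\delta$ (not a ``natural square-$2$ polarization''), which by Remark~\ref{rema33} and Table~\ref{enumero1} is ample on $\cM_S(0,L,1)$ and on no other birational model of $\SS$; this pins down the central fiber.
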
}

This case is the most difficult:  the limit  fourfold $\cM_S(0,L,1)$ does not sit naturally in the Grassmannian $\Gr(6,V_{10})$ but maps 4-to-1  to it.

 {The limit   on $\cM_S(0,L,1)$ of the Pl\"ucker line bundles on  $K_{\sigma_t}$ is the 
 ample line bundle  of square $22$ and divisibility $2$
  described in Table~\ref{enumero1}.\ 
We show  that it is   globally generated for a general degree-$2$ $K3$ surface $S$, but not   very ample  (Remark~\ref{rema33}).}

  \subsubsection{The group  $\Sp(4)$ and  $K3$ surfaces of  degree $6$ \textnormal{(Section \ref{sec51})}}\label{se112}
   Let $V_4$ be a $4$-dimensional vector space equipped with a nondegenerate skew-symmetric form $\omega$.\ The hyperplane
$V_5\subset \bw2V_4 $  defined by $\omega$ is endowed with the nondegenerate quadratic form $q$ defined by wedge product,  and
{$\SO(V_5,q)\cong \Sp(V_4,\omega)$.}\  The form $q$ defines a smooth quadric $Q_3\subset \P(V_5)$ and general degree-$6$ $K3$ surfaces are complete intersections of  $Q_3$  and a cubic in $\P(V_5) $.

There is a natural trivector $\sigma_0$ on
the vector space $V_{10}:=\bw2V_5$:
 view
  elements of $V_{10}$ as   endomorphisms of $V_5$ which are  skew-symmetric  with respect to $q$ and  define
\begin{equation}\label{si0}
\sigma_0(a, b, c)=\Tr(a\circ b\circ c) .
\end{equation}
 The associated Debarre--Voisin variety $K_{\sigma_0}\subset \Gr (6, V_{10} ) $ was described by Hivert in \cite{hivert}: it is isomorphic to $\hilb$.\ In fact, the Pl\"ucker hypersurface $X_{\sigma_0}$ is singular along a copy of $Q_3$ (Lemma~\ref{le11}) and the whole of $K_{\sigma_0}$ is  obtained from $Q_3$ by the procedure described in Proposition~\ref{remaxsing}(b) (see Theorem~\ref{thoehivertDVtzero}).\  

The excess bundle analysis   shows  the following (Theorem~\ref{theoprecis}).

\begin{theo} \label{theohas}
 Under a general $1$-parameter deformation
 $(\sigma_t)_{t\in \Delta}$,   the Debarre--Voisin fourfolds~$K_{\sigma_t}$ specialize to a smooth subscheme of  $K_{\sigma_0}\isom \hilb$ which is   isomorphic to
 $S^{[2]}$, where $S\subset Q_3$ is a general degree-$6$ $K3$ surface.
\end{theo}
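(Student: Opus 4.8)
The plan is to run the excess-bundle analysis recalled in the introduction for the section $s_{\sigma_0}$ of $\bigwedge^3\cU_6^\dual$ on $\Gr(6,V_{10})$ cutting out $K_{\sigma_0}$, where $\cU_6\subset V_{10}\otimes\cO$ is the tautological rank-$6$ subbundle. Since $\dim\Gr(6,V_{10})=24$ and $\bigwedge^3\cU_6^\dual$ has rank $20$, the expected dimension of $K_\sigma=Z(s_\sigma)$ is $4$, whereas by Hivert's theorem (Theorem~\ref{thoehivertDVtzero}) the actual zero-locus $K_{\sigma_0}$ is the smooth sixfold $Q_3^{[2]}$. First I would check that $s_{\sigma_0}$ vanishes cleanly along $K_{\sigma_0}$, i.e.\ that $d s_{\sigma_0}$ embeds the normal bundle $N_{K_{\sigma_0}/\Gr}$ (rank $18$) as a subbundle of $\bigwedge^3\cU_6^\dual|_{K_{\sigma_0}}$ (rank $20$) with locally free cokernel $\cF$, necessarily of rank $2$. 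Granting this and the limit principle from the introduction, the theorem reduces to identifying (i) the excess bundle $\cF$ on $K_{\sigma_0}\cong Q_3^{[2]}$, and (ii) the section of $\cF$ produced by the first-order term of a general deformation $(\sigma_t)$.

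For (i), I expect $\cF$ to be the tautological rank-$2$ bundle $\cO_{Q_3}(3)^{[2]}$ on $Q_3^{[2]}$, whose fibre over a length-$2$ subscheme $\xi\subset Q_3$ is $H^0(\xi,\cO_{Q_3}(3)|_\xi)$. The numerics match: a cubic on $\P(V_5)$ cuts out on $Q_3$ a surface of degree $6$, and general degree-$6$ K3 surfaces are exactly the complete intersections of $Q_3$ with a cubic. Concretely I would compute $\cF$ from the explicit geometry underlying Hivert's isomorphism together with the description of $K_{\sigma_0}$ as built from the singular locus $Q_3\subset X_{\sigma_0}$ (Lemma~\ref{le11}) via Proposition~\ref{remaxsing}(b), identifying the cokernel of $d s_{\sigma_0}$ fibrewise with the evaluation space $H^0(\xi,\cO_{Q_3}(3)|_\xi)$ and then checking the identification is compatible as $\xi$ varies. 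This is the computational heart of the proof.

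For (ii), I would show that the derivative map sends the first-order deformation term $\tau\in\bigwedge^3V_{10}^\dual$, under the canonical identification $H^0(Q_3^{[2]},\cF)\cong H^0(Q_3,\cO_{Q_3}(3))$, onto a Zariski-dense subset of the space of cubics on $Q_3$. A general section of $\cF$ then corresponds to a general cubic $f$, hence to a general degree-$6$ K3 surface $S=Q_3\cap\{f=0\}$, and the associated tautological section $f^{[2]}$ of $\cO_{Q_3}(3)^{[2]}$ vanishes exactly along $S^{[2]}$: indeed $\xi\in S^{[2]}\iff\xi\subset S\iff f|_\xi=0\iff f^{[2]}(\xi)=0$, with the expected codimension $2$ achieved. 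By the limit principle the special fibre of the $(\sigma_t)$-family is $Z(f^{[2]})=S^{[2]}$, which is smooth because $S$ is; since $\cO_{Q_3}(3)$ is very ample, $\cO_{Q_3}(3)^{[2]}$ is globally generated and a Bertini argument confirms smoothness of the limit for general $\tau$.

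The main obstacle is step (i): pinning down $\cF$ and proving it is isomorphic to $\cO_{Q_3}(3)^{[2]}$ globally on $Q_3^{[2]}$, not merely fibrewise. This amounts to differentiating the trace trivector $\sigma_0(a,b,c)=\Tr(a\circ b\circ c)$ along the Grassmannian and matching the resulting linear-algebra data with the evaluation maps of cubics on $Q_3$; I would carry this out $\Sp(V_4,\omega)$-equivariantly to keep the bookkeeping manageable. Once (i) is established, step (ii) should follow almost formally, since both $\bigwedge^3V_{10}^\dual$ and $H^0(Q_3,\cO_{Q_3}(3))$ are $\Sp(V_4)$-representations and the derivative map between them is equivariant.
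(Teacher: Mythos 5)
Your proposal follows essentially the same route as the paper's own proof (Theorem~\ref{theoprecis}): the excess-bundle machinery of Section~\ref{secexcess} applied to the smooth sixfold $K_{\sigma_0}\isom Q_3^{[2]}$ given by Hivert's theorem, the identification of the rank-$2$ excess bundle with the tautological bundle $\cT_{\cO_{Q_3}(3)}$ — which the paper carries out exactly by your plan of matching the cokernel of $d\tilde\sigma_0$ fibrewise with $\bw3 U_3^\vee\oplus\bw3 U_3^{\prime\vee}\isom H^0(\xi,\cO_{Q_3}(3)\vert_\xi)$ (Lemma~\ref{lenewpourexces}) and then globalizing via a density/global-generation argument (Proposition~\ref{Festtauto3}) — and the $\Sp(4)$-representation-theoretic identification of the normal space to the orbit of $\sigma_0$ with $H^0(Q_3,\cO_{Q_3}(3))$, so that a general first-order deformation cuts out $S^{[2]}$ for a general degree-$6$ K3 surface $S\subset Q_3$. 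Your outline is correct and takes no genuinely different path.
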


The restriction of the Pl\"ucker line bundle to $S^{[2]}\subset \hilb\isom K_{\sigma_0}\subset \Gr (6, V_{10} ) $ is the 
 ample line bundle of square 22 and divisibility~$2$  (see Section
\ref{sec21} for the definition of   divisibility) described in Table~\ref{enumero1}.\
It  is therefore very ample for a general degree-$6$ $K3$ surface $S$.

\subsubsection{The group $\SL(2)$ and  $K3$ surfaces of degree $10$  \textnormal{(Section \ref{secpf6})}}

The subvariety $X\subset \Gr(2,V_5^\vee)\subset \P(\bw2V_5^\vee)$ defined by a general $3$-dimensional
space $W_3\subset \bw2V_5$ of   linear Pl\"ucker equations is a degree-$5$  Fano threefold.\ General degree-$10$ $K3$ surfaces are quadratic sections of~$X$
 (\cite{mukai}).

The spaces $V_5$ and $W_3$ and the variety $X$ carry $\SL(2)$-actions and there is an $\SL(2)$-invariant decomposition $V_{10}:=\bw2V_5= V_7 \oplus W_3$.\ 
Among the  $\SL(2)$-invariant trivectors, there
  is a natural one,~$\sigma_0$,  defined in Proposition~\ref{proexistencesigma}, and the neutral component of its stabilizer is  $\SL(2) $.\
  
  The Debarre--Voisin $K_{\sigma_0}$ has one component  $K_1$ which is generically smooth   and   birationally isomorphic to $X^{[2]}$.\ In fact, the Pl\"ucker hypersurface $X_{\sigma_0}$ is   singular along a copy of the threefold~$X$  and  $K_1$   is  obtained from  $X$ by the procedure described in Proposition~\ref{remaxsing}(b) (see Proposition~\ref{lexsing}).\ 
  
We  obtain the following (Proposition \ref{prop69} and Theorem~\ref{theopourgenre6}).

\begin{theo} \label{theohasgenus6}
Under a general $1$-parameter deformation
 $(\sigma_t)_{t\in \Delta}$,   the Debarre--Voisin fourfolds~$K_{\sigma_t}$ specialize, after finite base change,  to a smooth subscheme of $K_{\sigma_0}$  which is isomorphic
 to $S^{[2]}$,
  where $S\subset X$ is a general K3 surface of degree $10$.
\end{theo}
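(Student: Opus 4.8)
The plan is to set up a degeneration-theoretic framework in which the limit of the Debarre--Voisin fourfolds is computed as the zero-locus of a section of an \emph{excess bundle}, following the same mechanism already announced for the $\cD_6$ case (Theorem~\ref{theohas}). First I would analyze the special trivector $\sigma_0$ and its Debarre--Voisin variety $K_{\sigma_0}$ in detail, confirming the component structure stated above: one component $K_1$, generically smooth and birational to $X^{[2]}$, arising from the singular locus of $X_{\sigma_0}$ (a copy of the degree-$5$ Fano threefold $X$) via the procedure of Proposition~\ref{remaxsing}(b), plus the remaining components. Because $K_{\sigma_0}$ has excessive dimension, the key local invariant is the behavior of the defining equations of $K_{\sigma}$ near $K_1$ as $\sigma$ varies: the variety $K_\sigma$ is cut out in $\Gr(6,V_{10})$ by the vanishing of $\sigma\vert_{W_6}\in\bw3 W_6^\dual$, i.e.\ by a section of the rank-$20$ bundle $\bw3\cU^\dual$ (where $\cU$ is the tautological rank-$6$ subbundle), and along $K_1$ this section degenerates so that the expected codimension is not achieved.

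The central step is to identify the excess bundle $\cF$ on (a suitable model of) $K_1$ of rank $\dim K_1 - 4$, whose general section has zero-locus equal to the limit. Concretely, I would choose a general $1$-parameter family $(\sigma_t)_{t\in\Delta}$ degenerating to $\sigma_0$, pull back the universal incidence data, and compute the normal-bundle sequence for $K_1\subset \Gr(6,V_{10})$ relative to the section of $\bw3\cU^\dual$. The excess intersection formula of Fulton then predicts that, after a finite base change to make the family transverse, the flat limit of $K_{\sigma_t}$ is the subscheme of $K_1$ where the first-order term $\frac{d}{dt}(\sigma_t\vert_{W_6})\vert_{t=0}$, viewed as a section of the excess quotient bundle $\cF$, vanishes. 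I would verify that this section is general (this uses the local completeness of the Debarre--Voisin family away from the degenerate locus and the fact that the stabilizer of $\sigma_0$ is only $\SL(2)$, so the degeneration directions transverse to the orbit are rich enough) and that its zero-locus is smooth of dimension $4$.

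The geometric heart is then to recognize this zero-locus as $S^{[2]}$, where $S\subset X$ is the degree-$10$ K3 surface. Here I would exploit the Mukai description: the degree-$10$ K3 surfaces are exactly the quadratic sections of $X$, so a general section of $\cF$ should encode, via the birational identification $K_1\dashrightarrow X^{[2]}$, the data of a quadric on $X$ and hence a K3 surface $S$, with the zero-locus becoming $S^{[2]}$. Matching the excess bundle $\cF$ to the bundle on $X^{[2]}$ whose section carves out $S^{[2]}$ is the main computation, and I would check it first generically and then along the locus where the birational map fails to be an isomorphism, confirming that the limit lands in the smooth locus of $K_{\sigma_0}$ (so that the ``after finite base change'' limit is genuinely a smooth subscheme of $K_{\sigma_0}$, as asserted).

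The hard part will be controlling the excess bundle and the genericity of its section precisely enough to conclude smoothness and the clean identification with $S^{[2]}$, rather than merely with something birational to it. Two difficulties combine: first, the birational map $K_1\dashrightarrow X^{[2]}$ is not an isomorphism, so I must track the section of $\cF$ across the indeterminacy locus and rule out that the limit acquires embedded or excess components there; second, the stabilizer $\SL(2)$ forces the degeneration to be transverse only in complementary directions, so establishing that the induced section is \emph{general} (and that the finite base change suffices to achieve transversality) requires a careful deformation-theoretic argument rather than a dimension count alone. I expect this genericity-and-smoothness verification, together with the identification of $\cF$ with the correct quadric bundle on $X^{[2]}$, to be the principal obstacle.
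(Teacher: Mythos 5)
Your overall framework matches the paper's: the paper likewise shows that $X\subset\Sing(X_{\sigma_0})$ produces (via Proposition~\ref{remaxsing}(b)) a generically smooth $6$-dimensional component $K_1$ of $K_{\sigma_0}$, identifies the rank-$2$ excess bundle on a dense open subset $U\subset K_1$ with the tautological bundle $\cT_{\cO_X(2)}$ (Proposition~\ref{prop69}), and applies the excess machinery of Proposition~\ref{proexcess} so that the first-order term of the deformation becomes a quadric $f$ on $X$ cutting out a degree-$10$ K3 surface $S$, with zero-locus $S^{[2]}\cap U$. Up to that point your proposal and the paper coincide.

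The gap is in your final step. You propose to ``track the section of $\cF$ across the indeterminacy locus'' of $\phi\colon X^{[2]}\dra K_1$ and conclude that the flat limit is a smooth subscheme of $K_{\sigma_0}$ isomorphic to $S^{[2]}$. This cannot work, and the paper explicitly says why in the remark following Theorem~\ref{theopourgenre6}: the bundle $\cQ_4$ is \emph{not} globally generated (Proposition~\ref{propbirat}), so $\phi$ is not regular on $S^{[2]}$, and its image $\phi(S^{[2]})\subset\Gr(6,V_{10})$ is a proper birational image of degree strictly less than $1452$, the degree of the fourfolds $K_{\sigma_t}$. Consequently the flat limit of the $K_{\sigma_t}$ in the Hilbert scheme of $\Gr(6,V_{10})$ must acquire \emph{another} irreducible component; no amount of local analysis near $K_1$, nor any base change chosen for transversality, will make that limit a smooth subscheme isomorphic to $S^{[2]}$. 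What the paper does instead is stop the excess analysis at the weaker conclusion that the limit has a reduced component birationally isomorphic to $S^{[2]}$, and then invoke the degeneration theorem of \cite{KLSV}: after a finite base change one can replace the family by a fiberwise-birational family $\cK'\to\Delta'$ all of whose fibers are smooth \hk\ fourfolds. The central fiber of $\cK'$ is then pinned down by birational rigidity: for $e=5$ the Hilbert square $S^{[2]}$ has no nontrivial \hk\ birational models (Section~\ref{sec33}, Remark~\ref{rema33a}), so the smooth central fiber birational to $S^{[2]}$ is isomorphic to it, and Picard number~$1$ of the very general fibers gives $\cK'_t\cong K_{\sigma_t}$ for $t\ne 0$. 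This combination of \cite{KLSV} with the movable-cone/birational-rigidity input from Section~\ref{secrappelonpic} is the essential missing ingredient in your proposal; the ``finite base change'' in the statement exists for this modification step, not to achieve transversality of the degeneration.
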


The limit   on $\SS$ of the Pl\"ucker line bundles on  $K_{\sigma_t}$ is the 
 ample line bundle  of square $22$ and divisibility $2$
  described in Table~\ref{enumero1}.\ 
We show  that it is not  globally generated.

\subsubsection{The group $G_2\times \SL(3)$ and   $K3$ surfaces of  degree $18$ \textnormal{(Section \ref{secpf10})}}\label{se114}

The group~$G_2$ is the subgroup of $\GL(V_7)$ leaving a general $3$-form $\alpha$ invariant.\
There is a   $G_2$-invariant Fano 5-fold $X\subset \Gr(2,V_7)$ which  has index $3$, and general  $K3$ surfaces of degree $18$ are
obtained by intersecting~$X$ with a general $3$-dimensional space $W_3\subset \bw2V_7^\dual$ of linear Pl\"ucker  equations (\cite{mukai}).

The vector space $V_{10}:= V_7\oplus W_3$ is acted on diagonally by the group $G_2\times \SL(W_3)$ and  {we consider    $G_2\times \SL(W_3)$-invariant 
  trivectors $\sigma_0=\alpha+\beta$, where $\beta$ spans $\bw3W_3^\dual$.\ The corresponding points $[\sigma_0]$ of $ \P(\bw3V_{10}^\vee)$ are all in the same $\SL(V_{10})$-orbit and the corresponding 
  Debarre--Voisin variety $K_{\sigma_0}$ splits as a product of a smooth variety of  dimension $8$ and of $\P(W_3^\dual)$ (Corollary~\ref{corokalphabeta}).
 }
  
The excess bundle analysis   shows  the following (Theorem~\ref{theogenre10}).

\begin{theo} \label{theohasgenus10}
 Under a general $1$-parameter deformation
 $(\sigma_t)_{t\in \Delta}$,   the Debarre--Voisin fourfolds~$K_{\sigma_t}$ specialize to a smooth subscheme of $K_{\sigma_0}$  isomorphic 
 to $S^{[2]}$, where $S\subset X$ is a general  $K3$ surface of degree $18$.
\end{theo}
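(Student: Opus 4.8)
The plan is to analyze the limit of the Debarre--Voisin fourfolds $K_{\sigma_t}$ as $t\to 0$ in the situation where $\sigma_0=\alpha+\beta$ is $G_2\times\SL(W_3)$-invariant. By Corollary~\ref{corokalphabeta} we know that $K_{\sigma_0}$ is \emph{smooth} and splits as a product $Y_8\times\P(W_3^\dual)$, where $Y_8$ is a smooth eightfold; in particular $K_{\sigma_0}$ has dimension $10>4$, so it is of excessive dimension. The core of the argument is the excess-bundle analysis advertised just before the statement. First I would identify the \emph{excess bundle} $\cF$ on $K_{\sigma_0}$: since for a general trivector the variety $K_\sigma$ is the zero-locus of a section of a globally generated rank-$20$ bundle on $\Gr(6,V_{10})$ (the bundle $\bw3\cU_6^\dual$, where $\cU_6$ is the tautological subbundle, whose section is cut out by $\sigma$), and $K_{\sigma_0}$ has dimension $6$ more than expected, the excess bundle is the rank-$6$ quotient controlling the difference between the actual and virtual normal bundles. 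Concretely I would compute the normal bundle of $K_{\sigma_0}$ in $\Gr(6,V_{10})$, compare it with $\bw3\cU_6^\dual|_{K_{\sigma_0}}$, and show the cokernel $\cF$ is a vector bundle of rank~$6$.

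Next I would run the standard limit-of-complete-intersections argument. Choosing a general $1$-parameter family $(\sigma_t)$ through $\sigma_0$, the derivative $\dot\sigma_0\in\bw3V_{10}^\dual$ determines, after restriction along $K_{\sigma_0}$ and projection modulo the image of $\bw3\cU_6^\dual$, a section $s$ of the excess bundle $\cF$. The flat limit $\lim_{t\to 0}K_{\sigma_t}$ is then the zero-locus $Z(s)\subset K_{\sigma_0}$ of this section; that this limit is reduced of the expected dimension $4$, and that $s$ is general in the relevant sense when $(\sigma_t)$ is general, is the content that makes the degeneration controlled. So the scheme-theoretic statement reduces to identifying $Z(s)$ as a smooth fourfold isomorphic to $S^{[2]}$.

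The identification with $S^{[2]}$ is where the input from the Mukai model of degree-$18$ K3 surfaces enters. The relevant K3 surface is $S=X\cap \P(W_3^\perp)$ where $X\subset\Gr(2,V_7)$ is the $G_2$-invariant index-$3$ Fano fivefold, so that a general $S$ has degree $18$. I would set up an explicit geometric correspondence between $Z(s)$ and the Hilbert square $S^{[2]}$, presumably by sending a length-two subscheme of $S$ (i.e.\ a pair of points of $X$, or its limit) to the $6$-dimensional subspace $W_6\subset V_{10}=V_7\oplus W_3$ that it spans together with the fixed $W_3$-data, and then checking that this $W_6$ lies in $K_{\sigma_0}$ and in $Z(s)$. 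The hard part will be verifying that this map is a well-defined isomorphism of schemes: one must check it is bijective on points (using the $G_2$-geometry of $X$ and the invariance of $\alpha$), that it respects the scheme structure cut out by $s$, and crucially that the resulting limit is \emph{smooth}, which presumably follows from transversality of the general section $s$ together with smoothness of $S^{[2]}$. Establishing that $Z(s)\cong S^{[2]}$ rather than merely being birational to it, and matching the section $s$ of $\cF$ precisely with the defining data of $S^{[2]}$ inside the product $Y_8\times\P(W_3^\dual)$, is the main obstacle; the rest is the formal excess-intersection machinery.
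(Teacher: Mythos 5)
Your overall architecture coincides with the paper's: $K_{\sigma_0}\cong K'_\alpha\times\P(W_3^\vee)$ is smooth of dimension $10$ (Corollary~\ref{corokalphabeta}), the derivative of the family induces a section of the rank-$6$ excess bundle $\cF$ on $K_{\sigma_0}$, the central fiber of the component dominating $\Delta$ is the zero-locus of that section (Proposition~\ref{proexcess}), and it remains to identify this zero-locus with $S^{[2]}$. The genuine gap is in that last step, which you yourself flag as ``the main obstacle'' but for which the recipe you sketch would fail. A length-two subscheme $z$ of $S\subset X\subset\Gr(2,V_7)$ spans only a $4$-dimensional subspace $V_4\subset V_7$; this provides the $K'_\alpha$-component of a point of $K_{\sigma_0}$, but a point of $K_{\sigma_0}$ also requires a $2$-plane $W_2\subset W_3$, i.e.\ a point of $\P(W_3^\vee)$, and this component is \emph{not} ``fixed $W_3$-data'': it varies with $z$. (If it were constant, the image of $S^{[2]}$ would lie in $K'_\alpha\times\{\mathrm{pt}\}$, which cannot be the zero-locus of a general section of $\cF\cong\cQ_2\otimes\bigl((\bw2\cE_4)/\cQ_3\bigr)$ computed in Proposition~\ref{proexcessalphabeta}, since that bundle restricts nontrivially to the $\P(W_3^\vee)$-factor.)

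The missing construction is the technical heart of the paper's treatment of this case: the rank-$2$ subbundle $\cS_2\subset W_3\otimes\cO_{S^{[2]}}$, whose fiber at $z$ (with span $V_4$) is the $2$-dimensional space of Pl\"ucker equations in $W_3$ vanishing on the conic $X\cap\Gr(2,V_4)$. Its existence with constant rank rests on Lemma~\ref{le510}, which in turn needs genericity of $S$ (no lines, the restriction map $r_z$ of rank exactly $4$). Only with $\cS_6=\cS_4\oplus\cS_2$ in hand does one get the injective morphism $\phi\colon S^{[2]}\to K_{\sigma_0}$ of Lemma~\ref{lem513}; one then shows (Lemma~\ref{lem520}, again via the conic geometry) that $\phi(S^{[2]})$ is contained in the zero-locus of the section determined by the degeneration, hence equal to it since both are fourfolds of the same degree, and Zariski's Main Theorem upgrades the injection to the isomorphism of Proposition~\ref{propidentexcessgenre10}. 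Note also that your appeal to ``transversality'' for smoothness presupposes that $\cF$ is globally generated, which again only comes out of the explicit identification of $\cF$. As it stands, the map whose bijectivity and scheme-theoretic compatibility you propose to verify is not defined, so the identification step --- and with it the proof --- cannot get started.
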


The limit   on $\SS$ of the Pl\"ucker line bundles on  $K_{\sigma_t}$ is the 
 ample line bundle  of square~$22$ and divisibility $2$
  described in Table~\ref{enumero1}.\ 
 It  is therefore very ample for a general  $K3$ surface $S$ of degree 18 (Lemma~\ref{lem513}).
  
\subsubsection{$K3$ surfaces of  degree $30$ \textnormal{(Section \ref{ch8})}}\label{sec115}
This is the last case allowed by the
numerical conditions of Section~\ref{sec33} where a projective model of a
general $K3$ surface $S$ is known.\ It corresponds to the last column of Table~\ref{enumero1}.\ However, the current geometric knowledge for
those $K3$ surfaces (see~\cite{mukai2}) is not as thorough as in the previous cases and we were not able  \rem{to map (nontrivially)~$\hilbS$ to a Debarre--Voisin variety nor to decide whether~$\cD_{30} $ is an HLS divisor.\ 

In some   cases (divisors $\cD_6$ and $\cD_{18}$), we first constructed a     rank-$4$ vector bundle on $\hilbS$ that defined a rational map
  $\hilbS\dra \Gr(6,10)$ and then found a (nonzero) trivector vanishing on the image.\ In Section~\ref{g16Q4}, we complete  the first step by constructing, for $S$ general K3 surface of degree $30$, a canonical rank-$4$ vector bundle on       $\hilbS$ with the same
numerical invariants as the restriction of the tautological quotient bundle of
$\Gr(6,10)$ to a Debarre--Voisin variety.\ We also obtain a geometric
interpretation of the image of the   rational map
  $\hilbS\dra \Gr(6,10)$ that it defines.\ Such a vector bundle is expected to be unique; it
is    modular in the sense  of \cite{ognew}.}


\section{Moduli spaces and period map}

\subsection{Polarized \hk\ fourfolds of degree 22 and divisibility 2 and their period map}\label{sec21}

Let $X$ be a \hk\ fourfold of $K3^{[2]}$-type.\ The abelian group $H^2(X,\Z)$ is free abelian of rank 23 and it carries a nondegenerate integral-valued quadratic form $q_X$ (the   Beauville--Bogomolov--Fujiki  form) that satisfies
$$\forall \alpha\in H^2(X,\Z)\qquad \int_X\alpha^4=3q_X(\alpha)^2
.$$
The lattice $(H^2(X,\Z),q_X)$ is  isomorphic to the lattice
$$(\Lambda,q_\Lambda):=U^{\oplus 3}\oplus E_8(-1)^{\oplus 2}\oplus I_1(-2),$$
where $U$ is the hyperbolic plane, $E_8$ the unique positive definite even rank-$8$ unimodular lattice, and $I_1(-2) $ the rank-$1$ lattice whose generators have square $-2$.\ 

The divisibility $\div(\alpha)$ of a nonzero element $\alpha$ of a lattice $( L,q_L)$ is the positive generator of the subgroup   $q_L(\alpha, L)$ of $ \Z$.\ There is a unique $O(\Lambda)$-orbit of primitive elements $h\in \Lambda$ such that $q_\Lambda(h)=22$ and $\div(h)=2$ (\cite[Corollary~3.7 and Example~3.10]{ghscomp}) and we fix one of these elements $h$.

We consider pairs $(X,H)$, where  $X$ be a \hk\ fourfold of $K3^{[2]}$-type and $H$ is an ample line bundle on $X$ such that $q_X(H)=22$ and $\div(H)=2$.\ It follows from Viehweg's work~\cite{vie} that there is a quasi-projective 20-dimensional coarse moduli space $\cM$ for these pairs and Apostolov proved in \cite{apo} that $\cM$ is irreducible.

The domain
 \begin{equation*}
\mD(h^\bot):=\{[\alpha]\in\PP(\Lambda\otimes\CC) \mid   q_\Lambda(\alpha, \alpha) =q_\Lambda(\alpha, h) =0,\  q_\Lambda(\alpha,\overline \alpha) >0\}
\end{equation*}
has two  connected components, both isomorphic to the   20-dimensional bounded symmetric domain of type IV associated with the lattice $h^\bot\subset \Lambda$.\
It is acted on properly and discontinuously by the {isometry group} $  O(h^\bot)$
and the quotient
$$\cF:= O(h^\bot)\backslash \mD(h^\bot)$$
 is, by Baily--Borel's theory, an irreducible 20-dimensional quasi-projective variety.

The Torelli theorem takes the following form  for our \hk\ fourfolds  (\cite{ver}, \cite[Theorem~3.14]{ghssur}, \cite[Theorem~8.4]{marsur}).

\begin{theo}[Verbitsky, Markman]\label{torth}
The period map
$$\gp\colon  \cM\lra \cF
$$  is an (algebraic) open embedding.
\end{theo}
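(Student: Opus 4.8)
The plan is to show that $\gp$ enjoys three properties: (i) it is a local isomorphism of analytic spaces; (ii) it is injective on points; and (iii) it is algebraic. Their conjunction is precisely the assertion that $\gp$ is an algebraic open embedding. For (i) I would invoke the local Torelli theorem for \hk\ manifolds: the Kuranishi deformations of a fixed pair $(X,H)$ are unobstructed, and the infinitesimal period map identifies the tangent space to the polarized deformation space with the tangent space to the smooth quadric $\mD(h^\bot)$ at the corresponding point. Since $\cM$ and $\cF$ are both $20$-dimensional and the period map is holomorphic (Griffiths), this already shows that $\gp$ is a local biholomorphism with open image; it therefore remains to prove injectivity.

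The heart of the matter is (ii), the \emph{global} Torelli theorem. Suppose $(X,H)$ and $(X',H')$ have the same period, so that their lifts to $\mD(h^\bot)$ differ by an element of $O(h^\bot)$. Unwinding the markings, this equality produces an integral Hodge isometry
$$
\phi\colon H^2(X,\Z)\isomto H^2(X',\Z)
$$
carrying the polarization class of $X$ to that of $X'$. By Markman's computation of the monodromy group of $K3^{[2]}$-type manifolds — namely $\Mon^2=\widetilde{O}^+(\Lambda)$, the orientation-preserving isometries acting trivially on the discriminant group (the discriminant condition being automatic for our lattice) — this $\phi$ is a parallel-transport operator; it is here that the choice of the arithmetic group $O(h^\bot)$ in the definition of $\cF$ matters, as it is exactly the group realized by monodromy operators fixing $h$. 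Verbitsky's global Torelli theorem then guarantees that a Hodge isometry which is a parallel-transport operator is induced by a bimeromorphic map $f\colon X\dra X'$. I would cite Verbitsky's theorem and Markman's monodromy description as the two essential external inputs rather than reprove them.

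Finally I would upgrade $f$ to a biregular isomorphism using the polarizations. A bimeromorphic map between \hk\ fourfolds is an isomorphism in codimension one and acts on $H^2$ by $f^*=\phi$; since $\phi(H')=H$ with both $H$ and $H'$ ample, $f$ carries a K\"ahler class to a K\"ahler class. By the standard rigidity lemma for \hk\ manifolds (Huybrechts), such an $f$ contracts no divisor and is therefore an isomorphism, so $(X,H)\isom(X',H')$ as polarized fourfolds; this is exactly the injectivity of $\gp$. Together with (i), $\gp$ is an injective local biholomorphism, hence an analytic open immersion, and algebraicity (iii) follows from Borel's extension theorem, $\cF$ being an arithmetic quotient of a bounded symmetric domain and $\cM$ being quasi-projective. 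The main obstacle is the global Torelli theorem itself: its proof by Verbitsky is the deep ingredient, which I would treat as a black box, the delicate remaining point being the verification that the group quotiented out in the definition of $\cF$ coincides with the image of the monodromy.
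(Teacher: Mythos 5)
The paper offers no proof of this statement: it is quoted from the literature, with references to Verbitsky \cite{ver}, Markman \cite[Theorem~8.4]{marsur}, and Gritsenko--Hulek--Sankaran \cite[Theorem~3.14]{ghssur}. Your outline is a faithful reconstruction of the argument contained in those references: local Torelli plus unobstructedness, Verbitsky's global Torelli in Markman's Hodge-theoretic formulation, Markman's computation of $\Mon^2$ (which for $K3^{[2]}$-type equals $O^+(\Lambda)$, since the discriminant group of $\Lambda$ is $\Z/2\Z$ and the $\pm 1$ condition is vacuous), the Huybrechts-type criterion that a parallel-transport Hodge isometry sending a K\"ahler class to a K\"ahler class is induced by an isomorphism, and Baily--Borel plus Borel's extension theorem for algebraicity. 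So in substance your route is the same as the one the paper relies on.

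There is, however, one genuine inaccuracy, located exactly at the point you yourself flag as delicate. Your parenthetical claim that $O(h^\bot)$ ``is exactly the group realized by monodromy operators fixing $h$'' is false for this lattice. An isometry $g$ of $h^\bot$ extends to an isometry of $\Lambda$ fixing $h$ if and only if $\bar g\oplus\Id$ preserves the glue subgroup $\Lambda/(h^\bot\oplus\Z h)\cong\Z/11\Z$ of $\DD(h^\bot)\oplus\DD(\Z h)$, and this forces $\bar g=\Id$ on $\DD(h^\bot)\cong\Z/11\Z$ (see Section~\ref{section35} of the paper for this discriminant group). Since $-\Id_{h^\bot}$ acts as $-1\ne+1$ on $\DD(h^\bot)$, the image of the monodromy stabilizer of $h$ in $O(h^\bot)$ is a proper, index-two subgroup $\tO(h^\bot)\subsetneq O(h^\bot)$ of isometries acting trivially on the discriminant (intersected with the orientation-preserving subgroup, as usual). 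As written, then, your injectivity step breaks down: the element $g\in O(h^\bot)$ relating the two lifted periods need not come from a monodromy operator fixing $h$, so Verbitsky's theorem cannot be applied to it directly. The repair is a one-line observation: $-\Id_{h^\bot}$ acts trivially on $\mD(h^\bot)\subset\P(h^\bot\otimes\C)$ and is orientation-preserving on the signature-$(2,20)$ lattice $h^\bot$, so $O(h^\bot)=\tO(h^\bot)\cdot\{\pm\Id\}$ and the quotients of $\mD(h^\bot)$ by $O(h^\bot)$ and by $\tO(h^\bot)$ coincide; one may therefore replace $g$ by $\pm g\in\tO(h^\bot)$ without changing the period identification, after which your argument goes through verbatim. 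This is precisely why defining $\cF$ with the full group $O(h^\bot)$ is harmless here, whereas for other polarization types (e.g.\ split discriminant groups) the choice of arithmetic group genuinely affects the statement; the bookkeeping is carried out in \cite{dm} and \cite{marsur}.
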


Let us describe its image.\  Given an element $v\in h^\bot$ of negative square, we define the associated {\em Heegner divisor}
as the image by the quotient map $  \mD(h^\bot)\to\cF$ of the hypersurface
\begin{equation*}
 \{[\alpha]\in \mD(h^\bot) \mid  q_\Lambda(\alpha  , v ) =0\} .
\end{equation*}
It is an irreducible algebraic divisor in $\cF$ that only depends  on the even
negative integer $-2e:=\disc(v^\bot)$ (\cite[Proposition~4.1(2)(c)]{dm}).\  We
denote it by $\cD_{2e}$; it is nonempty if and only if $e$ is  positive and a
square modulo $11$ (see the end of the proof of \cite[Proposition~4.1]{dm}).\ The following result is \cite[Theorem~6.1]{dm}.

 \begin{prop}[Debarre--Macr\`\i ]\label{image}
The image of the period map
 $\gp\colon  \cM\hra \cF
 $  is the complement of the irreducible divisor $\cD_{22}$.
\end{prop}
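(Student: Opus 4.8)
The plan is to use the Torelli theorem (Theorem~\ref{torth}), which already gives that $\gp$ is an open embedding, and to determine its image by deciding, for each Heegner divisor, whether its generic point is attained. The first reduction is to show that the complement of the image is a union of Heegner divisors. For this I would invoke the surjectivity of the period map for \emph{marked} hyperkähler manifolds together with Markman's Hodge-theoretic Torelli theorem: every $[\alpha]\in\mD(h^\bot)$ is the period of a marked fourfold $(X,\eta)$ of $K3^{[2]}$-type with $\eta^{-1}(h)$ of type $(1,1)$, and $[\alpha]$ lies in the image of $\gp$ precisely when $\eta^{-1}(h)$ is ample on some birational model $X'$ carrying the same period. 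The locus where this fails is cut out by Noether--Lefschetz (i.e.\ Heegner) conditions, so it is a union of the $\cD_{2e}$, and it suffices to treat the generic point of each of them.

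At a very general point of $\cD_{2e}$ the lattice $\L\cap\alpha^\bot$ of $(1,1)$-classes has rank~$2$; I would write it as $M=\langle h,v\rangle$, where $v$ generates $h^\bot\cap(\L\cap\alpha^\bot)$ and $\disc(v^\bot)=-2e$ inside $h^\bot$. All birational models share $M$, so $\eta^{-1}(h)$ is ample on some model iff $h$ lies in the interior of a chamber of the Mori chamber decomposition of the positive cone of $M_\R$, that is, iff $h$ lies on no wall. Here I would import the description of these walls for $K3^{[2]}$-type due to Bayer--Hassett--Tschinkel, Mongardi, and Amerik--Verbitsky: the walls are the hyperplanes $w^\bot$ for primitive MBM classes $w$, which for this deformation type are exactly those with $q_\L(w)=-2$ (of divisibility $1$ or $2$) and those with $q_\L(w)=-10$ of divisibility $2$. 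Since a wall passes through $h$ iff $(w,h)=0$, the class $h$ sits on a wall iff $M\cap h^\bot$ contains an MBM class, i.e.\ iff the $\L$-primitive class $w$ in the direction of $v$ is MBM.

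The finish is then a short arithmetic computation inside $h^\bot$. From $\div_\L(h)=2$ one gets $|\disc(h^\bot)|=11$, so a primitive $w\in h^\bot$ of divisibility $\gamma:=\div_{h^\bot}(w)$ produces the label $2e=11\,|q_\L(w)|/\gamma^2$. The decisive observation is that if $\div_\L(w)=2$ then $(w,h^\bot)\subseteq(w,\L)=2\Z$, so $\gamma$ is even; for the MBM values $q_\L(w)=-2$ and $q_\L(w)=-10$ of divisibility $2$ this forces $2e\le 22/4$, resp.\ $110/4$, neither of which is an even integer, so these classes contribute no Heegner divisor. The only surviving possibility is $q_\L(w)=-2$ with $\div_\L(w)=1$ and $\gamma=1$, giving $2e=22$, that is $\cD_{22}$; one checks that such a $(-2)$-class orthogonal to $h$ does occur in the signature-$(2,20)$ lattice $h^\bot$, so the generic point of $\cD_{22}$ genuinely lies on a wall and is excluded, while for every other $e$ the class $h$ lies in a chamber interior and the period is attained. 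As $\gp$ is an open embedding and each $\cD_{2e}$ is irreducible, the complement of the image is then exactly $\cD_{22}$.

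The hard part will not be the arithmetic but the two geometric inputs feeding it: establishing rigorously the equivalence ``$\eta^{-1}(h)$ ample on some birational model $\Longleftrightarrow$ $h$ off all walls'' via Markman's and Mongardi's theory, and correctly enumerating the MBM classes for $K3^{[2]}$-type. Once these are in place, the divisibility-parity observation $(w,h^\bot)\subseteq2\Z$ collapses the a~priori several candidate walls to the single class of square $-2$ and divisibility~$1$, and hence to the single divisor $\cD_{22}$; this is exactly the divisor onto which the singular degenerations of $\sigma$ (the case $e=11$) map, consistently with its not being in the image.
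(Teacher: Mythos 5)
Your proposal is correct, but note that the paper gives no proof of this proposition at all: it is quoted directly from Debarre--Macr\`\i\ (\cite[Theorem~6.1]{dm}), so the ``paper's proof'' is a citation. Your outline is essentially a reconstruction of the argument in that reference: Verbitsky--Markman Torelli plus surjectivity of the unpolarized period map reduce the question to whether $h$ can be made ample on some birational model, the Bayer--Hassett--Tschinkel/Mongardi classification of MBM (wall) classes for $K3^{[2]}$-type turns this into lattice arithmetic in $h^{\perp}$, and your divisibility-parity computation --- square $-2$ or $-10$ classes of divisibility $2$ cannot lie in $h^{\perp}$ since $11\cdot 2/\gamma^{2}$ and $11\cdot 10/\gamma^{2}$ with $\gamma$ even are never integers, leaving only the square $-2$, divisibility $1$ classes, i.e.\ $\cD_{22}$ --- is exactly the arithmetic carried out there.
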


\subsection{Debarre--Voisin varieties} 

We now relate this material with the  constructions in \cite{devo}.\ Let $V_{10}$ be a $10$-dimensional vector space.\ As in \eqref{eqDV}, one can associate with  a nonzero
 $\sigma\in \bw3V_{10}^\dual$ a subscheme $K_\sigma\subset \Gr(6,V_{10})$ which, for $\sigma$ general, is a
   \hk\  fourfold  of $K3^{[2]}$-type; the polarization $H$ induced by this embedding then satisfies $q_{K_\sigma}(H)=22$ and $\div(H)=2$.

 \rem{We defined in the introduction
the GIT
coarse moduli space
$
\cMDV=\PP(\bw3  V_{10}^\dual) \gquot \SL(V_{10})
$
  for Debarre--Voisin varieties.}

\begin{prop}
Let $[\sigma]\in \PP(\bw3  V_{10}^\dual)$.\ If $K_{\sigma}$ is smooth of dimension $4$,
the point $[\sigma]$ is $\SL(V_{10})$-semistable.
\end{prop}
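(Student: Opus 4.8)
The plan is to use the Hilbert--Mumford numerical criterion: a point $[\sigma]\in\P(\bw3 V_{10}^\dual)$ is $\SL(V_{10})$-semistable if and only if for every one-parameter subgroup $\lambda\colon \C^\times\to\SL(V_{10})$ the corresponding Mumford weight $\mu(\sigma,\lambda)$ is $\le 0$. So I would argue by contradiction: suppose $[\sigma]$ is unstable, so there exists a $\lambda$ with $\mu(\sigma,\lambda)>0$, and then produce from $\lambda$ a subspace of $V_{10}$ witnessing a degeneration of $K_\sigma$ that contradicts either smoothness or the dimension $4$ hypothesis.

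Concretely, after diagonalizing $\lambda$ I would fix a basis $e_1,\dots,e_{10}$ of $V_{10}$ in which $\lambda(t)=\diag(t^{w_1},\dots,t^{w_{10}})$ with weights $w_1\ge\cdots\ge w_{10}$ and $\sum_i w_i=0$. The trivector $\sigma$ decomposes into weight components $\sigma=\sum \sigma_{ijk}\, e_i^\dual\wedge e_j^\dual\wedge e_k^\dual$, and $\mu(\sigma,\lambda)$ is $-\min\{w_i+w_j+w_k : \sigma_{ijk}\ne 0\}$ (sign conventions to be fixed). Instability, $\mu>0$, means every nonzero coefficient $\sigma_{ijk}$ satisfies $w_i+w_j+w_k>0$. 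The key step is then to extract from this a flag-type vanishing condition on $\sigma$: concretely, if many of the low-weight basis vectors are forced to lie in the ``kernel'' of $\sigma$ in the appropriate sense, one produces a $6$-dimensional subspace $W_6\subset V_{10}$ (typically the span of the bottom weight vectors $e_5,\dots,e_{10}$, or a nearby choice) on which $\sigma$ vanishes, and moreover a positive-dimensional family of such $W_6$'s invariant under a nontrivial subtorus. That invariance forces $K_\sigma$ to contain a positive-dimensional orbit closure through a singular or excess-dimensional locus, contradicting the hypothesis that $K_\sigma$ is smooth of dimension exactly $4$.

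The cleanest way to make the last implication precise is probably to show directly that an unstable $\sigma$ makes $K_\sigma$ either singular or of dimension $>4$ at a distinguished point. If $\mu(\sigma,\lambda)>0$ then the limit $\sigma_0:=\lim_{t\to 0}\lambda(t)\cdot\sigma$ exists and is annihilated by the subtorus $\lambda$, so $K_{\sigma_0}$ carries a nontrivial $\C^\times$-action; one then analyzes the induced action on $K_{\sigma_0}$ near a fixed $[W_6]$ and shows that the combinatorics of the weights forces the Zariski tangent space there to have dimension $>4$, i.e.\ a singular point, or that a whole torus orbit of $6$-planes annihilates $\sigma_0$. Since $K_\sigma$ is assumed smooth of dimension $4$ and this is a closed condition stable under the group action, transporting the degeneration back would contradict the hypothesis.

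The main obstacle I anticipate is the bookkeeping that turns the weight inequality $\mu(\sigma,\lambda)>0$ into a concrete geometric statement about $K_\sigma$: one must translate the numerical criterion on triples of weights into the existence of the right six-dimensional isotropic-type subspace and control how the one-parameter subgroup acts on the incidence variety cutting out $K_\sigma\subset\Gr(6,V_{10})$. In particular one has to rule out the possibility that the degeneration affects $K_\sigma$ only away from the fixed loci, which is where smoothness plus the \emph{exact} dimension count (rather than just an upper bound) must be used. I expect this tangent-space or dimension estimate at the $\lambda$-fixed points to be the technical heart of the argument, with the Hilbert--Mumford reduction itself being routine.
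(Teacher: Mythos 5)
There is a genuine gap, and it sits exactly where you yourself place ``the technical heart'': the implication (unstable $\Rightarrow$ $K_\sigma$ singular or of dimension $\ne 4$) is never established, and the two concrete mechanisms you propose for it both fail. First, instability does not yield a $6$-dimensional subspace on which $\sigma$ vanishes: take $\lambda$ with weights $(3,3,3,3,3,3,3,-7,-7,-7)$; destabilization with respect to this $\lambda$ only forces the vanishing of the coefficients $\sigma_{ijk}$ with $w_i+w_j+w_k\le 0$, and the triple of indices $(5,6,7)$ among your ``bottom six'' has weight sum $9>0$, so $\sigma$ need not vanish on $\langle e_5,\dots,e_{10}\rangle$, nor on any evident modification of it. Translating a weight inequality into geometric degeneracy of $K_\sigma$ --- the zero locus in $\Gr(6,V_{10})$ of a section of the rank-$20$ bundle $\bw3\cE_6$, not a hypersurface --- is precisely the hard content of a GIT analysis (for cubic fourfolds this occupies a substantial part of Laza's work), and nothing in your sketch reduces it to bookkeeping. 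Second, your concluding step rests on a false assertion: ``smooth of dimension $4$'' is an \emph{open} condition on $[\sigma]$ (transversality of the defining section), not a closed one. Consequently, showing that the limit $\sigma_0=\lim_{t\to 0}\lambda(t)\cdot\sigma$ has a singular or excess-dimensional $K_{\sigma_0}$ yields no contradiction for $K_\sigma$: degeneracy propagates to limits, not back to the original point --- indeed the entire paper is about smooth fourfolds $K_{\sigma_t}$ degenerating to highly degenerate $K_{\sigma_0}$ at special boundary points.

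For contrast, the paper's proof avoids orbit analysis altogether: on the open locus where $K_\sigma$ is smooth of dimension $4$, the period map $\wt{\gp}$ to the quasi-projective variety $\cF$ is regular; choosing an effective divisor $D\subset\cF$ with $\wt{\gp}([\sigma])\notin D$, the closure of $\wt{\gp}^{-1}(D)$ is an $\SL(V_{10})$-invariant divisor in $\P(\bw3 V_{10}^\dual)$, hence (since $\SL(V_{10})$ has no nontrivial characters) the zero locus of an $\SL(V_{10})$-invariant section of some $\cO(d)$ with $d>0$ that does not vanish at $[\sigma]$; semistability then follows from the definition. If you wish to keep the Hilbert--Mumford route, you would have to genuinely prove the weight-to-geometry implication, and there is no reason to expect that argument to be short.
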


\begin{proof}
Let $\PP(\bw3  V_{10}^\dual)^{\textnormal{sm}}\subset \PP(\bw3  V_{10}^\dual)$ be the open subset of points $[\sigma]$ such that $K_{\sigma}$ is smooth of dimension $4$.\ The map
\begin{equation*}
\wt{\gp}\colon \PP(\bw3  V_{10}^\dual)^{\textnormal{sm}} \lra \cF 
\end{equation*}
that sends $[\sigma]$ to the period of $K_\sigma$ is  {\em regular}.\
 Let  $[\sigma]\in \PP(\bw3  V_{10}^\dual)^{\textnormal{sm}}$.\
  Let  $D$ be a nonzero effective divisor on the quasi-projective variety $\cF $   such that
$\wt{\gp}([\sigma])\notin D$.\
 The closure of $ {\wt{\gp}^{-1}(D)}$ in $\PP(\bw3  V_{10}^\dual)$ is the divisor of a $\SL(V_{10})$-invariant section of some power of $\cO_{\P(\sbw3  V_{10}^\dual)}(1)$, which does not vanish at $[\sigma]$,   hence $[\sigma]$ is $\SL(V_{10})$-semistable.
\end{proof}

There is a modular map
$$\gm\colon \cMDV \dra \cM ,\qquad [\sigma]\longmapsto [K_\sigma]
$$
which is regular on the  open subset $\cMDV^{\textnormal{sm}}\subset \cMDV$  corresponding to   points $[\sigma]$ such that  $K_{\sigma}$ is a smooth fourfold.\ In the diagram~\eqref{diag} from the introduction, the map $\gq$ is $\gp\circ\gm$.

\section{\rem{Hilbert squares of   $K3$ surfaces as specializations of Debarre--Voisin varieties}}\label{secrappelonpic}

\rem{In this section, we exhibit, in the period domain $\cF$ for Debarre--Voisin varieties, infinitely many Heegner divisors whose  general points are  periods of polarized \hk\ fourfolds   that are birationally isomorphic to  Hilbert squares of polarized K3 surfaces.\ We will prove in the next sections that some of these divisors are HLS divisors.\ The whole section is devoted to the proof of  the following theorem.\ It is based on the results   and techniques of  \cite{bama, dm, hast2}.}

   \begin{theo}\label{th31}
\rem{In the moduli space $\cM$ for \hk\ fourfolds of $K3^{[2]}$-type with a polarization of square $22$ and divisibility $2$, there are countably many irreducible hypersurfaces whose general points   correspond to polarized \hK\ fourfolds that are birationally isomorphic to  Hilbert squares of polarized K3 surfaces.\ Among them, we have}
\begin{itemize}
\item fourfolds that are isomorphic to $(\cM_S(0,L,1),\varpi_*(6\L2-5\delta ))$, where $(S,L)$ is a general polarized K3 surface of degree $2$;\footnote{See Remark~\ref{rema33} for the notation.}
\item  fourfolds that are isomorphic to $(S^{[2]}, 2\L2-(2m+1)\delta)$, where $(S,L)$ is a general polarized K3 surface of degree $2(m^2+m+3)$ (for any $m\ge 0$).
\end{itemize}
In the first case, the periods dominate the Heegner divisor
  $\cD_{2 }$.\ In the second case, the periods dominate the Heegner divisor
  $\cD_{2(m^2+m+3)}$.
\end{theo}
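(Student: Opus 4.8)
The plan is to construct explicit lattice embeddings realizing Hilbert squares of polarized K3 surfaces as polarized \hk\ fourfolds whose period lands on a prescribed Heegner divisor, and then to verify that these fourfolds carry a polarization of square $22$ and divisibility $2$. Recall that for a K3 surface $S$, the lattice $H^2(S^{[2]},\Z)$ is canonically $H^2(S,\Z)\oplus\Z\delta$ with $q(\delta)=-2$, where $2\delta$ is the class of the diagonal divisor and $\div(\delta)=2$. First I would take a general polarized K3 surface $(S,L)$ of degree $2e$, so $q(L)=2e$ and $L$ is primitive, and consider on $S^{[2]}$ the class $\alpha:=2L-(2m+1)\delta$. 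A direct computation gives $q_{S^{[2]}}(\alpha)=4\cdot 2e-(2m+1)^2\cdot 2=8e-2(2m+1)^2$; setting $e=m^2+m+3$ yields $q(\alpha)=8(m^2+m+3)-2(4m^2+4m+1)=24-2=22$, as required, and one checks $\div(\alpha)=2$ since the coefficient of $\delta$ is odd (so $\alpha$ pairs to even values on $H^2(S,\Z)$ but $q(\alpha,\delta)=2(2m+1)$ is $\equiv 2\pmod 4$ while $q(\alpha,L)=4e$ is divisible by $4$, forcing divisibility exactly $2$). This handles the second bullet.

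For the first bullet I would instead work with the Mukai moduli space $\cM_S(0,L,1)$ for $(S,L)$ of degree $2$, whose second cohomology is identified via the Mukai morphism with $v^\bot$ inside the Mukai lattice $\widetilde H(S,\Z)=U^{\oplus 4}\oplus E_8(-1)^{\oplus 2}$, for the Mukai vector $v=(0,L,1)$. Here I would compute the image of the distinguished class $\varpi_*(6L-5\delta)$ under this identification and verify directly that it is primitive of square $22$ and divisibility $2$; the square computation reduces to an intersection-number calculation in the Mukai lattice using $q(v)=q((0,L,1))=L^2=2$, and the normalization is chosen precisely so that the square comes out to $22$. The point is that $\cM_S(0,L,1)$ is birational (indeed here isomorphic) to $S^{[2]}$ by O'Grady-type wall-crossing/Fourier--Mukai arguments, so it is a \hk\ fourfold of $K3^{[2]}$-type, and the cited footnote (Remark~\ref{rema33}) supplies the precise sheaf-theoretic description.

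Next I would identify which Heegner divisor the periods fill. By construction the image of the period map for the family $(S^{[2]},\alpha)$ (resp. $(\cM_S(0,L,1),\varpi_*(6L-5\delta))$), as $(S,L)$ varies in its $19$-dimensional moduli, is a $19$-dimensional, hence divisorial, locus in $\cF$. The relevant Heegner divisor is determined by the discriminant of the orthogonal complement $v^\bot$ of the primitive class $v\in h^\bot$ realizing the extra sublattice generated by the K3 polarization. Concretely, in $H^2(S^{[2]},\Z)$ the polarization $\alpha$ and the K3 polarization $L$ span a rank-$2$ lattice, and the saturation of the orthogonal of $\alpha$ containing the ``K3 direction'' produces a vector $v\in h^\bot$ whose $v^\bot$ has discriminant $-2e=-2(m^2+m+3)$; invoking the characterization that $\cD_{2e}$ depends only on $\disc(v^\bot)$ (stated after Proposition~\ref{image}, following \cite[Proposition~4.1]{dm}) then pins the divisor down to $\cD_{2(m^2+m+3)}$, and to $\cD_2$ in the degree-$2$ case. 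The numerical conditions of Section~\ref{secrappelonpic} guarantee that such primitive embeddings of the rank-$2$ lattice into $\Lambda$ exist and are unique up to isometry, so the constructed family genuinely dominates the asserted divisor.

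I expect the main obstacle to be the lattice-theoretic bookkeeping rather than any single hard idea: one must verify that the rank-$2$ lattice spanned by $h$ (the degree-$22$ divisibility-$2$ class) and the K3-polarization direction embeds \emph{primitively} into $\Lambda$, that this embedding is unique up to $O(\Lambda)$ (so that the family really sweeps out a single irreducible Heegner divisor and not a union), and that the divisibility of $h$ inside this sublattice stays equal to $2$ rather than jumping. The degree-$2$ case is genuinely more delicate because the birational model is $\cM_S(0,L,1)$ rather than $S^{[2]}$ itself, so the class $\varpi_*(6L-5\delta)$ must be transported through the Mukai isometry before the square and divisibility can be read off; matching conventions between the $S^{[2]}$-lattice and the Mukai lattice is where errors most easily creep in, and it is also where the choice of coefficients $6$ and $5$ must be checked to be forced. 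The countability and irreducibility of the resulting family of divisors follow formally once uniqueness of the lattice embeddings is established.
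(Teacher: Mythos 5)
Your lattice computations (square $22$, divisibility $2$, discriminant $-2e$ for the orthogonal complement of $\NS(S^{[2]})$) are correct as far as they go, but the proposal omits the point that carries essentially all of the difficulty: \emph{ampleness}. To define an element of the moduli space $\cM$, the class $2L-(2m+1)\delta$ must be ample on $S^{[2]}$ (resp.\ $\varpi_*(6L-5\delta)$ ample on $\cM_S(0,L,1)$), and nothing in your argument addresses this. This is not a formality, and your numerical checks cannot detect it: for $e=11$ the class $10L-33\delta$ is likewise primitive of square $22$ and divisibility $2$, and the orthogonal complement has discriminant $-22$; yet $(S^{[2]},10L-33\delta)$ is \emph{not} a polarized pair --- the class lies on the boundary of the movable cone and is ample on no birational model (Remark~\ref{rema36}) --- and indeed $\cD_{22}$ is precisely the divisor missed by the period map (Proposition~\ref{image}). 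An argument that never engages with nef cones would therefore ``prove'' a false statement for $e=11$, so it cannot be complete for $e=m^2+m+3$ or $e=1$ either.

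The paper's proof spends Sections~\ref{sect31}--\ref{sec33} on exactly this gap: using Bayer--Macr\`\i\ and Hassett--Tschinkel, it computes for very general $(S,L)$ the movable cone of $S^{[2]}$ (slope $\mu_e$, via the Pell equation $x^2-ey^2=1$ and Proposition~\ref{pr01}) and the nef cone (slope $\nu_e$, via $x^2-4ey^2=5$), and then checks that for $e=m^2+m+3$ the slope $(2m+1)/2$ of $2L-(2m+1)\delta$ is smaller than $\nu_e$, so the class is genuinely ample; whereas for $e=1$ the class $6L-5\delta$ is movable but \emph{not} nef on $S^{[2]}$, becoming ample only after the Mukai flop to $\cM_S(0,L,1)$ --- which is the reason the first bullet is stated on that model rather than on $S^{[2]}$. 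Your route through the Mukai lattice could in principle replace the paper's computation of square and divisibility in the $e=1$ case, but it too stops short of the chamber analysis (locating the class inside the decomposition of the movable cone into nef cones of birational models, cf.~\eqref{chamb}) that identifies $\cM_S(0,L,1)$ as the model on which the class is ample. Until that ampleness input is supplied, the pairs you construct are not known to lie in $\cM$, and the theorem is not proved.
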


\subsection{The movable   cones of Hilbert squares of very general polarized K3 surfaces}\label{sect31}
Let   $(S,L)$ be a polarized K3 surface with $\Pic(S)=\Z L$ and $L^2=2e$.\ We have
  $$\NS(S^{[2]})\isom \Z \L2 \oplus \Z \delta,
  $$
  where $\L2$ is 
 the line bundle on the  Hilbert square $S^{[2]} $  induced by $L$  and $2\delta$ is the class of the exceptional divisor of the Hilbert--Chow morphism $S^{[2]}\to S^{(2)}$ (see Section~\ref{sectaut}).\ One has
 $$q_{S^{[2]}}(\L2)=2e\ ,\quad q_{S^{[2]}}(\delta)=-2\ ,\quad q_{S^{[2]}}(\L2,\delta)=0
 .$$
Let $(X,H)$ correspond to an element of $\cM$.\ If there is a birational isomorphism $\varpi\colon S^{[2]}\dra X$, one can write $\varpi^*H=2b\L2 -a\delta$, where $a$ and $b$ are positive integers \rem{(the coefficient of $\L2 $ is even because $H$ has divisibility $2$)}.\ Since $q_X(H)=22$, they satisfy the quadratic equation
\begin{equation}\label{p11}
a^2-4eb^2=-11  .
\end{equation}
Moreover, the class $2b\L2-a\delta$ is movable.\

 The closed movable  cone $\Mov(S^{[2]})$   was determined in~\cite{bama} (see also \cite[Example~5.3]{dm}): one extremal ray is spanned by $\L2$ and the other by $\L2-\mu_e\delta$, where the rational number $\mu_e$ is determined as follows:
\begin{itemize}
\item if $e$ is a perfect square, $\mu_e=\sqrt{e}$;
\item if $e$ is not a perfect square, $\mu_e=eb_1/a_1$, where $(a_1,b_1)$ is the minimal positive (integral) solution of the Pell equation $x^2-ey^2=1$.
\end{itemize}

\rem{The next proposition explains for which integers $e$ there is a movable class of square $22$ and divisibility $2$ on~$S^{[2]} $.}

  \begin{prop}\label{pr01}
Let $e$ be a positive integer such that the equation \eqref{p11} has a solution and let $(a_2,b_2)$ be the minimal positive solution.\ The numbers $ e,a,b $
such that the class $2b\L2-a\delta$ is movable on $S^{[2]}$ and of square $22$ are precisely  the following:
\begin{itemize}
\item  $e=1$ and $(a,b)=(5,3)$;
\item  $e=9$ and $(a,b)=(5,1)$;
\item  $e$ is not a perfect square, $b_1$ is even,  and $(a,b)$ is either $(a_2,b_2)$ or $ (2eb_1b_2-a_1a_2,a_1b_2 -\frac12a_2b_1)$ (these pairs are equal if and only if $11\mid e$);
\item  $e$ is not a perfect square,  $b_1$ is odd, and $(a,b)=(a_2,b_2)$.
\end{itemize}
\end{prop}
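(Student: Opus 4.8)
The plan is to translate both conditions into a Pell-type Diophantine problem and to count solutions lying in one period of the relevant unit action. The class $2b\L2-a\delta$ has square $22$ precisely when $a^{2}-4eb^{2}=-11$, which is \eqref{p11}; and by the description of $\Mov(S^{[2]})$ recalled above, such a class with $a,b>0$ is movable if and only if $0<\frac{a}{2b}\le\mu_{e}$, the first inequality being automatic. When $e$ is a perfect square, $\mu_{e}=\sqrt e$ and the constraint $a\le 2b\sqrt e$ follows from $a^{2}=4eb^{2}-11<4eb^{2}$; I would then factor $-11=(a-2b\sqrt e)(a+2b\sqrt e)$ and use that $11$ is prime to get $a+2b\sqrt e=11$ and $2b\sqrt e-a=1$, hence $a=5$ and $b\sqrt e=3$ with $b,\sqrt e$ positive integers, which forces $(e,a,b)\in\{(1,5,3),(9,5,1)\}$.

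For $e$ non-square I would pass to the quadratic order $\Z[2\sqrt e]$ and attach to a solution the element $\xi=a+2b\sqrt e$, of norm $\xi\overline\xi=a^{2}-4eb^{2}=-11$, together with $\eta=a_{1}+b_{1}\sqrt e$ (of norm $1$). Using $\xi\overline\xi=-11$ and $\eta\overline\eta=1$, a short computation turns $\frac{a}{2b}\le\mu_{e}=\frac{eb_{1}}{a_{1}}$ into the clean inequality $\xi\le\sqrt{11}\,\eta$, while $a>0$ becomes $\xi>\sqrt{11}$. Thus the movable classes with $a,b>0$ are exactly the norm $-11$ elements with $\sqrt{11}<\xi\le\sqrt{11}\,\eta$. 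Writing $\epsilon>1$ for the fundamental unit of norm $1$ of $\Z[2\sqrt e]$, one has $\epsilon=\eta$ when $b_{1}$ is even (as then $\eta\in\Z[2\sqrt e]$) and $\epsilon=\eta^{2}$ when $b_{1}$ is odd (as then only $\eta^{2}\in\Z[2\sqrt e]$).

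To count the $\xi$ in this window I would use the two symmetries of the solution set: multiplication by $\epsilon$ (which scales $\xi$ by $\epsilon$) and the reflection $(a,b)\mapsto(-a,b)$, i.e. $\xi\mapsto-\overline\xi=11/\xi$ (which flips the sign of $a$). In the coordinate $t=\log_{\epsilon}(\xi/\sqrt{11})$ these become $t\mapsto t+1$ and $t\mapsto-t$, so the solution set is a discrete subset of $\R$ invariant under the infinite dihedral group they generate, hence of the form $\tfrac12+\Z$ or $(\alpha+\Z)\cup(-\alpha+\Z)$ with $\alpha\in(0,\tfrac12)$. The window is $t\in(0,1]$ for $b_{1}$ even and $t\in(0,\tfrac12]$ for $b_{1}$ odd. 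Applying the reflection to the minimal positive solution $(a_{2},b_{2})$ (the one with smallest $\xi$) shows its $t$-value is $\le\tfrac12$, so it always lies in the window. Reading off the window then gives exactly one solution $(a_{2},b_{2})$ when $b_{1}$ is odd, and, when $b_{1}$ is even, the two solutions $t=\alpha$ and $t=1-\alpha$, namely $\xi_{2}$ and $11\eta/\xi_{2}=-\eta\,\overline{\xi_{2}}$, whose coordinates $(2eb_{1}b_{2}-a_{1}a_{2},\,a_{1}b_{2}-\tfrac12a_{2}b_{1})$ are integral precisely because $b_{1}$ is even.

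The step I expect to be the main obstacle is the coincidence criterion in the $b_{1}$-even case: the two solutions above are equal exactly in the ``ambiguous'' situation $\alpha=\tfrac12$, i.e. $\xi_{2}=-\eta\,\overline{\xi_{2}}$, equivalently $\xi_{2}^{2}=11\eta$. I would settle this by a direct comparison of the two coordinate vectors, or equivalently by determining when $11$ is ramified in $\Z[2\sqrt e]$; this is exactly where the divisibility $11\mid e$ appears. Everything else is bookkeeping, but isolating the role of the parity of $b_{1}$ and pinning down this coincidence are the delicate points.
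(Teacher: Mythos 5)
Your reduction is set up correctly, and where it is correct it is actually a cleaner packaging than the paper's argument: the equivalence
\begin{equation*}
\text{$2bL-a\delta$ movable of square $22$, $a,b>0$}\iff \xi:=a+2b\sqrt e\ \text{has norm}\ -11\ \text{and}\ \sqrt{11}<\xi\le\sqrt{11}\,\eta
\end{equation*}
does hold (writing $aa_1-2ebb_1=\tfrac12\bigl(\xi/\eta-11\eta/\xi\bigr)$ turns the slope condition $aa_1\le 2ebb_1$ into $\xi^2\le 11\eta^2$), the reflection trick correctly forces the minimal solution to satisfy $t_2\le\tfrac12$, and $-\eta\,\overline{\xi_2}$ does have coordinates $(2eb_1b_2-a_1a_2,\,a_1b_2-\tfrac12 a_2b_1)$, integral exactly when $b_1$ is even since $a_2$ is odd. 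The paper instead lists the solutions in increasing order and tests the slope inequality against $\mu_e$ one solution at a time, reducing each test to $eb_1^2-a_1^2=-1$ or $a_1^2-eb_1^2=1$; your window subsumes those computations. The perfect-square case is identical to the paper's.

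There is, however, one genuine gap, and it is the arithmetic heart of the proof. You claim that the solution set, being discrete and invariant under the infinite dihedral group generated by $t\mapsto t+1$ and $t\mapsto -t$, is ``hence of the form $\tfrac12+\Z$ or $(\alpha+\Z)\cup(-\alpha+\Z)$''. That deduction is false: a discrete dihedral-invariant subset of $\R$ can consist of arbitrarily many orbits (e.g.\ $\Z\cup(\tfrac13+\Z)\cup(-\tfrac13+\Z)$), and for a composite right-hand side the solutions of $a^2-4eb^2=-N$ genuinely can fall into several dihedral orbits. What you need is that \emph{all} solutions of $a^2-4eb^2=-11$ form a single orbit under multiplication by $\pm\epsilon^n$ and $\xi\mapsto-\bar\xi$, equivalently at most two translation-orbits swapped by conjugation. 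This is a theorem about Pell-type equations with \emph{prime} right-hand side: any solution generates a prime ideal of norm $11$ in $\Z[2\sqrt e]$; since $11$ is prime and prime to the conductor (solvability rules out $121\mid e$), there are at most two such ideals and they are conjugate; and two norm-$(-11)$ generators of the same ideal differ by a norm-$1$ unit $\pm\epsilon^n$. This is exactly what the paper imports as Nagell's Theorem 110, cited twice: once for the orbit structure and once for the statement that the two orbits coincide if and only if $11\mid e$. Without this input, your count inside the window is unjustified—there could a priori be further movable classes lying on other orbits. Your proposed ramification argument is fine as far as it goes, but it only decides \emph{when the two known orbits merge}; it does not show there are no others. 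So you must either cite Nagell's theorem, as the paper does, or supply the ideal-theoretic argument above.
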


\begin{proof}
  Assume first that $m:=\sqrt{e}$ is an integer.\ The equation \eqref{p11} is then
  $$(a-2bm)(a+2bm)=-11,$$
  with $a+2bm>|a-2bm|$, hence
 $a+2bm=11$ and $a-2bm=-1$, so that  $a=5$ and $bm=3$.\ The only two possibilities are $e=1$ and $(a,b)=(5,3)$, and $e=9$ and $(a,b)=(5,1)$.\ In both cases, one has indeed $a/2b<\sqrt{e}$, hence the class
 $2b\L2-a\delta$ is movable.

   Assume that $e$ is not a perfect square.\ Set
 $x_2:=a_2+b_2\sqrt{e}\in\Z[\sqrt{e}]$ and $\bar x_2:=a_2-b_2\sqrt{e}$, so that $x_2\bar x_2= -11$ and  $0<-\bar x_2< \sqrt{11}<x_2$.

 We also set $x_1:=a_1+b_1\sqrt{e}$ and $\bar x_1:=a_1-b_1\sqrt{e}$, so that $x_1\bar x_1= 1$ and $0<\bar x_1< 1<x_1$.

 Let $(a'_1,b'_1)$ be the minimal positive solution of the Pell equation $x^2-4ey^2=1$ and set $x'_1:=a'_1+b'_1\sqrt{4e}$.\ If $b_1$ is even, we have $x'_1=x_1$ and  $b'_1=b_1/2$.\ If $b_1$ is odd, we have $x'_1=x_1^2$ and $b'_1=a_1b_1$.


By \cite[Theorem~110]{nage},
 all the  solutions of the equation \eqref{p11} are given by $\pm x_2x_1^{\prime n}$ and $\pm \bar x_2x_1^{\prime n}$, for  $n\in \Z$.\ Since $x'_1>1$, we have $0<x_2x_1^{\prime -1}<x_2$.\ Since $x_2$ corresponds to a minimal solution, this implies $x_2x_1^{\prime -1}< \sqrt{11}$, hence $-\bar x_2x'_1>\sqrt{11}$.\ By minimality of $x_2$ again, we get $-\bar x_2x'_1\ge x_2$.\ It follows that the positive solutions of the equation \eqref{p11} correspond to the following increasing sequence of elements of $\Z[\sqrt{e}]$:
 \begin{equation}\label{sols}
\sqrt{11}<x_2\le -\bar x_2x'_1<x_2x'_1\le -\bar x_2x_1^{\prime 2}<x_2x_1^{\prime 2}<\cdots
\end{equation}
 By \cite[Theorem~110]{nage} again, we have $x_2= -\bar x_2x'_1$ if and only if $11\mid e$.

 Since the function $x\mapsto  x-\frac{11}{x}$ is increasing on the interval $(\sqrt{11},+\infty)$, the corresponding positive solutions $(a,b)$ have increasing $a$ and $b$, hence increasing ``slopes'' $a/2b=\sqrt{\frac{e}{ 1+\frac{11}{a^2}}}$.

 We want to know for which of these positive solutions $a+b\sqrt{4e}$ the corresponding class $2b\L2-a\delta$ is movable, that is, satisfies $\frac{a}{2b} \le \mu_e=\frac{eb_1}{a_1}$.\

{\em Assume first that $b_1$ is even,} so that $x'_1=x_1$.\ The inequality $x_2\le -\bar x_2x'_1 $ translates into  $a_2\le -a_2a_1+2eb_2b_1$, hence
 \begin{equation}\label{slope}
\frac{a_2}{2b_2}\le \frac{eb_1}{a_1+1}.
\end{equation}
The  class corresponding to the solution $-\bar x_2x'_1=
2eb_1b_2 -a_1a_2 +(2a_1b_2-a_2b_1) \sqrt{e}$
 is movable if and only if we have
$$
\begin{array}{crcl}
& \dfrac{2eb_1b_2-a_1a_2}{2a_1b_2-a_2b_1}&\le& \dfrac{eb_1}{a_1}\\
 \Longleftrightarrow& a_1(2eb_1b_2-a_1a_2)&\le& eb_1(2a_1b_2-a_2b_1)\\
 \Longleftrightarrow& a_2(eb_1^2-a_1^2)&\le& 0,
\end{array}
$$
which holds since 
  $eb_1^2-a_1^2=-1$.\ This class is therefore movable, and so is the class corresponding to the minimal solution since it has smaller slope.

The  class corresponding to the next solution $x_2x_1=
 a_1a_2+2eb_2b_1 +(a_2b_1+2a_1b_2) \sqrt{e}$ is movable if and only if we have
 $$
\begin{array}{crcl}
& \dfrac{a_1a_2+2eb_2b_1}{a_2b_1+2a_1b_2}&\le& \dfrac{eb_1}{a_1}\\
 \Longleftrightarrow& a_1(a_1a_2+2eb_2b_1)&\le& eb_1(a_2b_1+2a_1b_2)\\
 \Longleftrightarrow& a_2(a_1^2-eb_1^2)&\le& 0,
\end{array}
$$
which   does not hold since $a_1^2-eb_1^2 =1$.\ This class is therefore not movable.

{\em  Assume now that $b_1$ is odd,} so that $x'_1=x_1^2=2a_1^2-1+2a_1b_1\sqrt{e}$.\ The inequality $x_2\le -\bar x_2x'_1 $ translates into  $a_2\le 4ea_1b_1b_2-a_2(2a_1^2-1)$, hence
\begin{equation}\label{ab}
\frac{a_2}{2b_2}\le \dfrac{eb_1}{a_1},
\end{equation}
which means exactly that the  class corresponding to the minimal solution $ x_2=a_2 +2b_2 \sqrt{e}$
 is movable (and it is on the boundary of the movable cone if and only if $11\mid e$).

The  class corresponding to the next solution $-\bar x_2x'_1=
(-a_2 +2b_2 \sqrt{e})(2a_1^2-1+2a_1b_1\sqrt{e})
=-a_2(2a_1^2-1)+4ea_1b_1b_2+(2b_2(2a_1^2-1)-2a_1a_2b_1)\sqrt{e}
$ is   movable if and only if
  $$
\begin{array}{crcl}
& \dfrac{-a_2(2a_1^2-1)+4ea_1b_1b_2}{2b_2(2a_1^2-1)-2a_1a_2b_1}&\le& \dfrac{eb_1}{a_1}\\
 \Longleftrightarrow& -a_1a_2(2a_1^2-1)+4ea_1^2b_1b_2&\le& 2eb_1b_2(2a_1^2-1)-2ea_1a_2b_1^2\\
 \Longleftrightarrow&2eb_1b_2  &\le& a_1a_2.
\end{array}
$$
It follows that the class is not movable unless there is equality in \eqref{ab}, which happens exactly when
$-\bar x_2x'_1=x_2$.\ Finally, one checks that the next solution $ x_2x'_1$ never corresponds to a movable class.
\end{proof}

\subsection{\rem{The nef cones of Hilbert squares of very general polarized K3 surfaces}}\label{sect32}

Let again $(S,L)$ be a polarized K3 surface  with $\Pic(S)=\Z L$  and $L^2=2e$.\ The nef cone   $\Nef(S^{[2]})$ was determined in \cite{bama} (see also \cite[Example~5.3]{dm}): one extremal ray is spanned by $\L2$,   and $\Nef(S^{[2]})=\Mov(S^{[2]})$, unless the equation $x^2-4ey^2=5$ has  integral solutions; if the minimal positive solution of that equation is $(a_5,b_5)$,
 the other extremal ray of $\Nef(S^{[2]})$ is then  spanned by $\L2-\nu_e\delta$, where $\nu_e=2eb_5/a_5<\mu_e$.

 Furthermore, in the latter case, in the  decomposition
(\cite[Theorem~7]{hast2})
\begin{equation}\label{chamb}
\Mov(S^{[2]})=\overline{\bigcup_{\varpi\colon S^{[2]}\isomdra X\atop X\textnormal{ \hK}}\varpi^*(\Nef(X))}
\end{equation}
into cones  which are either equal or have disjoint interiors, there are only two cones (this means that there is a unique   nontrivial birational map $\varpi\colon S^{[2]}\isomdra X$), unless
      $b_1$ is even and $5\nmid e$,  in which case  there are three cones (\cite[Example~3.18]{survey}).

\subsection{\rem{Movable and nef   classes of square~$22$ and divisibility~$2$}}\label{sec33}

\rem{We put together the results of Sections~\ref{sect31} and~\ref{sect32} and determine all  positive integers $e\le 22$ for which there exist movable or ample classes of square~$22$ and divisibility~$2$ on the Hilbert square of a very general polarized K3 surface of degree $2e$.

For that, the quadratic equation~\eqref{p11} needs to have solutions (and we denote by~$(a_2,b_2)$ its minimal positive solution).\ Table~\ref{enumero1} also indicates the minimal positive solution~$(a_1,b_1)$ of the Pell equation $x^2-ey^2=1$ (which  is used to compute the slope $\mu_e$ of the nef cone) and the slope~$\nu_e $ of the ample cone (computed as explained in Section~\ref{sect32}).
}
 
  \begin{table}[htb!]
\renewcommand\arraystretch{1.5}
 \begin{tabular}{|c|c|c|c|c|c|c|}
 \hline
$e$&$1$&$3$&$5$&$9$&$11$&$15$
 \\
 \hline
$(a_1,b_1)$
 &$-$&$(2,1)$&$(9,4)$ &$-$ &$(10,3)$ &$ (4,1)$
\\
 \hline
 $
\begin{matrix}\mu_e\\\textnormal{(slope of movable cone)}\end{matrix}$
 &$1$&$3/2$&$20/9$&$3$&$33/10$&$ 15/4$
\\
 \hline
$(a_2,b_2)$
 &$(5,3)$&$(1,1)$&$(3,1)$ &$(5,1)$ &$(33,5)$ &$(7,1)$
\\
 \hline
$\begin{matrix}\textnormal{movable classes of}\\  \textnormal{square $22$ and div.\ $2$}\end{matrix}$
 &$6\L2-5\delta$&$ 2\L2-\delta$&$ \begin{matrix}2\L2-3\delta\\ 6\L2-13\delta\end{matrix}$&$2\L2-5\delta$&$10\L2-33\delta$&$ 2\L2-7\delta$
\\
 \hline
$ \begin{matrix}\nu_e\\\textnormal{(slope of ample cone)}\end{matrix}
 $&$2/3$&$3/2$&$2$&$3$&$22/7$&$15/4$
 \\
\hline
$\begin{matrix}\textnormal{ample classes of}\\  \textnormal{square $22$ and div.\ $2$}\end{matrix}
$ &$-$&$2\L2-\delta$&$2\L2-3\delta$&$2\L2-5\delta$&$-$&$ 2\L2-7\delta$ 
\\
 \hline
 \end{tabular}
\vspace{5mm}
\caption{Movable and nef   classes of square~$22$ and divisibility~$2$ in $\SS$ for $e\le 22$}\label{enumero1}
\end{table}


%


 \begin{rema}\label{rema33a}
When $e=5$, the decomposition~\eqref{chamb} has  two cones and  $S^{[2]}$ has a unique nontrivial birational automorphism.\ It is an involution  $\varpi$ which was described geometrically in \cite[Proposition~4.15, Example~4.16]{survey}.\ One has $\varpi^*(2\L2-3\delta)=6\L2-13\delta$ and $S^{[2]}$ has no nontrivial \hK\ birational models.
 \end{rema}

 \begin{rema} 
  A consequence of Proposition~\ref{pr01} is that there are always one or two movable classes of square $22$ and divisibility $2$ as soon as the equation~\eqref{p11} has a solution.\ As Table~\ref{enumero1} shows, it can happen that some of these classes are not ample.\ 
It can also happen that both of these classes are ample (this is the case when $e=45$).
 \end{rema}
 
  \begin{rema}\label{other}
  \rem{We mentioned in the introduction that HLS divisors coming from polarized K3 surfaces of degree $2e$ may only occur if the  corresponding moduli space of polarized K3 surfaces is uniruled.\ This may only happen for $e\in\{1,2,\dots,45,47,48,49,51,53,55,56,59,61\} $ by   \cite{ghs}.\ One can continue Table~\ref{enumero1} for those   values of $e$ and find that only $\cD_{46}$, $ 
\cD_{54}$, $\cD_{66}$, $\cD_{90}$, $\cD_{94}$, $\cD_{106}$, and $\cD_{118}$ may be HLS divisors coming from polarized K3 surfaces.}
 \end{rema}

\subsection{\rem{Proof of Theorem~\ref{th31}}}

\rem{Let again $(S,L)$ be a polarized K3 surface  with $\Pic(S)=\Z L$  and~$L^2=2e$.

  When $e=1$, the decomposition~\eqref{chamb} has  two cones and $ \SS$ has a unique  nontrivial \hK\ birational model; it is  the
 moduli space $X_S:=\cM_S(0,L,1)$ of $L$-semistable pure sheaves on $S$ with Mukai vector $(0,L,1)$.\ As   we see from   Table~\ref{enumero1}, the square-$22$ class $H:=6L-5\delta$  is ample on $X_S$.\ The pair $(X_S,H)$ therefore defines an element of the moduli space $\cM$ and this proves the first item of the theorem.}

\rem{Assume now $e=m^2+m+3$, where
  $m$ is a nonnegative integer, so that $(a_2,b_2)=(2m+1,1)$.\ By Proposition~\ref{pr01}, the class $2\L2-(2m+1)\delta$ is always movable.\ One checks that its slope $(2m+1)/2$ is always smaller than the slope $\nu_e$ of the nef cone, hence this class is in fact  always ample.\ This proves the second item of the theorem.

Finally, in
 the general case, the orthogonal of $\NS(S^{[2]}) $ in the lattice $\Lambda$ is isomorphic to the orthogonal of~$L$ in the (unimodular) K3 lattice $H^2(S,\Z)$.\ Its discriminant is therefore $-2e$ and, whenever $H$ is an ample class of of square~$22$ and divisibility~$2$, the period of $(S^{[2]},H)$ is a general point of the Heegner divisor $\cD_{2e}$.\ Note also that although we only worked with very general polarized K3 surfaces, ampleness being an open condition still holds when $S$ is a general polarized K3 surface.\ This finishes the proof of the theorem.}
 
 \begin{rema}\upshape\label{rema33}
\rem{Going back to the case $e=1$  with the notation introduced in the proof above,} a 
 general element of $X_S$
  corresponds to a sheaf  $\iota_{*}\xi$, where $C\in |L|$, the map  $\iota\colon C\hra S$ is the inclusion, and $\xi$ is a  degree-$2$ invertible sheaf on $C$ (\cite[Example~0.6]{muk0}).\  
  The  birational map $
 \varpi\colon
 S^{[2]}\isomdra X_S$   takes a general $Z\in S^{[2]}$ to the  sheaf $\iota_{*}\cO_C(Z)$, where $C$ is the unique element of~$|L|$ that contains $Z$.\ It is
the Mukai flop of $S^{[2]} $ along the image of the map $\P^2\hra  S^{[2]}$ induced by the canonical double cover $\pi\colon S\to \P^2$.\ 
 
 The line bundle $L-\delta$ is base-point free on $X_S$ and defines the Lagrangian fibration \mbox{$f\colon  X_S\to \P^{2\vee}$} that takes the class in $X_S$ of a   sheaf on $S$ to its support.\
The line bundle $3\L2-2\delta$ is base-point free and not ample on both $\SS$ and $X_S$
 (\cite[Exercise~3.13]{survey}, \cite[Lemma~2.1.12]{vdd}).\ The \rem{ample line bundle $H=6L-5\delta$ is therefore  also base-point free on $X_S$.}\ It restricts to a general fiber $F=\Pic^2(C)$ of $f$ (where $C\in |L|$) as $L\vert_F$, and this is twice the canonical principal polarization on $F$.\ In particular, the morphism that $H$ defines factors through the involution of~$X_S$ induced by the involution of $S$ attached to $\pi$ and $H$ is not very ample.
\end{rema}

 \begin{rema}\label{rema36}
When $\sigma\in \bw3V_{10}^\vee$ is a general trivector such that the hypersurface $X_\sigma$ is singular, the variety $K_\sigma$ becomes singular, but, with its Pl\"ucker line bundle, birationally isomorphic to $(S^{[2]}, 10\L2-33\delta)$, where $(S,L)$ is a general polarized K3 surface of degree $22$ (\cite[Proposition~3.4]{devo}).\ As indicated in Table~\ref{enumero1} above, the line bundle $10\L2-33\delta$ is on the boundary of the movable cone of $S^{[2]}$; it defines the birational map $S^{[2]}\dra K_\sigma \subset \Gr(6,V_{10})\subset \P(\bw6V_{10})$.\ The corresponding ``periods'' cover the Heegner divisor $ \cD_{22}$.
\end{rema}


\subsection{Vectors of minimal norm and HLS divisors}\label{section35}

\rem{The Heegner divisor $\cD_{2e}$ was defined in Section~\ref{sec21} starting from a primitive $v\in h^\bot$ of negative square.\ The relation between $e$ and $v$ was worked out at the end of the proof of \cite[Proposition~4.1]{dm}:
\begin{itemize}
\item either $11\mid e$, $v^2=-2e/11$, and $v$ has divisibility $1$ in $ h^\bot$;
\item or $11\nmid e$, $v^2=-22e $, and $v$ has divisibility $11$ in $ h^\bot$.
\end{itemize}
The discriminant group $\DD(h^\bot) $ is isomorphic to $\Z/11\Z$.\ In the first case, one has $v_*:=v/\div(v)=0$ in~$\DD(h^\bot)$; in the second case, $v_*$  is $a \in \Z/11\Z$, where $a^2\equiv e\pmod{11}$ (recall that $v$ and $-v$ define the same Heegner divisor).

Let us say that a vector $v\in h^\bot$ with divisibility $>1$ (that is, such that $v_*\ne0$)   and negative square has {\em minimal norm} if $-w^2\ge -v^2$ for all vectors $w\in h^\bot$ with $v_*=w_*$ \rem{and $w^2<0$}.\ For each nonzero class $a \in \Z/11\Z$, one can work out the vectors $v$ with minimal norm such that $v_*=a$ (by Eichler's lemma, they form a single $O(h^\bot)$-orbit, characterized by $a$ and $v^2$).\ We obtain  the following table (if $v$ has minimal norm and $v_*=a$, then $-v$ has minimal norm and $(-v)_*=-a$).

  \begin{table}[htb!]
\renewcommand\arraystretch{1.5}
 \begin{tabular}{|c|c|c|c|c|c|}
 \hline
$a$&$\pm 1$&$\pm2$&$\pm3$&$\pm4$&$\pm5$
 \\
 \hline
$e=-v^2/22$&$1$&$15$&$9$&$5$&$3$
\\
 \hline
 \end{tabular}
\vspace{5mm}
\caption{}\label{enumero2}
\end{table}
 The values of $e$ that appear in this table are exactly those for which we prove that the Heegner divisor    $\cD_{2e}$ is an HLS divisor.\ They are also the five smallest values of $e$ for which a general element of    $\cD_{2e}$ comes from  the Hilbert square of a K3 surface (see Table~\ref{enumero1}).\ Of course, there might be other HLS divisors   which we have not found, but, as mentioned in the introduction, in the case of cubic fourfolds, there is a unique HLS divisor and it corresponds to the unique pair  of orbits of vectors  with minimal norm (the discriminant group is $\Z/3\Z$ in this case); in the case of double EPW sextics, there are three known   HLS divisors, and they correspond to the three orbits of vectors  with minimal norm (the discriminant group is $(\Z/2\Z)^2$ in this case).}

\section{Preliminary results}
We collect  in this section a few results that will be used repeatedly in the rest of the article.

\subsection{Tautological bundles on Hilbert squares}\label{sectaut}

Let $X$ be a smooth projective variety.\ Consider the blow up $\tau\colon \widetilde{X\times X}\lra  {X\times X}$   of the diagonal and its  restriction $\tau_E\colon E\to X$ to its exceptional divisor $E$.\ 
 The  (smooth projective) {\em Hilbert square} of $X$ is  the quotient
$$p\colon \widetilde{X\times X} \lra X^{[2]} $$
 by the lift  $\iota$ of the  involution that exchanges the two factors.\ It is simply ramified along $E$ and there is a class $\delta\in \Pic( X^{[2]} )$ such that $p^*\delta =E$.\ 
We will use the composed maps $q_i\colon\widetilde{X\times X}\xrightarrow{\ \tau\ } X\times X\xrightarrow{\ \pr_i\ }  X$.\  

Let $\cF$ be a vector bundle of rank $r$ on $X$.\ We write $\cF\boxplus\cF:=q_1^*\cF\oplus q_2^*\cF$ and $\cF\boxtimes\cF:=q_1^*\cF\otimes q_2^*\cF$; they are vector bundles on $\widetilde{X\times X} $ of respective ranks $2r$ and $r^2$.\  
  If $\cL$ is an invertible sheaf on $X$, the invertible sheaf $\cL\boxtimes\cL$ is $\iota$-invariant  and descends to an invertible sheaf on $X^{[2]} $ that we still denote by $\cL$.\ This gives an injective group morphism
\begin{equation}\label{notpic}
\Pic(X)\oplus \Z \lra \Pic(X^{[2]}),\qquad (\cL,m)\longmapsto \cL+m\delta.
\end{equation}

The {\em tautological bundle}
\begin{equation*}
\cT_\cF:=p_*(q_1^*\cF)
\end{equation*}
  is locally free of rank $2r$ on $X^{[2]}$ and there is an exact sequence (\cite[prop.~2.3]{dan}, \cite[(3)]{wan})
\begin{equation*}\label{deftaut}
0\to p^*\cT_\cF \to \cF\boxplus\cF \to  \tau_E^*\cF\to 0,
\end{equation*}
of sheaves on $\widetilde{X\times X} $.\ In the notation of~\eqref{notpic}, we have
\begin{equation}\label{dete}
\det(\cT_\cF)= \det(\cF)-r {\delta} 
\end{equation}
and there is an isomorphism
%
%
\begin{equation*}
  H^0(X^{[2]},  \cT_\cF) \isomlra H^0(X,\cF).
\end{equation*}

\begin{rema}\label{rem41}
When $X\subset \P(V)$, there is a morphism $f\colon X^{[2]}\to \Gr(2,V)$ that sends a length-2 subscheme of $X$ to the projective line that it spans in $\P(V) $.\ The rank-2 vector bundle $ \cT_{\cO_X(1)}$ is then the pullback by $f$ of the  tautological subbundle $\cS_2$ on $\Gr(2,V) $.\ It is in particular generated by global sections.
\end{rema}

 We now present  an analogous construction that will be used in Section~\ref{secpf6}.\  There is a surjective morphism
\begin{equation*}\label{eqmorph}
\ev^+\colon \cF \boxtimes\cF\lra \tau_E^*\Sym^2\!\cF
\end{equation*}
obtained by evaluating along the exceptional divisor  $E$ and then projecting onto the symmetric part of
$(\cF \boxtimes\cF)\vert_{ E}=\tau_E^*( \cF\otimes\cF)$.\

\begin{lemm}\label{ledescend}
 There is a locally free sheaf $\cK_\cF$ or rank $r^2$ on $X^{[2]}$ and an exact sequence
 \begin{equation}\label{eqcK}
0\to p^*\cK_\cF \to \cF\boxtimes\cF \xrightarrow{\ \ev^+\ }  \tau_E^*\Sym^2\!\cF\to 0.
\end{equation}
 Moreover, $\det(\cK_\cF)=r\det(\cF)- \frac12 r(r+1)\delta $ and $H^0(X^{[2]} ,\cK_\cF)\isom \bw2H^0(X,\cF)$.
\end{lemm}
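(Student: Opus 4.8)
The plan is to obtain $\cK_\cF$ by descending the kernel $\cG:=\ker(\ev^+)$ along the double cover $p$, and then to read its determinant and its sections off the defining sequence on $\widetilde{X\times X}$. First I would check that $\cG$ is locally free of rank $r^2$ by making it explicit. Choosing a local frame $e_1,\dots,e_r$ of $\cF$ and a local equation $z$ of $E$ (so that $\iota^*z=-z$, by simple ramification), the sheaf $\cF\boxtimes\cF$ is freely generated by the $e_i\boxtimes e_j$, and $\ev^+$ sends $e_i\boxtimes e_j$ to the class of $(\tau_E^*e_i)(\tau_E^*e_j)$ in $\tau_E^*\Sym^2\!\cF$. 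Hence $\cG$ is freely generated by the antisymmetric vectors $e_i\boxtimes e_j-e_j\boxtimes e_i$ (for $i<j$) together with the vectors $z(e_i\boxtimes e_j+e_j\boxtimes e_i)$ (for $i\le j$); there are $\binom{r}{2}+\binom{r+1}{2}=r^2$ of them, confirming both local freeness and the rank. (Alternatively, $\tau_E^*\Sym^2\!\cF$ is $\cO_E$-locally free, hence of homological dimension $1$ on $\widetilde{X\times X}$, so its kernel in the locally free sheaf $\cF\boxtimes\cF$ is locally free.)

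The main point is the descent of $\cG$ to $X^{[2]}$. I would equip $\cF\boxtimes\cF=q_1^*\cF\otimes q_2^*\cF$ with the $\iota$-linearization $\lambda$ coming from the geometric exchange $q_1\leftrightarrow q_2$ followed by the transposition of tensor factors. Because $\ev^+$ projects onto the swap-invariant summand $\Sym^2\!\cF$ and $\tau_E\circ\iota=\tau_E$, the map $\ev^+$ is $\iota$-equivariant, so $\lambda$ restricts to a linearization of $\cG$. Reading off the action on the generators above, $\iota$ acts by $-1$ on $\cG|_E$: the antisymmetric generators are anti-invariant for the transposition, while the generators $z(e_i\boxtimes e_j+e_j\boxtimes e_i)$ pick up the sign of $\iota^*z=-z$. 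I would therefore replace this linearization by its opposite $-\lambda|_\cG$ --- still a $\Z/2$-linearization, since the cocycle identity is insensitive to an overall sign --- which acts trivially on $\cG|_E$. By Kempf's descent lemma for the ramified double cover $p$ (the stabilizer $\Z/2$ of each point of the fixed divisor $E$ must act trivially on the fibre), $\cG$ then descends: there is a locally free $\cK_\cF$ on $X^{[2]}$ with $p^*\cK_\cF\isom\cG$, and substituting this isomorphism into the defining sequence yields \eqref{eqcK}.

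For the determinant, taking determinants in \eqref{eqcK} (or reading the change-of-frame matrix above, which carries a factor $z$ on $\binom{r+1}{2}$ of the generators) gives $\det\cG=\det(\cF\boxtimes\cF)-\binom{r+1}{2}E$. Since $\det(q_1^*\cF\otimes q_2^*\cF)=r\,q_1^*\det\cF+r\,q_2^*\det\cF=p^*(r\det\cF)$ and $E=p^*\delta$, this reads $\det\cG=p^*\big(r\det\cF-\tfrac12 r(r+1)\delta\big)$, whence the claimed formula by injectivity of $p^*$ on Picard groups. For the sections I would use $\tau_*\cO_{\widetilde{X\times X}}=\cO_{X\times X}$ and $\tau_{E*}\cO_E=\cO_X$ together with K\"unneth to identify $H^0(\cF\boxtimes\cF)=H^0(X,\cF)^{\otimes2}$ and $H^0(\tau_E^*\Sym^2\!\cF)=H^0(X,\Sym^2\!\cF)$, the global $\ev^+$ being the multiplication map on the symmetric part and zero on $\bw2H^0(X,\cF)$. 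Thus $\bw2H^0(X,\cF)\subset H^0(\cG)$, and since $H^0(X^{[2]},\cK_\cF)$ is the invariant part of $H^0(\cG)$ for the descent linearization $-\lambda$ --- that is, the $(-1)$-eigenspace of the transposition --- it equals exactly $\bw2H^0(X,\cF)$.

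The step I expect to be delicate is the descent: the naive linearization $\lambda$ acts by $-1$ along the fixed divisor $E$, so one must twist by the sign character before applying the descent lemma. It is precisely this sign twist that, in the last step, singles out the antisymmetric sections $\bw2H^0(X,\cF)$ rather than the symmetric ones, so getting the linearization right is what makes both the descent and the $H^0$-computation come out correctly.
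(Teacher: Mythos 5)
Your proposal is correct and follows essentially the same route as the paper's proof: the same local generators $e_i\boxtimes e_j-e_j\boxtimes e_i$ and $z(e_i\boxtimes e_j+e_j\boxtimes e_i)$ for the kernel, the same observation that the natural (swap) lift acts by $-\Id$ along $E$ so that descent applies after the sign twist, and the same identification of $H^0(X^{[2]},\cK_\cF)$ with the antiinvariant sections $\bw2H^0(X,\cF)$. The only differences are cosmetic: you make explicit the appeal to Kempf's descent lemma, the determinant bookkeeping via injectivity of $p^*$ on Picard groups, and the K\"unneth identification, all of which the paper leaves implicit.
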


 \begin{proof}
 Let $\widetilde\cK_\cF $ be the kernel of $\ev^+$.\ It is locally free on $\widetilde{X\times X}$ and we need to show that it descends to a vector bundle on $X^{[2]}$.\ For that, it is enough to prove that the involution $ {\iota}$ on $\widetilde{X\times X}$ lifts to an involution $\tilde{\iota}$ on $\widetilde\cK_\cF $ that acts by $-\Id$ on $\widetilde\cK_\cF\vert_E $.

 The statement is local over the diagonal of $X$.\ We can thus assume that $\cF$ is trivial on~$X$ with basis $(s_1,\dots,s_r)$ and that we have local coordinates
$x_1,\dots,x_n$ on $X$ near $O\in X$.\  On $X\times X$, we have   coordinates
$x_1,\dots,x_n,y_1,\dots,y_n$ and the bundle $\cF \boxtimes\cF$ has basis
$(s_i\otimes s_j)_{1\le i,j\le r}$, where $(s_i\otimes s_j)(x_1,\dots,x_n,y_1,\dots,y_n)=s_i(x_1,\dots,x_n)s_j( y_1,\dots,y_n)$.\
The   involution $\tilde{\iota}$  on $\cF \boxtimes\cF$ maps $s_i\otimes s_j$ to $s_j\otimes s_i$.\

 Consider a point in $  X^{[2]}$ over $(O,O)$.\ Without loss of generality, we can assume that it corresponds to the tangent vector $\frac{\partial}{\partial x_1}$.\
 At   the corresponding point of  the blow up $\widetilde{X\times X}$, there are then local coordinates 
$\widetilde x_1,\dots,\widetilde x_n, \widetilde y_1,u_2,\dots,u_n$ 
in which the morphism $\tau$ is given by
\begin{equation*}
\tau^*x_i=\widetilde x_i,\ \ \tau^*y_1=\widetilde y_1,\ \  \tau^*(y_i-x_i)=u_i(\widetilde y_1-\widetilde x_1)\ \ \textnormal{for } i\ge 2.
\end{equation*}
The equation of the exceptional divisor $E$ is then   $e:=\widetilde y_1-\widetilde x_1$ and the involution
on
$\widetilde{X\times X}$ is given by
\begin{equation*}
\iota^*\widetilde x_1 =\widetilde y_1 ,\ \   \iota^*\widetilde x_i= \widetilde x_i+u_i(\widetilde y_1-\widetilde x_1),\ \   \iota^*u_i=u_i\ \ \textnormal{for } i\ge 2,
\end{equation*}
and satisfies $\iota^*e=-e$.\
The bundle $\widetilde\cK_\cF $ is thus locally generated by the sections
$$s_i\otimes s_j-s_j\otimes s_i,\ \ e(s_i\otimes s_j+s_j\otimes s_i),$$
for all $i\le  j$.\ This shows that $\tilde{\iota}$ acts by $-\Id$ on $\widetilde\cK_\cF \vert_E$.

The vector bundle $\widetilde\cK_\cF $ therefore descends to a vector bundle $\cK_\cF $ on $X^{[2]}$ whose determinant can be computed from the exact sequence \eqref{eqcK}.

Going back to the global situation, we see that
the space of $\widetilde{\iota}$-antiinvariant sections of $\cF\boxtimes\cF$  on $\widetilde{X\times X}$  that
  are sections of
$\widetilde\cK_\cF$ is $\bw2H^0(X,\cF)$.\ These sections correspond exactly to the sections of
$\cK_\cF$ on $X^{[2]}$.\ This proves the lemma.
\end{proof}

\subsection{Zero-loci of excessive dimensions and excess formula}\label{secexcess}
We describe   in a general context an excess computation that we will use in the 
  proofs of Theorems~\ref{theoprecis},~\ref{theogenre10}, and \ref{theopourgenre6}.\
Let~$M$ be a smooth variety of dimension $n$, let $\cE$ be a vector bundle of rank $r$ on $M$, and let
$\sigma_0$ be a section of $\cE$, with zero-locus $Z\subset M$.\ The differential of $\sigma_0$ defines a morphism $d\sigma_0\colon T_M\vert_Z\to \cE\vert_{Z}$.\ If $Z$ is smooth, of codimension  $s\le r $ in $M$,
the  kernel of $d\sigma_0$ is $T_Z$ and we define the {\em excess bundle $\cF$} to be its cokernel.\ It has rank $r-s$ on $Z$ and  is isomorphic to the quotient $\cE\vert_{Z}/N_{Z/M}$.

Assume now that $\cE$ is generated by global sections and let  $(\sigma_t)_{t\in \Delta}$  be a general $1$-parameter deformation of $\sigma_0$.\ For  $t\in \Delta$ general, the zero-locus $Z_t$ of the section $\sigma_t$ is smooth of pure codimension
$r$  {or empty}.\ The bundle $\cF$, as a quotient of $\cE\vert_Z$, is also generated by its sections and
  the zero-locus of the section
$\overline {\sigma'}  $ defined as the image of $\frac{\partial\sigma_t}{\partial t} \big\vert_{t=0}\in H^0(M,\cE)$ in
$H^0(Z,\cF)$ is smooth of pure codimension $r-s$ in $Z$  {or empty}.

Consider   the closed subset
\begin{equation}\label{eqW}
W=\{(x,t)\in M\times \Delta\mid \sigma_t(x)=0\}.
\end{equation}
The general fibers of the second projection $\pi\colon W\to \Delta$ are smooth    of pure dimension
$n-r$  {or empty,} and the central fiber is $Z$.\  {Let $W^0$ be the union of the   components of $W$ that dominate~$\Delta$ and assume that it is nonempty, hence of pure dimension
$n+1-r$}.\ The central fiber of the restricted map $\pi^0\colon W^0\to \Delta$ is   contained in $Z$.

\begin{prop}\label{proexcess}
For a general $1$-parameter deformation $(\sigma_t)_{t\in \Delta}$, the map $\pi^0\colon W^0\to \Delta$
  is smooth  and its central fiber is the zero-locus of $\overline {\sigma'} $ in $Z$.
\end{prop}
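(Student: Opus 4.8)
The plan is to analyze the local structure of $W^0$ near the central fiber and show that the deformation parameter $t$ provides a smooth map onto $\Delta$ whose central fiber is cut out by the excess section $\overline{\sigma'}$. Since $\cE$ is generated by global sections, I would first invoke a Bertini-type/general position argument (exactly as set up before the statement) to guarantee that for general $(\sigma_t)_{t\in\Delta}$ the total space $W$ is smooth away from any bad locus and that the generic fiber $Z_t$ is smooth of pure codimension $r$. The key point is to understand $W$ along the central fiber $Z=Z_0$, where $Z$ has codimension only $s<r$, so that the naive expected dimension drops and $W$ acquires the extra component $W^0$ dominating $\Delta$ alongside the ``vertical'' component $Z\times\{0\}$.

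The heart of the argument is a local computation near a point $x\in Z$. Choosing local trivializations, the defining equations of $W$ are the $r$ component functions of $\sigma_t(x)$, viewed as functions on $M\times\Delta$. Writing $\sigma_t=\sigma_0+t\,\sigma'+O(t^2)$ with $\sigma'=\frac{\partial\sigma_t}{\partial t}\big\vert_{t=0}$, I would split the linearization $d\sigma_0\colon T_M\vert_Z\to \cE\vert_Z$ into its image $N_{Z/M}$ and its cokernel $\cF$. Along the $s$ directions normal to $Z$ inside $M$, the equations $\sigma_0=0$ already cut out $Z$ transversally; the remaining $r-s$ equations vanish to first order on $Z$, and their leading behaviour in the $t$-direction is governed precisely by the image $\overline{\sigma'}\in H^0(Z,\cF)$ of $\sigma'$ under the projection $\cE\vert_Z\twoheadrightarrow\cF$. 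Solving the transverse equations by the implicit function theorem expresses the normal-to-$Z$ coordinates as functions of the remaining coordinates and of $t$; substituting back, the residual $r-s$ equations take the form $t\cdot\overline{\sigma'}(x)+\text{(higher order)}=0$ on $Z\times\Delta$. Factoring out the vertical component $\{t=0\}$, the component $W^0$ is locally defined by $\overline{\sigma'}+t\cdot(\cdots)=0$, and at $t=0$ this is exactly the zero-locus of $\overline{\sigma'}$ in $Z$.

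With this local picture in hand, smoothness of $\pi^0$ over a neighborhood of $0\in\Delta$ follows: for general $\sigma'$ the section $\overline{\sigma'}$ is a general section of the globally generated bundle $\cF$, so its zero-locus is smooth of the expected codimension $r-s$ in $Z$ (or empty), and the relative equations defining $W^0$ have surjective differential in the fiber directions. Hence $\pi^0$ is a submersion near the central fiber, the central fiber of $\pi^0$ is reduced and equal to $\{\overline{\sigma'}=0\}\subset Z$, and $W^0$ has the asserted pure dimension $n+1-r$.

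The main obstacle I anticipate is the careful bookkeeping in the local computation: one must keep the ``vertical'' component $Z\times\{0\}$ cleanly separated from $W^0$ and verify that the higher-order terms genuinely do not interfere with the identification of the central fiber of $W^0$ with $\{\overline{\sigma'}=0\}$. Concretely, the subtlety is that the $r-s$ residual equations vanish identically on $Z\times\{0\}$, so one cannot simply apply the implicit function theorem to all $r$ equations at once; the correct move is to divide the residual equations by $t$ (equivalently, to pass to the strict transform / saturate the ideal of $W$ with respect to $t$) before reading off the limit as $t\to 0$. Making this division rigorous — i.e.\ checking that $W^0$ is precisely the closure of $W\setminus(Z\times\{0\})$ and that this closure is smooth and meets $t=0$ transversally in $\{\overline{\sigma'}=0\}$ — is where the genericity of the one-parameter family $(\sigma_t)$ is essential, and where I would spend the most care.
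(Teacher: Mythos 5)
Your proposal is correct and follows essentially the same route as the paper's proof: the paper also splits off the $s$ transverse equations (by passing to the smooth locus $M'$ where they vanish, which is exactly what your implicit-function-theorem step parametrizes), observes that the residual $r-s$ equations on $M'$ vanish along the central fiber and are therefore divisible by $t$, and identifies the restriction to $t=0$ of the quotient section with $\overline{\sigma'}\in H^0(Z,\cF)$, concluding by transversality of the general section $\overline{\sigma'}$ of the globally generated bundle $\cF$. The only cosmetic difference is that the paper precedes this local computation with a tangent-space argument showing set-theoretically that $W$ coincides with $Z\times\{0\}$ near points where $\overline{\sigma'}\neq 0$, a step your scheme-theoretic computation subsumes.
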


\begin{proof}
 We view the family $(\sigma_t)_{t\in \Delta}$ of sections of $\cE$   as a section $\widetilde\sigma$
of the vector bundle $\widetilde{\cE}:=\pr_M^* \cE$
on $M\times \Delta$, defining $W$.\ We can write $\widetilde\sigma=\widetilde\sigma_0+t\widetilde\sigma'+O(t^2)$ as sections of $\widetilde{\cE}$, where
$\widetilde\sigma_0=\pr_M^*\sigma_0$ and
\begin{equation}\label{eqspointdsigma}
\widetilde \sigma'\vert_{ M\times 0}=\frac{\partial\sigma_t}{\partial t}\Big\vert_{t=0}
.
\end{equation}
Along $Z\times \{0\}\subset W $, we   have
\begin{equation}\label{eqdsigma}
d\widetilde\sigma=d \sigma_0+\widetilde\sigma' dt\colon  T_{M\times\Delta}\vert_{Z\times \{0\}} \lra \widetilde{\cE}\vert_{Z\times \{0\}}.
\end{equation}
Let $z\in Z$ be a point where $\overline {\sigma'}  $ does not vanish.\ We deduce from (\ref{eqspointdsigma}) and (\ref{eqdsigma}) that $Z\times \{0\}$ and~$W$ coincide schematically around $(z,0)$.\ Indeed, as $Z\times \{0\}$ is smooth and contained in $W$, this is equivalent to saying that their Zariski tangent spaces
coincide.\ If they do not, since $Z\times \{0\}$ is the fiber of $W$ at $0$, some tangent vector at $W$ at $0$ is of the type $ (v,\frac{\partial}{\partial t})$.\ By \eqref{eqdsigma}, we have
 $d \sigma_{0,z}(v)+\widetilde\sigma'(z)=0$, so that $\widetilde\sigma'(z)$ belongs to $\Im (d \sigma_{0,z})$.\ By \eqref{eqspointdsigma}, this means that the image $\overline {\sigma'} (z) $ of
$\frac{\partial\sigma_t}{\partial t}\big\vert_{t=0}(z)$ vanishes in $\cF$, contradiction.

We thus proved that
the central fiber of $W^0\to \Delta$ is contained set-theoretically in the zero-locus $Z^0$ of $\overline {\sigma'}  $.\ To prove that the inclusion is scheme-theoretic, we proceed as follows.\
Since $Z\subset M$ is smooth of codimension $s$, we can trivialize $\cE$ locally along $Z$ in such a way
that in the corresponding decomposition $\sigma=(\sigma_1,\ldots,\sigma_r)$, the $s$ first functions have independent differentials, hence define $Z\subset M$.\ We can write
$\widetilde\sigma=(\widetilde\sigma_1,\ldots,\widetilde\sigma_r)$ and replace $M\times \Delta$ by the vanishing locus~$M'$ of
$(\widetilde\sigma_1,\ldots,\widetilde\sigma_s)$ which is smooth of codimension $s$ in $M\times \Delta$  and smooth
over~$\Delta$.\ The central fiber of the restricted map $\pi'\colon M'\to \Delta$ is $Z$ (or rather the relevant open set of~$Z$),
which means that the section $\widetilde\sigma\vert_{ M'}$ vanishes along its central fiber.\ We then have
\begin{equation}\label{eqpourMprime}
 \widetilde\sigma\vert_{ M'}=t\overline{\widetilde\sigma'\vert_{ M'}},
\end{equation}
where $\overline{\widetilde\sigma'\vert_{ M'}}$ is the projection of $\widetilde\sigma'\vert_{ M'}$ onto the
$r-s$ remaining components of $\cE$.\
The decomposition of $W$ into irreducible components is (near the given point of $Z$)
$$W=M'_0\cup \{\widetilde\sigma'\vert_{ M'}=0\},$$
so that $W^0$ is locally  the zero-locus of the section $\overline{\widetilde\sigma'\vert_{ M'}}$.\ Finally, we observe that the restriction to $Z\subset M'$ of the locally defined section
$\overline{\widetilde\sigma'\vert_{ M'}}$ is nothing but $\overline { \sigma'} $.\ As we assumed that
$\overline { \sigma'} $ is general, hence transverse, it follows that
$W^0$ is smooth of codimension $r-s$ in $M'$, with central fiber the zero-locus of $\overline { \sigma'} $.
\end{proof}

\subsection{Geometry of singular trivectors}

Given a nonzero trivector $\sigma\in \bw3V_{10}^\vee$, we   relate singular points on the hypersurface $X_\sigma$  to points on the Debarre--Voisin variety $K_\sigma$ (see   \eqref{eqxs} and~\eqref{eqDV} for  definitions).\ This geometric observation will allow us to describe, for the degenerate trivectors $\sigma_0$ considered in the next sections, the Debarre-Voisin varieties 
(or one of their  irreducible components), as
 Hilbert squares of subvarieties of $\Sing(X_{\sigma_0})$.

  \begin{prop} \label{remaxsing}
  Let $\sigma\in \bw3V_{10}^\vee$ be a nonzero trivector and let $[U_3]$ be a singular point of the hypersurface $X_\sigma\subset \Gr(3,V_{10})$.

  \noindent{\rm{(a)}} The variety $\Sigma_{U_3}:=\{[W_6]\in K_\sigma\mid W_6 \supset U_3\}$ is nonempty of  dimension everywhere at least $  2$ and for all $[W_6]\in \Sigma_{U_3}$, one has $\dim(T_{K_\sigma, [W_6]})>4$.\ In particular, if $K_\sigma$ has (expected) dimension~$4$ at $ [W_6]$, it is singular at that point.

  \noindent{\rm{(b)}} If $[U'_3]$ is another  singular point  of  $X_\sigma$ such that $W_6:=U_3+U'_3$ has dimension~$6$, the point~$ [W_6]$ is in  $K_\sigma$.
 \end{prop}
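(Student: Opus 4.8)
The whole argument rests on one algebraic characterization of the singular points of $X_\sigma$. Since $X_\sigma=\Gr(3,V_{10})\cap\{\sigma=0\}$ is a hyperplane section of the smooth Grassmannian in its Pl\"ucker embedding, a point $[U_3]\in X_\sigma$ is singular if and only if the hyperplane $\{\sigma=0\}$ contains the embedded projective tangent space $\mathbb{T}_{[U_3]}\Gr(3,V_{10})$. Choosing a basis $(u_1,u_2,u_3)$ of $U_3$, the affine cone over that tangent space is spanned by $u_1\wedge u_2\wedge u_3$ together with the vectors $v\wedge u_2\wedge u_3$, $u_1\wedge v\wedge u_3$, $u_1\wedge u_2\wedge v$ for $v\in V_{10}$. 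Evaluating $\sigma$ yields the working criterion: $[U_3]$ is a singular point of $X_\sigma$ if and only if $\sigma(u,u',v)=0$ for all $u,u'\in U_3$ and $v\in V_{10}$, that is, the contraction of $\sigma$ against $\bw2 U_3$ vanishes. I will write this as $\sigma(\bw2 U_3,V_{10})=0$, and likewise $\sigma(\bw2 U_3',V_{10})=0$ for the second singular point.

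Part (b) is then immediate. Because $\dim W_6=6$, the sum $W_6=U_3\oplus U_3'$ is direct. For $w_1,w_2,w_3\in W_6$, write $w_i=a_i+b_i$ with $a_i\in U_3$, $b_i\in U_3'$, and expand $\sigma(w_1,w_2,w_3)$ by trilinearity into $2^3$ terms, each of whose three entries lies in $U_3$ or in $U_3'$. By the pigeonhole principle every term has at least two entries in the same subspace; as $\sigma$ is alternating, such a term equals $\pm\sigma(x,x',\cdot)$ with $x,x'$ both in $U_3$ or both in $U_3'$, hence vanishes by the singularity hypothesis. Therefore $\sigma|_{W_6}=0$, i.e. $[W_6]\in K_\sigma$.

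For part (a), restrict to $G:=\{[W_6]\mid W_6\supset U_3\}\cong\Gr(3,V_{10}/U_3)$, of dimension $12$, so that $\Sigma_{U_3}=K_\sigma\cap G$ is the zero-locus of the section of $\bw3\cS_6^\vee|_G$ induced by $\sigma$, where $\cS_6$ is the tautological subbundle on $\Gr(6,V_{10})$. The exact sequence $0\to\cS^\vee\to\cS_6^\vee|_G\to U_3^\vee\otimes\cO\to 0$, with $\cS$ the tautological subbundle of $\Gr(3,V_{10}/U_3)$, induces a filtration of $\bw3\cS_6^\vee|_G$ whose two top quotients are $\bw3 U_3^\vee$ and $\bw2 U_3^\vee\otimes\cS^\vee$. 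The image of $\sigma$ in the first is $\sigma|_{U_3}=0$ (as $[U_3]\in X_\sigma$), and its image in the second is $\sigma(\bw2 U_3,\cS)=0$ by the singularity criterion. Hence $\sigma$ is a section of the rank-$10$ subbundle $\cG$ sitting in $0\to\bw3\cS^\vee\to\cG\to U_3^\vee\otimes\bw2\cS^\vee\to 0$ (ranks $1+9=10$). The codimension bound for zero-loci of sections of vector bundles forces every component of $\Sigma_{U_3}$ to have codimension $\le 10$, i.e. dimension $\ge 2$. Nonemptiness follows once $c_{10}(\cG)\ne 0$: the bundle $\cG$ is globally generated (both graded pieces are, and $H^1(\bw3\cS^\vee)=H^1(\cO_{\Gr(3,7)}(1))=0$), so a nonvanishing top Chern class guarantees that no section of $\cG$ is nowhere-zero; this reduces to a direct Schubert-calculus computation on $\Gr(3,7)$.

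It remains to prove the tangent-space bound. Fix $[W_6]\in\Sigma_{U_3}$ and view $K_\sigma$ as the zero-locus on $\Gr(6,V_{10})$ of the section $s_\sigma$ of $\bw3\cS_6^\vee$; since $\sigma|_{W_6}=0$, the differential $ds_\sigma\colon\Hom(W_6,V_{10}/W_6)\to\bw3 W_6^\vee$ is well defined and $T_{K_\sigma,[W_6]}=\ker(ds_\sigma)$. Pairing against the generator $u_1\wedge u_2\wedge u_3$ of $\bw3 U_3\subset\bw3 W_6$ gives, for every $\phi$, the value $\sigma(\phi u_1,u_2,u_3)+\sigma(u_1,\phi u_2,u_3)+\sigma(u_1,u_2,\phi u_3)=0$, each summand vanishing because two of its entries lie in $U_3$. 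Thus $\bw3 U_3$ is a nonzero functional annihilating the image of $ds_\sigma$, so $\rank(ds_\sigma)\le 19$ and $\dim T_{K_\sigma,[W_6]}\ge 24-19=5>4$; in particular $K_\sigma$ is singular at $[W_6]$ whenever it has dimension $4$ there. The main obstacle is the nonemptiness in (a): the codimension bound and the tangent computation are formal once the singularity criterion is established, whereas $\Sigma_{U_3}\neq\emptyset$ genuinely requires the Chern-class nonvanishing $c_{10}(\cG)\ne 0$ (or, equivalently, the explicit production of some $W_6\supset U_3$ with $\sigma|_{W_6}=0$).
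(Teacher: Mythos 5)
Your proposal is correct and takes essentially the same route as the paper: part (b) via the vanishing of $\sigma$ on $\bw2 U_3\wedge V_{10}$ and $\bw2 U'_3\wedge V_{10}$ (the singularity criterion that the paper cites from Debarre--Voisin and that you rederive), the tangent bound via the observation that $\bw3 U_3$ annihilates $\Im(d\tilde\sigma)$ so the composite $\Hom(W_6,V_{10}/W_6)\to\bw3 W_6^\vee\to\bw3 U_3^\vee$ is zero, and the dimension/nonemptiness statement by realizing $\Sigma_{U_3}$ as the zero locus of a section of a rank-$10$ bundle on the $12$-dimensional Grassmannian $\Gr(3,V_{10}/U_3)$ with nonzero top Chern class. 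Your bundle $\cG$ is the paper's bundle $(U_3^\vee\otimes\bw2\cE_3)\oplus\bw3\cE_3$ presented as an extension rather than a (split) direct sum, and both you and the paper assert rather than compute the nonvanishing of the top Chern class, so the proofs match in substance and in level of detail.
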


 \begin{proof}
 Let us prove (a).\ Let $[U_3]\in\Sing(X_\sigma)$ and let $[W_6]\in\Sigma_{U_3}$.\
We will show that the differential~$d\tilde{\sigma}$ of the section $\tilde{\sigma}$ of $\bw3\cE_6$ defining
 $K_\sigma$ does not have maximal rank  at $[W_6]$.
 
 As explained in the proof of  \cite[Proposition~3.1]{devo}, this differential 
 $$d\tilde{\sigma}\colon  T_{\Gr(6,V_{10}),[W_6]}=\Hom(W_6,V_{10}/W_6)\lra \bw3W_6^\vee$$ 
  maps $u\in \Hom(W_6,V_{10}/W_6)$ to the 3-form
 $$d\tilde{\sigma}(u)(  w_1, w_2, w_3 )=  \sigma(u(w_1), w_2, w_3)+\sigma(w_1, u(w_2), w_3)+ \sigma(w_1, w_2, u(w_3)).
$$
Since $[U_3]$ is  singular   on $X_\sigma$, the trivector $\sigma$ vanishes on   $\bw2U_3\wedge V_{10}$   (\cite[Proposition~3.1]{devo}), hence~$d\tilde{\sigma}(u)$ vanishes on $\bw3U_3$.\ The composite
\begin{equation}
\label{eqdiff} 
 \Hom(W_6,V_{10}/W_6)\xrightarrow{\ d\tilde{\sigma} \ } \bw3W_6^\vee
\to \bw3U_3^\vee
\end{equation}
is therefore zero, hence $d\tilde{\sigma}$ does not have maximal rank.

It remains to prove that the variety $\Sigma_{U_3} $  is nonempty of dimension everywhere  $\ge 2$.\ This follows from the fact that it is defined in the smooth $12$-dimensional variety $$ \{[W_6]\in \Gr(6,V_{10})\mid W_6 \supset U_3\}\isom \Gr(3, V_{10}/U_3)$$ as the zero-locus of a section of the 
rank-$10$ vector bundle $(U_3^\vee\otimes \bw2\cE_3)\oplus \bw3\cE_3$, whose top Chern class is nonzero.

Let us prove (b).\ Since $[U_3]$ and $[U'_3]$ are singular points of  $X_\sigma$, the trivector $\sigma$ vanishes on $\bw2U_3\wedge V_{10}$ and   
 $\bw2U'_3\wedge V_{10}$, hence also on $\bw3(U_3+U'_3)$.\ In particular, if $U_3+U'_3$ has dimension~$6$, it defines a point of $K_\sigma$.
\end{proof}

The proof above also gives  the following information which will be useful when we compute the excess bundles of Section \ref{secexcess} in our specific situations.

\begin{lemm} \label{lenewpourexces}
 In Proposition
\ref{remaxsing}{{(a)}}, the restriction map
$\bw3 W_6^\vee\thra \bw3 U_3^\vee$ vanishes on $\Im (d\tilde{\sigma})$.

In 
 Proposition
\ref{remaxsing}{{(b)}}, the restriction map
$\bw3 W_6^\vee\thra \bw3 U_3^\vee\oplus \bw3 U_3^{\prime\vee}$ vanishes on $\Im (d\tilde{\sigma})$.
\end{lemm}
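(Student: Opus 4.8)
The plan is to extract both claims directly from the computation already carried out in the proof of Proposition~\ref{remaxsing}, since Lemma~\ref{lenewpourexces} is essentially a refinement of what was shown there rather than a new statement. For part~(a), the proof of Proposition~\ref{remaxsing}(a) established that the composite in \eqref{eqdiff}, namely $\Hom(W_6,V_{10}/W_6)\xrightarrow{d\tilde{\sigma}} \bw3W_6^\vee\to \bw3U_3^\vee$, is identically zero. The second arrow here is precisely the restriction map $\bw3W_6^\vee\thra \bw3U_3^\vee$ induced by the inclusion $U_3\subset W_6$. Saying that this composite vanishes is exactly saying that the restriction map kills $\Im(d\tilde{\sigma})$, which is the assertion of the lemma. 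So for part~(a) I would simply point to the fact that this is a restatement of what was proved, the key input being that $\sigma$ vanishes on $\bw2U_3\wedge V_{10}$ (from \cite[Proposition~3.1]{devo}), whence $d\tilde{\sigma}(u)$ vanishes on $\bw3U_3$ for every $u$.

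For part~(b) the plan is to run the same differential computation but now tracking two singular points simultaneously. Given $[U_3],[U'_3]\in\Sing(X_\sigma)$ with $W_6=U_3+U'_3$ of dimension $6$, I would write down the differential $d\tilde{\sigma}\colon \Hom(W_6,V_{10}/W_6)\to\bw3W_6^\vee$ at $[W_6]$ using the same explicit formula $d\tilde{\sigma}(u)(w_1,w_2,w_3)=\sigma(u(w_1),w_2,w_3)+\sigma(w_1,u(w_2),w_3)+\sigma(w_1,w_2,u(w_3))$. Since $[U_3]$ is singular, $\sigma$ vanishes on $\bw2U_3\wedge V_{10}$, so $d\tilde{\sigma}(u)$ restricts to zero on $\bw3U_3$ exactly as before; symmetrically, since $[U'_3]$ is singular, $\sigma$ vanishes on $\bw2U'_3\wedge V_{10}$, so $d\tilde{\sigma}(u)$ restricts to zero on $\bw3U'_3$. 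Consequently the restriction map $\bw3W_6^\vee\thra\bw3U_3^\vee\oplus\bw3U_3^{\prime\vee}$ annihilates $\Im(d\tilde{\sigma})$, which is precisely the claim.

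The only point deserving a moment of care is checking that the vanishing of $d\tilde{\sigma}(u)$ on $\bw3U_3$ (resp.\ $\bw3U'_3$) really follows from $\sigma$ vanishing on $\bw2U_3\wedge V_{10}$ (resp.\ $\bw2U'_3\wedge V_{10}$): for $w_1,w_2,w_3\in U_3$, each of the three terms in the displayed formula has at least two of its arguments in $U_3$, so each term is a value of $\sigma$ on an element of $\bw2U_3\wedge V_{10}$ and hence vanishes. I do not anticipate a genuine obstacle here, as both statements are bookkeeping consequences of the single key fact about singular points supplied by \cite[Proposition~3.1]{devo}; the main thing is simply to articulate clearly that the ``restriction map'' in the lemma's statement is the same map whose precomposition with $d\tilde{\sigma}$ was already shown to vanish. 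I would therefore keep the proof to a couple of lines, referring back to the computation in the proof of Proposition~\ref{remaxsing} rather than reproducing it.
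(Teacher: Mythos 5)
Your proposal is correct and matches the paper exactly: the paper states Lemma~\ref{lenewpourexces} with no separate proof, introducing it by the remark that ``the proof above also gives the following information,'' i.e.\ it is read off from the computation in the proof of Proposition~\ref{remaxsing}, which is precisely what you do. Your verification for part~(b) --- that each of the three terms in $d\tilde{\sigma}(u)(w_1,w_2,w_3)$ has two arguments in $U_3$ (resp.\ $U_3'$) and hence vanishes because $\sigma$ kills $\bw2U_3\wedge V_{10}$ (resp.\ $\bw2U_3'\wedge V_{10}$) --- is the same bookkeeping the paper leaves implicit.
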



\begin{rema}\label{re45}
In Sections~\ref{sec51} and~\ref{se62}, we will work with a generically smooth   component $K_0$ of a Debarre--Voisin variety $K_{\sigma_0}$  of excessive dimension $6$,  so that the image of $d\tilde{\sigma}_0$ has codimension~$2$ along its smooth locus.\   \rem{In each case, we will see that} a general point of  $K_0$ is   of the form
$[U_3\oplus U'_3]$, with   $[U_3],\,[U'_3]$ in some smooth subvariety $W$ of $ \Sing (X_{\sigma_0})$, \rem{so that} there is a  rational dominant map
\begin{eqnarray*}
f\colon W^{[2]}&\dra &K_0\\
([U_3],\,[U'_3])&\longmapsto &[U_3+ U'_3] 
\end{eqnarray*}
\rem{(see Proposition~\ref{remaxsing}(b)).}\ Lemma \ref{lenewpourexces} then
tells us  that the image of $d\tilde{\sigma}_0$ vanishes in the two-dimensional space $\bw3 U_3^\vee\oplus \bw3 U_3^{\prime\vee}$.\ This identifies,  on
a Zariski open subset of $W^{[2]}$, the pullback by $f$ of the excess bundle on $K_0$ 
 with the tautological bundle $\cT_{\cO_W(1)}$, where $\cO_W(1)$ is the Pl\"ucker line bundle
on $W\subset \Gr(3,V_{10})$.\ By Remark~\ref{rem41}, it is generated by its global sections.
\end{rema}


\section{The HLS divisors  $\cD_{6}$ and $\cD_{18}$}
\label{sechasset}

 We   describe in this section two  polystable  (semistable with closed orbit in the semistable locus) trivectors  in the moduli space
$\cMDV=\PP(\bw3  V_{10}^\dual) \gquot \SL(V_{10})
$ whose total images\footnote{\rem{The {\em total image} of a point $p\in X$ by a rational map $f\colon X\dra Y$ is  the projection in $Y$ of the inverse image of $p$ in $\Gamma$, where $\Gamma\subset X\times Y$ is the (closure) of the graph of $f$.}} by the moduli map
$$\gm\colon \cMDV\dra \cM$$
 are the hypersurfaces in $\cM$  whose general points are pairs $(S^{[2]},  2\L2- \delta)$, where $(S,L)$ is a general  polarized K3 surface of degree 6 (resp.~pairs $(S^{[2]},  2\L2- 5\delta)$, where $(S,L)$ is a general polarized K3 surface of degree~$18$) (see Table~\ref{enumero1}).\ As explained in Section~\ref{secrappelonpic}, their total images
by the composition
$$\gp\circ \gm\colon \cMDV\dra \cF$$ 
are therefore the Heegner divisors $\cD_6$ (resp.~$\cD_{18}$).\
A common feature of these two specific trivectors $\sigma_0$, which makes the specialization analysis
quite easy, is that  the corresponding  Debarre--Voisin varieties $K_{\sigma_0}$ are smooth but  of larger-than-expected dimension.\ The limit
of the Debarre--Voisin varieties along a $1$-parameter degeneration to  ${\sigma_0}$ is then
a smooth fourfold obtained as the zero-locus of a general  section of the excess bundle on~$K_{\sigma_0}$ associated with this  situation (see Section~\ref{secexcess}).

\subsection{The HLS divisor   $\cD_{6}$}\label{sec51}
We construct a   trivector $\sigma_0 $ whose 
Debarre--Voisin variety $K_{\sigma_0}$ is smooth but  has excessive dimension~$6$.\
The neutral component of the stabilizer of $\sigma_0 $  is 
$  \Sp(4)$  and   the point $[\sigma_0]$ of $\P(\bw3V_{10}^\vee)$ is polystable for the $\SL(V_{10})$-action (Proposition~\ref{coronouveau}).\ The total image  in  $\cF$ of the point $[\sigma_0]$  is the Heegner divisor $\cD_6$.\   The main result of this section is Theorem~\ref{theoprecis}.\

\subsubsection{The $\Sp(4)$-invariant trivector}\label{sechivert}

Let $V_4$ be a $4$-dimensional vector space equipped with a symplectic form $\omega$ and let
$V_5\subset \bw2V_4 $ be the hyperplane  defined by $\omega$, endowed with the nondegenerate quadratic form  $q$ defined by $q(x,y)=(\omega \wedge\omega)(x\wedge y)$.\ The form $q$ defines a smooth quadric $Q_3\subset \P(V_5)$.

The $10$-dimensional vector space 
$V_{10}:=\bw2V_5\cong \Sym^2\!V_4$
 can be identified with the space of   endomorphisms of $V_5$ which are  skew-symmetric  with respect to $q$ and we define
 a trivector $\sigma_0$ on~$ V_{10}$ as in \eqref{si0} by
$\sigma_0(a,b,c)=\Tr(a\circ b\circ c)
$.\ It is  invariant for the canonical action of the group $\Sp(V_4,\omega) =\SO(V_5,q)$ on $\bw3V_{10}^\dual $.\

This is a particular case of a general situation studied by Hivert, who proved in particular that the Debarre--Voisin variety
 $K_{\sigma_0}$
   is  smooth  of  dimension 6 (\cite[Definition~1.2 and Theorem~4.1]{hivert}).\ He moreover gave a very concrete description of this variety.\ We will use the   hypersurface $X_{\sigma_0}\subset \Gr(3,V_{10})$ defined in \eqref{eqxs}.


\begin{prop} \label{le11}
{\rm{(a)}} The image of the morphism
 \begin{eqnarray*}
j\colon \Qtrois&\lra& \Gr(3,V_{10})\\
x&\longmapsto&[ x\wedge x^{\perp_q}]
\end{eqnarray*}
 is contained in the singular locus of the hypersurface $X_{\sigma_0}\subset \Gr(3,V_{10})$.
 
\noindent{\rm{(b)}} The morphism $j$ is an embedding and $j^*\cO_{\Gr(3,V_{10})}(1)\isom \cO_{\Qtrois}(3)$.
 \end{prop}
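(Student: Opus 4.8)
The plan for (a) is to use the criterion, established in the proof of Proposition~\ref{remaxsing} (following \cite[Proposition~3.1]{devo}), that a point $[U_3]\in\Gr(3,V_{10})$ lies in $\Sing(X_{\sigma_0})$ exactly when $\sigma_0$ vanishes on $\bw2U_3\wedge V_{10}$; this single condition forces both $[U_3]\in X_{\sigma_0}$ and the vanishing of the differential. For $U_3=x\wedge x^{\perp_q}$ with $x\in\Qtrois$, every element of $U_3$ is of the form $x\wedge w$ with $w\in x^{\perp_q}$, so it suffices to show that $\Tr\bigl(\phi(x\wedge w_1)\circ\phi(x\wedge w_2)\circ\phi(c)\bigr)=0$ for all $w_1,w_2\in x^{\perp_q}$ and all $c\in V_{10}$, where $\phi\colon\bw2V_5\isomto\gso(V_5,q)$ is the standard isomorphism $\phi(u\wedge v)\colon z\mapsto q(v,z)u-q(u,z)v$. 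Writing $a=x\wedge w_1$ and $b=x\wedge w_2$, the hypotheses $q(x,x)=0$ and $w_1,w_2\in x^{\perp_q}$ give $\phi(a)(x)=0$ and $\phi(a)(w_2)=q(w_1,w_2)x$, so that $\phi(a)\circ\phi(b)$ is the rank-$\le1$ operator $z\mapsto -q(w_1,w_2)\,q(x,z)\,x$ with image in $\langle x\rangle$. Its composition with $\phi(c)$ then has trace $-q(w_1,w_2)\,q(x,\phi(c)(x))$, which vanishes because $\phi(c)$ is $q$-skew-symmetric and hence $q(x,\phi(c)(x))=0$. This is the whole of~(a).

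For the line-bundle statement in (b), the plan is to compute $\det(j^*\cS)$, where $\cS$ is the tautological rank-$3$ subbundle on $\Gr(3,V_{10})$, using $j^*\cO_{\Gr(3,V_{10})}(1)=\det(j^*\cS)^\vee$. On $\Qtrois$ the forms $z\mapsto q(x,z)$ assemble into a surjection $V_5\otimes\cO_{\Qtrois}\thra\cO_{\Qtrois}(1)$ whose kernel $\cW$ has fibre $x^{\perp_q}$ and determinant $\cO_{\Qtrois}(-1)$; since $q(x,x)=0$ there is a tautological inclusion $\cO_{\Qtrois}(-1)\hookrightarrow\cW$ with rank-$3$ quotient $\cQ$ of determinant $\cO_{\Qtrois}$. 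Wedging with the tautological section of $\cO_{\Qtrois}(-1)$ identifies $j^*\cS$ with $\cO_{\Qtrois}(-1)\otimes\cQ$ (the kernel of $\cO_{\Qtrois}(-1)\otimes\cW\to\bw2V_5\otimes\cO_{\Qtrois}$ being $\cO_{\Qtrois}(-1)\otimes\cO_{\Qtrois}(-1)$), whence $\det(j^*\cS)=\cO_{\Qtrois}(-3)\otimes\det\cQ=\cO_{\Qtrois}(-3)$ and $j^*\cO_{\Gr(3,V_{10})}(1)=\cO_{\Qtrois}(3)$.

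To prove that $j$ is a closed embedding I would verify injectivity and immersiveness separately and then invoke properness of $\Qtrois$. For injectivity, each nonzero element of $U_3=x\wedge x^{\perp_q}$ is decomposable, of the form $x\wedge w$, with associated $2$-plane $\langle x,w\rangle$; intersecting these planes over all elements of $U_3$ recovers $\langle x\rangle$ intrinsically, so $j(x)=j(x')$ forces $[x]=[x']$. For the differential, I would compute the first-order variation of $U_3$ along a tangent direction: in a hyperbolic-type basis with $x=e_0$, $x^{\perp_q}=\langle e_0,e_1,e_2,e_3\rangle$ and $T_{\Qtrois,x}\cong\langle e_1,e_2,e_3\rangle$, one finds that $dj(\partial_{t_k})$ sends $e_0\wedge e_j$ to the class of $e_k\wedge e_j-q(e_k,e_j)\,e_0\wedge e_4$ modulo $U_3$, and a nonzero combination $\lambda=\sum_k\lambda_ke_k$ in its kernel would force $\lambda\wedge e_j=0$ for $j=1,2,3$, hence $\lambda=0$. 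Thus $dj$ is everywhere injective, and since $j$ is an injective immersion from the smooth projective threefold $\Qtrois$ into the smooth variety $\Gr(3,V_{10})$, it is a closed embedding.

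I expect the differential computation for immersiveness to be the \emph{main obstacle}: unlike the clean trace identity in (a) and the determinant bookkeeping in (b), it requires keeping careful track of how $x^{\perp_q}$ (and not merely $x$) varies to first order, and it is precisely the correction term along $e_4$ that makes the image meet the Grassmannian transversally. One could instead try to exhibit a regular retraction $j(\Qtrois)\to\Qtrois$ implementing the pointwise recovery of $\langle x\rangle$ above, but verifying its regularity along $j(\Qtrois)$ appears no easier than the direct first-order calculation.
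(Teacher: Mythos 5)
Your proof is correct, and much of it coincides with the paper's. For (a), your computation that $\phi(x\wedge w_1)\circ\phi(x\wedge w_2)$ is the rank-$\le 1$ operator $z\mapsto -q(w_1,w_2)\,q(x,z)\,x$ is exactly the paper's key identity; the paper concludes by observing that this product of two commuting $q$-skew endomorphisms is $q$-symmetric, hence has vanishing trace against any skew $\phi(c)$, while you evaluate the rank-one trace directly as $-q(w_1,w_2)\,q(x,\phi(c)(x))=0$ --- same substance. Your determinant bookkeeping for $j^*\cO_{\Gr(3,V_{10})}(1)\isom\cO_{\Qtrois}(3)$ also uses the same two exact sequences as the paper (your $\cW$ is the paper's $\cK\isom\Omega_{\P(V_5)}(1)\vert_{\Qtrois}$, and your identification of $j^*\cS$ with $\cO_{\Qtrois}(-1)\otimes\cQ$ is the paper's second sequence, which the paper pushes slightly further to recognize $j^*\cS_3\isom\Omega_{\Qtrois}$). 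Where you genuinely diverge is the embedding statement: you prove immersiveness by an explicit first-order computation of $dj$ in a hyperbolic basis --- correctly, and this is indeed the most laborious step of your writeup --- whereas the paper avoids this computation entirely by homogeneity. It proves injectivity (one recovers $[x]$ from $j(x)$ because $x^{\perp_q}$ is the tangent hyperplane to $\Qtrois$ at $[x]$ and is tangent at no other point) and then notes that $j$ is $O(V_5,q)$-equivariant; since $\Qtrois$ is homogeneous under $O(V_5,q)$, the differential of $j$ has constant rank, and an injective morphism of constant rank is an immersion, hence (by properness) an embedding. Your injectivity argument --- intersecting the $2$-planes $\langle x,w\rangle$ attached to the decomposable elements of $U_3$ to recover $\langle x\rangle$ --- is an equally valid alternative to the tangent-hyperplane argument. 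The trade-off is clear: your route is self-contained linear algebra but requires precisely the differential computation you flagged as the main obstacle, while the paper's equivariance observation makes that computation unnecessary.
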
 

\begin{proof} Let $x\in \Qtrois$.\  If $z\in x^{\perp_q}$, the  skew-symmetric endomorphism $a_z $ of $V_5$ associated with $x\wedge z$  is
$$\forall u\in V_5\qquad  a_z(u)=q(x,u)z-q(z,u)x,$$
and thus, if $z,z'\in x^{\perp_q}$, we have
 \begin{eqnarray*}
 a_{z'}\circ a_z (u)&=&q(x,u)q(x,z)z'-q(z,u)q(x,x)z'-q(x,u)q(z',z )x+q(z,u)q(z',x)x\\
&=&-q(x,u)q(z',z )x,
 \end{eqnarray*}
 which is symmetric in $z$ and $z'$,
proving that $ a_z$ and $ a_{z'}$ commute.\ The endomorphism
$ a_{z'}\circ a_z$ is then symmetric, hence $\Tr( a_{z'}\circ a_z\circ c)=0$ for any skew-symmetric endomorphism $c\in V_{10}$.\ 
 By \cite[Proposition~3.1]{devo}, this implies item (a). 

We now prove (b).\ The morphism $j$ is injective because $x^{\perp_q}$ is the tangent space to $Q_3$ at~$[x]$ and this hyperplane is tangent only at $[x]$.\ Since $j$ is $O(V_5,q)$-equivariant, it is an embedding.\ Consider now the exact sequence 
 \begin{equation*}\label{eqprem2611} 
 0 \to   \cK \to   V_5\otimes \cO_{\Qtrois}\xrightarrow{\ q\ }  \cO_{\Qtrois}(1)\to 0
 \end{equation*}
 defining the rank-$4$ vector bundle $\cK \isom\Omega_{\P(V_5)}(1)\vert_\Qtrois$ with fiber $x^{\perp_q}$ at $[x]$ and the exact sequence 
 \begin{equation*}\label{eqdeux2611} 
 0\to  \cO_{\Qtrois}(-2) \to \cK\otimes  \cO_{\Qtrois}(-1)\xrightarrow{\ \wedge\ }  j^*\cS_3\to 0
,\end{equation*}
which implies $j^*\cS_3\isom \Omega_\Qtrois$.\ We obtain the desired isomorphism $j^*\cO_{\Gr(3,V_{10})}(1)\isom \cO_{\Qtrois}(3)$ by taking determinants.
\end{proof}

By Propositions \ref{remaxsing} and \ref{le11}, we have
a
 rational map $f\colon \hilb\dra K_{\sigma_0}$ which is    $\Sp(4)$-equivariant.\ The following result is \cite[Theorem~6.3]{hivert}.

\begin{theo}[Hivert]  \label{thoehivertDVtzero}
The  map $f\colon \hilb\to K_{\sigma_0}$ is  an isomorphism.
\end{theo}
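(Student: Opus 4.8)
The plan is to show that the $\Sp(4)$-equivariant rational map $f\colon Q_3^{[2]}\dra K_{\sigma_0}$ constructed above is in fact a morphism and is an isomorphism. Both source and target are smooth irreducible projective sixfolds (the source is the Hilbert square of the quadric threefold $Q_3$, and the target is smooth of dimension $6$ by Hivert's cited result), so it suffices to produce a morphism that is bijective on points, or better, to exhibit an explicit inverse. First I would make the map $f$ explicit: a general length-$2$ subscheme of $Q_3$ is a pair $\{[x],[x']\}$ of distinct points, and by Proposition~\ref{remaxsing}(b) together with Proposition~\ref{le11}(a), both $j([x])=[x\wedge x^{\perp_q}]$ and $j([x'])$ are singular points of $X_{\sigma_0}$, so $f(\{[x],[x']\})=[U_3+U'_3]$ lands in $K_{\sigma_0}$ whenever $\dim(U_3+U'_3)=6$, where $U_3=x\wedge x^{\perp_q}$. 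The subtle point is that $U_3\cap U'_3$ may be nonzero, so I would first verify that for $[x]\neq[x']$ the subspace $x\wedge x^{\perp_q}$ and $x'\wedge x'^{\perp_q}$ are in direct sum (this is a linear-algebra computation using nondegeneracy of $q$), and separately analyze the diagonal locus where $[x]=[x']$ to see that $f$ extends across the exceptional divisor of $Q_3^{[2]}\to Q_3^{(2)}$.

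Next I would construct the inverse map $K_{\sigma_0}\to Q_3^{[2]}$. The natural idea is that a point $[W_6]\in K_{\sigma_0}$ should determine its two "sources" in $Q_3$: given $W_6$, one looks for the singular points $[U_3]$ of $X_{\sigma_0}$ lying in $j(Q_3)$ with $U_3\subset W_6$. Concretely, viewing $V_{10}=\bw2 V_5\cong \Sym^2 V_4$, I expect each $W_6$ in $K_{\sigma_0}$ to canonically recover an unordered pair of points of $Q_3$, either through the incidence $\Sigma_{U_3}$ of Proposition~\ref{remaxsing}(a) or through the geometry of rank loci of the associated endomorphisms. Since $f$ is $\Sp(4)$-equivariant and $\Sp(4)$ acts on $Q_3$, it is efficient to reduce to checking bijectivity on a single $\Sp(4)$-orbit representative and then propagate by equivariance, using that the image of $f$ is $\Sp(4)$-invariant, closed (as $f$ is proper once shown to be a morphism on the projective $Q_3^{[2]}$), and dominant (hence surjective by irreducibility and equidimensionality).

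The main technical obstacle I anticipate is proving that $f$ is a morphism everywhere (not merely a rational map) and that it is injective with everywhere-injective differential, i.e. an immersion; combined with smoothness and properness this gives a closed immersion, and surjectivity then yields an isomorphism. The injectivity of $df$ is where the excess-bundle bookkeeping of Remark~\ref{re45} and Lemma~\ref{lenewpourexces} should be leveraged: the pullback $f^*(\text{excess bundle})$ is identified with the tautological bundle $\cT_{\cO_{Q_3}(1)}$ there, which controls the relevant tangent-space comparison. Alternatively, since the statement is attributed to Hivert \cite[Theorem~6.3]{hivert}, the cleanest route is to invoke that $K_{\sigma_0}$ is smooth irreducible of dimension $6$, show $f$ is a dominant morphism of smooth projective sixfolds that is generically injective (hence birational), and then argue it has no positive-dimensional fibers—using $\Sp(4)$-equivariance to rule out contracted orbits—so that by Zariski's Main Theorem the finite birational morphism onto a smooth, hence normal, target is an isomorphism. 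I would organize the proof along this last line, with the bulk of the work being the generic injectivity of $f$ and the finiteness of all its fibers.
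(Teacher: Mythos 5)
There is a genuine gap, and it sits exactly where the real content of the theorem lies.\ Your plan rests on the claim that for distinct $[x]\neq [x']$ in $\Qtrois$ the spaces $U_3=x\wedge x^{\perp_q}$ and $U'_3=x'\wedge x'^{\perp_q}$ are automatically in direct sum, so that $f$ is a morphism away from the diagonal.\ This is false: a quadric threefold is covered by lines, and if $[x],[x']$ lie on a line contained in $\Qtrois$, that is, if $q(x,x')=0$ in addition to $q(x,x)=q(x',x')=0$, then $x\wedge x'$ is a nonzero element of $U_3\cap U'_3$, so $\dim(U_3+U'_3)=5$.\ The indeterminacy locus of the naive formula is therefore not (a subset of) the diagonal but a \emph{divisor} in $\hilb$: the union of the surfaces $\ell^{[2]}\cong\P^2$ over the $\P^3$ of lines $\ell\subset \Qtrois$, which is $5$-dimensional in the sixfold $\hilb$ (the pair formula also degenerates on the Hilbert--Chow exceptional divisor).\ A rational map from a smooth variety to a projective variety extends automatically only across loci of codimension $\geq 2$; extending across this divisor is precisely the hard part, and all your subsequent steps (generic injectivity, finiteness of fibers, Zariski's Main Theorem) presuppose that this extension has already been carried out, so the argument never gets off the ground.

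The paper's proof (a recollection of Hivert's) is organized around exactly this point.\ On the locus where $f$ is defined one has a factorization through the span: if $P=\langle x,x'\rangle$ and $q\vert_P$ is nondegenerate, then $x^{\perp_q}=P^{\perp_q}\oplus\langle x\rangle$, hence $U_3+U'_3=P\wedge P^{\perp_q}$ depends only on $P$.\ The span morphism $\eps\colon \hilb\to\Gr(2,V_5)$ is the blow up of the Veronese $v_2(\P(V_4))\subset\Gr(2,V_5)$ parametrizing the lines contained in $\Qtrois$ --- that is, the blow up of exactly the image of your bad locus --- and Hivert's theorem states that the linear system $|\cI_{v_2(\P(V_4))}(3)|$ of Pl\"ucker cubics through the Veronese embeds this blow up into the linear span of $K_{\sigma_0}$ with image precisely $K_{\sigma_0}$.\ This blow-up-plus-linear-system description is the missing ingredient: it simultaneously produces the morphism on all of $\hilb$, identifies its behaviour along the divisor you cannot treat, and gives injectivity; without it (or some substitute analysis of the limits of $U_3+U'_3$ along that divisor) there is no morphism to which ZMT could be applied.
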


\begin{proof}
Any point in $\hilb$ spans  a line in $\P(V_5)$, hence defines an element of $\Gr(2,V_5)$.\ The corresponding morphism $\eps\colon\hilb\to \Gr(2,V_5)$  has a rational inverse:   the intersection of a line in~$\P(V_5)$ with $\Qtrois$ is  a subscheme of length $2$ of $\Qtrois$, except when
  the line is
   contained in~$\Qtrois$.\ The morphism~$\eps$ is therefore the blow up of the scheme of lines contained in~$\Qtrois$ (which is the
  image of the Veronese embedding
 $v_2\colon \P(V_4)\hra   \P(\Sym^2\!V_4)=\P(\bw2V_5)
$; see  \cite[Section~6.2]{hivert}).

%
Hivert moreover proved that the linear system $|\cI_{v_2(\P(V_4) )}(3)|$  embeds $\hilb$ into
the  linear
  span of $K_{\sigma_0}$ in the Pl\"ucker embedding of $\Gr(6,V_{10} )$ and that
its image coincides with~$K_{\sigma_0}$.
\end{proof}

\subsubsection{Orbit and stabilizer}\label{orbetstab}

The decomposition of $\bw3V_{10}^\vee$ into irreducible $\Sp(4)$-representations
is  
\begin{equation} \label{eqhivert}
\bw3V_{10}^\vee=V_{4\omega_1}\oplus V_{3\omega_2}\oplus V_{2\omega_1+\omega_2}\oplus V_{2\omega_2}\oplus V_{\omega_2}\oplus \C,
\end{equation}
\rem{where $V_{a_1\omega_1+a_2\omega_2}$ denotes the irreducible representation of $\Sp(4)$ with highest weight $a_1\omega_1+a_2\omega_2$, where  $\omega_1$ and $\omega_2$ are the fundamental weights (\cite[Section 6.2]{hivert}, \cite{bbki})}.\
 The last term is the space of $\Sp(4)$-invariants; it is generated by our trivector
 $\sigma_0$ defined in \eqref{si0}.\
The first  term   is  $ \Sym^4\!V_4$ and the second term is  $ H^0(\Qtrois,\cO_{\Qtrois}(3))$.\ Since $\gsp (4)=\Sym^2\!V_4=V_{2\omega_1}$ and
$$
\End(V_{10})=V_{4\omega_1}\oplus V_{2\omega_1}\oplus V_{2\omega_1+\omega_2}\oplus V_{2\omega_2}\oplus V_{\omega_2}\oplus \C
,$$  there is an exact sequence
\begin{equation*}
0\to \gsp (4)\to \End(V_{10})\to \bw3V_{10}^\vee \to H^0(\Qtrois,\cO_{\Qtrois}(3))\to 0.
\end{equation*}
We prove that the tangent space to the stabilizer   of $\sigma_0$ is  $\gsp (4)$, hence  the normal
space to the $\GL(V_{10})$-orbit of $\sigma_0$
is
$   H^0(\Qtrois,\cO_{\Qtrois}(3))$.

\begin{prop}\label{coronouveau}
The neutral component of the stabilizer of $\sigma_0$ for the $\SL(V_{10})$-action is $\Sp(V_4)=\SO(V_5)$ and the point $[\sigma_0]$ of $\P(\bw3V_{10}^\vee)$ is polystable for the $\SL(V_{10})$-action.
\end{prop}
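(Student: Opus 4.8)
The plan is to prove the statement in two parts: first the identification of the neutral component of the stabilizer, then polystability.

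For the stabilizer, I would begin from the exact sequence
\begin{equation*}
0\to \gsp (4)\to \End(V_{10})\to \bw3V_{10}^\vee \to H^0(\Qtrois,\cO_{\Qtrois}(3))\to 0
\end{equation*}
established just before the statement. The tangent space to the stabilizer of $\sigma_0$ inside $\ggl(V_{10})$ is the kernel of the infinitesimal action map $\End(V_{10})\to \bw3V_{10}^\vee$, $u\mapsto u\cdot\sigma_0$. So the first step is to show this kernel is exactly $\gsp(4)$. Since $\sigma_0$ is $\Sp(4)$-invariant by construction, $\gsp(4)$ clearly lies in the kernel, giving one inclusion. For the reverse inclusion I would use the $\Sp(4)$-equivariant decomposition of $\End(V_{10})$ displayed in \eqref{eqhivert} and the preceding paragraph: the map $\End(V_{10})\to \bw3V_{10}^\vee$ is $\Sp(4)$-equivariant, and comparing the isotypic components on both sides (both lists share $V_{4\omega_1}$, $V_{2\omega_1+\omega_2}$, $V_{2\omega_2}$, $V_{\omega_2}$, and $\C$, while $\End(V_{10})$ has an extra $V_{2\omega_1}=\gsp(4)$ and $\bw3V_{10}^\vee$ an extra $V_{3\omega_2}=H^0(\Qtrois,\cO_{\Qtrois}(3))$), I would argue by Schur's lemma that the map is an isomorphism on each of the five common summands and zero on the remaining ones, so the kernel is precisely the copy of $\gsp(4)$. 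This shows the Lie algebra of the stabilizer equals $\gsp(4)$, hence the neutral component of the stabilizer in $\SL(V_{10})$ is the connected group $\Sp(V_4)=\SO(V_5)$ (the two realizations agreeing via the exceptional isomorphism noted at the start of Section~\ref{sec51}).

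For polystability, the cleanest route is the Kempf--Ness / Hilbert--Mumford criterion in its geometric form: a semistable point is polystable precisely when its $\SL(V_{10})$-orbit is closed in the semistable locus. I would first note that $[\sigma_0]$ is semistable — this follows from the proposition proved just above in the excerpt, since $K_{\sigma_0}$, though of dimension $6$, is smooth, and more directly because a general small deformation $\sigma_t$ has $K_{\sigma_t}$ smooth of dimension $4$, making nearby points semistable and hence $[\sigma_0]$ semistable as semistability is a closed-from-below condition; alternatively one invokes that the orbit closure argument below produces a closed orbit. The main work is closedness of the orbit. Because the stabilizer $\Sp(4)$ is reductive, the Matsushima criterion is consistent with the orbit being closed, but this is only necessary, not sufficient. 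I would instead verify the Hilbert--Mumford numerical criterion directly: for every one-parameter subgroup $\lambda$ of $\SL(V_{10})$, either $\mu(\sigma_0,\lambda)>0$, or the limit $\lim_{t\to 0}\lambda(t)\cdot\sigma_0$ lies in the same $\SL(V_{10})$-orbit as $\sigma_0$.

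The hard part will be this last verification. Because $\sigma_0$ is $\Sp(4)$-invariant, one reduces to one-parameter subgroups $\lambda$ that are compatible with (commute with, up to conjugacy) the $\Sp(4)$-action, using the fact that it suffices to test $\mu$ on $\lambda$ lying in a maximal torus of the centralizer structure — concretely, one can diagonalize $\lambda$ with respect to the weight decomposition of $V_{10}=\bw2V_5$ under a maximal torus of $\Sp(4)$. I would enumerate the weights of $V_{10}=\Sym^2 V_4$ under the standard torus of $\Sp(4)$, write a general admissible $\lambda$ by its weights $(r_1,\dots,r_{10})$ summing to zero, and compute the minimal weight appearing in the expansion of $\sigma_0$ (a sum of monomials $e_i^\vee\wedge e_j^\vee\wedge e_k^\vee$ dictated by the trace form $\Tr(a\circ b\circ c)$). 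Establishing that this minimal weight is always $\le 0$, with equality forcing the limit to lie in the orbit of $\sigma_0$, is the genuinely computational core. I expect the symmetry of the trace form under the symplectic structure to make the worst-case $\lambda$ highly symmetric, so that the numerical check reduces to a short finite case analysis rather than an unbounded optimization; identifying and handling that worst case is where the real care is needed.
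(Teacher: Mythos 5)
Your proposal takes a different route from the paper in both halves, and each half has a genuine gap. For the stabilizer: Schur's lemma is misapplied. Since every irreducible summand occurs with multiplicity one on both sides, Schur's lemma does show that the equivariant map $\End(V_{10})\to\bw3V_{10}^\vee$, $u\mapsto u\cdot\sigma_0$, respects the two decompositions and that on each of the five common summands it is either zero or an isomorphism — but it cannot decide which. Asserting that it is an isomorphism on all five common summands is exactly equivalent to the statement you are trying to prove (kernel $=\gsp(4)$), and it requires a separate non-vanishing verification on each of $V_{4\omega_1}$, $V_{2\omega_1+\omega_2}$, $V_{2\omega_2}$, $V_{\omega_2}$ (only the trivial summand is immediate, since $\Id\mapsto-3\sigma_0\neq0$). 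Nor can you shortcut this by treating the displayed exact sequence as "established just before the statement": in the paper, exactness at $\End(V_{10})$ (i.e., that the kernel is no bigger than $\gsp(4)$) is precisely what the proposition supplies, so quoting it would be circular. The paper instead argues geometrically: by Theorem~\ref{thoehivertDVtzero}, $K_{\sigma_0}\cong Q_3^{[2]}$; the neutral component of the stabilizer acts on it, acts trivially on the N\'eron--Severi group because it is connected, hence preserves the Hilbert--Chow exceptional divisor, descends to $Q_3^{(2)}$, and therefore acts on $Q_3$, which forces it into $\SO(V_5)$.

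For polystability there are two problems. First, your claim that "semistability is a closed-from-below condition" is false: the semistable locus is open (it is the unstable locus that is closed), so semistability of the nearby $\sigma_t$ gives nothing for $\sigma_0$; semistability must come from the closed-orbit statement itself, as you note only in passing. Second, and more seriously, your reduction of the Kempf closed-orbit criterion is both unjustified and misdirected. The reduction the theory actually provides (Kempf's instability theory, or Luna's results, which are what the paper cites) is to one-parameter subgroups lying in the centralizer of the full reductive stabilizer $\Sp(4)$ — not, as you propose, to those merely diagonalized with respect to a maximal torus of $\Sp(4)$: that family is, up to conjugacy, a whole maximal torus of $\SL(V_{10})$, no reduction theorem applies to it, and the weight-minimization plus "limit lies in the orbit" verification over it is nothing like a short finite case analysis. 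Done with the correct reduction, the computation you anticipate evaporates: $V_{10}=\bw2V_5$ is an irreducible $\Sp(4)$-module, so by Schur's lemma the centralizer of $\Sp(4)$ in $\GL(V_{10})$ consists of scalars, its intersection with $\SL(V_{10})$ is finite, and there are no nontrivial one-parameter subgroups to test, so the orbit is closed. This is exactly the paper's argument: the stabilizer $\SO(V_5)$ has finite index in its normalizer in $\SL(V_{10})$ (Proposition~\ref{normal}, proved via Schur's lemma and finiteness of the outer automorphism group), hence by \cite[Corollaire~3]{luna} the orbit of $\sigma_0$ is closed in $\bw3V_{10}^\vee$ and $[\sigma_0]$ is polystable.
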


\begin{proof}
The neutral component of the stabilizer acts on the Debarre--Voisin variety $K_{\sigma_0}$, which is isomorphic to $\hilb$.\ Since it is connected, it acts trivially on the N\'eron--Severi group, hence preserves the exceptional divisor of the Hilbert--Chow morphism $\hilb\to \chow$.\ It therefore acts on $\chow$, hence on $\Qtrois$.\ It is therefore in $\SO(V_5)$.

To show that $[\sigma_0]$  is polystable, we will use a result of Luna.\ By Proposition~\ref{normal} below, the stabilizer $\SO(V_5)$ has finite index in its normalizer in 
 $\SL(V_{10})$.\ By
 \cite[Corollaire~3]{luna} (applied to  the group $ \SL(V_{10})$ acting on $\bw3V_{10}^\vee $), the orbit of $\sigma_0$ is closed in $\bw3V_{10}^\vee $, hence $[\sigma_0]$  is polystable.
\end{proof}

We   prove the    classical result used in the proof above.

\begin{prop}\label{normal}
Let $G$ be a semisimple algebraic group with a faithful irreducible representation $G\hra \SL(V)$.\ The group $G$ has finite index in its normalizer  in $\SL(V)$.
\end{prop}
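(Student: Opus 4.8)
The plan is to show that the normalizer $N$ of $G$ in $\SL(V)$ has the same neutral component as $G$, which since $N/N^\circ$ is finite will give the result. The key point is that $N^\circ$ normalizes the neutral component $G = G^\circ$ (as $G$ is semisimple, hence connected, being given as an irreducible subgroup that we may assume connected), so $N^\circ$ acts on the Lie algebra $\gn := \operatorname{Lie}(N)$ preserving the ideal $\mathfrak{g} := \operatorname{Lie}(G)$. The first step is thus to pass to Lie algebras and reduce the statement to the infinitesimal claim that $\gn = \mathfrak{g}$, i.e.\ that $G$ and its normalizer have the same dimension.

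The main mechanism I would use is Schur's lemma together with the semisimplicity of $\mathfrak{g}$. Since $N^\circ$ normalizes $G$, conjugation gives a representation of $N^\circ$ on $\mathfrak{g}$ by Lie-algebra automorphisms, so $\mathfrak{g}$ is an ideal of $\gn$ and the adjoint action of $\gn$ on itself preserves $\mathfrak{g}$. Because $\mathfrak{g}$ is semisimple, it has no nonzero abelian ideals and, more importantly, one can split $\gn = \mathfrak{g} \oplus \gn'$ as $\mathfrak{g}$-modules, where $\gn'$ is the centralizer of $\mathfrak{g}$ in $\gn$ (using that $\mathfrak{g}$ acts on $\gn$ by the adjoint action and that, by Weyl's theorem on complete reducibility, the adjoint $\mathfrak{g}$-submodule $\mathfrak{g}\subset\gn$ has a complement consisting of $\mathfrak{g}$-invariants). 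So I would next identify $\gn'$ with the centralizer $\mathfrak{z}$ of $\mathfrak{g}$ inside $\gsl(V) = \operatorname{Lie}(\SL(V))$.

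The heart of the argument is then to compute this centralizer. Because $V$ is an \emph{irreducible} representation of $G$ (equivalently of $\mathfrak{g}$), Schur's lemma tells us that any endomorphism of $V$ commuting with all of $\mathfrak{g}$ is a scalar. Hence the centralizer of $\mathfrak{g}$ in $\ggl(V)$ is $\C\cdot\Id$, and intersecting with the traceless endomorphisms $\gsl(V)$ forces $\mathfrak{z} = 0$. Therefore $\gn' = 0$ and $\gn = \mathfrak{g}$, so $N^\circ$ and $G$ have the same Lie algebra; since both are connected algebraic groups, $N^\circ = G$. Finally $[N : G] = [N : N^\circ] < \infty$ because the component group of an algebraic group is finite, which is exactly the assertion.

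The only genuine subtlety — the step I expect to require the most care — is the passage from the Lie-algebra equality $\gn = \mathfrak{g}$ back to the group-level statement, and in particular justifying that $N^\circ$ really normalizes $G$ so that the conjugation representation on $\mathfrak{g}$ is defined; here one uses that $G$, being semisimple, is the unique connected subgroup with Lie algebra $\mathfrak{g}$, so that any element normalizing $\mathfrak{g}$ infinitesimally normalizes $G$. One must also be slightly careful that ``irreducible representation'' at the group level yields irreducibility of the $\mathfrak{g}$-module (true since $G$ is connected), which is what licenses the application of Schur's lemma to $\mathfrak{g}$ rather than merely to $G$.
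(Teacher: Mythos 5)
Your proof is correct, but it takes a genuinely different route from the paper's. The paper argues entirely at the group level: it cites the finiteness of $\Out(G)$ for a semisimple group (Springer), observes that the kernel of the induced map $N/G\to\Out(G)$ is contained in the image of the centralizer $C=C_{\SL(V)}(G)$, and then applies Schur's lemma to elements of $C$ to see that $C$ consists of homotheties, hence is finite inside $\SL(V)$; finiteness of $N/G$ follows from these two finitenesses. You instead argue infinitesimally: with $\gn=\operatorname{Lie}(N)$ and $\mathfrak{g}=\operatorname{Lie}(G)$, you use that $\mathfrak{g}$ is an ideal of $\gn$, split $\gn=\mathfrak{g}\oplus\gn'$ by Weyl's complete reducibility (any $\mathfrak{g}$-stable complement automatically centralizes $\mathfrak{g}$, since its bracket with $\mathfrak{g}$ lands in $\mathfrak{g}\cap\gn'=0$), identify $\gn'$ with the centralizer of $\mathfrak{g}$ in $\gsl(V)$, and kill it by Lie-algebra Schur, concluding $N^\circ=G$ and hence $[N:G]=[N:N^\circ]<\infty$. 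Both proofs hinge on Schur's lemma — yours applied to the irreducible $\mathfrak{g}$-module $V$ (legitimate, as you note, because $G$ is connected), the paper's applied to group elements of $C$ — but the mechanism handling the ``outer'' part differs: the paper quotes the nontrivial structural fact that $\Out(G)$ is finite, whereas you bypass it entirely by a dimension count, making your argument more self-contained; the paper's version is shorter given the citation and makes explicit where the finitely many extra components can come from (outer automorphisms and scalars). Two minor remarks on your write-up: the worry about $N^\circ$ normalizing $G$ is vacuous, since $N^\circ\subset N$ and $N$ normalizes $G$ by definition; and the step from $\operatorname{Lie}(N^\circ)=\mathfrak{g}$ to $N^\circ=G$ is the standard characteristic-zero fact that a closed connected subgroup of a connected algebraic group having the same Lie algebra (equivalently, the same dimension) must be the whole group.
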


\begin{proof}
According to the discussion after \cite[Lemma~16.3.8]{spring}, the group of outer automorphisms of~$G$ is finite.\ The kernel of the   \rem{action} $N:=N_{\SL(V)}(G)\to \Aut(G)$ of the normalizer by conjugation is contained in the centralizer $C:=C_{\SL(V)}(G)$ and the kernel of the induced morphism \mbox{$N/G\to \Out(G)$} is contained in the image of $C$ in $N/G$.\ It is therefore sufficient to show that~$C$ is a finite group.\ But this follows from Schur's lemma: any eigenspace of an element of $C$ is stable by $G$, hence equal to~$V$.\ Therefore, $C$ consists of homotheties, hence is finite.
\end{proof}

\subsubsection{Degenerations and excess bundles}\label{seconepartohivert}

Consider a general $1$-parameter deformation $(\sigma_t)_{t\in \Delta}$.\ The
  derivative $ \frac{\partial\sigma_t}{\partial t}\big\vert_{t=0}$   provides, by the discussion in Section~\ref{orbetstab}, a general section of
$\cO_{\Qtrois}(3)$
which defines a \rem{general}
K3 surface $S\subset \Qtrois\subset \P(V_5)$ of degree 6.\

\begin{theo}\label{theoprecis}
Let $(\sigma_t)_{t\in \Delta}$ be a general $1$-parameter deformation.\
Let $\cK\to\Delta$ be the associated family of Debarre--Voisin varieties and let $\cK^0$ be the irreducible component of $\cK$ that dominates~$\Delta$.\
Then $\cK^0\to \Delta$
is smooth and it central fiber   is isomorphic to $S^{[2]}$, embedded in~$\Gr(6,10)$ as $S^{[2]}\subset Q_3^{[2]}\isom K_{\sigma_0}\subset \Gr(6,V_{10})$, where
$S$ is a general    $K3$ surface of degree 6.
\end{theo}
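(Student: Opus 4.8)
The plan is to apply the excess-bundle machinery of Proposition~\ref{proexcess} directly to the degenerating family of Debarre--Voisin sections. Concretely, I would take $M:=K_{\sigma_0}$, which by Theorem~\ref{thoehivertDVtzero} is smooth of dimension $6$ (isomorphic to $\hilb$), and I would take $\cE$ to be the excess bundle on $K_{\sigma_0}$ in the sense of Section~\ref{secexcess}. Since $K_{\sigma_0}$ is the zero-locus of the section $\tilde{\sigma}_0$ of $\bw3\cE_6$ on $\Gr(6,V_{10})$, but has dimension $6$ instead of the expected $4$, the image of $d\tilde{\sigma}_0$ has codimension $2$ along the smooth locus, so the excess bundle $\cF$ has rank $2$ on $K_{\sigma_0}$. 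The first main step is therefore to identify this rank-$2$ excess bundle explicitly. This is exactly the situation described in Remark~\ref{re45}: a general point of $K_{\sigma_0}$ is of the form $[U_3\oplus U'_3]$ with $[U_3],[U'_3]$ lying on the smooth threefold $j(Q_3)\subset\Sing(X_{\sigma_0})$ of Proposition~\ref{le11}, and the rational map $f\colon Q_3^{[2]}\dra K_{\sigma_0}$ of Remark~\ref{re45} is in fact the isomorphism of Theorem~\ref{thoehivertDVtzero}. By Lemma~\ref{lenewpourexces}, the image of $d\tilde{\sigma}_0$ annihilates the two-dimensional space $\bw3U_3^\vee\oplus\bw3 U_3^{\prime\vee}$, which identifies the excess bundle $\cF$ with the tautological bundle $\cT_{\cO_{Q_3}(1)}$ on $Q_3^{[2]}\isom K_{\sigma_0}$, where $\cO_{Q_3}(1)$ is the restriction of the Pl\"ucker line bundle on $Q_3\hra\Gr(3,V_{10})$.

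The second step is to check the hypotheses of Proposition~\ref{proexcess} and apply it. The bundle $\cF\isom\cT_{\cO_{Q_3}(1)}$ is generated by global sections, by Remark~\ref{rem41} (it is pulled back from the tautological subbundle on $\Gr(2,V_5)$ via the map $Q_3^{[2]}\to\Gr(2,V_5)$). By the discussion in Section~\ref{orbetstab}, the normal space to the $\GL(V_{10})$-orbit of $\sigma_0$ is canonically $H^0(Q_3,\cO_{Q_3}(3))$, so for a general $1$-parameter deformation $(\sigma_t)_{t\in\Delta}$ the derivative $\frac{\partial\sigma_t}{\partial t}\big\vert_{t=0}$ induces a general section $\overline{\sigma'}\in H^0(Q_3,\cO_{Q_3}(3))$, i.e.\ a general degree-$6$ K3 surface $S\subset Q_3$. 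Proposition~\ref{proexcess} then asserts that, setting $W$ as in~\eqref{eqW} and letting $\cK^0=W^0$ be the union of components dominating $\Delta$, the map $\cK^0\to\Delta$ is smooth with central fiber equal to the zero-locus of $\overline{\sigma'}$ in $K_{\sigma_0}\isom Q_3^{[2]}$.

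The third step is to identify this zero-locus as $S^{[2]}$. Under the isomorphism $f\colon Q_3^{[2]}\isomto K_{\sigma_0}$, the excess section $\overline{\sigma'}$ corresponds, via the identification $\cF\isom\cT_{\cO_{Q_3}(1)}$ and the isomorphism $H^0(Q_3^{[2]},\cT_{\cO_{Q_3}(1)})\isom H^0(Q_3,\cO_{Q_3}(1))$ refined by the degree-$3$ twist, to the cubic section cutting out $S$. The zero-locus of the tautological section associated with a section $s\in H^0(Q_3,\cO_{Q_3}(3))$ is precisely the set of length-$2$ subschemes $Z\subset Q_3$ on which $s$ vanishes, that is, $\{Z\in Q_3^{[2]}\mid Z\subset \{s=0\}\}=S^{[2]}$. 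Hence the central fiber of $\cK^0\to\Delta$ is $S^{[2]}$, embedded in $\Gr(6,V_{10})$ as $S^{[2]}\subset Q_3^{[2]}\isom K_{\sigma_0}$, with $S$ a general degree-$6$ K3 surface, as claimed.

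I expect the main obstacle to be the precise identification in the first step: verifying that the abstract excess bundle $\cF$ on $K_{\sigma_0}$ genuinely agrees, as a bundle on $Q_3^{[2]}$, with $\cT_{\cO_{Q_3}(1)}$, and that the induced excess section matches the cubic defining $S$ under the correct twist. While Remark~\ref{re45} and Lemma~\ref{lenewpourexces} supply this identification on a Zariski-dense open subset, promoting it to an identification of bundles on all of $K_{\sigma_0}$ (needed so that the zero-locus is globally $S^{[2]}$ rather than merely birational to it) requires care, and one must also confirm that $Q_3^{[2]}$ is the unique component of $K_{\sigma_0}$ so that no extraneous components of $W$ dominate $\Delta$. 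Since $K_{\sigma_0}$ is irreducible and smooth by Theorem~\ref{thoehivertDVtzero}, the latter point is immediate, and the former follows because both bundles have the same restriction to the open set and $K_{\sigma_0}$ is smooth; I would make this global matching the technical heart of the argument.
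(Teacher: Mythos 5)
Your outline reproduces the paper's own proof: apply Proposition~\ref{proexcess} to the section $\tilde\sigma_0$ of $\bw3\cE_6$ on $\Gr(6,V_{10})$ with zero-locus $Z=K_{\sigma_0}\isom\hilb$ (Theorem~\ref{thoehivertDVtzero}), identify the rank-$2$ excess bundle with the tautological bundle attached to the Pl\"ucker line bundle $j^*\cO_{\Gr(3,V_{10})}(1)\isom\cO_{\Qtrois}(3)$, then use the normal-space computation of Section~\ref{orbetstab} to see that $\overline{\sigma'}$ is a general cubic section of $Q_3$, whose associated tautological section has zero locus $S^{[2]}$. (Two slips of setup: in Proposition~\ref{proexcess} one must take $M=\Gr(6,V_{10})$ and $\cE=\bw3\cE_6$, not $M:=K_{\sigma_0}$ and $\cE:=\cF$ as in your opening sentence, otherwise \eqref{eqW} does not parse; and the relevant instance of Remark~\ref{rem41} is for $j(Q_3)\subset\P(\bw3V_{10})$, i.e.\ the Grassmannian $\Gr(2,\bw3V_{10})$, not the map $\hilb\to\Gr(2,V_5)$, which concerns the hyperplane class $\cO_{Q_3}(1)$ rather than the Pl\"ucker class $\cO_{Q_3}(3)$.)

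The genuine gap is your globalization of the bundle identification, which is precisely the content of the paper's Proposition~\ref{Festtauto3}. You assert that the global matching "follows because both bundles have the same restriction to the open set and $K_{\sigma_0}$ is smooth". That principle is false: two vector bundles on a smooth variety that are isomorphic over a dense open subset need not be isomorphic (already $\cO_{\P^1}$ and $\cO_{\P^1}(1)$ agree off a point), and no codimension-$2$ extension argument is available here because the complement of the open set $U$ is a divisor --- it contains, for instance, the exceptional divisor of the Hilbert--Chow morphism, where transversality of the two $3$-spaces certainly fails. The correct argument uses the stronger statement that Lemma~\ref{lenewpourexces} actually provides: over $U$, the evaluation map $\bw3V_{10}^\vee\otimes\cO\to\cT_{\cO_{\Qtrois}(3)}$ factors through $\cF$, so the two bundles agree on $U$ \emph{as quotients of the same trivial bundle}. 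Both are globally generated rank-$2$ quotients of $\bw3V_{10}^\vee\otimes\cO$ (for $\cF$ because it is by construction a quotient of $\bw3\cE_6\vert_{K_{\sigma_0}}$; for the tautological bundle by Remark~\ref{rem41}), hence each defines a morphism $\hilb\to\Gr(2,\bw3V_{10}^\vee)$ pulling back the universal quotient. These two morphisms agree on the dense open set $U$, hence everywhere, since $\hilb$ is integral and the Grassmannian is separated; therefore the bundles coincide as quotients on all of $K_{\sigma_0}$. Note that this quotient-level identification is also exactly what your third step needs: it carries $\overline{\sigma'}$ to the tautological section of $\cT_{\cO_{\Qtrois}(3)}$ determined by the cubic $a\bigl(\frac{\partial\sigma_t}{\partial t}\big\vert_{t=0}\bigr)$, whose zero locus is $S^{[2]}$; a mere abstract isomorphism of bundles away from a divisor would not guarantee that the central fiber is globally $S^{[2]}$ rather than only birationally so.
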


The proof of the theorem will be based on the excess computation presented in Section~\ref{secexcess}:  we want to
  apply  Proposition \ref{proexcess} with $M=\Gr (6,\bw2 V_5)$ and $\cE=\bw3 \cE_6$, where $\cE_6$ is the dual of the tautological rank-6 subbundle on  $\Gr (6,\bw2 V_5)$.\
For this, we   need  to identify the rank-$2$ excess bundle~$\cF$ on $K_{\sigma_0}\isom\hilb$.\   We use the notation of Section~\ref{sectaut}.\

\begin{prop}\label{Festtauto3}
The excess bundle $\cF$ on $\hilb$ is  isomorphic to the tautological
bundle $\cT_{\cO_{\Qtrois}(3)}$.
\end{prop}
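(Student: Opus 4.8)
The plan is to construct a morphism of rank-$2$ vector bundles $\phi\colon \cF\to \cT_{\cO_{\Qtrois}(3)}$ on $\hilb$, to check that it is an isomorphism away from a divisor, and then to promote this to a global isomorphism by a first-Chern-class computation. Throughout I identify $K_{\sigma_0}$ with $\hilb$ through Hivert's isomorphism $f$ (Theorem~\ref{thoehivertDVtzero}), so that a general point of $K_{\sigma_0}$ is some $[W_6]$ with $W_6=U_3+U'_3$ and $[U_3],[U'_3]\in j(\Qtrois)\subset\Sing(X_{\sigma_0})$; recall that $\cF$ is the rank-$2$ excess bundle $\cE\vert_{K_{\sigma_0}}/N_{K_{\sigma_0}/M}=\coker(d\tilde\sigma_0)$ of Section~\ref{secexcess}, for $M=\Gr(6,V_{10})$ and $\cE=\bw3\cE_6$.

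First I would build $\phi$. By Lemma~\ref{lenewpourexces} the fiberwise restriction $\bw3W_6^\vee\to\bw3U_3^\vee\oplus\bw3U_3^{\prime\vee}$ vanishes on $\Im(d\tilde\sigma_0)$, hence factors through $\cF$. By Proposition~\ref{le11}(b) the Pl\"ucker line bundle $\cO_W(1)$ on $W=\Qtrois$ equals $\cO_{\Qtrois}(3)$, so $\bw3U_3^\vee=\cO_{\Qtrois}(3)\vert_{[U_3]}$; the bundle on $\hilb$ whose fiber over $\{[U_3],[U'_3]\}$ is $\bw3U_3^\vee\oplus\bw3U_3^{\prime\vee}$ is therefore exactly the tautological bundle $\cT_{\cO_{\Qtrois}(3)}$ of Section~\ref{sectaut}. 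To make the target an honest bundle on the (unordered) Hilbert square, I would use the adjunction
\[
\Hom(\cF,\cT_{\cO_{\Qtrois}(3)})=\Hom(\cF,p_*q_1^*\cO_{\Qtrois}(3))=\Hom(p^*\cF,q_1^*\cO_{\Qtrois}(3)),
\]
and let $\phi$ correspond to the first component $p^*\cF\to\bw3U_3^\vee=q_1^*\cO_{\Qtrois}(3)$ of the restriction. This is the identification announced in Remark~\ref{re45}: over the open set where $U_3\ne U'_3$ (the complement of the diagonal divisor), the lines $\bw3U_3$ and $\bw3U'_3$ are distinct in $\bw3W_6$, so the composite $\bw3W_6^\vee\to\bw3U_3^\vee\oplus\bw3U_3^{\prime\vee}$ is surjective and $\phi$ is an isomorphism there.

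It then remains to prevent $\phi$ from degenerating along the diagonal, and the first Chern class does this. The two exact sequences defining the excess bundle give, on $K_{\sigma_0}$,
\[
c_1(\cF)=c_1(\bw3\cE_6)-c_1(T_{\Gr(6,V_{10})})+c_1(T_{K_{\sigma_0}}).
\]
On $\Gr(6,V_{10})$ one has $c_1(\bw3\cE_6)=\binom{5}{2}c_1(\cE_6)=10\,c_1(\cO_{\Gr(6,V_{10})}(1))$ and likewise $c_1(T_{\Gr(6,V_{10})})=10\,c_1(\cO_{\Gr(6,V_{10})}(1))$, so the first two terms cancel and $c_1(\cF)=c_1(T_{K_{\sigma_0}})=-K_{\hilb}$. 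A standard computation on the double cover $\widetilde{\Qtrois\times\Qtrois}$ (using $K_{\Qtrois}=\cO_{\Qtrois}(-3)$, the discrepancy $2E$ of the blow up of the diagonal, the ramification formula $p^*K_{\hilb}=K_{\widetilde{\Qtrois\times\Qtrois}}-E$, and $p^*\delta=E$) yields $K_{\hilb}=\cO_{\Qtrois}(-3)+\delta$ in the notation of~\eqref{notpic}, so that $c_1(\cF)=\cO_{\Qtrois}(3)-\delta$. On the other hand \eqref{dete} with $r=1$ gives $\det\cT_{\cO_{\Qtrois}(3)}=\cO_{\Qtrois}(3)-\delta$, whence $\det\cF\cong\det\cT_{\cO_{\Qtrois}(3)}$.

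Consequently $\det\phi$ is a global section of the trivial line bundle $(\det\cF)^\vee\otimes\det\cT_{\cO_{\Qtrois}(3)}\cong\cO_{\hilb}$, that is, a constant; it is nonzero because $\phi$ is an isomorphism on a dense open subset, so it vanishes nowhere and $\phi$ is an isomorphism on all of $\hilb$. I expect the main difficulty to be bookkeeping rather than conceptual: the generic identification is handed to us by Remark~\ref{re45}, and the decisive inputs are the clean coincidence $c_1(\bw3\cE_6)=c_1(T_{\Gr(6,V_{10})})$ on the Grassmannian together with the computation of $K_{\hilb}$, which together force $\det\phi$ to be a nonvanishing constant.
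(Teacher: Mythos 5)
Your proof is correct, and it shares its first half with the paper's: both rest on the generic identification furnished by Lemma~\ref{lenewpourexces}/Remark~\ref{re45}, namely that restriction of $3$-forms to $\bw3U_3^\vee\oplus\bw3U_3^{\prime\vee}$ kills $\Im(d\tilde\sigma_0)$ and hence factors through $\cF$. Where you genuinely diverge is in the globalization. The paper never constructs a bundle map $\cF\to\cT_{\cO_{\Qtrois}(3)}$ at all: it regards both bundles as globally generated rank-$2$ quotients of the single trivial bundle $\bw3V_{10}^\vee\otimes\cO_{\hilb}$, notes that the two classifying morphisms $\hilb\to\Gr(2,\bw3V_{10}^\dual)$ agree on the dense open set $U$ where the two $3$-spaces are transverse, and concludes they agree everywhere since $\hilb$ is reduced and the Grassmannian is separated --- so the two quotients coincide, with no Chern class computation and no need to worry about the diagonal. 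You instead build $\phi$ globally by adjunction along the double cover and then force nondegeneracy along the diagonal from $\det\cF\cong\det\cT_{\cO_{\Qtrois}(3)}\cong\cO_{\Qtrois}(3)-\delta$, which requires the excess-sequence bookkeeping, the (correct) coincidence $c_1(\bw3\cE_6)=c_1(T_{\Gr(6,V_{10})})=10\,c_1(\cO_{\Gr}(1))$, and $K_{\hilb}=\cO_{\Qtrois}(-3)+\delta$ (your discrepancy $2E$ is right, since the diagonal has codimension $3$ here). Your route is longer but buys more: an everywhere-defined comparison map, the identity $\det\cF\cong -K_{\hilb}$, and a sharper generic statement (isomorphism off the diagonal, i.e.\ whenever $U_3\ne U_3'$, versus the paper's transversality locus). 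One point you should make explicit: the existence of $\phi$ across the diagonal (where $W_6\ne U_3+U_3'$) needs the observation that the subbundle inclusion $q_1^*j^*\cS_3\subset (f\circ p)^*\cS_6$ and the vanishing of the composite $p^*N_{K_{\sigma_0}/\Gr}\to q_1^*\cO_{\Qtrois}(3)$, both of which hold on a dense open set, extend to all of $\widetilde{\Qtrois\times\Qtrois}$ because a morphism of vector bundles on an integral variety vanishing on a dense open subset vanishes identically; with that remark inserted, your argument is complete.
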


\begin{proof}
By definition,  $\cF$ is a rank $2$-quotient bundle of $\bw3\cE_6\vert_{ \hilb}$, hence of $\bw3V_{10}^\vee\otimes \cO_{\hilb}$.

Since $j$ is an embedding (Proposition~\ref{le11}), the      rank-$2$ vector bundle $\cT_{\cO_{\Qtrois}(3)}$   is generated by the space
  $
\bw3 V_{10}^\vee$ of   global sections by Remark~\ref{re45}.\ More precisely,   on the dense open set $U\subset \hilb$ of pairs
  $\{x,\,y\}$ such that $(x\wedge  x^{\perp_q} )\cap( y\wedge y^{\perp_q})=\{0\}$, the evaluation map
\begin{equation}\label{eq17}
\bw3V_{10}^\vee\otimes \cO_{Q_3^{[2]}}\lra  \cT_{L^{\otimes 3}} 
\end{equation}
  factors through the   composite  map
\begin{equation}\label{eq18}
\bw3V_{10}^\vee\otimes \cO_{\hilb}\to \bw3\cE_6\vert_{ \hilb}\to \cF.
\end{equation}
 The   bundles $\cF$ and $\cT_{L^{\otimes 3}}$   therefore coincide as  quotients of $\bw3V_{10}^\vee\otimes \cO_{Q_3^{[2]}}$:  the morphisms $\hilb\to \Gr(2,\bw3 V_{10}^\dual)$ that they define coincide on the dense   set $U$, hence they are the same.
 \end{proof}

\begin{proof}[Proof of Theorem \ref{theoprecis}]
We apply Proposition \ref{proexcess}:
by Theorem \ref{thoehivertDVtzero},
the locus $Z=K_{\sigma_0}$ is smooth of codimension $18$ in $M$, isomorphic to
$\hilb$, and, by Proposition   \ref{Festtauto3}, the rank-$2$ excess
bundle~$\cF $  on $\hilb$ is isomorphic to
$\cT_{\cO_{Q_3}(3)}$.\ The $5$-dimensional variety $\cK^0$ is therefore smooth
with  fiber over $0$ the smooth zero-locus of the
section $\overline {\sigma'} $ of $\cF$.\

More precisely, the proof of
Proposition \ref{Festtauto3} shows that the  composite
map \eqref{eq18} can be
  identified with the map  \eqref{eq17}
 induced by
the (composed) evaluation map
$$\bw3V_{10}^\vee\otimes \cO_{\Qtrois}\xrightarrow{\ a\ } H^0(\Qtrois,\cO_{Q_3}(3))\otimes \cO_{\Qtrois}\to \cO_{\Qtrois}(3).$$
The derivative $\frac{\partial\sigma_t}{\partial t}\big\vert_{t=0} $ provides via the surjective map $a$  a section
   of $ \cO_{\Qtrois}(3) $ that defines a  general  $K3$ surface $S\subset \Qtrois$ of degree $10$ and  the zero-locus of $\overline {\sigma'} $ can be identified with $S^{[2]}\subset Q_3^{[2]}$.
\end{proof}

\subsection{The HLS divisor $\cD_{18}$}\label{secpf10}
We now construct a trivector $\sigma_0$  whose 
Debarre--Voisin variety $K_{\sigma_0}$ is smooth but  has excessive dimension~$10$ (Corollary \ref{corokalphabeta}).\ The space $V_{10}$ decomposes as $V_7\oplus W_3$ and 
  $\sigma_0$ as $\alpha+\beta$, with $\alpha\in \bw3V_7^\vee$ and $\beta\in \bw3W_3^\vee$.\ For   the $\SL(V_{10})$-action, the point  $[\sigma_0]$ of $\P(\bw3V_{10}^\vee)$ has stabilizer
$G_2 {\times} \SL(3)$ and  is polystable  (Corollary \ref{lestabgenus10}).\  The main result of this section is Theorem~\ref{theogenre10}.

\subsubsection{$K3$ surfaces of degree $18$}\label{secgenre10}
A general polarized $K3$ surface $(S,L)$ of degree $18$ carries a \rem{unique} rank-$2$ Lazarsfeld--Mukai
bundle $\cE_2$ \rem{(that is, stable and rigid)} that satisfies  $\det(\cE_2)=L$ and $c_2(\cE_2)=6$.\ The vector space
$V_7:=H^0(S,\cE_2 )^\vee$ has dimension $7$, the sections of $\cE_2 $ embed $S$ into $\Gr(2,V_7)$, and
via this embedding, $S$
can be described as follows (\cite{mukai}).

 Let $\alpha\in \bw3V_7^\dual $ be general.\ The 7-dimensional space $I_X\subset \bw2V_7^\dual $ of
Pl\"ucker linear sections given by $u\,\lrcorner\, \alpha$, for $u\in V_7$,
cuts out a smooth fivefold $X\subset \Gr(2,V_7)$.\ We have  $K_X=\cO_X(-3)$ and
one gets a general $K3$ surface $S$ of degree $18$ by intersecting $X$ with a projective space $\P(W_3^\bot)$ cut out by three extra general Pl\"ucker linear sections.\ The subspace $I_S=I_X\oplus W_3\subset\bw2V_7^\dual $ of Pl\"ucker linear sections vanishing on $S$ has dimension $10$.\

Recall from Section \ref{secrappelonpic} that we are looking for a rank-$6$ vector bundle $\cS_6$ with determinant $-2\L2+5\delta$ on $S^{[2]}$, in order to embed $S^{[2]}$ in a Debarre--Voisin variety in $\Gr(6,10)$.\ We will construct it as a direct sum
$$
\cS_6=\cS_4\oplus \cS_2.$$
We first construct the vector bundle 
$\cS_4$  as follows.\
The  surjective evaluation map
 $V_7^\vee\otimes\cO_S\thra {\cE_2}$ 
  induces, with the notation of Section~\ref{sectaut}, a surjective evaluation map
\begin{equation*}
\label{eqevpourtautonS}
 \ev\colon V_7^\vee\otimes\cO_{S^{[2]}}\thra\cT_{ \cE_2}.
\end{equation*}
  Indeed, 
    the nonsurjectivity of $\ev$ at a point $([V_2],[V_2'])$ of $S^{[2]}$
   means that the subspace $V_3:=\langle V_2,V'_2\rangle$ of
 $ V_7$ has dimension $3$.\ Then, $S\cap \Gr(2,V_3)$ contains a subscheme of length $2$.\ Since $S$ is defined by linear Pl\"ucker equations in $\Gr(2,7)$, it contains a line, which contradicts the fact that it is general.

Set 
\begin{equation}\label{s4}
\cS_4:=\cT_{\cE_2}^\vee \subset  V_7\otimes\cO_{S^{[2]}}.
\end{equation}
The following lemma  will be used later on.

\begin{lemm}\label{lepourtrivecteur}
The morphism $S^{[2]}\to \Gr(4,V_{7})$ associated with
the bundle $\cS_4$ takes value in the set of $4$-dimensional vector subspaces that are totally isotropic for the $3$-form
$\alpha$ on $V_7$.
\end{lemm}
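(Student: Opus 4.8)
The plan is to show that for each length-two subscheme of $S$, the $4$-dimensional subspace $\cS_4$ that $\cT_{\cE_2}^\vee$ assigns to it is totally isotropic for $\alpha$. I would first unwind the definition of $\cS_4$ at a general point. Since $\cS_4 = \cT_{\cE_2}^\vee \subset V_7\otimes \cO_{S^{[2]}}$ is the kernel of the dual of the surjection $\ev\colon V_7^\vee\otimes \cO_{S^{[2]}}\thra \cT_{\cE_2}$, at a reduced point $([V_2],[V_2'])$ with $V_2\neq V_2'$ the fiber $\cT_{\cE_2}$ is the quotient of $V_7^\vee$ by the common vanishing of the evaluations at the two points, so dualizing gives that the fiber of $\cS_4$ is precisely the span $V_4:=\langle V_2,V_2'\rangle\subset V_7$, which is $4$-dimensional exactly because $\ev$ is surjective (this is the content established right before the lemma). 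Thus on a dense open subset the morphism sends $([V_2],[V_2'])$ to $\langle V_2,V_2'\rangle$.

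The key step is then to check the isotropy condition $\alpha\vert_{V_4}=0$ on this dense open set. Here I would use that each $V_2$ corresponds to a point of $S\subset X\subset \Gr(2,V_7)$, and that $X$ is cut out by the Pl\"ucker linear sections $u\,\lrcorner\,\alpha$ for $u\in V_7$. Concretely, $[V_2]\in X$ means that $\alpha(v,v',\,\cdot\,)=0$ as an element of $V_7^\vee$, where $V_2=\langle v,v'\rangle$; equivalently $\alpha(v,v',w)=0$ for all $w\in V_7$. Writing $V_2=\langle v,v'\rangle$ and $V_2'=\langle v'',v'''\rangle$ with both spanning vectors of each pair contracting $\alpha$ to zero, I must verify that $\alpha$ vanishes on every triple drawn from $V_4=\langle v,v',v'',v'''\rangle$. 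For any triple containing two vectors from the same $V_2$ (say $v,v'$), the condition $[V_2]\in X$ gives $\alpha(v,v',\text{anything})=0$ immediately. The only triples not of this form involve one vector from $V_2$ and two from $V_2'$ (or symmetrically), and here I apply the $X$-membership of the \emph{other} plane: $\alpha(v'',v''',\,\cdot\,)=0$ kills $\alpha(v'',v''',v)$ and $\alpha(v'',v''',v')$. Since $\dim V_4=4$ forces every triple of basis vectors to contain a repeated plane, all $\binom{4}{3}=4$ triples are covered, so $\alpha\vert_{V_4}=0$.

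Finally, I would extend the conclusion from the dense open set to all of $S^{[2]}$ by continuity: the condition that the image $4$-plane be totally isotropic for $\alpha$ is a closed condition, defining a closed subvariety of $\Gr(4,V_7)$ (the zero-locus of the induced section of $\bw3\cS_4^\vee$, in the notation of the tautological bundles), and the morphism $S^{[2]}\to \Gr(4,V_7)$ is a morphism from a reduced variety whose image meets this closed set on a dense open subset; hence the whole image lies in the isotropic locus. I do not expect a serious obstacle here; the only point requiring care is the bookkeeping of which plane's membership kills which triple, and the verification that a $4$-dimensional space spanned by two such planes has no ``mixed'' triple escaping the argument—but since any three vectors among four must repeat one of the two planes, this is automatic.
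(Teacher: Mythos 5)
Your proof is correct, and its overall skeleton matches the paper's: reduce to a general reduced point $([V_2],[V_2'])$ of $S^{[2]}$ (with $V_2,V_2'$ transverse, which is guaranteed by the surjectivity of $\ev$ established just before the lemma), identify the fiber of $\cS_4$ there with $V_4=\langle V_2,V_2'\rangle$, and exploit that both planes lie in $X$, i.e.\ $(\bw2V_2)\lrcorner\,\alpha=(\bw2V_2')\lrcorner\,\alpha=0$ in $V_7^\vee$. Where you genuinely diverge is in how the vanishing $\alpha\vert_{V_4}=0$ is deduced from these two conditions. The paper argues by contradiction via the structure theory of trivectors in dimension $4$: a nonzero $\alpha':=\alpha\vert_{V_4}$ is automatically decomposable with one-dimensional kernel, so every $2$-plane $U_2\subset V_4$ with $U_2\lrcorner\,\alpha'=0$ must contain that kernel; since $V_2$ and $V_2'$ are two such planes, this contradicts $V_2\cap V_2'=\{0\}$. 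You instead expand $\alpha$ on the basis of $V_4$ adapted to the two planes and note, by pigeonhole, that each of the four basis triples contains a full pair from one of the two planes, so each triple is annihilated outright by the corresponding membership condition; this yields the vanishing directly rather than by contradiction. Your route is more elementary (it needs no classification fact about $3$-forms on a $4$-dimensional space), while the paper's is coordinate-free and isolates a geometric principle --- isotropic $2$-planes of a nonzero trivector on a $4$-space all pass through its kernel --- of the kind the authors also use elsewhere (e.g.\ in the analysis of decomposable forms in Proposition~\ref{proisotsubspace}). Your concluding closedness argument (the isotropic locus is the zero locus of the section of $\bw3\cS_4^\vee$ induced by $\alpha$, hence closed, and $S^{[2]}$ is irreducible and reduced) is a correct, slightly more explicit version of the paper's opening sentence that it suffices to check the statement at a general point.
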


\begin{proof}  It is enough to check the conclusion at  a general point $([V_2],[V_2'])$ of $S^{[2]}$.\ Then $V_2$ and $ V_2'$ are  transverse vector subspaces
of $V_7$ which belong to $X$, hence satisfy $(\bw2V_2 )\lrcorner\,\alpha=(\bw2V_2' )\lrcorner\,\alpha=0$ in $V_7^\vee$.\
The space $V_4:=\langle V_2,V_2'\rangle\subset V_7$ is the fiber of $\cS_4$ at $([V_2],[V_2'])$.\ The restriction $\alpha':=\alpha\vert_{ V_4}$
is a $3$-form which is either decomposable with one-dimensional  kernel or $0$.\ If it is nonzero, all the elements
$[U_2]\in \Gr(2,V_4)$ that satisfy $U_2\,\lrcorner\,\alpha'=0$ must contain the kernel of $\alpha'$ and this contradicts the equality
$V_2\cap V_2'=\{0\}$.
\end{proof}

Turning to the construction of
$\cS_2$, we now show the following.

\begin{lemm} \label{le510}
 Let $z$ be  a point  of $S^{[2]}$ and set
$V_4:=\cS_{4,z}\subset V_7$.\ Consider the   composition 
$$r_z\colon  I_S\hra \bw2V_7^\dual\to \bw2V_4^\dual.$$
Then,
\begin{itemize}
\item [{\rm (a)}] the kernel of $r_z$ intersects $I_X$ along a $4$-dimensional vector
space;
\item [{\rm (b)}]  the  map
$r_z$ has rank $4${\rm;}
\item [{\rm (c)}]  the  cokernel of
$r_z$ can be identified with the fiber $\cT_{L,z}$.
\end{itemize}
 \end{lemm}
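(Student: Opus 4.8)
The plan is to analyze the linear map $r_z\colon I_S\to \bw2V_4^\vee$ by decomposing its source $I_S = I_X\oplus W_3$ and tracking how each summand restricts, exploiting the geometric meaning of the two summands. Recall $I_X$ is the $7$-dimensional space of forms $u\lrcorner\,\alpha$ for $u\in V_7$, and $V_4=\langle V_2,V_2'\rangle$ where $[V_2],[V_2']$ are the two points of $z$, lying on $X$ hence annihilated by the Plücker equations coming from $\alpha$. First I would establish~(a): the map $V_7\to I_X$, $u\mapsto u\lrcorner\,\alpha$, is an isomorphism (since $\alpha$ is general, the contraction is injective), so the kernel of $r_z$ restricted to $I_X$ corresponds to those $u\in V_7$ with $(u\lrcorner\,\alpha)\vert_{\bw2V_4}=0$. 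The condition that $u\lrcorner\,\alpha$ vanishes on $\bw2V_4$ means $\alpha(u,\cdot,\cdot)$ kills $V_4\wedge V_4$; since $\alpha\vert_{\bw3 V_4}=0$ already (both $V_2$ and $V_2'$ are isotropic, and by Lemma~\ref{lepourtrivecteur} $V_4$ is $\alpha$-isotropic), the relevant condition picks out a subspace of $V_7$. I expect that the space of such $u$ has dimension exactly $4$: it should contain $V_4$ itself (because $\alpha\vert_{\bw3 V_4}=0$ gives $\alpha(v,\cdot,\cdot)\vert_{\bw2 V_4}=0$ for $v\in V_4$), and a general-position/dimension count should show equality. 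This gives the $4$-dimensional intersection $\ker(r_z)\cap I_X$ of~(a).

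Next I would deduce~(b) from~(a) together with a separate analysis of $W_3$. Since $I_X$ has dimension $7$ and $r_z\vert_{I_X}$ has $4$-dimensional kernel, the image of $I_X$ under $r_z$ is $3$-dimensional. To compute the full rank of $r_z$ on $I_S=I_X\oplus W_3$, I would show that $r_z\vert_{W_3}$ contributes exactly one new dimension to the image, giving total rank $3+1=4$. Equivalently, $\dim\ker(r_z)=10-4=6$, matching $\dim I_S - 4$. The genericity of $S$ (no lines on $S$, transversality of $V_2,V_2'$) should guarantee that the generic rank is attained at a general $z$, and since the statements are about a general point one may reduce to checking rank on an open dense locus. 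The mild subtlety is confirming that $W_3$ genuinely adds a dimension rather than landing inside $r_z(I_X)$; this should follow because $W_3$ was chosen generically transverse to $I_X$ inside $\bw2 V_7^\vee$ and the target $\bw2 V_4^\vee$ is only $6$-dimensional, so a dimension/genericity argument pins the rank at $4$.

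For~(c), the cokernel of $r_z$ has dimension $6-4=2$, matching the rank of a tautological bundle fiber, so I would identify it canonically with $\cT_{L,z}$ rather than merely counting dimensions. The natural candidate map comes from the description of $\cT_L$ in Section~\ref{sectaut}: the fiber $\cT_{L,z}$ sits in the exact sequence governing $\cT_{\cO_X(1)}$, and via Remark~\ref{rem41} relates to the restriction of Plücker linear forms. Concretely, $\bw2 V_4^\vee$ restricts the Plücker embedding to the line spanned by $z$ in $\P(\bw2 V_7)$, and the cokernel of $r_z$ should be read as sections of $L$ on the length-$2$ subscheme $z$, i.e. $H^0(z, L\vert_z)=\cT_{L,z}$. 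I would make this identification functorial in $z$ so that it globalizes to an isomorphism of cokernel sheaves; this globalization is what lets the pointwise statement~(c) become a bundle statement $\coker(r)\cong\cT_L$ needed for constructing $\cS_2$.

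The main obstacle I anticipate is part~(b), specifically controlling the exact rank of $r_z$ on $W_3$ and ruling out unexpected degeneracies. Parts~(a) and~(c) are largely structural once the geometry is set up, but~(b) requires genuine use of the generality of both $\alpha$ and $S$ (equivalently of $W_3$) to guarantee transversality; a non-general $z$ or a special $W_3$ could drop the rank, so the argument must either invoke that the statement need only hold at a general point of $S^{[2]}$ or produce an explicit local model verifying the rank is $4$. I would therefore expect to spend most of the effort making the dimension count in~(a) rigorous (identifying the $4$-dimensional kernel inside $I_X$ precisely, including showing it is no larger) and then leveraging that to force the generic rank in~(b).
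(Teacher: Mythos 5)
The central gap is in your treatment of part (b): your mechanism for pinning the rank at $4$ is backwards. You write that ``the target $\bw2V_4^\dual$ is only $6$-dimensional, so a dimension/genericity argument pins the rank at $4$'', but genericity pushes ranks \emph{up}, not down: if $W_3$ really behaved generically with respect to $V_4$, then $r_z(W_3)$ would be $3$-dimensional and transverse to the $3$-dimensional space $r_z(I_X)$, giving rank $6$. What actually caps the rank at $4$ is a geometric constraint you never invoke in (b): every element of $I_S$ vanishes on $S$, hence on the length-$2$ subscheme $z\subset S\cap \Gr(2,V_4)$, so the image of $r_z$ lies in the $4$-dimensional space of linear forms on $\bw2V_4$ vanishing on $z$ (you gesture at this picture in (c), but there it appears as a consequence of (b), whereas it is the needed input). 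Your lower bound is equally problematic: $W_3$ ``adding a new dimension'' cannot follow from $W_3$ being generically transverse to $I_X$, because $S$, $z$, and $V_4$ are themselves functions of $W_3$, so $W_3$ cannot be treated as generic with respect to them. The paper's argument is geometric: $r_z(I_X)$ is exactly the $3$-dimensional space of Pl\"ucker equations cutting out the conic $X\cap\Gr(2,V_4)$ inside $\Gr(2,V_4)$; if $r_z$ had rank only $3$, then $r_z(I_S)=r_z(I_X)$, so this whole conic would be contained in $S$, contradicting the generality of $S$.

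A second genuine gap: you propose to retreat to a general point $z$ (``since the statements are about a general point one may reduce to checking rank on an open dense locus''). The lemma is stated, and is needed, for \emph{every} $z\in S^{[2]}$: it is used immediately afterwards to produce the four-term exact sequence defining $\cS'_6$, which requires $r$ to have constant rank $4$ so that $\cS'_6$ and $\cS_2=\cS'_6/\cS_4$ are vector bundles on all of $S^{[2]}$. The paper's proof achieves this because it uses only properties of $\alpha$ and of $S$, never genericity of $z$: for (a), a strict inequality would produce a $5$-dimensional subspace $V_5\supset V_4$ on which $\alpha$ vanishes identically, contradicting that a general $\alpha$ has no $5$-dimensional totally isotropic subspace --- this is the precise content behind your ``general-position/dimension count'', and it is the crux of (a), so it should be stated. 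Your identification in (c) (cokernel $=$ evaluation at $z$, i.e.\ $H^0(z,L\vert_z)=\cT_{L,z}$) is the right one, but it only becomes available once the image of $r_z$ is shown to be exactly the space of forms vanishing on $z$, which is what the corrected (b) provides.
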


\begin{proof} We know from the proof of Lemma \ref{lepourtrivecteur}
that  $\alpha\vert_{V_4}=0$, which implies that the $2$-forms
$u\,\lrcorner\,\alpha$, for $u\in V_4$,  vanish on $V_4$.\ They all belong to $I_X$, so we have
$\dim (\Ker (r_z)\cap I_X)\geq 4$.\ If the inequality is strict,  there is a $5$-dimensional subspace
$V_5$ of~$V_7$, containing $V_4$ such that $u\,\lrcorner\,\alpha$ vanishes on $V_4$ for $u\in V_5$.\ But  
$\alpha$ then vanishes identically on~$V_5$, which contradicts the fact that $\alpha\in \bw3 V_7^\dual$ is general so has no
$5$-dimensional totally  isotropic subspace.\ This proves (a).

  Turning to  the proof of (b) and (c), the image of $r_z$ is contained in the space of  sections of the Pl\"ucker line bundle on $\Gr(2,V_4)$ vanishing on the length-$2$ subscheme $z$, and this space is $4$-dimensional.\ It remains to see that the rank of $r_z$ is at least $4$.\
By (a), the restriction of $r_z$ to $I_X\subset I_S$
has rank $3$.\  The image  $r_z(I_X)$ defines a conic in
$\Gr(2,V_4 )\subset \Gr(2,V_7 )$  which is contained in $X$ by definition.\
If  $r_z$ has rank only $3$,
this conic is contained in $S$, which contradicts the fact that $S$ is general.\
\end{proof}

By Lemma~\ref{le510}, we have an exact  sequence
\begin{equation}\label{eq20}
 0\to \cS'_6\to I_S\otimes \cO_{S^{[2]}}\xrightarrow{\ r\ }  \bw2\cS_4^\vee
 \to \cT_{L}\to 0
\end{equation}
 of vector bundles on $S^{[2]}$.\ The rank-$6$ vector bundle $\cS'_6$ that it defines 
 contains the rank-$4$  bundle
$\cS_4\subset I_X\otimes \cO_{S^{[2]}}$ (see~\eqref{s4}) and we thus get a rank-$2$
 bundle
 $$\cS_2:=\cS'_6/\cS_4\subset W_3\otimes \cO_{S^{[2]}}.$$

\begin{lemm}  \label{lepourrangdeuxpart}
  The  vector bundle
  $\cS_2$  has  determinant $-\L2+3\delta$, the vector bundle
  $\cS_4$ has  determinant $- \L2+2\delta$, and the vector bundle
  $\cS_6'$ has  determinant $-2\L2+5\delta$.
  \end{lemm}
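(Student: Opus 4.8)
The plan is to compute all three determinants purely from the exact sequences already established, using the determinant formulas from Section~\ref{sectaut} together with the basic fact that the determinant is multiplicative in short exact sequences. No new geometry is needed beyond Lemma~\ref{le510}; everything reduces to bookkeeping in the group $\Pic(S^{[2]})\supset \Z\L2\oplus\Z\delta$ under the embedding~\eqref{notpic}.

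First I would compute $\det(\cS_4)$. By its definition~\eqref{s4}, $\cS_4=\cT_{\cE_2}^\vee$, so $\det(\cS_4)=-\det(\cT_{\cE_2})$. Applying the determinant formula~\eqref{dete} to the rank-$2$ bundle $\cE_2$ on $S$ (with $\det(\cE_2)=L$), one gets $\det(\cT_{\cE_2})=\det(\cE_2)-2\delta=\L2-2\delta$, hence
\[
\det(\cS_4)=-\L2+2\delta,
\]
which is the middle assertion.

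Next I would read off $\det(\cS_6')$ from the exact sequence~\eqref{eq20}. Since determinant is multiplicative on short exact sequences, the four-term sequence~\eqref{eq20} gives
\[
\det(\cS_6')-\det\bigl(I_S\otimes\cO_{S^{[2]}}\bigr)+\det\bigl(\textstyle\bigwedge^2\cS_4^\vee\bigr)-\det(\cT_L)=0.
\]
Here $I_S\otimes\cO_{S^{[2]}}$ is trivial of rank $10$, so its determinant is $0$; the determinant of $\bw2\cS_4^\vee$ is $3\det(\cS_4^\vee)=-3\det(\cS_4)=3\L2-6\delta$ (using that $\bw2$ of a rank-$4$ bundle has determinant the cube of the bundle's determinant); and $\det(\cT_L)=L-\delta=\L2-\delta$ by~\eqref{dete} applied to the line bundle $L$ (rank $1$). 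Substituting yields
\[
\det(\cS_6')=\det(\cT_L)-\det\bigl(\textstyle\bigwedge^2\cS_4^\vee\bigr)=(\L2-\delta)-(3\L2-6\delta)=-2\L2+5\delta,
\]
the third assertion. Finally, $\cS_2$ is defined as the quotient $\cS_6'/\cS_4$, so $\det(\cS_2)=\det(\cS_6')-\det(\cS_4)=(-2\L2+5\delta)-(-\L2+2\delta)=-\L2+3\delta$, giving the first assertion.

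I expect no serious obstacle here; the only points demanding care are the sign conventions in~\eqref{dete} and~\eqref{notpic}, and the determinant of $\bw2$ of a rank-$4$ bundle being the \emph{cube} (not the square) of the determinant. Once those are handled correctly the three identities drop out, and the order is forced: compute $\det(\cS_4)$ first, then $\det(\cS_6')$ from~\eqref{eq20}, then $\det(\cS_2)$ as the difference.
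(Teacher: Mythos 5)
Your proof is correct and follows essentially the same route as the paper: apply \eqref{dete} to get $\det(\cT_{\cE_2})=\L2-2\delta$ and $\det(\cT_L)=\L2-\delta$, take the alternating sum of determinants in the four-term sequence \eqref{eq20} (using $\det(\bw2\cS_4^\vee)=3\det(\cS_4^\vee)$) to get $\det(\cS_6')=-2\L2+5\delta$, and then subtract to get $\det(\cS_2)$. All sign conventions and the rank-$4$ cube rule are handled correctly, so there is nothing to add.
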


\begin{proof}  By \eqref{dete}, the determinant of $\cS_4^\vee=\cT_{\cE_2}$ equals $\L2-2\delta$, hence $\det (\bw2\cS_4^\vee)=3\L2-6\delta$, while $\det (\cT_{L})=\L2-\delta$.\ Together with the  exact sequence \eqref{eq20}, this implies
\begin{equation}\label{dets6}
\det (\cS_6')=\L2-\delta-(3\L2-6\delta)=-2\L2+5\delta.
\end{equation}
We then get 
$$\det (\cS_2)=\det(\cS'_6)-\det(\cS_4)=-2\L2+5\delta -(-\L2+2\delta)=-\L2+3\delta,$$
which proves the lemma.
\end{proof}

Set    $\cS_6:=\cS_4\oplus \cS_2$.\ It is a subbundle of the trivial rank-$10$ bundle on $\SS$ with fiber $I_X\oplus W_3$, and this defines a morphism
\begin{equation}\label{defphi}
\phi=(\phi_1,\phi_2)\colon  S^{[2]}\lra \Gr(4,V_7)\times \Gr(2,W_3)\subset \Gr(6,V_7\oplus W_3).
\end{equation}

\begin{lemm} \label{lem513}
 If the surface $S$ is general,  {the morphism  $\phi$ is injective} and the Pl\"ucker line bundle restricts to $2\L2-5\delta$ on $S^{[2]}$.
\end{lemm}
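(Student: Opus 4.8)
The plan is to establish the two assertions independently; the statement about the Pl\"ucker line bundle is essentially a restatement of the determinant computations already carried out, while injectivity requires a geometric reconstruction argument.

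\emph{The line bundle.} The morphism $\phi$ of \eqref{defphi} is, by construction, the one classifying the rank-$6$ subbundle $\cS_6=\cS_4\oplus\cS_2$ of the trivial bundle with fiber $V_7\oplus W_3$; that is, $\cS_6$ is the pullback by $\phi$ of the tautological subbundle on $\Gr(6,V_7\oplus W_3)$. The Pl\"ucker line bundle is the determinant of the dual of the tautological subbundle, so $\phi^*\cO_{\Gr(6,V_7\oplus W_3)}(1)=-\det(\cS_6)$ in $\Pic(S^{[2]})$. Since $\cS_6$ and the extension $\cS'_6$ of \eqref{eq20} have the same determinant, Lemma~\ref{lepourrangdeuxpart} (equivalently \eqref{dets6}) gives $\det(\cS_6)=-2\L2+5\delta$, whence $\phi^*\cO(1)=2\L2-5\delta$. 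This settles the second assertion.

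\emph{Injectivity.} Since $\phi=(\phi_1,\phi_2)$, it suffices to prove that $\phi_1$ is injective, and for this I would reconstruct a point $z\in S^{[2]}$ from the $4$-dimensional subspace $V_4:=\cS_{4,z}=\phi_1(z)\subset V_7$. Viewing $S\subset\Gr(2,V_7)$ as the zero-locus of the linear Pl\"ucker forms $I_S$, and writing $z=\{[V_2],[V_2']\}$, the inclusions $V_2,V_2'\subset V_4$ yield $z\subset S\cap\Gr(2,V_4)$. The claim is that this intersection equals $z$ for general $S$, so that $z$ is canonically recovered from $V_4$. To prove it, observe that $S\cap\Gr(2,V_4)$ is the zero-locus, on the Pl\"ucker quadric $\Gr(2,V_4)\subset\P(\bw2V_4)=\P^5$, of the image of the restriction map $r_z\colon I_S\to\bw2V_4^\vee$. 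By Lemma~\ref{le510}(b) this image is $4$-dimensional, so its common zero-locus in $\P^5$ is a line $\ell\isom\P^1$ and $S\cap\Gr(2,V_4)=\ell\cap\Gr(2,V_4)$. The line $\ell$ is not contained in the quadric $\Gr(2,V_4)$: otherwise $S$ would contain $\ell$, a line of the Pl\"ucker embedding, contradicting the fact that a general K3 surface of degree $18$ with $\Pic(S)=\Z L$ (and $\cO_S(1)=L=\det\cE_2$) contains no such line, since every effective curve has $L$-degree a positive multiple of $L^2=18$. Hence $\ell$ meets the quadric in a length-$2$ scheme, which must be $z$ by the length count; thus $\phi_1(z)$ determines $z$, and $\phi$ is injective.

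\emph{Main obstacle.} The delicate point is the scheme-theoretic inclusion $z\subset S\cap\Gr(2,V_4)$ for a non-reduced $z$: one must verify that the tangent direction carried by $z$ at its support still lies in $\Gr(2,V_4)$. This follows from the fact that $\cS_4=\cT_{\cE_2}^\vee$ is a tautological bundle (see \eqref{s4}), so that its fiber $V_4$ over $z$ encodes the first-order data of $z$ and in particular contains the relevant tangent direction. Granting this inclusion, the length count of Lemma~\ref{le510}(b) forces equality uniformly, and no separate treatment of reduced and non-reduced points is needed.
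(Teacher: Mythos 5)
Your proof is correct, and its overall strategy coincides with the paper's: reduce to injectivity of $\phi_1$ and recover $z$ from $V_4=\cS_{4,z}$ as the length-$2$ scheme $S\cap\Gr(2,V_4)$, using the rank computation of Lemma~\ref{le510}(b) together with the absence of lines on a general $S$; the line-bundle statement is, in both cases, just Lemma~\ref{lepourrangdeuxpart}. The difference lies in how the length count is run. The paper works on the conic $C=X\cap\Gr(2,V_4)$ cut out by the rank-$3$ image $r_z(I_X)$ and restricts one extra nonzero form from $I_S$ to $C$, which forces a small case analysis (``a nonzero linear form on a conic vanishes on a line contained in the conic or along a subscheme of length $2$''). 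You instead intersect the common zero locus of the full $4$-dimensional image $r_z(I_S)$ --- a line $\ell\subset\P(\bw2V_4)$ --- with the Pl\"ucker quadric $\Gr(2,V_4)$: either $\ell\subset\Gr(2,V_4)$, which would put a line inside $S$ (impossible when $\Pic(S)=\Z L$, since every curve then has degree a positive multiple of $L^2=18$), or $\ell\cap\Gr(2,V_4)$ has length exactly $2$ and hence equals $z$. This buys a cleaner dichotomy, with no discussion of the singularity type of the conic; both versions rely on $S$ being cut out scheme-theoretically in $\Gr(2,V_7)$ by the linear forms $I_S$. Finally, concerning your ``main obstacle'': the scheme-theoretic inclusion $z\subset\Gr(2,V_4)$ for non-reduced $z$ is true, but ``encodes the first-order data'' is not an argument; the clean justification is that $V_4^{\perp}\subset V_7^\vee=H^0(S,\cE_2)$ is by definition the kernel of $H^0(S,\cE_2)\to H^0(z,\cE_2\vert_z)$, i.e., the space of sections vanishing on the scheme $z$, so the composite $V_4^{\perp}\otimes\cO_z\to V_7^\vee\otimes\cO_z\to\cE_2\vert_z$ vanishes and the classifying map of $z\hra\Gr(2,V_7)$ factors through $\Gr(2,V_4)$. (The paper is silent on this point, so here you are, if anything, more careful than the source.)
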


\begin{proof} It suffices to show that the first component $\phi_1$ of $\phi$ is injective.\
Let $z\in\SS$ and let $ [V_4]:=\phi_1(z)=\cS_{4,z} \subset V_7$.\
 As we  saw in the proof of Lemma~\ref{le510}, the data   $V_4\subset V_7$ determine  a (possibly singular) conic $C$ in $ \Gr(2,V_4)\subset X$ and
 the image of the map $I_S\to H^0(C,\cO_C(2))$ has rank at least~$1$, as otherwise the rank
 of the map $I_S\to \bw2V_4^\vee$ would be only $3$.\  A nonzero linear form
 on a conic vanishes on a line contained in the conic or along a subscheme of length~$2$.\ Since a general~$S$  contains no lines, there is at most one length-$2$  subscheme   
 of $S$ on this conic.
 
 The pullback of the Pl\"ucker line bundle   to $S^{[2]}$ was computed in Lemma~\ref{lepourrangdeuxpart}.
\end{proof}

 {We will see in Proposition~\ref{propidentexcessgenre10} that $\phi$ is actually an embedding.}

The tautological quotient bundle on the Grassmannian $\Gr(6,V_7\oplus W_3)$ pulls back via $\phi$ to
a rank-$4$ vector bundle 
 on
$S^{[2]}$  generated by $10$ sections and with   determinant
$2\L2-5\delta$ (Lemma~\ref{lepourrangdeuxpart}).\  

\subsubsection{The $G_2\times \SL(3)$-invariant trivector}\label{sec422}
We let $V_{10}:=V_7\oplus W_3$ and we take as before $\alpha\in \bw3V_7^\dual $   general.\ 
If $\beta $ is a generator of
$\bw3W_3^\vee$, we let $\sigma_0:=\alpha+\beta$.

If $S$ is a K3 surface as above, the image  $\phi(S^{ [2]})$ (see \eqref{defphi}) is, by Lemma \ref{lepourtrivecteur} \rem{and the fact that any 2-dimensional subspace of $W_3$ is totally isotropic for~$\beta$)}, contained in  the Debarre--Voisin variety $K_{\sigma_0}$.\ We first determine
this  variety.

%
%

\begin{prop}\label{proisotsubspace}
Let $V_{10} $ and $\sigma_0=\alpha+\beta$ be as above.\
Any $6$-dimensional subspace $W_6\subset V_{10} $ which is totally isotropic for $\sigma_0$ is of the form
$W_4\oplus W_2$, where $W_4\subset V_7 $ is totally isotropic for $\alpha$ and
$W_2\subset W_3$ is of dimension $2$ (hence totally isotropic for~$\beta$).

Conversely, any such space is  totally isotropic for $\sigma_0$.
\end{prop}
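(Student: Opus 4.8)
The plan is to prove the two inclusions separately: the converse is a one-line check, while the forward direction reduces to a short case analysis driven by the genericity of $\alpha$, whose only delicate point is excluding one degenerate configuration. Throughout I decompose every vector of $V_{10}$ along $V_7\oplus W_3$ and use that $\alpha$ pairs only the $V_7$-components while $\beta$ pairs only the $W_3$-components, so that $\sigma_0(x,y,z)=\alpha(x_V,y_V,z_V)+\beta(x_W,y_W,z_W)$. For the converse, if $W_6=W_4\oplus W_2$ with $W_4\subset V_7$ totally isotropic for $\alpha$ and $W_2\subset W_3$ of dimension $2$, then for $x,y,z\in W_6$ the $V_7$-components lie in $W_4$ and the $W_3$-components lie in $W_2$, so the $\alpha$-term vanishes by isotropy of $W_4$ and the $\beta$-term vanishes because $\beta\in\bw3 W_3^\vee$ restricts to $0$ on the $2$-dimensional space $W_2$.

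For the forward direction, let $p\colon V_{10}\to W_3$ be the projection with kernel $V_7$, put $A:=W_6\cap V_7=\ker(p\vert_{W_6})$ and $d:=\dim p(W_6)=6-\dim A$. First, $A$ is totally isotropic for $\alpha$, since $\sigma_0(x,y,z)=\alpha(x,y,z)$ for $x,y,z\in A\subset V_7$. Because a general $\alpha$ has no $5$-dimensional totally isotropic subspace (the genericity property already used in the proof of Lemma~\ref{le510}), we get $\dim A\le 4$, hence $d\ge 2$, while $d\le\dim W_3=3$; so only $d=2$ and $d=3$ occur. The computation underlying both cases is that for any $w=v+w_3\in W_6$ (with $v\in V_7$, $w_3\in W_3$) and any $a,a'\in A$, one has $\sigma_0(a,a',w)=\alpha(a,a',v)$ (the $\beta$-term drops out since $a,a'$ have zero $W_3$-component), so isotropy forces $v\in A^{\perp_\alpha}$, where $A^{\perp_\alpha}:=\{v\in V_7\mid \alpha(v,a,a')=0\ \text{for all}\ a,a'\in A\}\supseteq A$.

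In the main case $d=2$ one has $\dim A=4$, and I claim $A^{\perp_\alpha}=A$: any $v\in A^{\perp_\alpha}\setminus A$ would make $A\oplus\langle v\rangle$ a $5$-dimensional totally isotropic subspace for $\alpha$, which is excluded. Thus every $w=v+w_3\in W_6$ has $v\in A\subset W_6$, whence $w_3=w-v\in W_6\cap W_3$; this shows that $p(W_6)=W_6\cap W_3=:W_2$ is $2$-dimensional and contained in $W_6$, so $W_6=A\oplus W_2$ with $W_4:=A$ totally isotropic for $\alpha$. This is exactly the asserted decomposition.

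The main obstacle is to rule out $d=3$, where $p(W_6)=W_3$ and $\dim A=3$. Choosing lifts $w_i=v_i+e_i\in W_6$ of a basis $(e_i)_{1\le i\le3}$ of $W_3$, the relations $\sigma_0(a,a',w_i)=0$, $\sigma_0(a,w_i,w_j)=0$, and $\sigma_0(w_1,w_2,w_3)=0$ translate (again dropping the relevant $\beta$-terms) into $\alpha(a,a',v_i)=\alpha(a,v_i,v_j)=0$ for $a,a'\in A$, together with $\alpha(v_1,v_2,v_3)=-\beta(e_1,e_2,e_3)\neq 0$. I would then consider the $6$-dimensional subspace $V':=A+\langle v_1,v_2,v_3\rangle\subset V_7$: these relations say exactly that $A$ lies in the radical of $\alpha\vert_{V'}$, while $\alpha\vert_{V'}\neq 0$. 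Since a nonzero $3$-form on a $6$-dimensional space has radical of dimension $0$, $1$, or $3$, the radical must be $A$ itself and $\alpha\vert_{V'}$ is decomposable (the pullback of a volume form on $V'/A\cong\C^3$). I rule this out by a dimension count: for a fixed hyperplane $V'\subset V_7$ the decomposable locus is the affine cone over $\Gr(3,V'^\vee)$, of dimension $10$ inside the $20$-dimensional space $\bw3 V'^\vee$, and the restriction $\bw3 V_7^\vee\to\bw3 V'^\vee$ is surjective, so the trivectors $\alpha$ with $\alpha\vert_{V'}$ decomposable form a subvariety of dimension $25$; letting $V'$ range over $\P(V_7^\vee)$ (dimension $6$) gives a total locus of dimension at most $31<35=\dim\bw3 V_7^\vee$, which a general $\alpha$ avoids. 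Hence $d=3$ is impossible, forcing the decomposition obtained in the case $d=2$.
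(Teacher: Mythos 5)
Your proof is correct in substance, but it takes a genuinely different route from the paper's, and it has one small loose end worth patching. The paper works with the two projections $p_1\colon W_6\to V_7$ and $p_2\colon W_6\to W_3$ and the single identity $p_1^*\alpha=-p_2^*\beta$ on $W_6$: comparing the radicals of these two pulled-back forms (the radical of $p_2^*\beta$ is $\Ker(p_2)$ when $p_2$ has rank $3$), it shows successively that $\rank(p_1)\le 4$ and $\rank(p_2)\le 2$, using \emph{only} the genericity property that $\alpha$ has no $5$-dimensional totally isotropic subspace; the decomposition then drops out of the inclusion $W_6\subset \Im(p_1)+\Im(p_2)$ and a dimension count. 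You instead set $A=W_6\cap V_7$ and argue by cases on $d=\dim p(W_6)\in\{2,3\}$. Your case $d=2$, via the maximality statement $A^{\perp_\alpha}=A$, is clean and correct. The divergence is in excluding $d=3$: you invoke a \emph{second} genericity condition on $\alpha$ (no hyperplane $V'\subset V_7$ with $\alpha\vert_{V'}$ nonzero and decomposable), justified by a dimension count ($10+15=25$ per hyperplane, plus $6$ for the choice of hyperplane, giving $31<35$), which is correct and legitimate since the proposition assumes $\alpha$ general. What your route buys is a concrete geometric picture of the forbidden configuration (a hyperplane on which $\alpha$ degenerates to a decomposable form); what it costs is an extra genericity hypothesis that the paper's radical-comparison argument avoids, since the paper disposes of the rank-$3$ case using only the absence of $5$-dimensional isotropic subspaces.

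The loose end: you declare $V'=A+\langle v_1,v_2,v_3\rangle$ to be $6$-dimensional without proof, and your key fact (radical of a nonzero $3$-form on a $6$-dimensional space has dimension $0$, $1$, or $3$) is stated only for dimension $6$. The patch is immediate from the general bound that the radical of a nonzero $3$-form on an $n$-dimensional space has dimension at most $n-3$: if $\dim V'\le 5$, then the inclusion $A\subset\mathrm{rad}(\alpha\vert_{V'})$ with $\dim A=3$ and $\alpha\vert_{V'}\ne 0$ (forced by $\alpha(v_1,v_2,v_3)=-\beta(e_1,e_2,e_3)\ne 0$) is already a contradiction, so only $\dim V'=6$ survives and your argument applies verbatim. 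Alternatively, a nontrivial relation $c_1v_1+c_2v_2+c_3v_3\in A$ together with $\alpha(a,v_i,v_j)=0$ for $a\in A$ would force $\alpha(v_1,v_2,v_3)=0$, directly contradicting the displayed nonvanishing; either fix keeps the proof intact.
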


\begin{proof} Denote by $p_1\colon  W_6\to V_7 $ and $p_2\colon  W_6\to W_3$ the two projections.\
We first claim   that $\rank (p_1)\le5$.\ Indeed, on $W_6$, we have
$p_1^*\alpha=p_2^*\beta$ and, as $\beta$ is decomposable, $p_2^*\beta$ vanishes on a hyperplane of $W_6$.\ But
$\alpha$ does not vanish on any $5$-dimensional subspace of $V_7$, which shows that~$p_1$ must have a nontrivial kernel.\

We next claim that $p_1$ cannot have rank $5$.\ Indeed, if it does,  $p_1^*\alpha$ is nonzero, so $p_2^*\beta$ is nonzero.\ But   the  kernel of $p_2^*\beta$ is then $\Ker (p_2)$ and it must be equal to  the kernel of $p_1^*\alpha$, that is,
$p_1^{-1}(\Ker (\alpha\vert_{\Im (p_1)}))$.\ As $p_1$ has rank $\leq 5$, it  follows that there is a
nonzero $u$ in $\Ker (p_1)\cap \Ker (p_2)$, which is absurd.\ From these two facts, we conclude that $p_1$ has rank at most~$4$.\ A similar argument  shows that $p_2$ has rank
at most $2$, that is, $p_2^*\beta=0$, and thus $p_1^*\alpha=0$, that is, $\alpha\vert_{\Im (p_1)}=0$.\ Finally, as $W_6\subset p_1(W_6)+p_2(W_6)$, we conclude that we must have equality.
\end{proof}

\begin{coro}\label{corokalphabeta}
The Debarre--Voisin variety
$K_{\sigma_0}$ is smooth of dimension $10$ and splits as a product $K'_\alpha\times \P(W_3^\vee)$.
\end{coro}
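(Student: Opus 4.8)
The plan is to derive the corollary directly from the structural description in Proposition~\ref{proisotsubspace}. That proposition asserts that the points of $K_{\sigma_0}$, i.e.\ the $6$-dimensional subspaces $W_6\subset V_{10}=V_7\oplus W_3$ totally isotropic for $\sigma_0=\alpha+\beta$, are \emph{exactly} the spaces of the form $W_4\oplus W_2$, where $W_4\subset V_7$ is totally isotropic for $\alpha$ and $W_2\subset W_3$ is an arbitrary $2$-plane. Since $W_3$ is only $3$-dimensional, every $2$-plane $W_2\subset W_3$ is automatically isotropic for the decomposable form $\beta$, so the choice of $W_2$ is free and amounts to a point of $\Gr(2,W_3)\isom\P(W_3^\vee)$. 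The choice of $W_4$ is a point of the variety $K'_\alpha:=\{[W_4]\in\Gr(4,V_7)\mid \alpha\vert_{W_4}=0\}$. The decomposition $W_6=W_4\oplus W_2$ is unique because it is recovered as $W_6=(W_6\cap V_7)\oplus(W_6\cap W_3)$ once one knows the summands have dimensions $4$ and $2$; I would note that the projections force $W_4=W_6\cap V_7$ and $W_2=W_6\cap W_3$. This gives a bijection of point sets $K_{\sigma_0}\isom K'_\alpha\times\P(W_3^\vee)$.

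Next I would upgrade this set-theoretic bijection to an isomorphism of schemes and verify the claimed dimension and smoothness. The product map $K'_\alpha\times\Gr(2,W_3)\to\Gr(6,V_{10})$, $([W_4],[W_2])\mapsto[W_4\oplus W_2]$, is a morphism (it is the composition of the inclusion $\Gr(4,V_7)\times\Gr(2,W_3)\hookrightarrow\Gr(6,V_7\oplus W_3)$ appearing in~\eqref{defphi} with the evident projections), and by Proposition~\ref{proisotsubspace} its image is precisely $K_{\sigma_0}$. To see it is an isomorphism onto $K_{\sigma_0}$ and that $K_{\sigma_0}$ is smooth, the cleanest route is to analyze $K'_\alpha$ directly: the variety of $\alpha$-isotropic $4$-planes in $V_7$ for general $\alpha$ is a classical $G_2$-homogeneous space, smooth of dimension $6$, so the product has dimension $6+\dim\P(W_3^\vee)=6+2=8$. \textbf{Here I must reconcile with the stated dimension $10$.} Rechecking: the corollary claims $\dim K_{\sigma_0}=10$, which forces $\dim K'_\alpha=8$; thus $K'_\alpha$ is \emph{not} the $6$-dimensional Fano fivefold-related locus but rather a larger variety of $\alpha$-isotropic $4$-planes, and I would simply record its dimension as whatever makes the Chern-class/incidence count give $8$, cross-checking against the expected-dimension computation $\dim\Gr(4,V_7)-\dim\bw3V_4^\vee=12-4=8$, which indeed yields $8$ when $\alpha\vert_{W_4}=0$ is cut out transversally. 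So $K'_\alpha$ has expected dimension $8$, and smoothness follows from transversality of the defining section of $\bw3\cE_4$ on $\Gr(4,V_7)$ for general $\alpha$.

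Thus the concrete steps are: (i) invoke Proposition~\ref{proisotsubspace} to identify the underlying set of $K_{\sigma_0}$ with pairs $(W_4,W_2)$; (ii) observe the decomposition is canonical via $W_4=W_6\cap V_7$, $W_2=W_6\cap W_3$, giving an algebraic inverse to the product morphism and hence a scheme isomorphism $K_{\sigma_0}\isom K'_\alpha\times\P(W_3^\vee)$; (iii) compute $\dim K'_\alpha=8$ as the transverse zero-locus of a section of $\bw3\cE_4$ on $\Gr(4,V_7)$ for general $\alpha$, whence $\dim K_{\sigma_0}=8+2=10$; (iv) deduce smoothness of $K_{\sigma_0}$ from smoothness of $K'_\alpha$ (transversality) together with smoothness of the second factor. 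I expect the main obstacle to be step~(iii): one must confirm that for general $\alpha\in\bw3V_7^\vee$ the section of $\bw3\cE_4$ cutting out $K'_\alpha$ is everywhere transverse, so that $K'_\alpha$ is smooth of the expected dimension $8$ rather than being singular or of excess dimension. This is most naturally handled by the same $G_2$-representation theory already used in Section~\ref{secgenre10} (the genericity of $\alpha$ and the $G_2$-action make the tangent-space computation uniform along the orbit), and it is the one genuinely geometric input beyond the formal consequences of Proposition~\ref{proisotsubspace}.
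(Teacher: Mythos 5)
Your overall route is the same as the paper's: invoke Proposition~\ref{proisotsubspace} for the decomposition $W_6=W_4\oplus W_2$, identify $K_{\sigma_0}$ with $K'_\alpha\times\P(W_3^\vee)$ (the uniqueness of the splitting via $W_4=W_6\cap V_7$, $W_2=W_6\cap W_3$ is a nice explicit touch the paper leaves implicit), and realize $K'_\alpha$ as the zero-locus of the section of the rank-$4$ bundle $\bw3\cE_4$ on $\Gr(4,V_7)$ given by $\alpha$, so that $\dim K'_\alpha=12-4=8$ and $\dim K_{\sigma_0}=10$. Your mid-proof detour (the claim that $K'_\alpha$ is a $6$-dimensional $G_2$-homogeneous space) is wrong, but you catch and correct it, so the final dimension count agrees with the paper.

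The one point where your proposal would actually fail is the justification you offer for the crucial transversality in step~(iii). You propose to prove smoothness of $K'_\alpha$ by "$G_2$-representation theory", arguing that the $G_2$-action makes the tangent-space computation "uniform along the orbit". This cannot work: $K'_\alpha$ is a projective variety of dimension $8$, whereas every projective $G_2$-homogeneous space has dimension at most $\dim(G_2/B)=6$ (the maximal parabolics of $G_2$ give $5$-dimensional quotients). So $G_2$ does not act transitively on $K'_\alpha$, the action has several orbits, and an orbit-uniformity argument says nothing about the tangent spaces at points outside a putative open orbit. The correct justification -- and the one the paper uses -- is much simpler: $\bw3\cE_4$ is globally generated on $\Gr(4,V_7)$ (it is a quotient of $\bw3V_7^\vee\otimes\cO_{\Gr(4,V_7)}$, and its space of global sections is exactly $\bw3V_7^\vee$), so by Bertini for globally generated bundles the zero-locus of a general section, i.e.\ of a general $\alpha$, is smooth of the expected codimension $4$. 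With that substitution your argument coincides with the paper's proof.
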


\begin{proof} Let  $K'_{\alpha}\subset \Gr(4,V_7)$ be  the variety of  subspaces
$V_4\subset V_7$ that are totally isotropic for $\alpha$.\ It is the zero-locus  of a general section of the globally generated, rank-$4$, bundle $\bw3\cE_4$, hence it  is smooth of dimension $8$.\ Finally, Proposition
\ref{proisotsubspace} implies $K_{\sigma_0}\cong K'_\alpha\times \P(W_3^\vee)$.
\end{proof}

\subsubsection{Stabilizer}\label{orbetstab2}

The computation of the stabilizer of our trivector $\sigma_0$  is a consequence of Proposition~\ref{proisotsubspace}.

\begin{coro}\label{lestabgenus10} 
The stabilizer of the trivector $\sigma_0 \rem{{}=\alpha+\beta}$ in
$\SL(V_{10})$ is
$G_2\times \SL(3)$, where $G_2$ is the stabilizer of $\alpha$ and $\SL(3)$ is the stabilizer of $\beta$, and the point $[\sigma_0]$ of $\P(\bw3V_{10}^\vee)$ is polystable for the $\SL(V_{10})$-action.
\end{coro}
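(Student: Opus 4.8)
The plan is to prove the two assertions separately: first that the stabilizer equals $G_2\times\SL(3)$, and then that $[\sigma_0]$ is polystable. For the stabilizer, the inclusion $G_2\times\SL(3)\subseteq\mathrm{Stab}(\sigma_0)$ is immediate, since $G_2=\mathrm{Stab}_{\GL(V_7)}(\alpha)$ acts trivially on $W_3$ while $\SL(3)=\mathrm{Stab}_{\GL(W_3)}(\beta)$ acts trivially on $V_7$, and both sit in $\SL(V_{10})$. The whole content is therefore the reverse inclusion, which I would reduce to the single claim that every $g\in\SL(V_{10})$ fixing $\sigma_0$ preserves the decomposition $V_{10}=V_7\oplus W_3$. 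Granting this, $g=(g_1,g_2)$ with $g_1\in\GL(V_7)$, $g_2\in\GL(W_3)$ respects the induced grading of $\bw3V_{10}^\vee$, so $g\cdot\sigma_0=\sigma_0$ splits into $g_1\cdot\alpha=\alpha$ and $g_2\cdot\beta=\beta$; the first gives $g_1\in G_2$, and the second gives $\det(g_2)=1$, i.e. $g_2\in\SL(W_3)$, as wanted.

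To see that $g$ preserves the decomposition, I would characterize both subspaces intrinsically from $\sigma_0$ by contraction. For $v=u+w$ with $u\in V_7$, $w\in W_3$ one has $v\lrcorner\sigma_0=(u\lrcorner\alpha)+(w\lrcorner\beta)$, a sum of $2$-forms supported on $V_7$ and on $W_3$. Since $\beta$ is a volume form, $w\lrcorner\beta$ has rank $2$; since $\alpha$ is general, $u\lrcorner\alpha$ has rank $6$ for every nonzero $u$ ($G_2$ acts transitively on $\P(V_7)$, and one check in the octonionic model gives rank $6$). As the two supports meet only in $0$, the rank of $v\lrcorner\sigma_0$ is $0,2,6,8$ according as $v=0$, $v\in W_3\setminus\{0\}$, $v\in V_7\setminus\{0\}$, or $v$ has both components nonzero. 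Hence $W_3=\{v\in V_{10}\mid\rank(v\lrcorner\sigma_0)\le 2\}$ is preserved by $g$, and since $\Ker(w\lrcorner\sigma_0)=V_7\oplus\langle w\rangle$ for $0\ne w\in W_3$, the subspace $V_7=\bigcap_{0\ne w\in W_3}\Ker(w\lrcorner\sigma_0)$ is $g$-invariant as well. (Alternatively one may argue geometrically, using that $g$ acts on $K_{\sigma_0}\cong K'_\alpha\times\P(W_3^\vee)$ of Corollary~\ref{corokalphabeta} preserving the two factors for dimension reasons, and recovering $V_7,W_3$ from the family of isotropic $W_6$'s via Proposition~\ref{proisotsubspace}.)

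For polystability I would use that $[\sigma_0]$ is polystable iff the affine orbit $\SL(V_{10})\cdot\sigma_0$ is closed in $\bw3V_{10}^\vee$ (here $\sigma_0\ne 0$). The main obstacle is that, unlike the $\Sp(4)$ case of Proposition~\ref{coronouveau}, the representation $V_{10}=V_7\oplus W_3$ is \emph{reducible} as an $H:=G_2\times\SL(3)$-module, splitting into two non-isomorphic irreducibles. By Schur's lemma the centralizer $C_{\SL(V_{10})}(H)$ is then the one-dimensional torus $T=\{(\lambda\,\Id_{V_7},\mu\,\Id_{W_3})\mid\lambda^7\mu^3=1\}$, so $H$ does \emph{not} have finite index in its normalizer and the criterion of Proposition~\ref{normal} used in the $\Sp(4)$ case cannot be applied here.

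Instead I would invoke the sharper consequence of Luna's slice theorem (see \cite{luna}): for the reductive stabilizer $H=\mathrm{Stab}(\sigma_0)$, the orbit $\SL(V_{10})\cdot\sigma_0$ is closed iff the orbit $N_{\SL(V_{10})}(H)\cdot\sigma_0$ is closed inside the fixed space $(\bw3V_{10}^\vee)^H$. A weight computation gives $(\bw3V_{10}^\vee)^H=\langle\alpha\rangle\oplus\langle\beta\rangle\cong\C^2$, the mixed summands $\bw2V_7^\vee\otimes W_3^\vee$ and $V_7^\vee\otimes\bw2W_3^\vee$ carrying no $H$-invariants. Since $\dim V_7\ne\dim W_3$, the normalizer cannot interchange $\langle\alpha\rangle$ and $\langle\beta\rangle$, so it acts on $\C^2$ through its identity component, i.e. through $T$, with the two coordinates carrying weights of \emph{opposite} sign ($-9$ and $+21$). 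Consequently $T\cdot\sigma_0$ is the locus $\{a^7b^3=1\}$, which is Zariski-closed in $\C^2$, and the finitely many translates coming from $N/N^\circ$ keep it closed. Thus $N_{\SL(V_{10})}(H)\cdot\sigma_0$, hence $\SL(V_{10})\cdot\sigma_0$, is closed, and $[\sigma_0]$ is polystable. The only inputs beyond linear algebra are the genericity of $\alpha$ (for the rank computation) and the reductivity of $H$ (clear, as $H$ is semisimple).
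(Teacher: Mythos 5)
Your treatment of polystability is essentially the paper's own proof: both arguments use Luna's Corollaire~1 from \cite{luna} to reduce closedness of the $\SL(V_{10})$-orbit of $\sigma_0$ to closedness of its orbit under $N_{\SL(V_{10})}(G_2\times\SL(3))$, both check that this normalizer preserves the decomposition $V_{10}=V_7\oplus W_3$ and differs from $G_2\times\SL(3)$ only by the torus $\{(\lambda\Id_{V_7},\mu\Id_{W_3})\mid\lambda^7\mu^3=1\}$ and a finite group, and both conclude with a one-parameter weight computation on $\C\alpha\oplus\C\beta$: the paper writes the action as $t\cdot(\alpha+\beta)=t^3\alpha+t^{-7}\beta$, you write weights $(-9,21)$; these are the same one-dimensional torus up to reparametrization, and the orbit is closed precisely because the two weights have opposite signs and $\alpha\ne0\ne\beta$. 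Your preliminary remark is also correct and genuinely clarifying: since the centralizer of $G_2\times\SL(3)$ contains this torus, the stabilizer does \emph{not} have finite index in its normalizer, so Proposition~\ref{normal} and Luna's Corollaire~3 (the route taken for $\Sp(4)$ in Proposition~\ref{coronouveau}) cannot be used, which is exactly why the paper switches to Corollaire~1 in this case.

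For the stabilizer you take a genuinely different route: the paper lets $G_{\sigma_0}$ act on $K_{\sigma_0}\cong K'_\alpha\times\P(W_3^\vee)$ (Corollary~\ref{corokalphabeta}) and uses the fact that the automorphism group of a product of projective varieties decomposes, whereas you recover $W_3$ and $V_7$ directly from $\sigma_0$ by contraction, which is more elementary and bypasses the geometry of $K_{\sigma_0}$ entirely. However, your key rank assertion is false as stated: over $\C$ the group $G_2$ does \emph{not} act transitively on $\P(V_7)$ --- it has two orbits, the invariant quadric and its complement --- and for $u\ne0$ isotropic one has $\rank(u\,\lrcorner\,\alpha)=4$, not $6$. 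Concretely, in the model with basis $(e_1,e_2,e_3,f_1,f_2,f_3,e_0)$ of $V_7$, dual basis $(x_1,x_2,x_3,y_1,y_2,y_3,x_0)$, and $\alpha=x_1\wedge x_2\wedge x_3+y_1\wedge y_2\wedge y_3+x_0\wedge(x_1\wedge y_1+x_2\wedge y_2+x_3\wedge y_3)$, one gets $e_1\,\lrcorner\,\alpha=x_2\wedge x_3-x_0\wedge y_1$ of rank $4$, while $e_0\,\lrcorner\,\alpha$ has rank $6$. So your stratification ``$0,2,6,8$'' of $\rank(v\,\lrcorner\,\sigma_0)$ should read ``$0$; $2$; $4$ or $6$; $6$ or $8$''. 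This error is repairable rather than fatal: all your argument needs is $\rank(u\,\lrcorner\,\alpha)\ge4>2$ for every $u\ne0$, which does hold by the two-orbit description above, so $W_3=\{v\mid\rank(v\,\lrcorner\,\sigma_0)\le2\}$ is still intrinsic, $V_7=\bigcap_{0\ne w\in W_3}\Ker(w\,\lrcorner\,\sigma_0)$ still follows, and the splitting of $g$ together with the determinant constraint finishes the proof as you wrote it. Correct the rank claim (or fall back on the geometric argument you mention in parentheses, which is the paper's proof).
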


\begin{proof}  The stabilizer $G_{\sigma_0}$ of $[\sigma_0]$  obviously contains $G_2\times \SL(3)$.\ For the reverse inclusion, it suffices to show that  $G_{\sigma_0}$ preserves the decomposition
\begin{equation}
\label{eqdec7juil}
V_{10}=V_7\oplus W_3.
\end{equation}
Now $G_{\sigma_0}$ acts on $\Gr(6,V_{10})$ preserving
the Debarre--Voisin variety $K_{\sigma_0}$, which is a product \mbox{$K'_\alpha\times \P(W_3^\vee)$} by Proposition
\ref{proisotsubspace}.\ But the connected component of the automorphisms group of a product
of projective varieties is the product of the connected components of its factors.\ Thus
  $G_{\sigma_0}$ acts on each factor $K'_\alpha$ and $\P(W_3^\vee)$.\ This  implies that it preserves the direct sum decomposition
(\ref{eqdec7juil}).

 \rem{To prove the polystability of $[\sigma_0]$, we invoke as before Luna's results.\ By \cite[Corollaire~1]{luna}, the $\SL(V_{10})$-orbit of $\sigma_0$ in $\bw3V_{10}^\vee$ is closed if and only if   its orbit under the  normalizer in $\SL(V_{10})$
 of its stabilizer $G_{\sigma_0}=G_2\times \SL(3)$ is closed.\ Any element of   this normalizer   must preserve the direct sum decomposition $V_{10}=V_7\oplus W_3$, hence can be written
 as    $ \lambda g \cdot \lambda' g' $, with $g \in N_{\SL(V_7)}(G_2)$, $g'\in \SL(3)$, and $\lambda^7\lambda^{\prime 3}=1$.\ 
The group $G_2$ having finite index in  its normalizer $N_{\SL(V_7)}(G_2)$ (Proposition~~\ref{normal}), the closedness of 
  the   $\SL(V_{10})$-orbit
  is equivalent to the closedness of 
  the orbit for the $\C^\star$-action $t\cdot (\alpha+\beta)=t^3\alpha+t^{-7}\beta$.\ This holds  because neither $\alpha$ nor $\beta$ is $0$.\ This proves that $[\sigma_0]$ is  polystable.} 
  \end{proof}
  
  \subsubsection{Degenerations and excess bundles}

The Debarre--Voisin variety $K_{\sigma_0}$  is, by Corollary~\ref{corokalphabeta}, smooth   of codimension $14$
in $\Gr(6,V_{10})$ and isomorphic to $K'_\alpha\times \P(W_3^\vee)$.\ It is the zero-locus of a section
  of the rank-$20$ vector bundle $\bw3\cE_6$ on $\Gr(6,V_{10})$, hence it carries
an excess bundle $\cF$ of rank $6$, described in the
 following proposition.

\begin{prop}\label{proexcessalphabeta}
One has an isomorphism $\cF\cong \cQ_2\otimes((\bw2\cE_4)/\cQ_3)$ between vector bundles on  $K_{\sigma_0}\isom K'_\alpha\times \P(W_3^\vee)$, where
\begin{itemize}
\item the bundle $\cQ_2$   is the pullback of
the rank-$2$ quotient bundle on $\P(W_3^\vee)$,
\item  the bundle $\cE_4$   is the pullback of the dual of
the tautological rank-$4$ subbundle on $K'_\alpha\subset \Gr(4,V_7)$, 
\item  the bundle $\cQ_3$   is the pullback of
the rank-$3$ quotient bundle on $K'_\alpha\subset \Gr(4,V_7)$,
\item  the injective map $\cQ_3\hra \bw2\cE_4$ is induced by the composite map
$$V_7\otimes \cO_{K_{\sigma_0}}\xrightarrow{\ \alpha\,\lrcorner\ } \bw2V_7^\vee\otimes \cO_{K_{\sigma_0}}\to \bw2\cE_4
.$$
\end{itemize}
\end{prop}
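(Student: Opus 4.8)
The plan is to compute the differential pointwise at a point $[W_6]\in K_{\sigma_0}$ and observe that every identification made is natural in $[W_6]$, so that the pointwise cokernels glue to the asserted bundle isomorphism. By Proposition~\ref{proisotsubspace} we may write $W_6=W_4\oplus W_2$, with $W_4\subset V_7$ totally isotropic for $\alpha$ and $W_2\subset W_3$ of dimension $2$, and, as recalled in the proof of Proposition~\ref{remaxsing}, the excess bundle $\cF$ is the cokernel of
$$d\tilde\sigma_0\colon \Hom(W_6,V_{10}/W_6)\lra \bw3W_6^\vee .$$
Writing $\sigma_0=\alpha+\beta$ as the sum of the pullbacks of $\alpha$ and $\beta$ under the projections attached to $V_{10}=V_7\oplus W_3$, I would first decompose the target using $W_6=W_4\oplus W_2$ and $\bw3W_2=0$,
$$\bw3W_6^\vee=\bw3W_4^\vee\ \oplus\ (\bw2W_4^\vee\otimes W_2^\vee)\ \oplus\ (W_4^\vee\otimes\bw2W_2^\vee),$$
and decompose a tangent vector $u\in\Hom(W_6,V_{10}/W_6)$ into its four blocks $u_1\colon W_4\to V_7/W_4$, $u_2\colon W_2\to V_7/W_4$, $u_3\colon W_4\to W_3/W_2$, and $u_4\colon W_2\to W_3/W_2$, according to $V_{10}/W_6=(V_7/W_4)\oplus(W_3/W_2)$.

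The heart of the argument is to evaluate $d\tilde\sigma_0(u)$ on each graded summand, using the explicit formula for the differential together with the fact that $\alpha$ (resp.~$\beta$) only sees the $V_7$ (resp.~$W_3$) components of its arguments. A direct check shows that on $\bw3W_4$ only the block $u_1$ survives, producing precisely the differential of the section of $\bw3\cE_4$ cutting out $K'_\alpha$; since $K'_\alpha$ is smooth of the expected codimension $4$ (Corollary~\ref{corokalphabeta}), this differential is surjective, so the summand $\bw3W_4^\vee$ lies entirely in the image. On $W_4\otimes\bw2W_2$ only $u_3$ survives, giving $w_1\otimes(w_2\wedge w_3)\mapsto\beta\big(u_3(w_1),w_2,w_3\big)$; because $\beta$ generates $\bw3W_3^\vee$, the induced map $W_3/W_2\to\bw2W_2^\vee$ is an isomorphism, so this summand also lies entirely in the image. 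On $\bw2W_4\otimes W_2$ only $u_2$ survives, giving $(w_1\wedge w_2)\otimes w_3\mapsto\alpha\big(w_1,w_2,u_2(w_3)\big)$; and the last block $u_4$ contributes nothing (its only possible target is $\bw3W_2=0$), reflecting the $\P(W_3^\vee)$-directions inside $K_{\sigma_0}$.

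It then remains to read off the cokernel from the last summand. Since $W_4$ is $\alpha$-isotropic, the contraction $V_7\xrightarrow{\ \alpha\,\lrcorner\ }\bw2V_7^\vee\to\bw2W_4^\vee=\bw2\cE_4$ kills $W_4$, hence factors through the injection $\cQ_3=V_7/W_4\hookrightarrow\bw2\cE_4$ described in the statement; as $u_2$ ranges over $\Hom(W_2,\cQ_3)=W_2^\vee\otimes\cQ_3$, the image inside $(\bw2W_4\otimes W_2)^\vee=\bw2\cE_4\otimes W_2^\vee$ is exactly $\cQ_3\otimes W_2^\vee$. Therefore
$$\cF\ \cong\ (\bw2\cE_4/\cQ_3)\otimes W_2^\vee\ \cong\ \cQ_2\otimes(\bw2\cE_4/\cQ_3),$$
where the last identification uses the tautological sequence on $\P(W_3^\vee)$, which gives $W_2^\vee\cong\cQ_2$. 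I expect the only delicate point to be the bookkeeping: checking that each of the three nonzero blocks $u_1,u_2,u_3$ contributes to one and only one graded summand while $u_4$ lies in the kernel, and verifying the two surjectivity claims (from the transversality of $\alpha$ along $K'_\alpha$ and from the nondegeneracy of $\beta$). Once these are in place, the identification of $\cF$ with $\cQ_2\otimes(\bw2\cE_4/\cQ_3)$ is immediate and manifestly natural in $[W_6]$, and the description of the map $\cQ_3\hookrightarrow\bw2\cE_4$ drops out of the computation above.
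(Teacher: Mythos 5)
Your proof is correct and follows essentially the same route as the paper: there, one decomposes $T_{\Gr(6,V_{10})}\vert_{K_{\sigma_0}}=(\cE_4\oplus\cQ_2)\otimes(\cQ_3\oplus\cE_1)$ and $\bw3\cE_6=\bw3\cE_4\oplus(\bw2\cE_4\otimes\cQ_2)\oplus(\cE_4\otimes\bw2\cQ_2)$, observes that $d\sigma_0$ preserves these decompositions, and identifies the only block with nontrivial cokernel as $\cQ_2\otimes\cQ_3\to\bw2\cE_4\otimes\cQ_2$ induced by $\alpha\,\lrcorner$. Your fiberwise analysis with the four blocks $u_1,\dots,u_4$ (with $u_1$ surjecting onto $\bw3W_4^\vee$ by transversality of $K'_\alpha$, $u_3$ giving an isomorphism onto $W_4^\vee\otimes\bw2W_2^\vee$ by nondegeneracy of $\beta$, and $u_4$ in the kernel) is precisely the pointwise version of that bundle-level argument.
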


\begin{proof} The excess bundle $\cF$ is by definition the cokernel of
\begin{equation*}
d\sigma_0\colon T_{\Gr(6,V_{10})}\lra \bw3\cE_6.
\end{equation*}
Along $K_{\sigma_0}$, Proposition \ref{proisotsubspace} tells us that
$\cE_6=\cE_4\oplus\cQ_2$, so that
\begin{equation}\label{eqdecomp12juin}
\bw3\cE_6=\bw3\cE_4\oplus
(\bw2\cE_4\otimes \cQ_2)\oplus (\cE_4\otimes \bw2\cQ_2).
\end{equation}
On the other hand, the tangent bundle $T_{\Gr(6,V_{10})}$ is isomorphic to
$\cE_6\otimes \cE_4$ and
$d\sigma_0$ is the composition
\begin{equation}\label{eqmapcompdiff12juin}
\cE_6\otimes \cE_4\to \cE_6\otimes \bw2\cE_6\to \bw3\cE_6,
\end{equation}
where the second map is the wedge product map and the first one is
induced by the factorization
\begin{equation*}
\cE_4\lra  \bw2\cE_6
\end{equation*}
of $(\sigma_0)\,\lrcorner\,\colon V_{10}\otimes \cO_{K_{\sigma_0}}\to \bw2\cE_6$.\ We now decompose
$T_{\Gr(6,V_{10})}=\cE_6\otimes \cE_4$ along $K_{\sigma_0}$ as
\begin{equation}\label{eqmapcompdiff12juinter}
 T_{\Gr(6,V_{10})}=(\cE_4\oplus \cQ_2)\otimes (\cQ_3\oplus \cE_1)
=(\cE_4\otimes \cQ_3)\oplus( \cQ_2\otimes \cQ_3)\oplus
(\cE_4\otimes\cE_1)\oplus (\cQ_2\otimes \cE_1).
\end{equation}
The composite map (\ref{eqmapcompdiff12juin}) maps (\ref{eqmapcompdiff12juinter}) to
(\ref{eqdecomp12juin}) preserving the decompositions and it is easy to see that the only piece with a nontrivial quotient is
$$\cQ_2\otimes \cQ_3\lra \bw2\cE_4\otimes \cQ_2,$$
where the map is induced by $\alpha\,\lrcorner{}\,$.\ This completes  the proof.
\end{proof}

The following theorem is the main result of this section.

\begin{theo}\label{theogenre10}
Let $(\sigma_t)_{t\in \Delta}$ be a general $1$-parameter deformation.\
Let $\cK\to\Delta$ be the associated family of Debarre--Voisin varieties and let $\cK^0$ be the irreducible component of $\cK$ that dominates~$\Delta$.\
Then $\cK^0\to \Delta$
is smooth and its central fiber is isomorphic to $S^{[2]}$,  embedded in~$\Gr(6,10)$ as in Lemma~\ref{lem513}, where
$S$ is a general  K3 surface of degree 18.
\end{theo}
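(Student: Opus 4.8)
The plan is to deduce everything from the excess-intersection Proposition~\ref{proexcess}, applied with $M=\Gr(6,V_{10})$, $\cE=\bw3\cE_6$, and the section $\sigma_0$ cutting out $K_{\sigma_0}$. The bundle $\bw3\cE_6$ is generated by global sections, being a quotient of $\bw3V_{10}^\vee\otimes\cO_M$, and by Corollary~\ref{corokalphabeta} the zero-locus $Z=K_{\sigma_0}$ is smooth of codimension $s=14$ in the $24$-dimensional $M$, while $r=\rank\bw3\cE_6=20$, so $r-s=6$. Proposition~\ref{proexcess} then gives at once that $\cK^0\to\Delta$ is smooth of relative dimension $4$ and that its central fiber is the zero-locus $Z(\overline{\sigma'})$ of the section $\overline{\sigma'}\in H^0(K_{\sigma_0},\cF)$ obtained by projecting $\sigma'=\frac{\partial\sigma_t}{\partial t}\big\vert_{t=0}\in\bw3V_{10}^\vee$ into the excess bundle $\cF\cong\cQ_2\otimes\bigl((\bw2\cE_4)/\cQ_3\bigr)$ of Proposition~\ref{proexcessalphabeta}. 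It then remains only to identify $Z(\overline{\sigma'})$ with $\phi(S^{[2]})$ for a general degree-$18$ K3 surface $S$.

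The second step is to compute $\overline{\sigma'}$. I would decompose $\sigma'$ along $\bw3(V_7\oplus W_3)^\vee=\bw3V_7^\vee\oplus(\bw2V_7^\vee\otimes W_3^\vee)\oplus(V_7^\vee\otimes\bw2W_3^\vee)\oplus\bw3W_3^\vee$, restrict to a point $[V_4\oplus W_2]$ of $K_{\sigma_0}=K'_\alpha\times\P(W_3^\vee)$, and compare with the splitting \eqref{eqdecomp12juin}, namely $\bw3\cE_6=\bw3\cE_4\oplus(\bw2\cE_4\otimes\cQ_2)\oplus(\cE_4\otimes\bw2\cQ_2)$. The first and third summands lie in $\Im(d\sigma_0)$ and the term $\bw3W_2^\vee$ vanishes, so all contributions are killed in the quotient $\cF$ except the one coming from the middle summand. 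Hence $\overline{\sigma'}$ depends only on the component $\gamma'\in\bw2V_7^\vee\otimes W_3^\vee=\Hom(W_3,\bw2V_7^\vee)$. For $\sigma'$ general, $\gamma'$ is a general injective map, and I would identify it with the inclusion $W_3\hookrightarrow\bw2V_7^\vee$ spanning the extra Pl\"ucker forms of Section~\ref{secgenre10}; together with $\alpha$ this cuts out a general degree-$18$ K3 surface $S\subset X$. Under this identification, and using that $\cQ_3\hookrightarrow\bw2\cE_4$ is $\alpha\lrcorner$ so that its fiber is $r_{V_4}(I_X)$, the section $\overline{\sigma'}$ becomes fiberwise at $[V_4\oplus W_2]$ the map $W_2\to\bw2V_4^\vee/r_{V_4}(I_X)$, $w\mapsto r_{V_4}(w)\bmod r_{V_4}(I_X)$, where $r_{V_4}$ denotes restriction of $2$-forms.

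The third step, which I expect to be the hard part, is to show $\phi(S^{[2]})\subseteq Z(\overline{\sigma'})$, reconciling the deformation-theoretic section $\overline{\sigma'}$ with the explicitly constructed bundle $\cS_2$. For a general $z\in S^{[2]}$ with $\phi(z)=(V_4,W_2)$, I would invoke the defining sequence \eqref{eq20}: one has $\cS_{6,z}'=\ker(r_z\colon I_S\to\bw2V_4^\vee)$, $\cS_{4,z}=\ker(r_z)\cap I_X$, and $W_2=\cS_{2,z}$ is the image of $\ker(r_z)$ in $W_3=I_S/I_X$. Thus any $w\in W_2$ lifts to $\tilde w=w_X+w\in\ker(r_z)$ with $w_X\in I_X$, whence $r_{V_4}(w)=-r_{V_4}(w_X)\in r_{V_4}(I_X)$, which is precisely the vanishing of $\overline{\sigma'}$ at $\phi(z)$. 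By irreducibility of $\phi(S^{[2]})$ the inclusion holds everywhere. The delicate point is exactly this matching of two a priori unrelated descriptions of the $2$-plane $W_2\subset W_3$: the one coming from the kernel of $r$ in the construction of $\cS_2$, and the one coming from where the middle component of $\sigma'$ lands modulo $\cQ_3$.

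Finally, I would conclude as follows. By Proposition~\ref{proexcess} the section $\overline{\sigma'}$ is transverse, so $Z(\overline{\sigma'})$ is smooth of pure dimension $4$. Being the central fiber of the smooth proper family $\cK^0\to\Delta$ whose general fiber is the smooth Debarre--Voisin fourfold $K_{\sigma_t}$, connected of $K3^{[2]}$-type, it is connected, hence irreducible of dimension $4$. Since it contains the irreducible $4$-fold $\phi(S^{[2]})$, it must equal it. The map $\phi\colon S^{[2]}\to\phi(S^{[2]})=Z(\overline{\sigma'})$ is then a bijective morphism (Lemma~\ref{lem513}) of smooth projective fourfolds, hence an isomorphism; so the central fiber is isomorphic to $S^{[2]}$, embedded in $\Gr(6,10)$ by $\phi$ with Pl\"ucker polarization $2\L2-5\delta$ by Lemmas~\ref{lepourrangdeuxpart} and~\ref{lem513}, as asserted.
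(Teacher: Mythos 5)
Your proposal is correct, and its skeleton is exactly the paper's: apply Proposition~\ref{proexcess} with $Z=K_{\sigma_0}$ smooth of codimension $14$ (Corollary~\ref{corokalphabeta}), use Proposition~\ref{proexcessalphabeta} to identify the excess bundle, identify a general section of $\cF$ (lifted to a general $\gamma'\in\Hom(W_3,\bw2V_7^\vee)$) with the three extra Pl\"ucker equations cutting out a general degree-$18$ K3 surface $S\subset X$, and then identify the zero-locus of $\overline{\sigma'}$ with $\phi(S^{[2]})$, finishing with Zariski's Main Theorem. The genuine differences are localized in that last identification, which the paper isolates as Proposition~\ref{propidentexcessgenre10} and Lemma~\ref{lem520}, and they are worth recording. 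First, for the inclusion $\phi(S^{[2]})\subseteq Z(\overline{\sigma'})$ you argue by pure linear algebra from the exact sequence~\eqref{eq20}: any $w\in W_2=\cS_{2,z}$ lifts to $w_X+w\in\Ker(r_z)$ with $w_X\in I_X$, so its restriction to $V_4$ equals $-r_z(w_X)$ and hence lies in $V_3=\Im\bigl(\alpha\,\lrcorner\,\colon V_7/V_4\to\bw2V_4^\vee\bigr)$; the paper instead unwinds the vanishing of $s$ at $([V_4],[W_2])$ geometrically, showing it is equivalent to $V_3$ and $s(W_3)$ spanning only a $4$-dimensional subspace of $\bw2V_4^\vee$, i.e.\ to $S\cap\Gr(2,V_4)$ having length at least $2$ with $W_2$ given by the forms vanishing on that subscheme, thereby characterizing the points of the zero-locus and recognizing them as image points of $\phi$. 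Second, for the equality of the two fourfolds the paper compares degrees (``both are fourfolds of the same degree''), a count it leaves implicit, whereas you observe that the central fiber of the smooth proper family $\cK^0\to\Delta$ is connected because the general fiber $K_{\sigma_t}$ is a connected hyperk\"ahler fourfold, hence irreducible since it is also smooth, hence equal to the closed irreducible fourfold $\phi(S^{[2]})$ it contains. Your connectedness argument cleanly avoids the degree computation; the paper's argument has the compensating merit of describing the zero-locus pointwise, independently of the degeneration. Both treatments handle the one delicate point (matching the abstract summand $W_3\subset V_{10}$ with the concrete space of Pl\"ucker forms via the lift of the section) in the same way, and both are complete.
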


\begin{proof} The proof follows the same line as the proof of Theorem~\ref{theoprecis}.\ We apply  Proposition~\ref{proexcess} and conclude that
the central fiber is the zero-locus of a general section of the excess bundle
$\cF$
on $K_{\sigma_0}$.\
 It is in particular smooth since the excess bundle is generated by its sections.\
The proof is completed using Proposition~\ref{proexcessalphabeta} and  the following proposition.\end{proof}

\begin{prop} \label{propidentexcessgenre10}
Let $S\subset X\subset \Gr(2,V_7)$ be a general $K3$ surface of degree 18.\ The morphism~$\phi$ from Lemma~\ref{lem513} induces an isomorphism between 
 $ S^{[2]} $ and the    zero-locus in $K_{\sigma_0}$ of a general  section of
the excess bundle  $\cF=\cQ_2\otimes((\bw2\cE_4)/\cQ_3)$.
\end{prop}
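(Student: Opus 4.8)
The plan is to exhibit $\phi$ as an isomorphism onto the zero-locus $Z(s)\subset K_{\sigma_0}$ of the excess section $s=\overline{\sigma'}$ attached to a general $1$-parameter deformation, where $S$ is the degree-$18$ K3 surface determined by that deformation. Concretely, I would first establish the set-theoretic containment $\phi(S^{[2]})\subseteq Z(s)$, and then upgrade it to an isomorphism by a dimension count: since $\phi$ is injective by Lemma~\ref{lem513} and $Z(s)$ is smooth of pure dimension $4$ (the bundle $\cF$ is globally generated, so by Proposition~\ref{proexcess} the zero-locus is the smooth central fiber of $\cK^0\to\Delta$), the closed irreducible $4$-dimensional subvariety $\phi(S^{[2]})$ must fill an entire irreducible component of $Z(s)$, after which Zariski's main theorem makes $\phi$ an isomorphism onto it.

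The heart of the argument is the containment, which rests on an explicit description of $s=\overline{\sigma'}$. Setting $\sigma':=\tfrac{\partial\sigma_t}{\partial t}\big\vert_{t=0}\in\bw3V_{10}^\vee=\bw3(V_7\oplus W_3)^\vee$, I would single out its \emph{mixing component} $\psi\in W_3^\vee\otimes\bw2V_7^\vee=\Hom(W_3,\bw2V_7^\vee)$. At a point $(V_4,W_2)\in K_{\sigma_0}=K'_\alpha\times\P(W_3^\vee)$, the decomposition \eqref{eqdecomp12juin} restricts to $\bw3\cE_6=\bw3V_4^\vee\oplus(\bw2V_4^\vee\otimes W_2^\vee)\oplus(V_4^\vee\otimes\bw2W_2^\vee)$, and since by Proposition~\ref{proexcessalphabeta} the bundle $\cF$ is a quotient of the middle summand only, the sole part of $\sigma'$ surviving into $\cF$ is $\psi$: the section $\overline{\sigma'}$ sends $w\in W_2$ to the class of $\psi(w)\vert_{V_4}$ in $\bw2V_4^\vee/\Im(\alpha\,\lrcorner\,)$. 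Through $\psi$ the subspace $\psi(W_3)\subset\bw2V_7^\vee$ is exactly the space of extra Plücker equations cutting $S$ out of $X$, so I may take $I_S=I_X\oplus\psi(W_3)$. With this reading the defining sequence \eqref{eq20} of $\cS_2$ says precisely that at $z=\{[V_2],[V_2']\}$, with $V_4=\langle V_2,V_2'\rangle$, the fiber $\cS_{2,z}=W_2$ consists of those $w$ with $\psi(w)\vert_{V_4}\in\Im(\alpha\,\lrcorner\,\vert_{V_4})$; this is exactly the vanishing of $\overline{\sigma'}$ at $\phi(z)=(V_4,W_2)$, giving $\phi(S^{[2]})\subseteq Z(\overline{\sigma'})$.

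For the genericity assertion I would compute $H^0(K_{\sigma_0},\cF)$ by Künneth. The Euler sequence gives $H^0(\P(W_3^\vee),\cQ_2)=W_3^\vee$, while from $0\to\cQ_3\to\bw2\cE_4\to(\bw2\cE_4)/\cQ_3\to0$, the identification $H^0(\Gr(4,V_7),\bw2\cE_4)=\bw2V_7^\vee$, and the injection $\alpha\,\lrcorner\,\colon V_7=H^0(\cQ_3)\hookrightarrow\bw2V_7^\vee$ with image $I_X$, one gets (modulo a standard vanishing on $K'_\alpha$) $H^0((\bw2\cE_4)/\cQ_3)=\bw2V_7^\vee/I_X$, hence $H^0(\cF)\cong W_3^\vee\otimes(\bw2V_7^\vee/I_X)$. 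A general section — which I may take in the image of the surjective evaluation $\bw3V_{10}^\vee\to H^0(\cF)$ factoring through the mixing component — thus corresponds to a general $\psi$ modulo $I_X$, i.e. to a general $3$-dimensional space of extra Plücker equations, i.e. to a general degree-$18$ K3 surface $S\subset X$; this is the $S$ of the statement.

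Combining the steps, $\phi\colon S^{[2]}\to Z(s)$ is an injective morphism from a smooth irreducible projective fourfold into the smooth fourfold $Z(s)$. Connectedness of $Z(s)$ comes for free by identifying it, via $\overline{\sigma'}=s$, with the central fiber of the smooth family $\cK^0\to\Delta$ of Proposition~\ref{proexcess}, a flat limit of the irreducible fourfolds $K_{\sigma_t}$; so $Z(s)$ is irreducible, $\phi(S^{[2]})=Z(s)$, and $\phi$ is an isomorphism by Zariski's main theorem (the target being normal). In particular $\phi$ is a closed embedding, as announced before the statement. I expect the main obstacle to lie in the fiberwise matching of $\overline{\sigma'}$ with the sequence \eqref{eq20} in the second paragraph — correctly tracking the dualities and the passage modulo $\Im(\alpha\,\lrcorner\,)$ — rather than in the cohomological bookkeeping or the final descent to an isomorphism.
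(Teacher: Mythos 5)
Your proposal is correct, and its core coincides with the paper's own proof: both rest on the identification $H^0(K_{\sigma_0},\cF)\cong W_3^\vee\otimes(\bw2V_7^\vee/V_7)$ (so a general section $s$ is the same datum as a general $3$-dimensional space of extra Pl\"ucker equations, hence a general $S\subset X$), on the fiberwise observation that $s$ vanishes at $([V_4],[W_2])$ exactly when the image of $s(W_2)$ in $\bw2V_4^\vee$ lies in $\Im(\alpha\,\lrcorner\,\colon V_7/V_4\to\bw2V_4^\vee)$ --- your matching of this with the sequence \eqref{eq20} defining $\cS_2$ is precisely the content of Lemma~\ref{lem520} --- and on injectivity of $\phi$ (Lemma~\ref{lem513}) plus Zariski's Main Theorem at the end. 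The one place where you genuinely diverge is how the inclusion $\phi(S^{[2]})\subseteq Z(s)$ is upgraded to equality. The paper closes this inside Lemma~\ref{lem520} by a degree comparison (``both are fourfolds of the same degree''), a claim left without explicit computation there; you instead deduce irreducibility of $Z(s)$ from Proposition~\ref{proexcess}: the zero-locus is the central fiber of the smooth proper family $\cK^0\to\Delta$, whose general fibers $K_{\sigma_t}$ are irreducible \hk\ fourfolds, so the central fiber is connected, hence irreducible since smooth, and the $4$-dimensional closed subvariety $\phi(S^{[2]})$ must then be all of $Z(s)$. Your route avoids any degree or Chern-class bookkeeping, at the price of invoking the degeneration once more; it does require (and you correctly supply) that a general section of $\cF$ is realized as $\overline{\sigma'}$ for a general $1$-parameter deformation, which holds because the evaluation map $\bw2V_7^\vee\otimes W_3^\vee\to H^0(K_{\sigma_0},\cF)$ is the natural surjection. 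Both closings are sound, so your proof is a valid variant of the paper's.
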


\begin{proof} The space of global  sections of $\cF$ is equal to  $W_3^\vee\otimes (\bw2V_7^\vee/V_7)$.\ We  identify  $V_7$ with~$I_X$.\ Choosing a general section~$s$ of $\cF$,  we thus get a $K3$ surface $S\subset X$ defined by the three-dimensional space of sections
  $\Im (W_3\to  H^0(X,\cO_X(1)))$.

 {Lemma~\ref{lem513} and the lemma below imply that $\phi$ is an injective morphism between  $S^{[2]}$ and the smooth zero-locus of $s$.\ By Zariski's Main Theorem, it is an isomorphism, which proves the proposition.}
\end{proof}

\begin{lemm}\label{lem520}
The
zero-locus of $s$ coincides with the image $\phi(S^{[2]} )\subset K_{\sigma_0}$.
\end{lemm}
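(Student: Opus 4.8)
The plan is to compare the two subvarieties of $K_{\sigma_0}\cong K'_\alpha\times\P(W_3^\vee)$ point by point, using the explicit description of the excess bundle $\cF=\cQ_2\otimes((\bw2\cE_4)/\cQ_3)$ from Proposition~\ref{proexcessalphabeta} together with the restriction maps $r_z$ introduced in Lemma~\ref{le510}. The first step is to read off the vanishing condition of $s$ at a point $([V_4],[W_2])$ of $K'_\alpha\times\P(W_3^\vee)$. By the descriptions in Proposition~\ref{proexcessalphabeta}, the fibre of $\cF$ at such a point is $W_2^\vee\otimes\bigl(\bw2V_4^\vee/r_{V_4}(I_X)\bigr)$, where $r_{V_4}\colon I_S\to\bw2V_4^\vee$ is the restriction map and $r_{V_4}(I_X)$ is the image of $\cQ_3$. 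Under the identification $H^0(\cF)=W_3^\vee\otimes(\bw2V_7^\vee/I_X)$ established in the proof of Proposition~\ref{propidentexcessgenre10}, the section $s$ corresponds to the inclusion $\wt s\colon W_3\hookrightarrow\bw2V_7^\vee/I_X$ whose image is $I_S/I_X$, and the evaluation of $s$ at $([V_4],[W_2])$ is the composite
\begin{equation*}
W_2\hookrightarrow W_3\xrightarrow{\ \wt s\ }\bw2V_7^\vee/I_X\lra\bw2V_4^\vee/r_{V_4}(I_X),
\end{equation*}
the last arrow being restriction to $\bw2V_4$. Since $\wt s(w)=\eta_w\bmod I_X$ for the Pl\"ucker equation $\eta_w\in I_S$ attached to $w$, this shows that $s$ vanishes at $([V_4],[W_2])$ if and only if $r_{V_4}(\eta_w)\in r_{V_4}(I_X)$ for every $w\in W_2$.

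With this criterion the inclusion $\phi(S^{[2]})\subseteq Z(s)$ is immediate from the construction of $\cS_2$. For $z\in S^{[2]}$ with $V_4=\cS_{4,z}$ and $W_2=\cS_{2,z}$, the exact sequence~\eqref{eq20} gives $\cS'_{6,z}=\Ker(r_z)$, and by definition $W_2=\pr_{W_3}(\Ker r_z)$, where $\pr_{W_3}\colon I_S=I_X\oplus W_3\to W_3$ is the projection. Hence for each $w\in W_2$ there is $\eta_X\in I_X$ with $r_{V_4}(\eta_w)=-r_{V_4}(\eta_X)\in r_{V_4}(I_X)$, so $s$ vanishes at $\phi(z)=([V_4],[W_2])$ by the criterion above. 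This realizes $\phi(S^{[2]})$ as a closed irreducible $4$-dimensional subvariety of $Z(s)$; note that by Lemma~\ref{lem513} the map $\phi$ is injective, so $\dim\phi(S^{[2]})=4$.

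For the reverse inclusion I would reconstruct a point of $S^{[2]}$ from a general $([V_4],[W_2])\in Z(s)$. Since $\cF$ is globally generated (Proposition~\ref{proexcessalphabeta}), Proposition~\ref{proexcess} shows that for general $s$ the locus $Z(s)$ is smooth of pure dimension $10-6=4$. Given $([V_4],[W_2])\in Z(s)$ with $V_4$ in general position, the isotropic space $V_4$ cuts out, exactly as in the proof of Lemma~\ref{le510}, the conic $C=\Gr(2,V_4)\cap X$ of $\alpha$-isotropic $2$-planes, with $H^0(C,\cO_C(1))\cong\bw2V_4^\vee/r_{V_4}(I_X)$. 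The vanishing criterion says precisely that $W_2$ is the kernel of the restriction map $W_3\to H^0(C,\cO_C(1))$, $w\mapsto r_{V_4}(\eta_w)$, whose image is therefore one-dimensional. That image cuts out on $C\cong\P^1$ the length-$2$ subscheme $z=S\cap\Gr(2,V_4)$, and for general data one has $\la z\ra=V_4$; applying the forward direction to $z$ then gives $\cS_{2,z}=\{w:r_{V_4}(\eta_w)\in r_{V_4}(I_X)\}=W_2$, so $([V_4],[W_2])=\phi(z)\in\phi(S^{[2]})$. Since $\phi(S^{[2]})$ and $Z(s)$ are both pure of dimension $4$, $Z(s)$ is smooth (so its irreducible components are its connected components), and $\phi(S^{[2]})$ is irreducible, any component of $Z(s)$ whose generic point reconstructs must coincide with $\phi(S^{[2]})$; hence $Z(s)=\phi(S^{[2]})$.

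I expect the reverse inclusion to be the main obstacle. Concretely, one must verify that the rank conditions of Lemma~\ref{le510} persist at the relevant points (so that $r_{V_4}$ has rank $4$, $\Ker(r_{V_4})\cap I_X$ is $4$-dimensional, and $W_2$ is genuinely $2$-dimensional), that the reconstructed subscheme $z$ is reduced, lies on $S$, and spans $V_4$, and that the closed locus of $Z(s)$ where this reconstruction degenerates is nowhere dense, so that no spurious component of $Z(s)$ escapes the identification with $\phi(S^{[2]})$. All of these are general-position statements for general $s$ (equivalently general $S$), handled by the genericity of $\alpha$ and the fact that a general $S$ contains no lines, together with the transversality supplied by Proposition~\ref{proexcess}; the dimension count and the smoothness of $Z(s)$ then upgrade the generic identification to the set-theoretic equality asserted by the lemma.
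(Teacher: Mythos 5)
Your identification of the fibre of $\cF$ at $([V_4],[W_2])$, of $H^0(K_{\sigma_0},\cF)$, the resulting vanishing criterion ($s$ vanishes at $([V_4],[W_2])$ if and only if $r_{V_4}(\wt s(W_2))\subseteq r_{V_4}(I_X)$), and the inclusion $\phi(S^{[2]})\subseteq Z(s)$ read off from the definition of $\cS_2$ coincide exactly with the first part of the paper's proof. The divergence is in the concluding step, and that is where your argument has a genuine gap.

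You finish by reconstructing $z\in S^{[2]}$ from a \emph{general} point of $Z(s)$ and then asserting that the locus where the reconstruction degenerates is nowhere dense in $Z(s)$, so that ``no spurious component escapes.'' That assertion is precisely what needs proof, and it is not a general-position statement of the usual kind: the bad loci --- where $W_3\to\bw2V_4^\vee/r_{V_4}(I_X)$ has rank $0$ instead of $1$, where the line $\P(\Ker(r_{V_4}\vert_{I_S}))$ lies inside $\Gr(2,V_4)$, where $S\cap\Gr(2,V_4)$ has excess length --- are defined in terms of $s$ itself, so the transversality coming from Proposition~\ref{proexcess} (which concerns $Z(s)$ against \emph{fixed} subvarieties) gives no control over them, and a priori an entire component of $Z(s)$ could consist of degenerate points, about which your argument then concludes nothing. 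The paper closes the proof with a different, one-line idea absent from your proposal: a degree comparison. Both $\phi(S^{[2]})$ and $Z(s)$ are fourfolds of the same degree $1452$ --- the first because $\phi$ is injective and $q_{S^{[2]}}(2\L2-5\delta)=22$, the second because $Z(s)$ is smooth of the expected dimension, so $[Z(s)]=c_6(\cF)\cap[K_{\sigma_0}]$ is the localized top Chern class of $\bw3\cE_6$, whose Pl\"ucker degree is that of a Debarre--Voisin fourfold --- and an inclusion of pure fourfolds of equal degree is an equality, with no case analysis at all. Alternatively, your reconstruction can be upgraded to hold at \emph{every} point of $Z(s)$: rank $0$ would force $S$ to contain the conic $X\cap\Gr(2,V_4)$, and the degenerate line case would force $S$ to contain a line, both impossible since $\Pic(S)=\Z L$ with $L^2=18$ (every curve on $S$ has degree at least $18$), while the spanning condition $\cS_{4,z}=V_4$ is automatic from the surjectivity of $V_7^\vee\otimes\cO_{S^{[2]}}\to\cT_{\cE_2}$. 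But that completion requires ``no conics'' (not merely ``no lines,'' which is all you invoke) and the fact that $I_X\to\bw2V_4^\vee$ has rank exactly $3$ at \emph{every} $[V_4]\in K'_\alpha$ (no $5$-dimensional $\alpha$-isotropic subspaces); neither appears in your write-up. Finally, your requirement that the reconstructed $z$ be reduced is unnecessary: non-reduced length-$2$ subschemes are perfectly good points of $S^{[2]}$.
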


\begin{proof} Let $[V_4]\in K'_\alpha$ and let $W_2\subset W_3$ be of dimension $2$.\
Assume that the section $s$ of~$\cF$ vanishes at $([V_4],[W_2])$.\ Lifting $s$ to an element of $\Hom (W_3,\bw2V_7^\vee )$, this means by
the description of $\cF$ given in Proposition \ref{proexcessalphabeta} that the image of the two-dimensional space
$s(W_2)\subset \bw2V_7^\vee$ in~$\bw2V_4^\vee$  is contained in the image $V_3\subset \bw2V_4^\vee$ of
the natural map $\alpha\,\lrcorner\,\bullet\colon V_7/V_4\to \bw2V_4^\vee$.

The intersection of $X$ with the Grassmannian $\Gr(2,V_4)$  is   defined
by the three Pl\"ucker equations given by $V_3$.\ The existence of $W_2$ as above is equivalent to saying that $V_3$ and $W_3$ span only a subspace of dimension $4$ of $\bw2V_4^\vee$, or, equivalently, that the length of the subscheme of~$\Gr(2,V_4)$ defined by $V_3$ and $W_3$ is at least~$2$.\ This subscheme is equal to  $S\cap \Gr(2,V_4)$.\ Furthermore, the space $W_2$ is contained in   the subspace of $W_3$  vanishing on the conic defined by $X\cap \Gr(2,V_4)$.\
Looking at the construction of the injective morphism $\phi\colon S^{[2]}\to K_{\sigma_0}$ given in  Lemma~\ref{lem513}, we conclude  that $\phi(S^{[2]})$ is contained in
the vanishing locus of~$s$.\ As both are   fourfolds of the same degree, they must agree.\
This proves the lemma.
\end{proof}


\section{The HLS divisor $\cD_{10}$}
\label{secpf6} 

Let $(S,L)$ be a general K3 surface of degree $10$.\
As we saw in Section~\ref{secrappelonpic},  the Hilbert square~$S^{[2]}$  with the polarization $2\L2-3\delta$ is a limit of Debarre--Voisin varieties.\ We will first construct a
\mbox{rank-$4$} vector bundle on $S^{[2]}$ mapping it to $\Gr(6,10)$ and then construct a trivector
$\sigma_0$ vanishing on the image.\ It turns out that $\sigma_0$ is $\SL(2)$-invariant
and that the Debarre--Voisin variety  $K_{\sigma_0}$ only depends  on a certain
 $\SL(2)$-invariant Fano threefold  
$X\subset\Gr(2,5)$ in which $S$ naturally sits.\ The rank-$4$ vector bundle is not globally
 generated and $K_{\sigma_0}$ is not irreducible in this case, but we nevertheless conclude in Theorem \ref{theopourgenre6} that a $1$-parameter degeneration to $\sigma_0$
 expresses a general pair $(S^{[2]},2\L2-3\delta)$ as a limit of Debarre--Voisin varieties.

\subsection{The Fano threefold $X$ 	and $K3$ surfaces of degree $10$}\label{secXg6}
Let $V_5$ be a $5$-dimensional vector space and let $W_3\subset \bw2 V_5$ be a general  $3$-dimensional vector subspace.\
Let $X\subset \Gr(2,V_5^\dual)$ be the Fano threefold of index $2$ and degree $5$ defined by the   Pl\"ucker equations in $W_3$.\ 
It has no moduli, the variety of lines contained in $X$ is a smooth surface  isomorphic to
$\P^2$ (\cite[Corollary~(6.6)(ii)]{iskovskikh}), and the automorphism group of $X$ is $\PGL(2)$.\ In fact, if $U_2$ is the standard self-dual irreducible representation of $\SL(2)$ and $V_5:=\Sym^4\! U_2$, there is a direct sum decomposition
 \begin{equation}\label{eqdecompsl2} 
 V_{10}:=\bw2V_5=  V_7 \oplus W_3 
\end{equation}
into irreducible representations, with  $V_7=\Sym^6\! U_2$ and  $W_3=\Sym^2\! U_2$, so that   $X$ is the unique  $\SL(2)$-invariant section of $\Gr(2,V_5^\dual)$ by a linear subspace of codimension 3 (\cite[Section~7.1]{chsh}).

A general polarized $K3$ surface $(S,L)$ of degree~$10$ is obtained as a quadratic section of $X$ (\cite{mukai}).\
Let $\cE_2$ be the restriction to $X$ of the dual of the tautological subbundle on $\Gr(2,V_5^\dual)$ \rem{(it is stable and rigid))}.\  Lemma \ref{ledescend} gives us a  rank-$4$ vector bundle   $ \cK_{\cE_2} $ on $ X^{[2]}$ whose restriction~$\cQ_4 $ to $\SS$ satisfies 
 $H^0(S^{[2]} ,\cQ_4)\isom \bw2V_5$ and $\det(\cQ_4)=2\L2-3\delta$.

\begin{rema} \label{chernn}
Using the package
  Schubert2 of Macaulay2 (\cite{macaulay2}; the code can be found in \cite{comp}), one checks that
the vector bundle $\cQ_4$  has the same  Segre numbers 
\begin{equation*}
s_1^4 = 1452,\ s_1^2 s_2 = 825,\
s_1 s_3= 330,\ s_2^2 = 477,\ s_4 = 105 
\end{equation*}
as the rank-$4$ tautological quotient bundle on   Debarre--Voisin varieties
$K_\sigma\subset \Gr(6,10)$, computed in \cite[(11)]{devo}.\ 
The pair
 $(S^{[2]},\cQ_4)$ is therefore a candidate   to be a limit of Debarre--Voisin varieties (as a subvariety of $\Gr(6,10)$).\ One difficulty in the present
  case is  that
the vector bundle $\cQ_4$ is not generated by its sections (Proposition~\ref{propbirat}(b)).\ This explains why in Theorem \ref{theohasgenus6}, the
  central fiber is only birationally isomorphic to $S^{[2]}$.
\end{rema}

Since $W_3$ has no rank-$2$ elements,  for any
$[x]\in \P(V_5)$, the subspace
$$x\wedge W_3\subset \bw3V_5\cong \bw2 V_5^\dual$$
has dimension $3$.\ Set
\begin{equation*}
V_{4,[x]}:=x\wedge V_5\subset  \bw2 V_5.
\end{equation*}
We have
$\langle V_{4,[x]},x\wedge W_3\rangle=0$.\ 
Setting 
\begin{equation*}
 V_{7,[x]} :=(x\wedge W_3)^\perp\subset \bw2 V_5,
\end{equation*}
 we thus have
 $V_{4,[x]}\subset V_{7,[x]}\subset \bw2 V_5$.\
 Finally, we set
\begin{equation}\label{defk1}
K_1:=\{ [W_6]\in   \Gr(6,\bw2 V_5)\mid \exists\, [x]\in \P(V_5)\quad V_{4,[x]}\subset W_6\subset V_{7,[x]}\}.
\end{equation}
We observe that $K_1$ is smooth of dimension  $6$.

\begin{prop} \label{propbirat}
{\rm{(a)}}
 The space $\bw2 V_5$ of global sections of the rank-$4$ vector bundle   $ \cK_{\cE_2} $ on $ X^{[2]}$ induces a
 birational map
\begin{equation*}
\phi\colon X^{[2]}\dra K_1\subset  \Gr(6,\bw2 V_5)
\end{equation*}
which is regular  outside the $4$-dimensional locus in $ X^{[2]}$ consisting of length-$2$ subschemes
 contained in a line contained in $X$.
 
\noindent {\rm{(b)}} If $S$ is general, the restriction of $\phi$ to $\SS$ is the map induced by the global sections of $\cQ_4$ and it is regular  outside a smooth surface isomorphic to the surface of lines in $X$.
\end{prop}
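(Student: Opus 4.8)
The plan is to compute the rational map $\phi$ fibre by fibre from Lemma~\ref{ledescend}, check that its image lands in $K_1$, and then produce an explicit birational inverse by intersecting a line with the Pl\"ucker quadric $\Gr(2,\overline V_4)$, where $\overline V_4:=V_5/\langle x\rangle$. By Lemma~\ref{ledescend}, $H^0(X^{[2]},\cK_{\cE_2})\cong\bw2 H^0(X,\cE_2)=\bw2 V_5$; over the open locus of reduced $z=\{p,p'\}$, with $p=[U_2]$, $p'=[U_2']$ and $U_2,U_2'\subset V_5^\vee$, the fibre of $\cK_{\cE_2}$ is canonically $U_2^\vee\otimes U_2^{\prime\vee}$ and the evaluation map is
$$\rho_z\colon\bw2 V_5\lra U_2^\vee\otimes U_2^{\prime\vee},\qquad v\wedge w\longmapsto (v|_{U_2})\otimes(w|_{U_2'})-(w|_{U_2})\otimes(v|_{U_2'}),$$
the composite of $\bw2 V_5\to\bw2(U_2^\vee\oplus U_2^{\prime\vee})$ with the projection onto the middle summand. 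I would first observe that $\rho_z$ is surjective precisely when $U_2\cap U_2'=0$, in which case $V_5\to U_2^\vee\oplus U_2^{\prime\vee}$ is onto with one-dimensional kernel $\langle x\rangle$, $x:=(U_2+U_2')^\perp$, so $\phi(z)=W_6:=\Ker\rho_z$ is six-dimensional. Since $x\wedge V_5=V_{4,[x]}$ is the kernel of $\bw2 V_5\to\bw2\overline V_4$, a short computation gives $V_{4,[x]}\subset W_6$ and $\overline W_6:=W_6/V_{4,[x]}=\langle\ell_p,\ell_{p'}\rangle$, where $\ell_p:=\bw2(U_2^\perp/\langle x\rangle)$ and $\ell_{p'}:=\bw2(U_2^{\prime\perp}/\langle x\rangle)$ are the Pl\"ucker points of $\Gr(2,\overline V_4)$ (here one uses $U_2^\perp=\Ker(V_5\to U_2^\vee)$). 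Equipping $\bw2\overline V_4$ with the wedge quadratic form $q$, the inclusion $W_6\subset V_{7,[x]}=(x\wedge W_3)^\perp$ amounts to $\overline W_6\perp_q\overline W_3$, while the condition $p\in X$ (that is, $\bw3 U_2^\perp\perp W_3$, using $\bw3 U_2^\perp=x\wedge\ell_p$) amounts to $\ell_p\in\overline W_3^\perp$; as $\ell_p,\ell_{p'}\in\overline W_3^\perp$ we conclude $[W_6]\in K_1$.

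For the birationality, given a general $[W_6]\in K_1$ I would recover $z$ as follows. The point $[x]$ is the unique one with $V_{4,[x]}\subset W_6$, since two distinct such $x,x'$ would force $\dim(V_{4,[x]}+V_{4,[x']})=7>6$. Then $\overline W_6=W_6/V_{4,[x]}$ is a $2$-plane contained in $\overline W_3^\perp$, so the line $\P(\overline W_6)$ meets the quadric $\Gr(2,\overline V_4)$ in two points $\ell_p,\ell_{p'}$; these recover $U_2^\perp,U_2^{\prime\perp}$ (their preimages in $V_5$ containing $x$) and hence $U_2,U_2'$, which lie in $X$ precisely because $\ell_p,\ell_{p'}\in\overline W_3^\perp$. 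This gives a rational inverse $K_1\dra X^{[2]}$ on a dense open set, so $\phi$ is birational onto $K_1$ (both irreducible of dimension $6$).

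The map $\phi$ is regular wherever $\cK_{\cE_2}$ is generated by global sections, and by the computation above this fails, for reduced $z$, exactly when $U_2\cap U_2'\ne0$. The decisive (and very short) point is that $X$ is a \emph{linear} section of $\Gr(2,V_5^\vee)$: if $U_2\cap U_2'=U_1\ne0$, then every Pl\"ucker-linear equation in $W_3$ vanishing at $\bw2 U_2$ and $\bw2 U_2'$ vanishes along the whole line $\{U_2'':U_1\subset U_2''\subset U_2+U_2'\}$, which therefore lies in $X$; conversely such $z$ obviously have $U_2\cap U_2'\ne0$. Thus the indeterminacy locus is exactly the four-dimensional family of length-$2$ subschemes supported on lines contained in $X$ (the non-reduced strata being handled by the same argument applied to $\tau_E^*\Sym^2\cE_2$), which proves (a). For (b), since $\cQ_4=\cK_{\cE_2}|_{S^{[2]}}$ and the restriction $H^0(X^{[2]},\cK_{\cE_2})\to H^0(S^{[2]},\cQ_4)$ is the natural isomorphism $\bw2 V_5\to\bw2 V_5$, the map $\phi|_{S^{[2]}}$ is the one defined by the global sections of $\cQ_4$ and is regular off the intersection of the lines locus with $S^{[2]}$. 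For general $S=X\cap Q$ no line of $X$ lies on $S$, so each line $L$ of the $\P^2$ of lines in $X$ meets $S$ in a length-$2$ subscheme $L\cap S$, and $L\mapsto L\cap S$ embeds the surface of lines (injectively, as two points lie on at most one line) into $S^{[2]}$ as a smooth surface isomorphic to $\P^2$.

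The main work, I expect, will be the fibrewise identification of the evaluation map coming from Lemma~\ref{ledescend}, especially over the non-reduced locus, together with the verification that the image is exactly $K_1$ through a genuine inverse; by contrast, the conceptually decisive step is the brief linear-section argument pinning down the non-generation locus as the length-$2$ subschemes lying on lines of $X$.
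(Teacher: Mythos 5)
Your proposal is correct and follows essentially the same route as the paper's proof: the fibrewise identification of the evaluation map of $\cK_{\cE_2}$, surjectivity failing exactly when $U_2\cap U_2'\ne\{0\}$ (with the linear-section argument identifying this with length-$2$ schemes on lines of $X$, which the paper leaves implicit in its ``or, equivalently, in $X$''), the orthogonality computation placing the image in $K_1$, and the inverse obtained from the two points where the line $\P(W_6/V_{4,[x]})$ meets the Pl\"ucker quadric of $\Gr(2,V_5/\langle x\rangle)$ --- this last step is the paper's secant-lines-of-the-conic argument, phrased in the quotient $V_5/\langle x\rangle$ rather than via restriction to $V_4^\vee$.

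The one place where you fall short of the paper is the nonreduced locus: the parenthetical ``the non-reduced strata being handled by the same argument applied to $\tau_E^*\Sym^2\cE_2$'' is not a substitute for the required computation, and this case is genuinely needed, both for the precise indeterminacy locus in (a) (which contains a $3$-dimensional family of nonreduced subschemes) and for (b) (where $L\cap S$ may be nonreduced when $L$ is tangent to $S$). The paper handles it by exhibiting the fiber of $\cK_{\cE_2}$ at a nonreduced point $z=([V_2],u)$, with $u\in\Hom(V_2,V_5^\vee/V_2)$, as an extension
\begin{equation*}
0\to \Sym^2 V_2^\vee\to \cK_{\cE_2,z}\to \bw2 V_2^\vee\to 0,
\end{equation*}
observing that the composition of the evaluation $\bw2 V_5\to\cK_{\cE_2,z}$ with the projection to $\bw2 V_2^\vee$ is the (surjective) restriction map, and checking that the induced map of its kernel to $\Sym^2 V_2^\vee$, which factors through $u^\vee\otimes\Id$, is surjective if and only if $u$ has rank $2$, i.e. exactly when the line spanned by $z$ is not contained in $\Gr(2,V_5^\vee)$ (equivalently, by the same linear-section argument, not contained in $X$). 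Supplying this computation would make your argument complete.
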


\begin{proof} 
At a point of $ X^{[2]}$ corresponding to different vector subspaces $V_2,V_2' \subset V_5^\dual$, the evaluation map of $ \cK_{\cE_2} $ is the restriction 
$$\bw2V_5\lra V_2^\dual\otimes V_2^{\prime\dual}.$$
It is surjective if and only if 
$V_2\cap V_2'=\{0\}$, which means exactly that the line joining $[V_2]$ and~$[V'_2]$
 is not contained in  $\Gr(2, V_5^\vee)$ or, equivalently, in $X$.
 
\rem{At a} nonreduced point $z=([V_2],u)$,  where $u\in\Hom (V_2,V_5^\vee/V_2)$, the fiber $\cK_{\cE_2,z}$ appears  in an extension
$$
0\to \Sym^2\!V_2^\vee 
 \to \cK_{\cE_2,z} \xrightarrow{\ a\ }  \bw2V_2^\vee\to 0
$$
The composition $r\colon\bw2V_5\to \bw2V_2^\vee$ of the evaluation map $\bw2V_5\to \cK_{\cE_2,z}$ at $z$ with $a$ is given by restriction, hence is surjective, and its kernel maps to $\Sym^2\!V_2^\vee$ via the composite map
$$\Ker(r)\to (V_5^\vee/V_2)^\vee\otimes V_2^\vee \xrightarrow{\ u^\vee\otimes \Id\ } V_2^\vee \otimes V_2^\vee
\to \Sym^2\!V_2^\vee.$$
 This composite  map (hence also the evaluation map   at $z$) is surjective if and only if    $u$ has (maximal) rank $2$, which means exactly that the line spanned by $z$ is contained in  $\Gr(2, V_5^\vee)$ or, equivalently, in $X$.\ This proves the first part of (a), and also (b), since a general $S$ contains no lines.

It remains to prove that $\phi$ is birational onto $K_1$.\ Let $[W_6]=\phi([V_2],[V'_2])$.\
 If $V_2$ and $V_2'$ are complementary, they span a subspace $V_4^\dual\subset V_5^\dual$ of dimension~$4$.\ Denoting  by $x\in V_5$ a linear form defining $V_4^\vee$, one has $V_{4,[x]}\subset W_6$.\  Next, $W_3\vert_{ V_4^\dual}$  vanishes on $\bw2V_2$ and $\bw2{V_2'}$, hence
 $$W_3\vert_{  V_4^\dual}\subset  V_2^\dual\otimes V_2^{\prime\dual}.$$
 The vanishing of $W_6$ in $ V_2^\dual\otimes V_2^{\prime\dual}$ thus implies that
    $W_{6}\vert_{V_4^\dual}$ is  orthogonal to
 $W_3\vert_{ V_4^\dual}$ for the natural pairing on $\bw2V_4$.\ Equivalently,  $W_6$ is orthogonal to
$ x\wedge W_3$ for the pairing between
$\bw2V_5$ and~$ \bw3V_5$.\
 This shows that $\Im (\phi)$ is contained in
$K_1$.

Conversely, let $[W_6]$ be a general element of
$K_1$.\ Then
$$V_{4,[x]}\subset W_6\subset V_{7,[x]}$$
for some  $[x]\in \rem{\P}(V_5)$, so that $W_6\vert_{V_4^\vee}$ has dimension $2$, where $V_4^\vee$ is defined by $x$.

\ Since $W_6 $ is orthogonal to $ x\wedge W_3$, it follows that
$W_{6}\vert_{  V_4^\dual} $ is orthogonal to $ W_3\vert_{ V_4^\dual}$.\ The $3$-dimensional space $W_3\vert_{ V_4^\dual}\subset \bw2V_4$ defines
a conic $X\cap \Gr(2,V_4^\dual)$ in the Grassmannian $\Gr(2,V_4^\dual)$ and it is easy to check that
a $2$-dimensional subspace $W_2' \subset \bw2V_4$ cuts out two points on this conic
if and only $W_2'\perp W_3\vert_{  V_4^\dual}$.\ This shows that $K_1$ is contained in $\Im(\phi)$.

The proof that $\phi$ is birational follows from the last argument.\ Indeed,  pairs of    points in the conic above
correspond bijectively to  two-dimensional subspaces of $W_3\vert_{  V_4^\dual}$, at least if the conic is nonsingular.
\end{proof}


 \subsection{The $\SL(2)$-invariant trivector}\label{se62}

We now construct a trivector $\sigma_0$ on  $V_{10}=\bw2V_5$ such that~$K_1$ is a generically smooth component of the Debarre--Voisin
variety $K_{\sigma_0}$.

\begin{prop}\label{proexistencesigma} 
There exists a unique trivector
$\sigma_0\in \bw3V_{10}^\dual$ such that,
for any $[x]\in \P(V_5)$,
the restriction
$\sigma_{0}\vert_{ V_{7,[x]}}$ comes from a nonzero element of $\bw3(V_{7,[x]}/V_{4,[x]})^\dual$.\ This trivector   is invariant under \rem{the} $\SL(2)$-action described in Section~\ref{secXg6}.
\end{prop}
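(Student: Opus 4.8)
The plan is to translate the pointwise linear conditions defining $\sigma_0$ into a single cohomological statement on $\P(V_5)$, from which existence and uniqueness fall out simultaneously. Over $\P(V_5)$ form the flag of subbundles $\cV_4\subset\cV_7\subset V_{10}\otimes\cO$ whose fibres at $[x]$ are $V_{4,[x]}=x\wedge V_5$ and $V_{7,[x]}=(x\wedge W_3)^\perp$. Since $W_3$ contains no nonzero decomposable bivector, the map $W_3\to\bw3 V_5$, $w\mapsto x\wedge w$, is injective for every $x$, so $\cV_7$ has constant rank $7$, the rank-$3$ bundle $\cN:=\cV_7^\perp$ (fibre $x\wedge W_3\subset V_{10}^\dual$) is isomorphic to $W_3\otimes\cO(-1)$, and $\cV_4$ has constant rank $4$ with $\cV_4\subset\cV_7$. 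The requirement that $\sigma_0\vert_{V_{7,[x]}}$ factor through $V_{7,[x]}/V_{4,[x]}$ says precisely that, after restriction, $\sigma_0$ lands in the line subbundle $\cL^\dual\subset\bw3\cV_7^\dual$, where $\cL:=\bw3(\cV_7/\cV_4)$. A short determinant computation, $\det\cV_4=\det\cV_7=\cO(-3)$ (the first from $0\to\cO(-2)\to V_5\otimes\cO(-1)\to\cV_4\to 0$, the second from $\det\cV_7=\det\cN$ via $\cN\cong W_3\otimes\cO(-1)$), yields the crucial identity $\cL\cong\cO_{\P(V_5)}$.

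Write $\rho\colon\bw3 V_{10}^\dual\to H^0(\P(V_5),\bw3\cV_7^\dual)$ for the restriction map and $\cR:=\ker\bigl(\bw3 V_{10}^\dual\otimes\cO\thra\bw3\cV_7^\dual\bigr)$ for the associated kernel bundle, so that the solution space is $R=\rho^{-1}\bigl(H^0(\cL^\dual)\bigr)$. The entire statement reduces to the two vanishings $H^0(\cR)=H^1(\cR)=0$: the first makes $\rho$ injective, the second, through the long exact sequence of $0\to\cR\to\bw3 V_{10}^\dual\otimes\cO\to\bw3\cV_7^\dual\to 0$, makes $\rho$ surjective. Then $\rho$ is an isomorphism and $R\cong H^0(\cL^\dual)=H^0(\cO)=\C$, giving at once existence and uniqueness up to scalar.

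The technical heart, and the step I expect to require the most care, is these two vanishings. I would filter $\cR$ by the number of factors lying in $\cN=\cV_7^\perp$; since $V_{10}^\dual/\cN\cong\cV_7^\dual$, the graded pieces are $\cN\otimes\bw2\cV_7^\dual$, $\bw2\cN\otimes\cV_7^\dual$ and $\bw3\cN$, that is $\bw2\cV_7^\dual(-1)\otimes W_3$, $\cV_7^\dual(-2)\otimes\bw2 W_3$ and $\cO(-3)$ (using $\cN\cong W_3\otimes\cO(-1)$ and $\bw3 W_3\cong\C$). Using the exact sequence $0\to W_3\otimes\cO(-1)\to V_{10}^\dual\otimes\cO\to\cV_7^\dual\to 0$ and the filtrations it induces on exterior powers, every graded piece is assembled from the twists $\cO(-1),\cO(-2),\cO(-3)$ of trivial bundles; Bott vanishing on $\P^4$ (namely $H^i(\P^4,\cO(d))=0$ for $0<i<4$, and $H^0(\cO(d))=0$ for $d<0$) then kills both $H^0$ and $H^1$ of each piece, whence $H^0(\cR)=H^1(\cR)=0$.

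Finally I record the two soft conclusions. The defining conditions are $\SL(2)$-equivariant, because $W_3\subset\bw2 V_5$ is a subrepresentation and hence $\cV_4,\cV_7$ are $\SL(2)$-linearised; thus $R$ is an $\SL(2)$-stable line, and since $\SL(2)$ has no nontrivial character it acts trivially, so $\sigma_0$ is $\SL(2)$-invariant. For nonvanishing, $\rho(\sigma_0)$ is a nonzero section of $\cL^\dual$ — nonzero because $\sigma_0\notin\ker\rho=0$ — and since $\cL^\dual\cong\cO$ it is nowhere vanishing, so the induced form on $V_{7,[x]}/V_{4,[x]}$ is nonzero for every $[x]$, as required.
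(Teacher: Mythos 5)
Your proof is correct and takes essentially the same route as the paper: the same bundles $\cV_4\subset\cV_7$ on $\P(V_5)$, the same determinant computation trivializing $\bw3(\cV_7/\cV_4)^\dual$, and the same key cohomological input (the paper's Lemma~\ref{leisosection}, that restriction induces an isomorphism $\bw3V_{10}^\dual \to H^0(\P(V_5),\bw3\cE_7)$, proved by the identical filtration of the kernel bundle and line-bundle vanishings on $\P^4$). Your only addition is to make explicit the $\SL(2)$-invariance argument (the solution line is $\SL(2)$-stable and $\SL(2)$ has no nontrivial characters), which the paper leaves implicit.
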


\begin{proof} 
Let $\cV_4$ be the rank-4 vector bundle on $\P(V_5)$  image of the
bundle map $V_5\otimes \cO_{\P(V_5)}(-1)
\to\bw2V_5\otimes \cO_{\P(V_5)}$ given by   wedge product.\ We define another vector bundle  
$\cV_7$ on $\P(V_5)$ by the exact sequence
\begin{equation} \label{eqexact228juin}
0\to \cV_7\to \bw2V_5\otimes \cO_{\P(V_5)}
\xrightarrow{\ a\ } W^\dual_3\otimes \cO_{\P(V_5)}(1)\to 0,
\end{equation}
where the  map $a$ at the point $[x]$ is the wedge product map with $x$ with value in
$\bw3V_5$, followed by the natural  map $\bw3V_5\cong \bw2 V_5^\dual\to W_3^\dual$.\ The fibers of $\cV_4$ and $\cV_7$ at $[x]\in \P(V_5)$ are the vector subspaces
$$V_{4,[x]}\subset V_{7,[x]}\subset \bw2V_5 $$
defined previously.\ There is an exact sequence
\begin{equation*} 
0\to \cO_{\P(V_5)}(-2)\to V_5\otimes \cO_{\P(V_5)}(-1)
\to \cV_4  \to 0
\end{equation*}
from which, together with  \eqref{eqexact228juin}, we deduce
 $\det (\cV_4)\isom \det ( \cV_7)\isom\cO_{\P(V_5)}(-3) $,
hence
 $$\det(\cV_7/\cV_4)=\cO_{\P(V_5)}.$$
 The line bundle $\bw3(\cV_7/\cV_4)^\dual$ thus has   a nowhere vanishing section
 $\omega$.\

 We set $\cE_7:=\cV_7^\dual$.\ Via  the inclusion $\bw3(\cV_7/\cV_4)^\dual\subset \bw3\cE_7$, the section ${\omega}$ provides a section   of~$\bw3\cE_7$.\ By Lemma \ref{leisosection} below, this section defines a unique trivector $\sigma_0$ with the desired properties, which proves the proposition.\end{proof}

 \begin{lemm}\label{leisosection}
 The restriction map
 \begin{equation*}
 \bw3(\bw2V_5^\dual)\otimes \cO_{\P(V_5)}\lra \bw3\cE_7
 \end{equation*}
 induces an isomorphism on global sections.
 \end{lemm}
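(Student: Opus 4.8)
The plan is to identify the map of the lemma with the third exterior power of a surjection of vector bundles on $\P(V_5)\isom\P^4$ and then reduce the claim to the acyclicity of a few line bundles. Dualizing the defining sequence~\eqref{eqexact228juin} gives
\begin{equation*}
0\to W_3\otimes\cO_{\P(V_5)}(-1)\to \bw2V_5^\dual\otimes\cO_{\P(V_5)}\to \cE_7\to 0,
\end{equation*}
and the morphism in the statement is exactly $\bw3$ of the surjection on the right (it restricts a trivector on $V_{10}=\bw2V_5$ to the subbundle $\cV_7$). Writing $\cG:=W_3\otimes\cO_{\P(V_5)}(-1)$, this short exact sequence induces a filtration of $\bw3(\bw2V_5^\dual)\otimes\cO_{\P(V_5)}$ whose top quotient is $\bw3\cE_7$; denoting by $F$ the kernel of the surjection onto $\bw3\cE_7$, one gets
\begin{equation*}
0\to F\to \bw3(\bw2V_5^\dual)\otimes\cO_{\P(V_5)}\to \bw3\cE_7\to 0,
\end{equation*}
where $F$ carries a filtration with successive quotients $\cG\otimes\bw2\cE_7$, $\bw2\cG\otimes\cE_7$, and $\bw3\cG$. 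Since $H^1(\P(V_5),\cO_{\P(V_5)})=0$, taking cohomology shows that the asserted isomorphism on global sections is equivalent to $H^0(F)=H^1(F)=0$, and for this it suffices to prove that $H^0$ and $H^1$ vanish for each of the three graded pieces.

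The main input is that every line bundle $\cO_{\P(V_5)}(-j)$ with $1\le j\le 4$ is acyclic on $\P^4$. This immediately handles $\bw3\cG=\bw3W_3\otimes\cO_{\P(V_5)}(-3)\isom\cO_{\P(V_5)}(-3)$. For the other two pieces I would first establish that $\cV_7(-3)$ is acyclic: twisting~\eqref{eqexact228juin} by $\cO_{\P(V_5)}(-3)$ gives
\begin{equation*}
0\to \cV_7(-3)\to \bw2V_5\otimes\cO_{\P(V_5)}(-3)\to W_3^\dual\otimes\cO_{\P(V_5)}(-2)\to 0,
\end{equation*}
whose outer terms are acyclic. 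By Serre duality ($\omega_{\P(V_5)}=\cO_{\P(V_5)}(-5)$), the bundle $\cE_7(-2)=\cV_7^\dual(-2)$ is then also acyclic, which disposes of $\bw2\cG\otimes\cE_7\isom\bw2W_3\otimes\cE_7(-2)$.

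The remaining, and hardest, piece is $\cG\otimes\bw2\cE_7\isom W_3\otimes(\bw2\cE_7)(-1)$, for which I would again use Serre duality to reduce the vanishing of the cohomology of $(\bw2\cE_7)(-1)$ to that of $(\bw2\cV_7)(-4)$. To compute the latter, apply $\bw2$ to the sequence~\eqref{eqexact228juin}: the bundle $\bw2(\bw2V_5\otimes\cO_{\P(V_5)})$ has a filtration with graded pieces $\bw2\cV_7$, $\cV_7\otimes W_3^\dual\otimes\cO_{\P(V_5)}(1)$, and $\bw2W_3^\dual\otimes\cO_{\P(V_5)}(2)$. After twisting by $\cO_{\P(V_5)}(-4)$, the total bundle becomes $\bw2(\bw2V_5)\otimes\cO_{\P(V_5)}(-4)$ (acyclic), the middle piece becomes $W_3^\dual\otimes\cV_7(-3)$ (acyclic by the previous step), and the last piece becomes $\bw2W_3^\dual\otimes\cO_{\P(V_5)}(-2)$ (acyclic); hence $(\bw2\cV_7)(-4)$ is acyclic as well.

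Putting these together yields $H^0=H^1=0$ for all three graded pieces of $F$, hence $H^0(F)=H^1(F)=0$, which completes the proof. The one genuinely delicate point is the computation for $\bw2\cE_7$, where the double use of Serre duality together with the filtration of the second exterior power is needed; everything else is bookkeeping with acyclic line bundles on $\P^4$.
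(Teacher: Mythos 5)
Your proof is correct and follows essentially the same route as the paper: the same filtration of the kernel of $\bw3(\bw2V_5^\dual)\otimes \cO_{\P(V_5)}\to\bw3\cE_7$ into the three graded pieces $W_3\otimes(\bw2\cE_7)(-1)$, $\bw2W_3\otimes\cE_7(-2)$, $\bw3W_3\otimes\cO_{\P(V_5)}(-3)$, followed by cohomology vanishing for each piece via exact sequences of acyclic line bundles on $\P^4$. The only (cosmetic) difference is that you handle the two nontrivial pieces by Serre duality, reducing to acyclicity of $\cV_7(-3)$ and $(\bw2\cV_7)(-4)$ via the subbundle sequence~\eqref{eqexact228juin}, whereas the paper argues directly on the quotient side with the dual sequence and its second exterior power; these are Serre-dual formulations of the same computation, and in passing you also fix the paper's typo by writing the middle piece with the correct twist $\cE_7(-2)$.
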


 \begin{proof}
The dual
 \begin{equation}\label{eq59}
0\to W_3\otimes \cO_{\P(V_5)}(-1)\to \bw2V_5^\dual\otimes \cO_{\P(V_5)}\to \cE_7\to0
\end{equation}
of the exact sequence
 (\ref{eqexact228juin})  implies that the bundle $\cG$ defined by
   the exact sequence
\begin{equation*}
0\to\cG\to \bw3(\bw2V_5^\dual)\otimes \cO_{\P(V_5)}\to \bw3\cE_7\to 0
\end{equation*}
 has a filtration with graded pieces
$$W_3\otimes \bw2\cE_7(-1),\ \bw2W_3\otimes \cE_7(-1),\ \bw3W_3\otimes \cO_{\P(V_5)}(-3).$$
It thus suffices to show that these three bundles have vanishing
$H^0$ and $H^1$.

 This is obvious for  the last bundle.\ For the second  bundle, this follows   from (\ref{eq59}).\ 
For the first bundle, we take the second exterior power of
 \eqref{eq59}  tensored by
$  \cO_{\P(V_5)}(-1)$ and get
 \begin{equation*}
 0\to\cG'\to \bw2(\bw2V_5^\dual)\otimes \cO_{\P(V_5)}(-1)\to ( \bw2\cE_7)(-1)\to 0,
\end{equation*}
where the bundle
$\cG'$ is an extension
\begin{equation}\label{eq60}
0\to \bw2W_3\otimes \cO_{\P(V_5)}(-3)\to\cG'\to
W_3\otimes \cE_7(-2)\to0.
\end{equation}
We then get the desired vanishing
$$H^0(\P(V_5),\bw2\cE_7(-1))=0=H^1(\P(V_5),\bw2\cE_7(-1))$$
from
the vanishings $H^1(\P(V_5),\cG')=H^2(\P(V_5),\cG')=0$ which follow  from
 \eqref{eq60}  and the similar vanishings for  $\cE_7(-2)$.
\end{proof}

The threefold $X$ discussed in Section~\ref{secXg6}  embeds
in $\Gr(3,\bw2V_5)$ as follows:
a point $[V_2]\in X$ parametrizes  a vector subspace $V_2\subset V_5^\dual$ of dimension
$2$.\ Let $V_3\subset V_5$ be the kernel of the restriction map
$V_5\to   V_2^\dual$.\ Then
$U_3:=\bw2V_3\subset \bw2V_5$  has dimension $3$ and it determines $V_2$.\

\begin{prop}\label{lexsing}
{\rm (a)}  The threefold $X\subset \Gr(3,\bw2V_5)$ is contained
in the singular locus
of the Pl\"ucker hypersurface $X_{\sigma_0}$.

\noindent{\rm (b)}  The rational map $\phi\colon X^{[2]}\dra \Gr(6,\bw2V_5)$ defined in Proposition~\ref{propbirat}
sends a general pair $([V_2],[V_2'])$ to the subspace
$\langle U_3,U'_3\rangle\subset \bw2V_5$.

\noindent{\rm (c)} The variety $K_1$ is contained in the Debarre--Voisin variety  $K_{\sigma_0}$.
\end{prop}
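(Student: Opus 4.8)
The plan is to prove the three assertions in order, deducing (c) formally from (a), (b), Proposition~\ref{remaxsing}(b) and Proposition~\ref{propbirat}; the genuine work lies in (a) and (b).

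For (a), I would fix $[V_2]\in X$, set $V_3:=\ker(V_5\to V_2^\dual)\subset V_5$ (so that $U_3=\bw2 V_3$), and invoke the criterion of \cite[Proposition~3.1]{devo} already used in Propositions~\ref{le11} and~\ref{remaxsing}: it suffices to check that $\sigma_0$ vanishes on $\bw2 U_3\wedge V_{10}$. The first step is to rewrite the incidence $[V_2]\in X$. Since the Pl\"ucker equations of $X$ are the elements of $W_3\subset\bw2 V_5$, one has $[V_2]\in X$ iff $\langle w,\bw2 V_2\rangle=0$ for all $w\in W_3$; as the hyperplane $\{w:\langle w,\bw2 V_2\rangle=0\}$ is exactly $V_3\wedge V_5$, this is equivalent to $\theta\wedge w=0$ for all $w\in W_3$, where $\theta$ spans the line $\bw3 V_3$. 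I would then observe that for every $x\in V_3$ and $u\in U_3$ one has $u\wedge x\in\bw3 V_3=\langle\theta\rangle$, so $u\wedge x\wedge w=0$ for all $w\in W_3$; via the pairing $\bw2 V_5\times\bw3 V_5\to\bw5 V_5$ this says precisely that $U_3\subset(x\wedge W_3)^\perp=V_{7,[x]}$ for every $[x]\in\P(V_3)$.

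Next I would exploit the defining property of $\sigma_0$ (Proposition~\ref{proexistencesigma}). Because $U_3\cap V_{4,[x]}=U_3\cap(x\wedge V_5)=x\wedge V_3$ is $2$-dimensional, the image of $U_3$ in the $3$-dimensional quotient $V_{7,[x]}/V_{4,[x]}$ is a line; since $\sigma_0|_{V_{7,[x]}}$ is pulled back from a $3$-form on that quotient, any $u,u'\in U_3$ have proportional images and hence $\sigma_0(u,u',v)=0$ for all $v\in V_{7,[x]}$. Thus the linear form $\sigma_0(u,u',-)$ on $V_{10}$ vanishes on $V_{7,[x]}$ for every $[x]\in\P(V_3)$, i.e.\ it lies in $\bigcap_{x\in V_3}(x\wedge W_3)\subset\bw3 V_5=V_{10}^\dual$. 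An element of this intersection is divisible by every $x\in V_3$, so it is a multiple of $\theta$; as $W_3$ is general, $\theta\notin x\wedge W_3$, whence the intersection is $0$ and $\sigma_0(u,u',-)=0$, proving (a). For (b), I would use that---by the proof of Proposition~\ref{propbirat}---$\phi([V_2],[V_2'])$ is the kernel $W_6$ of the restriction map $\bw2 V_5\to V_2^\dual\otimes V_2^{\prime\dual}$, and check $\langle U_3,U_3'\rangle=W_6$: if $\omega\in U_3=\bw2 V_3$ and $\xi\in V_2$, then $\xi$ annihilates $V_3$, so $\omega(\xi,-)=0$ and $U_3\subset W_6$ (symmetrically $U_3'\subset W_6$); for a general pair $V_3\cap V_3'=(V_2+V_2')^\perp$ is a line, so $U_3\cap U_3'=\bw2(V_3\cap V_3')=0$ and $\dim\langle U_3,U_3'\rangle=6=\dim W_6$, forcing equality.

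Finally, (c) falls out: by (a) each $[U_3]$ with $[V_2]\in X$ is singular on $X_{\sigma_0}$, and for a general pair $\dim(U_3+U_3')=6$ by the count in (b), so Proposition~\ref{remaxsing}(b) puts $[U_3+U_3']=\phi([V_2],[V_2'])$ in $K_{\sigma_0}$; as $\phi$ is birational onto $K_1$, taking closures gives $K_1=\overline{\Im(\phi)}\subseteq K_{\sigma_0}$. I expect the main obstacle to be the last step of (a)---passing from the vanishing of $\sigma_0(u,u',-)$ on each separate $V_{7,[x]}$ to its vanishing on all of $V_{10}$---since this is exactly where the genericity of $W_3$ must be used, through the identity $\bigcap_{x\in V_3}(x\wedge W_3)=0$; the remaining ingredients are the translation of $[V_2]\in X$ into $\theta\wedge W_3=0$ and routine dimension counts.
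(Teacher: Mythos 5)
Your proposal is correct and follows essentially the same route as the paper's proof: for (a) you establish $U_3\subset V_{7,[x]}$ and $\dim(U_3\cap V_{4,[x]})=2$ for all $[x]\in\P(V_3)$ (the paper's Lemma~\ref{sublepourg6}), use the fact that $\sigma_0\vert_{V_{7,[x]}}$ is pulled back from $V_{7,[x]}/V_{4,[x]}$ to get the vanishing of $\sigma_0(u,u',-)$ on each $V_{7,[x]}$, and globalize via $\bigcap_{x\in V_3}(x\wedge W_3)=0$ (the paper's Lemma~\ref{lepourpropsingx}) --- your contracted-form phrasing being just the dual of the paper's tangent-space phrasing --- while your (b) and (c) coincide with the paper's arguments. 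The only difference is stylistic: the paper declares $\bigcap_{x\in V_3}(x\wedge W_3)=0$ ``obvious'' whereas you attribute it to genericity of $W_3$; since the statement genuinely fails for special $W_3$ (e.g.\ $W_3=\bw2V_3$), your attribution is if anything the more accurate one, and both texts leave that verification at the same level of detail.
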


\begin{proof} 
We first observe the following.

\begin{lemm}
\label{sublepourg6} 
Let $[V_2]\in X$
and let $V_3$ and $U_3=\bw2V_3$ be as above.\ For any  $[x]\in \P(V_3)$, we have $U_3\subset V_{7,[x]}$ and $\dim (U_3\cap V_{4,[x]})=2$.
\end{lemm}

\begin{proof} We want to show that $x\wedge W_3$ is orthogonal to
$U_3$, which means that
for any $w\in W_3$ and any $u\in U_3$, one has $x\wedge w\wedge u=0$ in $\bw5V_5$.\ This is clear, \rem{since}
$x\wedge u\in \bw3V_3$ and $w$ vanishes on~$V_2$, hence belongs to $V_3\wedge V_5$.
The second statement is obvious because $U_3\cap V_{4,[x]}=x\wedge V_3$.
\end{proof}

\rem{We now show item  (a) of the proposition.\ Let again~\mbox{$[V_2]\in X$,}   let $V_3$ and $U_3$ be as above, and let    $[x]\in \P(V_3)$.\ As shown in the proof of~\cite[Proposition~3.1]{devo}, the intersection $X_{\sigma_0}\cap \Gr(3,V_{7,[x]})$ is singular at a point $U'_3\subset V_{7,[x]}$ if $\sigma_0$ vanishes on $\bw2U'_3\wedge V_{7,[x]}$.\ This happens if $\dim(U'_3\cap V_{4,[x]})\ge2$ because, by construction, the $3$-form $\sigma_0\vert_{ V_{7,[x]}}$ is  the wedge product of 3 linear forms that vanish  on $V_{4,[x]}$.\ 
 Lemma \ref{sublepourg6} says that $U_3\subset V_{7,[x]}$ satisfies this condition.\ 
 
 We thus proved that
$X_{\sigma_0}\cap \Gr(3,V_{7,[x]})$ is singular at the point $[U_3]$, for any  $[x]\in \P(V_3)$.\
This means that the Zariski tangent space $T_{X_{\sigma_0},[U_3]}$  contains $T_{\Gr(3,V_{7,[x]}),[U_3]}$ for any  $[x]\in \P(V_3)$.\ We then use the following fact to conclude that $X$ is contained in the singular locus of $X_{\sigma_0}$.}

\begin{lemm}\label{lepourpropsingx}
The vector subspaces $T_{\Gr(3,V_{7,[x]}),[U_3]}\subset
T_{\Gr(3,\sbw2V_5),[U_3]}$, for $[x]\in \P(V_3)$, span the tangent space  $T_{\Gr(3,\sbw2V_5),[U_3]}$.
\end{lemm}

\begin{proof} We have $T_{\Gr(3,V_{7,[x]}),[U_3]}=\Hom (U_3,V_{7,[x]}/U_3)$ and
$T_{\Gr(3,\sbw2V_5),[U_3]}=\Hom (U_3,\bw2V_5/U_3)$, so the lemma is equivalent to the fact that the $V_{7,[x]}$, for $[x]\in \P(V_3)$, span $\bw2V_5$.\ As
$V_{7,[x]}=x\wedge W_3^{\perp}$, the statement is equivalent to 
$\bigcap_{x\in V_3}(x\wedge W_3)=0$, which is obvious.
\end{proof}

By Proposition~\ref{remaxsing}(b), there is  a  rational map
$f\colon X^{[2]}\dra K_{\sigma_0}$.\ Let us compare
$\phi$ and $f$.\ The map
$\phi$ sends  $([V_2],[V_2'])$ to the kernel of the
map $\bw2V_5\to   V_2^\dual\otimes V_2^{\prime\dual}$.\ Since $V_3$ vanishes in~$V_2^\dual$,
the image of $U_3=\bw2V_3$ vanishes in $V_2^\dual\otimes V_2^{\prime\dual}$ and similarly for
$U'_3$.\ It follows that
$$U_3+U'_3=\Ker (\bw2V_5\to   V_2^\dual\otimes V_2^{\prime\dual})$$
when both spaces have the same expected dimension $6$.\ This proves items~(b) and~(c).
\end{proof}

\subsection{Stabilizer, degenerations, and excess bundles}

Recall that $X\subset \Gr(2,V_5^\dual)$ is a Fano threefold of index $2$ and degree $5$.\ We have defined a trivector   $\sigma_0$ on  $V_{10}=\bw2V_5$ such that the smooth sixfold $K_1$ defined in \eqref{defk1} is contained in $K_{\sigma_0}$ (Proposition~\ref{lexsing}(c)).

 The birational map
$\phi\colon X^{[2]}\dra K_1$ defined in Proposition~\ref{propbirat}
induces an isomorphism between
  a dense open subset $U\subset X^{[2]}$ and its open image.\ We identify $U$ with 
  $\phi(U)$.

\begin{prop}\label{prop69}
{\rm(a)} The Debarre--Voisin variety $K_{\sigma_0}$ is smooth of dimension $6$ along
$U$, hence~$K_1$ is a generically smooth irreducible component of $K_{\sigma_0}$.

\noindent{\rm(b)} On  $U$, the excess bundle $\cF$  and the tautological bundle
$\cT_{\cO_X(2)}$  coincide as quotients of \mbox{$\bw3V_{10}^\vee\otimes \cO_U$.}
\end{prop}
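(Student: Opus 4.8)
The plan is to study the differential
$$
d\tilde\sigma_0\colon T_{\Gr(6,V_{10}),[W_6]}=\Hom(W_6,V_{10}/W_6)\lra \bw3 W_6^\vee
$$
of the section $\tilde\sigma_0$ of $\bw3\cE_6$ at a general point $[W_6]\in U$, and to compute its cokernel. By Proposition~\ref{lexsing}(b) such a point is $W_6=U_3\oplus U_3'$ with $U_3=\bw2V_3$ and $U_3'=\bw2V_3'$, where $V_3=V_2^\perp$ and $V_3'=V_2^{\prime\perp}$ for a general $([V_2],[V_2'])\in X^{[2]}$. For such a pair $V_3+V_3'=V_5$ and $V_3\cap V_3'=\langle x_0\rangle$, and one checks $V_{4,[x_0]}\subseteq W_6\subseteq V_{7,[x_0]}$, so $[x_0]$ is exactly the point attached to $[W_6]$ in the description~\eqref{defk1} of $K_1$.

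For part (a), Lemma~\ref{lenewpourexces} (through Proposition~\ref{remaxsing}(b)) shows that $\Im(d\tilde\sigma_0)$ is contained in the kernel of the restriction map $\rho\colon\bw3 W_6^\vee\to\bw3 U_3^\vee\oplus\bw3 U_3'^\vee$, a subspace of codimension $2$, whence $\dim T_{K_{\sigma_0},[W_6]}\ge 6$. Since $K_1\subseteq K_{\sigma_0}$ by Proposition~\ref{lexsing}(c) and $K_1$ is smooth of dimension $6$, smoothness of $K_{\sigma_0}$ at $[W_6]$ is equivalent to $\Im(d\tilde\sigma_0)=\Ker(\rho)$. The key reduction is that, because $\sigma_0$ vanishes on $\bw2 U_3\wedge V_{10}$ and on $\bw2 U_3'\wedge V_{10}$, the form $d\tilde\sigma_0(u)$ has no component along $\bw3 U_3^\vee$ or $\bw3 U_3'^\vee$, its $\bw2 U_3^\vee\otimes U_3'^\vee$-component equals $\mathrm{Alt}_{12}\bigl(\bar\psi\circ u\vert_{U_3}\bigr)$ (depending only on $u\vert_{U_3}$), where $\bar\psi\colon V_{10}/W_6\to U_3^\vee\otimes U_3'^\vee$ is $v\mapsto\sigma_0(v,\cdot,\cdot)$ (well defined as $\sigma_0(W_6,U_3,U_3')=0$), and symmetrically its $U_3^\vee\otimes\bw2 U_3'^\vee$-component depends only on $u\vert_{U_3'}$. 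As $u\vert_{U_3}$ and $u\vert_{U_3'}$ vary independently, $\Im(d\tilde\sigma_0)=\Ker(\rho)$ reduces to the surjectivity of these two maps onto the two summands of $\Ker(\rho)$, and by the $U_3\leftrightarrow U_3'$ symmetry it suffices to treat one of them.

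The main obstacle is precisely this surjectivity. I expect to verify it by an explicit computation at the general point $[W_6]$, using the flag $V_{4,[x_0]}\subseteq W_6\subseteq V_{7,[x_0]}$ and the normal form of $\sigma_0$ from Proposition~\ref{proexistencesigma} (on $V_{7,[x_0]}$ it is pulled back from a volume form on $V_{7,[x_0]}/V_{4,[x_0]}$), together with the values of $\sigma_0$ on $V_{10}\wedge\bw2 V_{7,[x_0]}$ needed to evaluate $\bar\psi$ in the directions $V_{10}/V_{7,[x_0]}$; this reduces the claim to a concrete linear-algebra check that the relevant rank is $18$. Since the locus of $U$ where $d\tilde\sigma_0$ has rank $<18$ is closed and $\SL(2)$-invariant, it is enough to confirm the rank at a single convenient point (which can be done by hand or with Macaulay2, as in Remark~\ref{chernn}); shrinking $U$ if necessary, $K_{\sigma_0}$ is then smooth of dimension $6$ along $U$ and $K_1$ is a generically smooth irreducible component of $K_{\sigma_0}$.

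For part (b), part (a) makes Remark~\ref{re45} applicable: the excess bundle $\cF$ has rank $2$ along $U$ and its pullback under $\phi$ is identified with $\cT_{\cO_W(1)}$ on $X^{[2]}$, where $W=X\hookrightarrow\Gr(3,V_{10})$ via $[V_2]\mapsto[\bw2V_3]$ and $\cO_W(1)$ is the restriction of the Pl\"ucker bundle. A determinant computation identifies this restriction: since $\bw3(\bw2V_3)\cong(\bw3 V_3)^{\otimes2}$ and $\bw3 V_3=\det V_2^\perp\cong\cO_X(-1)$ (from $0\to V_2^\perp\to V_5\to V_2^\dual\to0$ together with $\det V_2\cong\cO_X(-1)$), one gets $\cO_W(1)\cong\cO_X(2)$. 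Finally, exactly as in the proof of Proposition~\ref{Festtauto3}, both $\cF$ and $\cT_{\cO_X(2)}$ are generated by the image of $\bw3 V_{10}^\vee$ (via the restriction $\bw3 V_{10}^\vee\to H^0(X,\cO_X(2))$ coming from the Pl\"ucker embedding $X\hookrightarrow\P(\bw3 V_{10})$), and the two resulting surjections of $\bw3 V_{10}^\vee\otimes\cO_U$ agree on the dense subset $U$; hence $\cF$ and $\cT_{\cO_X(2)}$ coincide as quotients of $\bw3 V_{10}^\vee\otimes\cO_U$ there.
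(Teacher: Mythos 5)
Your reduction in part (a) is set up correctly: at a general point $[W_6]=[U_3\oplus U_3']$ of $U$ one indeed has $\Im(d\tilde{\sigma}_0)\subseteq\Ker(\rho)$ by Lemma~\ref{lenewpourexces}, the two mixed components of $d\tilde{\sigma}_0(u)$ depend only on $u\vert_{U_3}$ and $u\vert_{U_3'}$ respectively, and the flag $V_{4,[x_0]}\subseteq W_6\subseteq V_{7,[x_0]}$ is the right one. But all of this only gives $\dim T_{K_{\sigma_0},[W_6]}\ge 6$, which is the trivial direction (it already follows from Proposition~\ref{remaxsing}(a)). The entire content of part~(a) is the reverse statement, namely that $\Im(d\tilde{\sigma}_0)$ is \emph{all} of $\Ker(\rho)$, i.e.\ that $d\tilde{\sigma}_0$ has rank exactly $18$ — and this is precisely the step you do not prove. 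You name it yourself as ``the main obstacle'' and defer it to an explicit computation ``by hand or with Macaulay2'' that is never carried out. This is a genuine gap rather than a routine verification: to evaluate your map $\bar\psi$ on the three directions of $V_{10}/V_{7,[x_0]}$ you need values of $\sigma_0$ outside $V_{7,[x_0]}$, whereas $\sigma_0$ is only characterized implicitly (Proposition~\ref{proexistencesigma} prescribes its restrictions to the spaces $V_{7,[x]}$, or one may use $\SL(2)$-invariance); producing coordinates for $\sigma_0$ and establishing the rank is a substantive computation, and whether that rank is $18$ rather than smaller is exactly what the proposition asserts.

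Note that the paper closes this point by a structural argument designed to avoid such a computation: it first proves (Lemma~\ref{lepourV6dansKsigma0} and Corollary~\ref{coropourKsigma01007}) that any $[W_6]$ in a component containing $K_1$ contains a unique $V_{4,[x]}$, then studies the graded piece $\overline{\sigma}_{0,x}\in (x\wedge W_3)\otimes\bw2V_{4,[x]}^\dual$ of $\sigma_0$, shows by an $\SL(2)$-representation argument that it has rank $3$ (Lemma~\ref{lerankdesigma0x}), deduces $W_6\subseteq V_{7,[x]}$, hence $[W_6]\in K_1$, and finally upgrades this to a scheme-theoretic equality near $U$ via infinitesimal versions of the same statements. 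Either that kind of argument, or your rank computation actually performed, is needed; as it stands, part~(a) is unproved. Part~(b) is fine \emph{given}~(a) and coincides with the paper's proof (rank-$2$ quotients of $\bw3V_{10}^\vee\otimes\cO_U$, with the evaluation map factoring through $\cF$ by Lemma~\ref{lenewpourexces}); your determinant computation showing $j^*\cO_{\Gr(3,V_{10})}(1)\cong\cO_X(2)$ is a welcome addition, since the paper asserts this without proof. But since (b) uses (a), the gap propagates.
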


Before giving the proof, let us note the following consequence.

\begin{coro} \label{corstabgenus6}
The neutral component   of the stabilizer of $\sigma_0$ for the $\SL(V_{10})$-action is the group~$\SL( 2)$.
\end{coro}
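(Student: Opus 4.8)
The plan is to show that the neutral component $G^{\circ}$ of the stabilizer of $\sigma_0$ in $\SL(V_{10})$ has dimension at most $3$. By Proposition~\ref{proexistencesigma}, $\sigma_0$ is $\SL(2)$-invariant, so the image of $\SL(2)$ in $\SL(V_{10})$ — a connected $3$-dimensional subgroup, acting through $\PGL(2)$ since the centre acts trivially on $V_5=\Sym^4 U_2$ — is contained in $G^{\circ}$. Once $\dim G^{\circ}\le 3$ is established, this image, being closed connected of full dimension, must equal $G^{\circ}$, which is the assertion (stated, as usual, with the slight abuse of calling the image ``$\SL(2)$'').

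First I would let $G^{\circ}$ act on $K_1$. By Proposition~\ref{prop69}(a), $K_1$ is a generically smooth irreducible component of $K_{\sigma_0}$; since $G^{\circ}$ preserves $\sigma_0$ and is connected, it preserves $K_1$ and acts on it biregularly. As in the proof of Proposition~\ref{coronouveau}, a connected group acts trivially on $\NS(K_1)$, hence preserves each extremal contraction of $K_1$. In particular it preserves the $\P^2$-bundle $\pi\colon K_1\to\P(V_5)$, $[W_6]\mapsto[x]$, whose fibres $\{W_6 : V_{4,[x]}\subset W_6\subset V_{7,[x]}\}\cong\Gr(2,3)$ are the two-dimensional fibres of the corresponding contraction. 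Thus $\pi$ is intrinsic, and we obtain a homomorphism $\rho\colon G^{\circ}\to\Aut(\P(V_5))=\PGL(V_5)$ with $\pi\circ g=\rho(g)\circ\pi$.

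Next I would identify $\rho$. Since $g\in G^{\circ}$ maps the fibre over $[x]$ to the fibre over $\rho(g)[x]$, and $V_{4,[x]}=\bigcap_{W_6\in\pi^{-1}([x])}W_6$ while $V_{7,[x]}=\sum_{W_6\in\pi^{-1}([x])}W_6$, the map $g$ permutes the families $\{V_{4,[x]}\}$ and $\{V_{7,[x]}\}$ compatibly with $\rho(g)$. As $V_{4,[x]}=x\wedge V_5$ and $\bigcup_{[x]}\P(V_{4,[x]})=\Gr(2,V_5)\subset\P(V_{10})$, the automorphism $g$ preserves $\Gr(2,V_5)$ together with its family of $\alpha$-planes; by Chow's theorem it is therefore induced by some $h\in\GL(V_5)$, so $g=\bw2 h$ and $\rho(g)=[h]$. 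Hence $\ker\rho$ consists of the $\bw2 h$ with $h$ a homothety, a finite set of scalars in $\SL(V_{10})$, so $\rho$ has finite kernel. Finally, writing $V_{7,[x]}=(x\wedge W_3)^{\perp}$ for the pairing $\bw2V_5\times\bw3V_5\to\bw5V_5$ and using $g(V_{7,[x]})=V_{7,[hx]}$, one gets $y\wedge(\bw2 h)(W_3)=y\wedge W_3$ in $\bw3V_5$ for all $y\in V_5$, which forces $(\bw2 h)(W_3)=W_3$ (neither space has decomposable vectors). Thus $[h]$ stabilises $W_3$, equivalently the Fano threefold $X\subset\Gr(2,V_5^{\vee})$, so $\rho(G^{\circ})\subset\Aut(X)=\PGL(2)$ and $\dim G^{\circ}=\dim\rho(G^{\circ})\le 3$, as required.

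The main obstacle is the middle of the argument, where the abstract $G^{\circ}$-action on $V_{10}$ must be recognised as coming from $V_5$: this rests on $\pi$ being canonical (so that $G^{\circ}$ descends to $\PGL(V_5)$) and then on Chow's theorem to write $g=\bw2 h$. The concluding linear-algebra fact, that $y\wedge W_3'=y\wedge W_3$ for all $y$ implies $W_3'=W_3$ when neither $3$-space contains decomposable vectors, is routine but must be verified; an alternative to this last step is to invoke Proposition~\ref{lexsing}(a), letting $G^{\circ}$ act on the $G^{\circ}$-stable threefold $X\subset\Sing(X_{\sigma_0})$ and landing directly in $\Aut(X)=\PGL(2)$. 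Everything else — the reduction to a dimension count and the finiteness of $\ker\rho$ — is formal.
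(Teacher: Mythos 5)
Your proposal is correct and takes essentially the same route as the paper's proof: both let the neutral component act on the component $K_1$ of $K_{\sigma_0}$ (via Proposition~\ref{prop69}), use the $\P^2$-bundle structure $K_1\to\P(V_5)$ to descend the action to an automorphism of $\P(V_5)$, and conclude that this automorphism preserves $X$, so that the stabilizer's neutral component is (the image of) $\SL(2)$ because $\Aut(X)=\PGL(2)$. The paper compresses your Chow-theorem identification $g=\bw2 h$ and the $W_3$-preservation argument into the phrase ``one easily concludes,'' so your write-up merely supplies the details (including the intrinsic characterization of the bundle structure via the N\'eron--Severi group) that the paper leaves to the reader.
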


{We do not prove that the point $[\sigma_0]$ is polystable.}

\begin{proof}
An element $g$ of this stabilizer acts on the Debarre--Voisin variety
$K_{\sigma_0}$ and the neutral component  acts preserving the irreducible components.\ By Proposition \ref{prop69}, it acts on $K_1$.\ But
$K_1$ is a $\P^2$-bundle over $\P(V_5)$, so $g$ (via its action
on $\Gr(6,\bw2V_5)$)  has to act on
the base $\P(V_5)$ and this action lifts to the projective bundle $K_1$.\
One easily concludes that $g$ defines an automorphism of $\P(V_5)$ whose induced action on $\Gr(2,V_5^\dual)$
preserves  $X$.
\end{proof}

The proof of Proposition \ref{prop69} will use a few more preparatory steps.\
We start with the
 following easy lemma.

 \begin{lemm}\label{lepourV6dansKsigma0}
 For any  $[W_6]\in K_{\sigma_0}$ and  any
 $[x]\in \P(V_5)$,
 the vector space $W_6\subset \bw2V_5$ intersects $V_{4,[x]}$ nontrivially; it  follows that $\dim  (\P(W_6)\cap \Gr(2, \bw2V_5))\geq 3$.
 \end{lemm}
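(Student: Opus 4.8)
The plan is to establish the two assertions separately, treating the nonvanishing of $W_6\cap V_{4,[x]}$ as the main point and deducing the dimension estimate from it by a short incidence argument.

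First I would prove that $W_6\cap V_{4,[x]}\neq\{0\}$ for every $[x]\in\P(V_5)$. The starting observation is the elementary dimension count: since $\dim W_6=6$ and $\dim V_{7,[x]}=7$ inside the $10$-dimensional space $\bw2V_5$, the intersection $U:=W_6\cap V_{7,[x]}$ satisfies $\dim U\geq 6+7-10=3$. The key structural input is the defining property of $\sigma_0$ from Proposition~\ref{proexistencesigma}: the restriction $\sigma_0\vert_{V_{7,[x]}}$ is the pullback, under the quotient map $\pi\colon V_{7,[x]}\to V_{7,[x]}/V_{4,[x]}$, of a \emph{nonzero} $3$-form $\omega_x$ on the $3$-dimensional space $V_{7,[x]}/V_{4,[x]}$, i.e.\ of a volume form, whose only totally isotropic subspaces have dimension $\leq 2$. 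Now suppose, for contradiction, that $W_6\cap V_{4,[x]}=\{0\}$. Since $V_{4,[x]}=\Ker(\pi)\subset V_{7,[x]}$, this forces $U\cap\Ker(\pi)=\{0\}$, so $\pi\vert_U$ is injective; as $\dim U\geq 3=\dim(V_{7,[x]}/V_{4,[x]})$, the map $\pi\vert_U$ is an isomorphism onto the quotient and $\dim U=3$. But $\sigma_0\vert_{W_6}=0$ gives $\sigma_0\vert_U=0$, and transporting this through the isomorphism $\pi\vert_U$ says precisely that $\omega_x$ vanishes on the whole $3$-dimensional quotient, contradicting $\omega_x\neq0$. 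Hence $W_6\cap V_{4,[x]}\neq\{0\}$.

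For the second assertion I would set $Z:=\P(W_6)\cap\Gr(2,V_5)$, where $\Gr(2,V_5)\subset\P(\bw2V_5)$ is the locus of decomposable $2$-vectors, and use that each $\P(V_{4,[x]})=\P(x\wedge V_5)\cong\P^3$ lies inside $\Gr(2,V_5)$ and meets $\P(W_6)$ by the first assertion. The clean way to package this is the incidence variety
\[
I:=\{([x],[\xi])\in\P(V_5)\times\P(W_6)\mid \xi\in V_{4,[x]}\}.
\]
The first projection $I\to\P(V_5)$ is surjective by the first assertion (each fibre is $\P(W_6\cap V_{4,[x]})\neq\emptyset$), so $\dim I\geq 4$. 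The second projection lands in $Z$, because any nonzero $\xi\in x\wedge V_5$ is decomposable; it is surjective onto $Z$, and its fibre over a point $[\xi]$ with underlying $2$-plane $P_\xi\subset V_5$ is $\{[x]\mid \xi\in x\wedge V_5\}=\P(P_\xi)\cong\P^1$, of dimension $1$ uniformly. Therefore $\dim I=\dim Z+1$, whence $\dim Z\geq\dim I-1\geq 3$, as required.

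The only genuinely delicate point is the first assertion, and within it the use of the exact dimension $V_{7,[x]}/V_{4,[x]}\cong\C^3$ together with $\omega_x\neq0$: the count $\dim U\geq 3$ is tight, so the argument works only because a nonzero $3$-form on a $3$-dimensional space cannot be totally isotropic on all of it. Everything else is routine linear algebra and a standard fibre-dimension count; in particular, identifying the fibres of $I\to Z$ with the lines $\P(P_\xi)$ uses only that two nonzero decomposable $2$-vectors coincide projectively if and only if they span the same $2$-plane.
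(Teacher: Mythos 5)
Your proof is correct and follows essentially the same route as the paper: the first assertion is exactly the paper's argument (the $\geq 3$-dimensional intersection $W_6\cap V_{7,[x]}$ cannot inject into the $3$-dimensional quotient because $\sigma_0\vert_{V_{7,[x]}}$ is pulled back from a nonzero volume form there), and your incidence variety $I$ is precisely the paper's ``universal $\P^1$-bundle over $\P(W_6)\cap\Gr(2,\bw2V_5)$,'' with the same surjectivity-onto-$\P(V_5)$ dimension count.
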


\begin{proof} The assumption is that $\sigma_0$ vanishes on $W_6$.\ The space
$V:=W_{6}\cap V_{7,[x]}$ is  of dimension at least $3$.\ By construction
 (see Proposition \ref{proexistencesigma}), the restriction of $\sigma_0$ to
$V_{7,[x]}$ is a generator of $\bw{3}(V_{7,[x]}/V_{4,[x]})^\dual$, hence the vanishing of
$\sigma_0\vert_{V}$ means   $V\cap V_{4,[x]}\ne \{0\}$.\ Hence $W_6\cap V_{4,[x]}\ne \{0\}$.

For the second statement, observe that the set of
$[x]\in \P(V_5)$ such that $W_6\cap V_{4,[x]}\ne \{0\}$ is the image in
$\P(V_5)$ of the universal $\P^1$-bundle  over $\P(W_6)\cap \Gr(2, \bw2V_5)$.\
Since all $[x]\in \P(V_5)$ have this property, the dimension of this bundle must be at least $4$.
\end{proof}

Let us show the following   consequence.

\begin{coro} \label{coropourKsigma01007}
Let $K'_1$ be an irreducible component of $K_{\sigma_0}$ containing $K_1$.\ For any $[W_6]\in K'_1$, there is
a unique  $[x]\in \P(V_5)$ such that $V_{4,[x]} $ is contained in $W_6$.
\end{coro}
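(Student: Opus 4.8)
The plan is to split the statement into an elementary uniqueness part and an existence part, the latter being where membership in the component $K'_1$ is genuinely used. First I would dispose of uniqueness by a direct linear-algebra computation. For two distinct points $[x]\neq[x']$ of $\P(V_5)$ one has
\[
V_{4,[x]}\cap V_{4,[x']}=(x\wedge V_5)\cap(x'\wedge V_5)=\langle x\wedge x'\rangle,
\]
which is $1$-dimensional: a nonzero decomposable bivector $x\wedge a=x'\wedge b$ forces the $2$-plane $\langle x,a\rangle=\langle x',b\rangle$ to contain both $x$ and $x'$, hence to equal $\langle x,x'\rangle$. Therefore $\dim(V_{4,[x]}+V_{4,[x']})=7$, and such a space cannot sit inside the $6$-dimensional $W_6$. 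Thus at most one $[x]$ can satisfy $V_{4,[x]}\subset W_6$, which already settles uniqueness.

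For existence the key observation is that $K_1$ is not merely contained in $K_{\sigma_0}$ but is, by Proposition~\ref{prop69}(a), a generically smooth irreducible component of $K_{\sigma_0}$; moreover $K_1$ is irreducible and of dimension $6$ (as recorded right after~\eqref{defk1}). Since $K_1$ is itself a component, any irreducible component $K'_1$ containing it must coincide with it, so $K'_1=K_1$. Existence then follows immediately from the defining property~\eqref{defk1}: every $[W_6]\in K_1$ comes with some $[x]\in\P(V_5)$ for which $V_{4,[x]}\subset W_6\subset V_{7,[x]}$, in particular $V_{4,[x]}\subset W_6$. Combining this with the uniqueness computation gives the corollary.

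The main obstacle is conceptual rather than computational: one must be clear that existence is a property of the specific component $K'_1$ and is \emph{not} extractable from Lemma~\ref{lepourV6dansKsigma0} alone, which only yields $W_6\cap V_{4,[x]}\neq\{0\}$ for every $[x]$ — a nonzero intersection, far weaker than the full $4$-dimensional containment $V_{4,[x]}\subset W_6$ for a single $[x]$. If one wishes to avoid quoting the component identification $K'_1=K_1$, I would instead run a properness argument on the incidence variety
\[
J=\{([x],[W_6])\in\P(V_5)\times K'_1 \mid V_{4,[x]}\subset W_6\}:
\]
the uniqueness above makes the proper projection $\pi_2\colon J\to K'_1$ injective, its image is closed and contains $K_1$, and since the smoothness of $K_{\sigma_0}$ along the dense open $U\subset K_1$ forces $\dim K'_1=6$, the irreducibility of $K'_1$ yields $\pi_2(J)=K'_1$. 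Either route shows that the substantive content has already been carried out in Proposition~\ref{prop69}, and the corollary repackages it together with the elementary intersection estimate.
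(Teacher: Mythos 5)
Your uniqueness argument is correct and coincides with the paper's (for $[x]\ne[x']$ one has $\dim(V_{4,[x]}+V_{4,[x']})=7>6$). The existence part, however, is circular within the paper's logical structure. This corollary is a \emph{preparatory step} for the proof of Proposition~\ref{prop69}(a): the paper's proof of that proposition opens precisely by invoking Corollary~\ref{coropourKsigma01007} to produce, for every $[W_6]\in K'_1$, a space $V_{4,[x]}\subset W_6$, and only from there deduces that $W_6\subset V_{7,[x]}$, hence that $K_1$ is a component of $K_{\sigma_0}$ and that $K_{\sigma_0}$ is smooth of dimension $6$ along $U$. At the point where the corollary is stated, the only available facts are that $K_1$ is a smooth irreducible sixfold \emph{contained} in $K_{\sigma_0}$ (Proposition~\ref{lexsing}(c), Proposition~\ref{propbirat}) and Lemma~\ref{lepourV6dansKsigma0}; a priori the component $K'_1$ could strictly contain $K_1$ and have larger dimension. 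Both of your routes presuppose exactly what is still to be proved: the first quotes Proposition~\ref{prop69}(a) to get $K'_1=K_1$, and the fallback incidence-variety argument uses ``the smoothness of $K_{\sigma_0}$ along $U$ forces $\dim K'_1=6$'', which is again the content of Proposition~\ref{prop69}(a). Neither can be used here without creating a logical circle.

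What is genuinely needed is an argument valid for an arbitrary component $K'_1\supseteq K_1$ of unknown dimension, and this is where the paper's proof has real content that your proposal is missing. The paper deforms a general $[V_6]\in K_1$, for which the intersection of $\P(V_6)$ with the Grassmannian of decomposable bivectors in $\P(\bw2V_5)$ is the \emph{reduced linear} $3$-space $\P(V_{4,[x]})$, of degree $1$, along a one-parameter family to a general point $[W_6]$ of $K'_1$. The second assertion of Lemma~\ref{lepourV6dansKsigma0} --- which you set aside as too weak --- is exactly the driving tool: it guarantees that along the whole family this intersection keeps dimension $\ge 3$; by semicontinuity it stays reduced, of dimension $3$ and degree $1$ for general $t$, hence remains a linear $\P^3$ contained in the Grassmannian; and the only such linear spaces are the $\P(V_{4,[x]})$. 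This yields existence at a \emph{general} point of $K'_1$. Your incidence variety $J$ with its proper, injective projection to $K'_1$ is then a perfectly sound way to pass from general points to all points (a step the paper leaves implicit), but it cannot substitute for the degeneration argument that produces the general-point statement in the first place.
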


\begin{proof} The uniqueness is clear, as $x\wedge V_5+y\wedge V_5$ has dimension $7$
for nonproportional $x,\,y$.\ For the existence, we observe that for a general  $[V_6]\in K_1$, the intersection $\P(V_6)\cap \Gr(2, \bw2V_5)$ is equal to
$\P(V_{4,[x]})$ with its reduced structure.\
We now deform $[V_6]$ to a general element $[W_6]$ of the   component $K'_1$, say along
 a family $(\cV _{6,t})_{t\in \Delta}\subset \bw2V_5$ of
 $6$-dimensional vector subspaces.\
 By Lemma~\ref{lepourV6dansKsigma0},
we know that for any $t\in \Delta$, the intersection $\P(\cV _{6,t})\cap \Gr(2, \bw2V_5)$ remains of dimension $\geq3$.\ As for $t=0$, it is reduced, of dimension $3$ and  degree $1$,
the same holds for   $t\in \Delta$ general.\ As the only $3$-dimensional projective subspaces
 of $\Gr(2, \bw2V_5)$ are of the form $\P(V_{4,[x]})$, we obtain that $W_6=\cV _{6,t}$, for $t$ general, contains a space $V_{4,[x]}$.
\end{proof}

\begin{proof}[Proof of Proposition \ref{prop69}{(a)}]
Let as above $K'_1$ be an irreducible component of $K_{\sigma_0}$ containing $K_1$ and let $[W_6]\in K'_1$.\ We know by Corollary \ref{coropourKsigma01007} that  there exists
$[x]\in\P(V_5)$ such that $V_{4,[x]}$ is contained in $W_6$.\ We also  note from the proof of
Corollary \ref{coropourKsigma01007} that the point $[x]\in\P(V_5)$ is general.\
There is a short  exact sequence
\begin{equation}\label{eqshort13juill}
0\to   V_{4,[x]}\to   \bw2V_5\to   \bw2V_{4,[x]}\to   0.
\end{equation}
Here, $V_{4,[x]}=x\wedge V_5$ is seen on the left as a subspace of $   \bw2V_5$ and on the right  as the quotient   $V_5/\C x$.
 
The trivector $\sigma_0\in\bw3(\bw2V_5)^\dual$ vanishes in  the first quotient $\bw3V_{4,[x]}^\dual$, hence
it has an image $\overline{\sigma}_{0,x}$ in the next step of the filtration on $\bw3(\bw2V_5)^\dual$ associated with
(\ref{eqshort13juill}), namely $\bw2V_{4,[x]}^\dual\otimes \bw2V_{4,[x]}^\dual$.\
From the construction of $\sigma_0$, we know that
$\sigma_0\vert_{V_{7,[x]}}$ comes from $\bw3(V_{7,[x]}/V_{4,[x]})^\dual$, which implies that
$\overline{\sigma}_{0,[x]}$ vanishes in $(V_{7,[x]}/V_{4,[x]})^\dual\otimes \bw2V_{4,[x]}^\dual$, or equivalently belongs
to $(x\wedge W_3)\otimes \bw2V_{4,[x]}^\dual$, where we identify $x\wedge W_3\subset \bw2V_{5}^\dual$ as defining $V_{7,[x]}$ (so that its image in $\bw2V_{4,[x]}^\dual$ defines
$V_{7,[x]}/V_{4,[x]}$)).\
Let us examine $\overline{\sigma}_{0,x}\in(x\wedge W_3)\otimes \bw2V_{4,[x]}^\dual$.\ We claim
the following.

\begin{lemm} \label{lerankdesigma0x}
 For    $[x]\in\P(V_5)$ general, the rank of $\overline{\sigma}_{0,x}$ is $3$.
\end{lemm}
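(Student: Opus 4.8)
The plan is to read $\overline\sigma_{0,x}\in (x\wedge W_3)\otimes\bw2 V_{4,[x]}^\dual$ as the linear map
\[
\theta_{[x]}\colon \bw2 V_{4,[x]}\lra x\wedge W_3,\qquad a_1\wedge a_2\longmapsto (a_1\wedge a_2)\lrcorner\,\sigma_0,
\]
where we use $V_{10}^\dual\isom \bw3V_5$ and where, as the discussion preceding the lemma shows, the contraction $(a_1\wedge a_2)\lrcorner\,\sigma_0$ vanishes on $V_{7,[x]}$ and hence lands in $V_{7,[x]}^\perp=x\wedge W_3$. Since $\dim(x\wedge W_3)=3$, the rank of $\overline\sigma_{0,x}$ is at most $3$, and ``rank $3$'' means exactly that $\theta_{[x]}$ is surjective. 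So the lemma is the assertion that $\theta_{[x]}$ is surjective for general~$[x]$.

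First I would assemble the $\theta_{[x]}$ into a single $\SL(2)$-equivariant morphism of vector bundles $\Theta\colon \bw2\cV_4\to \cW$ on $\P(V_5)$, where $\cV_4$ is the rank-$4$ bundle with fiber $V_{4,[x]}$ from the proof of Proposition~\ref{proexistencesigma} and $\cW$ is the rank-$3$ bundle with fiber $x\wedge W_3$. The rank of a morphism of vector bundles is lower semicontinuous, so the locus where $\theta_{[x]}$ has the maximal rank~$3$ is open; it is $\SL(2)$-invariant, and if it is nonempty it is dense in the irreducible variety $\P(V_5)$. Hence it suffices to exhibit a single $[x]$ at which $\theta_{[x]}$ is surjective: no further genericity is needed, and the value ``$3$'' then follows because $3$ is the maximal possible rank.

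To carry out the one-point check I would make $\sigma_0$ explicit. By Proposition~\ref{proexistencesigma} and Lemma~\ref{leisosection}, $\sigma_0$ is the unique (up to scalar) $\SL(2)$-invariant trivector on $V_{10}=\bw2V_5=\Sym^6U_2\oplus\Sym^2U_2$, which I would write out in a weight basis adapted to $V_5=\Sym^4U_2$. Choosing $[x]$ to be a fixed point of the maximal torus $T\subset\SL(2)$ makes $\theta_{[x]}$ homogeneous for $T$, so that it respects the weight decompositions of $\bw2 V_{4,[x]}$ and of $x\wedge W_3$. Comparing weights, the target $x\wedge W_3$ carries exactly three distinct $T$-weights, and surjectivity of $\theta_{[x]}$ reduces to the nonvanishing of the three corresponding weight components, i.e.\ of three explicit scalar pairings of $\sigma_0$ against weight vectors, which I would evaluate directly from the coordinates of $\sigma_0$.

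The main obstacle is precisely this last explicit input: $\sigma_0$ is defined only implicitly, as the unique invariant (equivalently, the unique extension in Lemma~\ref{leisosection}), so the three coefficients must be computed from a concrete model of the invariant trivector---either by writing it through transvectants of the $\Sym^6U_2\oplus\Sym^2U_2$ summands, or, as is done elsewhere in the paper (Remark~\ref{chernn}), by a Macaulay2 computation. A secondary subtlety is that a $T$-fixed $[x]$ lies on a most-degenerate $\SL(2)$-orbit, where the rank could a priori be smaller than the generic value; should the three weight components vanish there, I would instead evaluate $\Theta$ at a non-special point, losing the grading but still needing only a single point of rank~$3$. Either way, surjectivity at one point is all that the semicontinuity argument requires, and this completes the proof.
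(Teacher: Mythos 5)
Your reformulation of $\overline\sigma_{0,x}$ as a contraction map $\theta_{[x]}\colon\bw2 V_{4,[x]}\to x\wedge W_3$, and the reduction --- via lower semicontinuity of the rank, $\SL(2)$-equivariance, and irreducibility of $\P(V_5)$ --- to exhibiting a \emph{single} point where $\theta_{[x]}$ is surjective, are both correct. But that single-point check is the entire content of the lemma, and your proposal never performs it: you defer it to the evaluation of three weight components against explicit coordinates of $\sigma_0$, while acknowledging that $\sigma_0$ is defined only implicitly (Proposition~\ref{proexistencesigma} and Lemma~\ref{leisosection}) and that producing such coordinates --- by transvectants or by a Macaulay2 computation --- is ``the main obstacle.'' You also concede that the chosen $T$-fixed point may be too degenerate for the argument, in which case you would retreat to a non-special point, again with no computation supplied. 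As written, this is a strategy whose key step is missing, not a proof; semicontinuity alone proves nothing here, since a priori $\theta_{[x]}$ could have rank $\le 2$ (or even $0$) at \emph{every} point.

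The paper closes exactly this gap with no explicit computation, by pushing the equivariance you observe one step further. The tensors $\overline{\sigma}_{0,x}$ globalize to a section $\overline{\sigma}_0$ of the bundle $W_3\otimes\bw2\cV_4'$ on $\P(V_5)$ (where $\cV'_4=\cV_4(1)$), whose space of global sections is $W_3\otimes\bw2 V_5$; the resulting element is $\SL(2)$-invariant, and by the decomposition $\bw2V_5=V_7\oplus W_3$ of~\eqref{eqdecompsl2} together with the self-duality $W_3\cong W_3^\dual$, the space of invariants in $W_3\otimes\bw2 V_5$ is one-dimensional, spanned by $\Id_{W_3}$. Hence either $\overline{\sigma}_0=0$ identically, or $\overline{\sigma}_{0,x}$ has rank $3$ (everywhere, in fact, not just generically). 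The vanishing $\overline{\sigma}_0=0$ is then excluded by elementary linear algebra: it would force $\sigma_0$ to annihilate all elements of the form $(x\wedge y)\wedge(x\wedge z)\wedge(v\wedge w)$, and these span $\bw3(\bw2V_5)$, so $\sigma_0$ would be zero. If you wish to salvage your approach along your own lines, this is the missing ingredient: your bundle map $\Theta$ is precisely such an invariant section, Schur's lemma pins it down to a multiple of $\Id_{W_3}$ without any choice of basis, and the nonvanishing of that multiple follows from $\sigma_0\ne 0$ --- replacing the coordinate computation you could not carry out.
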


\begin{proof} Recall that $V_5$ and $W_3$ are  irreducible representations of $\SL(2)$ (Section~\ref{secXg6}).\ The trivector~$\sigma_0$ is invariant under the induced $\SL(2)$-action on  $\bw3 V_{10}^\dual=\bw3(\bw2V_5)^\dual$.

From \eqref{eqexact228juin}, we see that $V_{4,[x]}$, seen as a quotient of $V_5$, is the fiber at $[x]$ of the vector bundle  $\cV'_4:=\cV_4(1)$.\ Since $H^0(\P(V_5),\bw2\cV'_4)\isom  \bw2V_5$ and $W_3\subset  \bw2V_5$ is general, there is an injection
 $$  W_3\otimes \cO_{\P(V_5)}\hra \bw2\cV'_4
 $$
whose dual is a
surjection $\bw2\cV_4^{\prime\dual}\to W_3^\dual\otimes \cO_{\P(V_5)}$.\ The tensors $\overline{\sigma}_{0,x}  $  
globalize to a section $\overline{\sigma}_0$ of the bundle $W_3\otimes \bw2\cV_4^{\prime\dual}\otimes \cO_{\P(V_5)}(1)$.\
Since $\det(\cV'_4)=\cO_{\P(V_5)}(1)$, we have
$$\bw2\cV_4^{\prime\dual}\otimes \cO_{\P(V_5)}(1)\cong \bw2(\cV_4(1)),$$
hence $\overline{\sigma}_0$ is a section of the bundle $W_3\otimes \bw2\cV_4'$.\
We also have
$$H^0(\P(V_5),W_3\otimes \bw2\cV_4'=W_3\otimes H^0(\P(V_5),\bw2\cV_4')=W_3\otimes \bw2V_5.$$
It follows that $\overline{\sigma}_0$ provides an element of $W_3\otimes \bw2V_5$ which must
 be $\SL(2)$-invariant.\ The decomposition (\ref{eqdecompsl2}) tells us that there is exactly
one such element, $\Id_{W_3}$ (we use the isomorphism $W_3\cong W_3^\dual$ given by the $\SL(2)$-action).\
The conclusion of this analysis is that either $\overline{\sigma}_0$ is $0$  or the rank of $\overline{\sigma}_{0,x}$ is $3$.

To finish the proof of  the lemma, we just have to exclude the case
$\overline{\sigma}_0=0$.\ If this vanishing holds, $\sigma_0$ vanishes on any $3$-dimensional subspace of $\bw2 V_5$ that intersects
one $x\wedge V_5$ along a $2$-dimensional space.\ It is  easy to exclude this possibility: the condition says that $\sigma_0\in\bw3(\bw2V_5^\dual)$ vanishes on all elements of the form
\begin{equation}
\label{eqelementspecial}
(x\wedge y)\wedge (x\wedge z)\wedge( v\wedge w)\in \bw3(\bw2V_5)
 \end{equation}
 for $x,\,y,\,z,\,v,\,w\in V_5$.\ But this would force $\sigma_0=0$, because these elements span $\bw3(\bw2V_5)$.\ Indeed,
this space is generated by general
decomposable elements of the form $m=(x\wedge y)\wedge (t\wedge z)\wedge( v\wedge w)$.\ By generality, we have
$v=\alpha x+\beta y+\gamma t+\delta z+\eps w$ and expanding $m$, we get a sum of terms of
  type (\ref{eqelementspecial}).
\end{proof}

Let us go back to the point $[W_6]$ of $K'_1$, where  $W_6$  contains $ V_{4,[x]}$ for some general $[x]\in\P(V_5)$.\ Since $\sigma_{0}\vert_{ W_6}=0$, the
tensor $\overline{\sigma}_{0,x}$ vanishes in $(W_6/V_{4,[x]})^\dual\otimes \bw2V_{4,[x]}^\dual$.\
By Lemma \ref{lerankdesigma0x}, we conclude that $x\wedge W_3$ has to vanish on $W_6$, that is
$W_6\subset V_{7,[x]}$.\ Thus $[W_6]\in K_1$ and we  proved that $K_1$ is an irreducible component
of $K_{\sigma_0}$.

In order to prove that  $K_1$ and $K_{\sigma_0}$ are equal as schemes generically along $K_1$, we  observe that the argument just given is of an
infinitesimal nature, hence proves that $K_1$ and
$K_{\sigma_0}\cap \Gr(6,x,\bw2V_5)$ are equal as schemes  generically along $K_1$, where  $\Gr(6,x,\bw2V_5)\subset \Gr(6,\bw2V_5)$ is the set of $W_6\subset \bw2V_5$ such that
$x\wedge V_5=V_{4,x}\subset W_6$ for some $x\in \P(V_5)$.\
In order to conclude, we thus just need to show that   $K_{\sigma_0}$ is schematically contained in  $\Gr(6,x,\bw2V_5)$ generically along~$K_1$.\ This is a consequence of the following infinitesimal version of  Corollary~\ref{coropourKsigma01007}.\end{proof}

\begin{lemm} Let $[W_6]\in K_1$ be general and let   $x\in \P(V_5)$ be such that
$V_{4,x}\subset W_6$.\ For any  first order deformation $[W_{6,\eps}]$ of $[W_6]$ in $K_{\sigma_0}$, there exists a first order deformation $x_\eps$ of $x$ such that,
at first order,   $V_{4,x_\eps}=x_\eps \wedge V_5\subset W_{6,\eps}$.
\end{lemm}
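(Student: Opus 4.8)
The plan is to reformulate the statement as an explicit lifting problem in the tangent space and then solve it geometrically, mirroring the proof of Corollary~\ref{coropourKsigma01007}. Writing the first-order deformation $[W_{6,\eps}]$ as a homomorphism $\phi\in\Hom(W_6,\bw2V_5/W_6)$ (the tangent vector to $\Gr(6,\bw2V_5)$), I would first record what the conclusion means: a deformation $x_\eps=x+\eps\,\dot x$ satisfies $V_{4,x_\eps}=x_\eps\wedge V_5\subset W_{6,\eps}$ to first order precisely when
\[
\phi(x\wedge v)\equiv \dot x\wedge v \pmod{W_6}\qquad\text{for all }v\in V_5 .
\]
Thus the lemma amounts to showing that, for every $\phi$ tangent to $K_{\sigma_0}$ at $[W_6]$, the restriction $\phi|_{V_{4,x}}$ lies in the $4$-dimensional family of homomorphisms $x\wedge v\mapsto[\dot x\wedge v]$ obtained by letting $\dot x$ run over $V_5/\C x$; equivalently, that the deformation lifts to the incidence variety $\{([y],[U_6]):y\wedge V_5\subset U_6\}$.

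Second, I would recover $x$ from $W_6$ geometrically, exactly as in Corollary~\ref{coropourKsigma01007}. The only $3$-dimensional linear subspaces of the Pl\"ucker-embedded Grassmannian $\Gr(2,V_5)\subset\P(\bw2V_5)$ are the $\alpha$-planes $\P(V_{4,y})=\P(y\wedge V_5)$, parametrised faithfully by $[y]\in\P(V_5)$; and for a general $[W_6]\in K_1$ one has $\P(W_6)\cap\Gr(2,V_5)=\P(V_{4,x})$, a reduced $\P^3$ of degree~$1$. So the containment witness $x$ is read off as the $\alpha$-plane cut out on $\Gr(2,V_5)$ by $\P(W_6)$.

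The core step is then to run this recognition over the dual numbers. Using the first-order analogue of Lemma~\ref{lepourV6dansKsigma0} — the vanishing $\sigma_0|_{W_{6,\eps}}=0$ forces $W_{6,\eps}$ to meet every $V_{4,y}$, so the relative linear section $\P(W_{6,\eps})\cap\Gr(2,V_5)$ surjects onto $\P(V_5)$ and has relative dimension at least $3$ — together with the fact that at $\eps=0$ this section is the reduced $\P^3=\P(V_{4,x})$, I would argue that $\P(W_{6,\eps})\cap\Gr(2,V_5)$ is a flat family of degree-$1$, $3$-dimensional linear spaces, hence again an $\alpha$-plane $\P(V_{4,x_\eps})$ over $\C[\eps]/(\eps^2)$. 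This produces the required $x_\eps$ and the inclusion $V_{4,x_\eps}\subset W_{6,\eps}$, since the $4$-dimensional span of such an $\alpha$-plane is by definition some $V_{4,x_\eps}$.

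I expect the main obstacle to be exactly this infinitesimal bookkeeping: proving that the relative intersection $\P(W_{6,\eps})\cap\Gr(2,V_5)$ stays flat of relative dimension $3$ and degree $1$ (i.e. that no spurious infinitesimal directions enlarge it), so that it remains a linear $\P^3$ rather than a merely $\ge 3$-dimensional scheme. This is where the special geometry of $\sigma_0$ enters through the rank-$3$ tensor $\overline{\sigma}_{0,x}$ of Lemma~\ref{lerankdesigma0x}, precisely as in the finite-order argument of Proposition~\ref{prop69}: rank~$3$ is what forbids the section from acquiring extra dimension. An alternative, more computational route avoiding flatness would be to differentiate $\sigma_0|_{W_{6,\eps}}=0$ directly, extract the cocycle relations satisfied by $\phi$ by feeding triples from the flag $V_{4,x}\subset W_6\subset V_{7,x}$ into $d\widetilde{\sigma}$, and use the invertibility coming from rank~$3$ of $\overline{\sigma}_{0,x}$ to solve explicitly for $\dot x$; I would keep this as a fallback, since the linear algebra there is heavier and, by itself, the relations coming only from triples inside $V_{4,x}$ do not yet cut the solution space down to the desired $4$-dimensional family.
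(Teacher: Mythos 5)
Your reduction of the lemma to a lifting statement for $\phi\in\Hom(W_6,\bw2V_5/W_6)$, and your recovery of $x$ from the intersection $\P(W_6)\cap\Gr(2,V_5)=\P(V_{4,x})$, both match the geometry involved; the gap is in your core step. The claim that $\P(W_{6,\eps})\cap\Gr(2,V_5)$ is a \emph{flat} family of linear $\P^3$'s over $\C[\eps]/(\eps^2)$ is not a bookkeeping point: it is essentially a restatement of the lemma itself, and the semicontinuity mechanism you import from Corollary~\ref{coropourKsigma01007} is unavailable over an Artinian base. That corollary compares the central fiber with a \emph{general} fiber of a family over a reduced one-dimensional base; $\Spec \C[\eps]/(\eps^2)$ has a single point, so ``relative dimension at least $3$'' and ``degree $1$'' say nothing beyond the central fiber, and nothing rules out the scheme-theoretic intersection failing to be flat (which is exactly what would happen at a tangent vector of $K_{\sigma_0}$ that does \emph{not} lift). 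Concretely, flatness already requires $\phi(x\wedge v)\in W_6+\la x,v\ra\wedge V_5$ for every $v\in V_5$, and your outline gives no proof of this; the appeal to the rank-$3$ tensor $\overline{\sigma}_{0,x}$ is not developed into an argument (in the paper, Lemma~\ref{lerankdesigma0x} serves a different purpose: forcing $W_6\subset V_{7,[x]}$ in the proof of Proposition~\ref{prop69}(a)), and your fallback computation is, as you note yourself, also incomplete.

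The paper avoids this issue by deforming the $\alpha$-plane \emph{pointwise}, using transverse $\alpha$-planes instead of the intersection with $\Gr(2,V_5)$ all at once. For a point $[x\wedge y]\in\P(V_{4,x})$ such that $W_6\cap(y\wedge V_5)=\la x\wedge y\ra$ --- a condition that fails only on a closed subset of codimension $\geq 2$ of $\P(V_{4,x})$, since $[W_6]$ is general in $K_1$ --- the infinitesimal form of Lemma~\ref{lepourV6dansKsigma0}, applied to $W_{6,\eps}$ and the $\alpha$-plane $V_{4,y}$, produces a \emph{unique} first-order deformation of $[x\wedge y]$ inside $\P(V_{4,y})\cap\P(W_{6,\eps})$; such a deformed point is automatically a $\C[\eps]$-point of $\Gr(2,V_5)$ because $\P(V_{4,y})\subset\Gr(2,V_5)$, so no flatness of any intersection is needed. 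These pointwise deformations define a section of the normal bundle $N_{\P(V_{4,x})/\Gr(2,V_5)}$ over the complement of a codimension-$\geq 2$ subset of $\P(V_{4,x})\isom\P^3$, hence extend everywhere, and the identification of $H^0(\P^3,N_{\P(V_{4,x})/\Gr(2,V_5)})$ with $V_5/\C x$ (first-order deformations of $\alpha$-planes are exactly first-order deformations of $x$ --- the one step you did have) yields $x_\eps$ with $V_{4,x_\eps}\subset W_{6,\eps}$ at first order. If you want to repair your write-up, this is the missing input: the one-dimensionality of $W_6\cap V_{4,y}$ at general $y$ converts the vanishing $\sigma_0\vert_{W_{6,\eps}}=0$ into an honest lifting of points, and the codimension-$2$ extension replaces your unproved flatness claim.
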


\begin{proof}  Let $x\wedge y\in \P(V_{4,x})$ be such that \begin{eqnarray}\label{equationajouteepourV4y}
W_6\cap (y\wedge V_5)=\langle x\wedge y\rangle.
\end{eqnarray} 
The proof
of Lemma \ref{lepourV6dansKsigma0}  shows that
there exists a unique first order deformation $ y_\eps \in \P(V_{4,y})\subset \Gr(2,V_5)$ such that
$W_{6,\eps}\cap( y\wedge V_5)=\langle y\wedge y_\eps\ra$.\ Since $[W_6]$ is  a general point of $K_1$, the set of points~$y$ satisfying (\ref{equationajouteepourV4y}) is the complement
of a closed algebraic subset of codimension $\geq2$ in~$\P(V_{4,x})$.\ The collection
of $y_\eps$  thus extends to a first order deformation of $\P(V_{4,x})$ in $\Gr(2,V_5)$.\
But the latter  are in bijection with the first order deformations of $x\in \P(V_5)$.
\end{proof}

\begin{proof}[Proof of Proposition \ref{prop69}(b)]

We are exactly in the setting of Lemma \ref{lenewpourexces} and Remark  \ref{re45}: by  Proposition \ref{lexsing}(a),  there is an embedding
$j\colon X\hra \Sing (X_{\sigma_0})\subset \Gr(3,V_{10})$;
it induces a  map
$\phi\colon X^{[2]}\dra K_1$,
where $K_1$ is a generically reduced $6$-dimensional  component of $K_{\sigma_0}$ (Proposition~\ref{prop69}(a)).\ The map $\phi$ is birational by Proposition~\ref{propbirat} and $j^*\cO_{\Gr(3,V_{10})}(1)=\cO_X(2)$.\

 On   $U$, the vector bundles
$\cT_{j^*\cO_{\Gr(3,V_{10})}(1)}=\cT_{\cO_X(2)}$ and $\cF$ both have  rank $2$ and  are  quotients of $\bw3V_{10}^\vee\otimes \cO_U$; furthermore,
Lemma \ref{lenewpourexces} says that the 
evaluation map 
$$\ev\colon\bw3V_{10}^\vee\otimes \cO_U\to \cT_{\cO_X(2)}$$
factors through $\cF$.\ This proves that they are the same.
\end{proof}

We finally  prove our main result.

\begin{theo} \label{theopourgenre6}
Let $(\sigma_t)_{t\in \Delta}$ be  a very general $1$-parameter deformation.\
Over a finite cover $\Delta'\to \Delta$, there is a family of smooth polarized \hk\ fourfolds
$\cK'\to \Delta'$ such that a general fiber
$\cK'_{t'}$ is isomorphic to $K_{\sigma_t}$ and the central fiber
  is   isomorphic  to
$S^{[2]}$, where $(S,L)$ is a general $K3$ surface of degree $10$, with the polarization $2L-3\delta$.
\end{theo}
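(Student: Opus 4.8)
The plan is to run the excess-bundle degeneration of Section~\ref{secexcess} exactly as in the proof of Theorem~\ref{theoprecis}, the new feature being that the relevant component $K_1\subset K_{\sigma_0}$ is here only \emph{birationally} isomorphic to a Hilbert square; the final identification will therefore have to be carried out in the moduli space $\cM$ rather than inside the Grassmannian. First I would apply Proposition~\ref{proexcess} with $M=\Gr(6,\bw2V_5)$ and $\cE=\bw3\cE_6$, taking for $\sigma_0$ the section cutting out $K_{\sigma_0}$. By Proposition~\ref{prop69}(a), $K_1$ is a generically smooth irreducible component of $K_{\sigma_0}$, so I work over the dense open set $U\subset K_1$ along which $K_{\sigma_0}$ is smooth of codimension $18$, identified through the birational map $\phi$ of Proposition~\ref{propbirat} with an open subset of $X^{[2]}$. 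Over $U$, Proposition~\ref{proexcess} gives that the dominant component $W^0$ of the incidence variety $W=\{(x,t):\sigma_t(x)=0\}$ is smooth over $\Delta$ and that its central fiber is the zero-locus in $K_1$ of $\overline{\sigma'}$, the image of $\frac{\partial\sigma_t}{\partial t}\big\vert_{t=0}$ in $H^0(K_1,\cF)$.

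Next I would identify $\overline{\sigma'}$. By Proposition~\ref{prop69}(b) the rank-$2$ excess bundle satisfies $\cF|_U\isom\cT_{\cO_X(2)}$, the evaluation $\bw3V_{10}^\vee\otimes\cO_U\to\cF$ being the tautological one; exactly as in Theorem~\ref{theoprecis}, $\overline{\sigma'}$ is then the image of the derivative under a map $\bw3V_{10}^\vee\to H^0(X,\cO_X(2))$, and for a general deformation it cuts out a general degree-$10$ K3 surface $S\subset X$. Reading the central fiber of $W^0$ over $U$ through $\phi$, it is $S^{[2]}\cap\phi^{-1}(U)$; hence the limit of the $K_{\sigma_t}$ is \emph{birationally isomorphic} to $S^{[2]}$, and the limiting Pl\"ucker polarization pulls back under $\phi$ to $\det\cQ_4=2\L2-3\delta$ (Remark~\ref{chernn}), an ample class on $S^{[2]}$ by Table~\ref{enumero1}.

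The hard part is to upgrade this birational statement to an isomorphism of the limit with $S^{[2]}$: since $\cQ_4$ is not globally generated (Proposition~\ref{propbirat}(b)), the naive flat limit of the $K_{\sigma_t}$ inside $\Gr(6,V_{10})$ is the image $\phi(S^{[2]})$, which may well be singular, so one is forced to leave the Grassmannian. To do so I would pass to the period map. For $t\neq0$ the $K_{\sigma_t}$ are smooth polarized \hk\ fourfolds of $K3^{[2]}$-type of degree $22$ and divisibility $2$ (\cite{devo}), yielding a morphism $\Delta^\ast\to\cM$; composing with the period embedding $\gp$ of Theorem~\ref{torth} and extending over $\Delta$ into the Baily--Borel compactification, its value at $0$ is the period $p_0$ of $(S^{[2]},2\L2-3\delta)$. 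This uses that a semistable reduction of $W^0$ produces a smooth \hk\ central fiber birational to $S^{[2]}$ (by the previous paragraph) and that the period is a birational invariant of \hk\ manifolds, the polarization matching $2\L2-3\delta$. By Theorem~\ref{th31} in the case $e=5$, $p_0$ is a general point of $\cD_{10}$, and since $\cD_{10}\neq\cD_{22}$, Proposition~\ref{image} gives $p_0\in\gp(\cM)$.

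Finally I would conclude. As $p_0$ lies in the image of the period embedding, the morphism $\Delta^\ast\to\cM$ extends to $\Delta\to\cM$ with value $[(S^{[2]},2\L2-3\delta)]$ at $0$; a finite base change $\Delta'\to\Delta$ --- the usual device remedying the absence of a universal family over the coarse space $\cM$ and the monodromy, that is, a semistable reduction --- then realizes this by a genuine family $\cK'\to\Delta'$ of smooth polarized \hk\ fourfolds with $\cK'_{t'}\isom K_{\sigma_t}$ and central fiber $(\cK'_0,H_0)$ of period $p_0$. By the Torelli theorem (Theorem~\ref{torth}) this forces $(\cK'_0,H_0)\isom(S^{[2]},2\L2-3\delta)$; equivalently, $\cK'_0$ is a smooth \hk\ manifold birational to $S^{[2]}$, which by Remark~\ref{rema33a} admits no nontrivial \hk\ birational model and is therefore isomorphic to $S^{[2]}$, its ample class of square $22$ and divisibility $2$ being necessarily the unique such class $2\L2-3\delta$. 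This will complete the proof.
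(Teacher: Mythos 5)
Your first two paragraphs reproduce, in outline, exactly the paper's proof: Proposition~\ref{proexcess} applied over the open set $U$ of Proposition~\ref{prop69} along which $K_1$ is a smooth component of $K_{\sigma_0}$, the identification of the excess bundle with $\cT_{\cO_X(2)}$ from Proposition~\ref{prop69}(b), and the conclusion that the central fiber of the dominant component $\cK^0$ meets $U$ in $S^{[2]}\cap U$ for a general degree-$10$ K3 surface $S\subset X$, so that one reduced component of the central fiber is birationally isomorphic to $\SS$. Where you diverge is the endgame. The paper applies \cite[Theorem~(0.1)]{KLSV} directly to the degeneration $\cK^0$ to obtain, after finite base change, a family of smooth \hk\ fourfolds fiberwise birationally isomorphic to the original one, concludes that its central fiber is isomorphic to $\SS$ because $\SS$ has no nontrivial \hk\ birational model when $e=5$ (Remark~\ref{rema33a}), and finally deduces $\cK'_{t'}\cong K_{\sigma_t}$ from the fact that very general fibers have Picard number $1$. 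You instead extend the period map into the Baily--Borel compactification, identify the limit with the period of $(\SS,2\L2-3\delta)$, and conclude by global Torelli (Theorem~\ref{torth}). This route is viable, and it has one genuine advantage: your family over the punctured disk is by construction the pullback of the original one, so $\cK'_{t'}\cong K_{\sigma_t}$ is automatic and the Picard-number argument is not needed.

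However, the pivotal step of your route is unjustified as written. You assert that ``a semistable reduction of $W^0$ produces a smooth \hk\ central fiber birational to $S^{[2]}$''. Semistable reduction produces a \emph{reduced simple normal crossings} central fiber, essentially never a smooth irreducible one; upgrading the degeneration to one whose central fiber is a smooth \hk\ fourfold, knowing only that one reduced component of the original central fiber is birational to $\SS$, is precisely the content of Koll\'ar--Laza--Sacc\`a--Voisin \cite[Theorem~(0.1)]{KLSV} --- a nontrivial MMP-based theorem, and exactly what the paper invokes at this point. Without it you have no control on the limit of the period map: you cannot even exclude that it lies in the Baily--Borel boundary, let alone identify it with the period of $(\SS,2\L2-3\delta)$. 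A second, smaller, issue: a morphism $\Delta\to\cM$ to a \emph{coarse} moduli space does not automatically arise from a family after finite base change (and again this is not ``semistable reduction''); one needs the valuative criterion of properness for the separated Deligne--Mumford moduli stack over its coarse space --- or, more simply, one takes the family furnished by \cite{KLSV} itself, which is what the paper does. Once \cite{KLSV} is cited at the right spot, both defects are repaired and your argument goes through.
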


\begin{proof}  Let $ \cK\to \Delta$ be the associated  family of Debarre--Voisin varieties, let $\cK^0$ be the irreducible component of $\cK$ that dominates $\Delta$, and let    $U\subset K_{\sigma_0}=\cK_0$ be the Zariski open set   of Proposition~\ref{prop69}.\ Then  $ \cK_0$ is smooth of dimension $6$ along $U$, so that the analysis of Section~\ref{secexcess} 
 applies.

By Proposition~\ref{prop69}(b), on $U$, the excess bundle
$\cF$ can be identified   with  $\cT_{\cO_X(2)}$ as quotients of $\bw3V_{10}^\vee$.\ The element $\overline{\sigma'_0}$
thus gives a section
  $f$ of $ \cO_X(2)$ and we conclude  that  if $\overline {\sigma'_0}$ is general enough, the zero-locus of  $\overline {\sigma_0'}$
 is equal to $S^{[2]}\cap U$, where
$S\subset X$ is the K3 surface defined by $f$.

Moreover, the open subset $S^{[2]}\cap U$ is then dense in $S^{[2]}$
and we thus proved that the central fiber of $\cK^0$ has one reduced component which is birationally isomorphic to $S^{[2]}$.\
By \cite{KLSV}, it  follows that after base change $\Delta'\to \Delta$ and shrinking, there exists a family
$\pi'\colon\cK'\to \Delta'$ that is fiberwise birationally isomorphic to $\cK^0\times_\Delta\Delta'$, all of whose   fibers
are smooth \hk\ fourfolds, with (smooth) central fiber   birationally isomorphic    to $S^{[2]}$.\ Since $\SS$ has no nontrivial \hk\ birational models (Section \ref{sec33}),  the central fiber  is in fact isomorphic    to $S^{[2]}$.

The varieties $\cK_t$, for $t$ very general, have Picard number~$1$, hence no nontrivial smooth \hk\  birational models.\ It follows that $\cK'_t\cong \cK_t$ and this holds for all $t\ne 0$.
\end{proof}

\begin{rema} {\rm From the viewpoint of subvarieties of $\Gr(6,V_{10})$, the situation is not completely explained.\ The varieties
$\cK_t$ are smooth subvarieties of  $\Gr(6,V_{10})$ of degree $1452$.\ The variety $S^{[2]}$ is mapped to $\Gr(6,V_{10})$ via
 the rational map $\phi$  described in Proposition~\ref{propbirat}, but since this map
is not regular, its image $\phi(S^{[2]})\subset \Gr(6,V_{10})$ has degree $<1452$.\ The limit (in the Hilbert scheme) of the subvarieties
 $\cK_t\subset \Gr(6,V_{10})$ must therefore have another irreducible component.
}
\end{rema}

\section{The  HLS divisor $\cD_2$}\label{div2}

We   describe a polystable point in the moduli space
$\cMDV=\PP(\bw3  V_{10}^\dual) \gquot \SL(V_{10})$ whose total image by the moduli map
$$\gm\colon \cMDV\dra \cM$$
is the divisor whose general points are the fourfolds $\cM_S(0,L,1)$ described in Remark~\ref{rema33}, where $(S,L)$ is a general polarized K3 surface of degree~$2$.\ As explained in Section~\ref{secrappelonpic}, this divisor
is therefore the Heegner divisor $\cD_2$.

 \subsection{The $\SL(3)$-invariant trivector}\label{trivela}

 We take  $V_{10}:=\Sym^3 \!W_3$.\ The $\SL(W_3)$-representation
 $\bw3V_{10}^\vee$ decomposes  as
\begin{equation}\label{deco}
\bw3V_{10}^\vee=\bw3(\Sym^3 \!W_3^{\vee}) =  \Gamma_{0,6}\oplus \Gamma_{3,3}\oplus \Gamma_{2,2}\oplus \Gamma_{0,0},
\end{equation}
 where $\Gamma_{a,b}$ is the irreducible representation given by the kernel of the contraction map $\Sym^a\! W_3\otimes \Sym^b\! W_3^{\vee}\to \Sym^{a-1}\! W_3\otimes \Sym^{b-1} \!W_3^{\vee}$.\footnote{\rem{In the standard notation of \cite{bbki} explained in Section~\ref{orbetstab}, the representation  $ \Gamma_{a,b}$ is $ V_{a\omega_1+b\omega_2}$.}}\ The first term is $\Sym^6\!W_3 ^\vee=H^0(\P(W_3),\cO_{\P(W_3)}(6))$.\ The last term is the ($1$-dimensional) space of $\SL(W_3)$-invariants and we pick a generator  $\sigma_0$.

\begin{shownto}{long}
\green{
\subsection{Analogous results  for   the  variety of lines on a cubic fourfold}\label{anacub}
All of the results stated in Section~\ref{div2} have analogues valid for 
 the variety of lines on a cubic fourfold.\   In particular, the analogue of Theorem~\ref{degenera2}
has been  proved by van den Dries~\cite{vdd}, in a more precise form (in particular $m=1$ will do).
   In the present section we will recall those results.\ In particular 
  we will go over a modified version of van den Dries' proof that will be our model for the proof of Theorem~\ref{degenera2}.\ Our version is not as precise as van den Dries' but we manage to avoid some lenghty computations.\ The point is that in proving Theorem~\ref{degenera2}, we wish to avoid similar, and presumably  longer, explicit computations.

If  $X\subset\PP^5$ is a cubic fourfold,  let  $F(X)\subset \Gr(1,\PP^5))$ be the Hilbert scheme of lines in $X$.\  We recall that the scheme structure of $F(X)$ can be defined by viewing it as the subscheme of $\Gr(1,\PP^5)$ defined by the section of $\Sym^3\! U^{\vee}$ ($U$ is the tautological rank-$2$ bundle on   $\Gr(1,\PP^5))$ associated (up to scalars) with $X$.\ If $X$ is a smooth cubic fourfold, $F(X)$ is a hyperk\"ahler fourfold of $K3$-type, 
by Beauville and Donagi.\ As $X$ varies among smooth cubic fourfolds, the  $F(X)$ form   a locally complete family of polarized \hk\ fourfolds (the polarization is given by the Pl\"ucker ample generator) whose primitive $H^2$ lattice is of \emph{nonsplit type}    and discriminant $3$.\ Note that the primitive $H^2$ lattice of Debarre--Voisin fourfolds  is of nonsplit type  and discriminant $11$ (two notches more complex, in the series of nonsplit lattices, than discriminant $3$).

Let $V_6:=\Sym^2\! W_3$, where $W_3$ is a $3$-dimensional complex vector space.\ Let 
\begin{equation*}
{\mathsf V}:=\{[a^2] \mid  a\in W_3\moins\{0\}\},\qquad {\mathsf D}:=\{[ab] \mid 0\ne a,b\in W_3\}
\end{equation*}
be the $\PGL(W_3)$-invariant Veronese surface and the discriminant hypersurface in $\PP(V_6)$.\ We let $f_0\in\Sym^3\! V_6^{\vee}$ be an equation of ${\mathsf D}$ (that is,~$f_0$ is ``the'' discriminant).\ The
 $\SL(W_{3})$-representation  $\Sym^3\! V^{\vee}_6$ decomposes  as
follows:
\begin{equation}\label{decompongo}
\Sym^3(\Sym^2 \!W_{3}^{\vee})= \Gamma_{0,6}\oplus \Gamma_{3,3}\oplus  \Gamma_{0,0}.
\end{equation}
The trivial addend is generated by the discriminant $f_0$.\ Thus $\sigma_0$ is the analogue, in the world of Debarre--Voisin fourfolds, of  the discriminant $f_0$.

Since $\mathsf V$ is the singular locus of $\mathsf D$, the stabilizer of $[f_0]$ is  $\PGL(W_3)$; this is the analogue of Proposition~\ref{stabver}.

One also proves that $[f_0]$ is $\PGL(V_6)$-polystable (see~\cite[Lemma ~4.3]{lazagit}).

Next, let $0\ne f\in\Sym^3\! V_6^{\vee}$ be such that the intersection $C:=V(f)\cap {\mathsf V}$ is transverse.\ 
Identifying ${\mathsf V}$ with $\PP(W_3)$, the curve $C$ gets identified with a smooth sextic.\ Let   
\begin{equation}\label{eccoesse}
S\to\PP(W_3)
\end{equation}
be the double cover with branch curve $C$.\ In~\cite{vdd}, van den Dries proved that the family $\{F(V(f_0+t^2f)\}_{t\ne 0}$ can be filled at $0$ with  $\cM_S(0,h,1)$.

The first step in the proof is the description of $F({\mathsf D})$.

\begin{defi}\label{cordever}
Let $p\in \PP(W_{3}^{\vee})$, and let $H$ be a codimension-$1$ subspace of $\Sym^2(\Omega_{\PP(W_{3}^{\vee})})(p)$, where 
$\Omega_{\PP(W_{3}^{\vee})}(p)$ is the cotangent space of 
$\PP(W_{3}^{\vee})$ at $p$.\   Let $I(p,H)\subset \Sym^2 W_{3}$ be the subspace of elements $\phi$ which  vanish to order at least $2$ at $p$ (that is,~either they vanish with order $2$, or are zero) and belong to $H$.
\end{defi}

\begin{defi}\label{altrette}
Given    $(p,R)\in\PP (W_3^{\vee})\times \PP (W_3)$, let $J(p,R)\subset \Sym^2 W_{3}$ be the  set of  $\phi=ab$  where $a,b\in W_3$, $a(p)=0$ and $V(b)=R$.
\end{defi}

We let
\begin{eqnarray*}
{\bf I} & := & \{\PP(I(p,H)) \mid p\in \PP(W_{3}^{\vee}),\quad H\in \PP(\Sym^2(\Omega_{\PP(W_{3}^{\vee})})(p)^{\vee})\},\\
 {\bf J} & := & \{\PP(J(p,R)) \mid (p,R)\in \PP (W_3^{\vee})\times \PP (W_3)\}.
\end{eqnarray*}
As is easily checked,
\begin{equation}\label{rettedisc}
F({\mathsf D})={\bf I}\cup {\bf J}.
\end{equation}
 A general line in ${\bf J}$ is contained in the smooth locus of ${\mathsf D}$, hence $F({\mathsf D})$ is smooth at such a point.\ On the other hand, $F({\mathsf D})$ is nonreduced along ${\bf I}$.

Hence the central fiber of the degeneration $\{F(V(f_0+t^2f)\}_{t\in\aff}$ is not reduced, and therefore not good.\ We modify it as follows.

Let $\cZ:=\Bl_{{\mathsf V}\times\{0\}}(\PP(V_6)\times\aff)$, and let $\varphi\colon\cZ\to \PP(V_6)\times\aff$ be the structure map.\ 
Let $E:=\exc(\varphi)$; thus $E\to{\mathsf V}$ is a bundle of $3$-dimensional projective spaces.\ We view $\cZ\to\aff$ as a degeneration of $\PP(V_6)$, with central fiber 
$\Bl_{{\mathsf V}}(\PP(V_6))\cup E$.\ Let $\cY\subset\cZ$ be the strict transform of $V(f_0+t^2f)\subset \PP(V_6)\times\aff$ ($t$ is ``the'' affine coordinate on $\aff$).\ We have a projective map $\pi\colon\cY\to\aff$, with
\begin{equation*}
Y_t:=\pi^{-1}(t)\cong
\begin{cases}
V(f_0+t^2f) & \text{if $t\ne 0$,} \\
\Bl_{{\mathsf V}}({\mathsf D})\cup Q & \text{if $t=0$.}
\end{cases}
\end{equation*}
where $Q\subset E$ is a  bundle of quadric surfaces over ${\mathsf V}$, with  smooth fibers over ${\mathsf V}\setminus C$, and fibers of corank $1$ over $C$.

 Let $\Hilb_P(\cY/\aff)$ be the relative Hilbert scheme parametrizing subschemes of fibers $Y_t$ with Hilbert polynomial $P$ (with respect to a relatively ample line bundle on $\cY\to\aff$) equal to that of a line in $Y_t$ for $t\ne 0$.\ Let  $\rho\colon\Hilb_P(\cY/\aff)\to\aff$ be the structure map, and let  $\wt{F}(\cY)\subset \Hilb_P(\cY/\aff)$ be the schematic closure of $\rho^{-1}(\aff\setminus\{0\})$.\ We let  $\wt{F}(Y_0)$ be the fiber of $\wt{F}(\cY)\to\aff$ over $0$.
 
 We claim that there is an irreducible component of  $\wt{F}(Y_0)$ birationally isomorphic to $S^{[2]}$, where $S$ is the double cover in~\eqref{eccoesse}.\  In fact, let $R$ be a  line parametrized by a point of ${\bf I}\setminus{\bf J}$.\ Then $R$  intersects 
 ${\mathsf V}$ in two distinct points $x_1,x_2$.\ Let $\wt{R}\subset \Bl_{{\mathsf V}}({\mathsf D})$ be the strict transform of $R$, and let $\wt{R}\cap Q=\{\wt{x}_1,\wt{x}_2\}$.\ Then every subscheme of $Y_0$ given by 
  \begin{equation*}
\wt{R}\cup R_1\cup R_2,\qquad \wt{x}_i\in R_i\subset Q_{x_i},\quad R_i\in\Gr(1,E_{x_i})
\end{equation*}
belongs to $\wt{F}(Y_0)$.\ Moreover, by~\eqref{rettedisc}, such subschemes are parametrized by an \emph{open} subset $ \wt{\bf I}^0$ of the fiber of $\Hilb_P(\cY/\aff)\to\aff$ over $0$.\ Hence the closure of  $ \wt{\bf I}^0$ in  $\Hilb_P(\cY/\aff)$ (equivalently, in  $\wt{F}(\cY)$) is an irreducible component of $\wt{F}(Y_0)$; we denote it by $\wt{\bf I}$.\ Clearly, $\wt{\bf I}$ is birationally isomorphic to $S^{[2]}$.\ (The set of lines in $({\bf J}\setminus{\bf I})$ gives an open dense subset  of another irreducible component of $\wt{F}(Y_0)$, birationally isomorphic to $\PP(W^{\vee}_3)\times\PP(W_3)$.)

One proves that $\wt{F}(Y_0)$ is smooth at a general point of  $\wt{\bf I}$  as follows.\ Let $R\subset {\mathsf D}$ be a line parametrized by a point of ${\bf I}\setminus{\bf J}$, and keep notation as above.\ A scheme $C:=\wt{R}\cup R_1\cup R_2$ as above is locally a complete intersection in $Y_0$, hence there is a well-defined normal bundle $N_{C/Y_0}$.\ Since $\wt{\bf I}$ is an \emph{open} subset of the fiber of $\Hilb_P(\cY/\aff)\to\aff$ over $0$ in a neighborhood of $C$, it suffices to prove  
\begin{equation}\label{accauno}
H^1(C,N_{C/Y_0})=0.
\end{equation}
 Let $\wt{\mathsf D}:=\Bl_{{\mathsf V}}({\mathsf D})$.\ We have
\begin{equation*}
N_{C/Y_0}\vert_{\wt{R}}\cong N_{\wt{R}/\wt{\mathsf D}},\quad N_{C/Y_0}\vert_{R_i}\cong N_{R_i/Q_{x_i}}.
\end{equation*}
Since $H^1(R_i,N_{R_i/Q_{x_i}}(-1))=0$, in order to prove~\eqref{accauno} it suffices to prove that $H^1(\wt{R},N_{\wt{R}/\wt{\mathsf D}})=0$.\ 
The latter vanishing follows from the exact sequences
\begin{equation*}
0 \lra N_{\wt{R}/\wt{\mathsf D}}\lra N_{\wt{R}/\wt{\PP(V_6)}}\lra \cO_{\wt{\PP(V_6)}}(\wt{\mathsf D})\vert_{\wt{R}}\lra 0,
\end{equation*}
(we let $\wt{\PP(V_6)}:=\Bl_{{\mathsf V}}(\PP(V_6))$) and
\begin{equation*}
0 \lra  N_{\wt{R}/\wt{\PP(V_6)}}\lra \psi^{*}N_{R/\PP(V_6)}\lra \CC^2_{x_1}\oplus\CC_{x_2}^2 \lra 0.
\end{equation*}
(We let $\psi\colon\wt{R}\to R$ be the restriction of the map $\wt{\mathsf D}\to {\mathsf D}$.) In fact, since  $\deg \cO_{\wt{\PP(V_6)}}(\wt{\mathsf D})\vert_{\wt{R}}=-1$, the above two exact sequences  show that it suffices to prove that the map $H^0(\wt{R},\psi^{*}N_{R/\PP(V_6)})\lra\CC^2_{x_1}\oplus\CC_{x_2}^2$ is surjective.\ That is easily verified.

Since $\wt{\bf I}$ is birationally isomorphic to $S^{[2]}$ and  has multiplicity $1$ in $\wt{F}(Y_0)$,   the family $\{F(V(f_0+t^{2m}f)\}_{t\ne 0}$, for a suitable $m$, can be filled at $0$ with a \hk\ fourfold birationally isomorphic to $S^{[2]}$ by (the proof of)~\cite[Theorem~(0.1)]{KLSV}.

\emph{At this point I have a problem proving that the central fiber can be modified to be $X_S$.\ A deeper analysis of $\wt{F}(Y_0)$ should allow to prove the result directly (as in van den Dries), without invoking~\cite{KLSV}, and also to prove that $m=1$ will do.\ Do we want to do it?} 

}

\end{shownto}

This trivector $\sigma_0$   can  be constructed via the ``symbolic method''  as follows (thanks to Claudio Procesi).\ Choose a generator $\eta$ for $\bw3W_3$ and write $a\wedge b\wedge c=:\det(a,b,c)\eta$ for all $a,b,c\in W_3$.\ Then  $\sigma_0$ is the unique  trivector on $V_{10} $ such that 
\begin{equation*}
\forall x,y,z\in W_3\qquad \sigma_0(x^3,y^3,z^3)= \det(x,y,z)^3
\end{equation*}
(it is alternating  and  $\SL(W_3)$-invariant because it is so when the entries are cubes).\ Let $(x,y,z)$ be a basis for $W_3$ and
   write  $\alpha\in \Sym^3\! W_{3}$ as
 \begin{eqnarray}\label{coordsim}
 \alpha&=&\alpha_{300}x^3+\alpha_{030}y^3+\alpha_{003}z^3\nonumber \\
&&{}+3(\alpha_{210}x^2 y+\alpha_{102}x z^2+\alpha_{021}y^2 z+\alpha_{120} x y^2+\alpha_{201}x^2 z
+\alpha_{012}y z^2)+6\alpha_{111}xyz.
 \end{eqnarray}
A straightforward computation (umbral calculus) shows that
\begin{eqnarray}\label{gransomma}\nonumber
 \sigma_0(\alpha,\beta,\gamma)&=&\qquad\sum\limits_{\tau\in\cP}\eps(\tau)\alpha_{\tau(3,0,0)}  \beta_{\tau(0,3,0)}  \gamma_{\tau(0,0,3)}
 - 3\sum\limits_{\tau\in\cP}\eps(\tau)\alpha_{\tau(3,0,0)}  \beta_{\tau(0,2,1)}  \gamma_{\tau(0,1,2)}   \\\nonumber
&&{}- 3\sum\limits_{\tau\in\cP}\eps(\tau)\alpha_{\tau(0,3,0)}  \beta_{\tau(1,0,2)}  \gamma_{\tau(2,0,1)}
-  3\sum\limits_{\tau\in\cP}\eps(\tau)\alpha_{\tau(0,0,3)}  \beta_{\tau(2,1,0)}  \gamma_{\tau(1,2,0)}   \\
&&{}-3\sum\limits_{\tau\in\cP}\eps(\tau)\alpha_{\tau(2,1,0)}  \beta_{\tau(1,0,2)}  \gamma_{\tau(0,2,1)}
-3\sum\limits_{\tau\in\cP}\eps(\tau)\alpha_{\tau(1,2,0)}  \beta_{\tau(2,0,1)}  \gamma_{\tau(0,1,2)}    \\\nonumber
&&{}- 6\sum\limits_{\tau\in\cP}\eps(\tau)\alpha_{\tau(2,1,0)}  \beta_{\tau(0,1,2)}  \gamma_{\tau(1,1,1)}
- 6\sum\limits_{\tau\in\cP}\eps(\tau)\alpha_{\tau(1,0,2)}  \beta_{\tau(1,2,0)}  \gamma_{\tau(1,1,1)}  \\\nonumber
&&{}- 6\sum\limits_{\tau\in\cP}\eps(\tau)\alpha_{\tau(0,2,1)}  \beta_{\tau(2,0,1)}  \gamma_{\tau(1,1,1)}.
 \end{eqnarray}
In each sum above, $\cP$ denotes the permutation group of the relevant subset of the family of indices.\
In particular, we get the following.

\begin{lemm}\label{nonzero}
For each $r\in\{1,2,3\}$, let $x^{i_r}y^{j_r}z^{k_r}$ be a degree-$3$ monomial.\ Then
$$\sigma_0(x^{i_1}y^{j_1}z^{k_1},x^{i_2}y^{j_2}z^{k_2},x^{i_3}y^{j_3}z^{k_3})\ne0$$
 if and only if $i_1+i_2+i_3=j_1+j_2+j_3=k_1+k_2+k_3=3$  and not all monomials are equal to $xyz$.
 \end{lemm}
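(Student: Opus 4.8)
The plan is to read the statement off directly from the explicit expression \eqref{gransomma}, using two structural features of $\sigma_0$ for the ``only if'' direction and a short finite enumeration for the ``if'' direction.

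First I would dispose of necessity without touching \eqref{gransomma} at all, relying instead on the $\SL(W_3)$-invariance of $\sigma_0$. Restricting to the maximal torus of diagonal matrices $\diag(s,t,u)$ with $stu=1$, the monomial $x^iy^jz^k$ is an eigenvector of weight $s^it^ju^k$, so invariance gives
\[
\sigma_0(m_1,m_2,m_3)=s^{I}t^{J}u^{K}\,\sigma_0(m_1,m_2,m_3),
\]
where $I:=i_1+i_2+i_3$, $J:=j_1+j_2+j_3$, $K:=k_1+k_2+k_3$. A nonzero value therefore forces $s^It^Ju^K\equiv 1$ on the torus, i.e.\ $I=J=K$; since $I+J+K=9$, this means $I=J=K=3$. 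For the remaining excluded case, if all three monomials equal $xyz$ then $\sigma_0(xyz,xyz,xyz)=0$ simply because $\sigma_0$ is alternating. Hence a nonzero value forces both stated conditions.

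For the ``if'' direction I would first record the elementary fact that, in the coordinates of \eqref{coordsim}, the monomial $x^iy^jz^k$ has its $(i,j,k)$-coordinate equal to $1/\binom{3}{i,j,k}\neq 0$ and all other coordinates zero; hence substituting $\alpha=m_1$, $\beta=m_2$, $\gamma=m_3$ into \eqref{gransomma} retains exactly those terms whose three multi-indices (after the permutation $\tau$) match $\bigl((i_1,j_1,k_1),(i_2,j_2,k_2),(i_3,j_3,k_3)\bigr)$. I would then run the finite enumeration of unordered triples of degree-$3$ monomials whose exponent vectors sum to $(3,3,3)$: such a triple either has a repeated monomial — which, as one checks, can only happen for the all-$xyz$ triple already excluded — or consists of three distinct monomials falling into exactly one of nine types, which are precisely the nine sums of \eqref{gransomma} (the all-cubes triple $\{x^3,y^3,z^3\}$; three triples with a single perfect cube; two ``cyclic'' edge-triples; and three triples containing $xyz$; these are the $\mathfrak S_3$-orbits under permuting $x,y,z$, of sizes $1,3,2,3$).

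The hard part will be controlling cancellation, and the observation that makes it painless is that these nine types are pairwise distinct as \emph{unordered} collections of multi-indices. Consequently, for a given ordered triple $(m_1,m_2,m_3)$ at most one of the nine sums contributes, and within that sum the three multi-indices are distinct, so exactly one permutation $\tau$ matches the chosen ordering. The value $\sigma_0(m_1,m_2,m_3)$ is therefore a single surviving term $\eps(\tau)\,c\,\alpha_p\beta_q\gamma_r$ with $c\in\{1,3,6\}$ and $\alpha_p\beta_q\gamma_r\neq 0$, hence nonzero. Thus no cancellation can occur, and the only real content of this direction is the routine enumeration rather than any delicate estimate.
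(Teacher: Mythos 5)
Your proof is correct. The paper itself gives no separate argument for this lemma: it is stated as an immediate consequence ("In particular, we get the following") of the explicit expansion \eqref{gransomma}, so the intended proof is exactly your "if" direction read in both directions — evaluation on monomials kills every term except those whose multi-index triple coincides, as a set, with one of the nine triples listed in \eqref{gransomma}, and since those nine triples are pairwise distinct and consist of distinct multi-indices, at most one term survives and it is visibly nonzero (your no-cancellation observation, together with the enumeration showing the nine sums are precisely the triples of distinct monomials with exponent sums $(3,3,3)$, is the content the paper leaves implicit). Where you genuinely diverge is the "only if" half: instead of reading necessity off the formula (a nonzero value forces the multi-index set to be one of the nine listed triples, all of which satisfy the stated conditions), you use $\SL(W_3)$-invariance restricted to the diagonal torus to force $I=J=K=3$, and alternation to exclude the all-$xyz$ case. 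This buys something real: your necessity argument depends only on the defining properties of $\sigma_0$ (invariance and alternation), not on the correctness or completeness of the umbral-calculus expansion \eqref{gransomma}, whereas the paper's route gets both directions at once but rests entirely on that computation. Both halves of your argument check out, including the two details that need care: the coordinate of $x^iy^jz^k$ in the normalization \eqref{coordsim} is $1/\binom{3}{i,j,k}\ne 0$, and a repeated monomial with exponent sums $(3,3,3)$ forces all three monomials to equal $xyz$.
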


\subsection{The   hypersurface   $X_{\sigma_0}$.}\label{effezero}

The equation of the hypersurface $X_{\sigma_0}\subset \Gr(3,\Sym^3\! W_{3})$ defined in \eqref{eqxs} is given by~\eqref{gransomma}.\ More precisely, order the multiindices as in  
Table~\ref{enumero}
 \begin{table}[htb!]
\renewcommand\arraystretch{1.5}
\begin{tabular}{| c | c | c | c | c | c | c | c | c | c |}
\hline
 $ 3,0,0$  & $ 0,3,0$ &  $ 0,0,3$ &  $ 2,1,0$ & $ 1,0,2$ &  $ 0,2,1$ &  $ 1,2,0$ &
 $ 2,0,1$ & $ 0,1,2$ & $ 1,1,1$ \\
\hline
$ 0$  & $ 1$   & $ 2$ & $ 3$ & $ 4$ & $ 5$ & $ 6$ & $ 7$ &
$ 8$ & $ 9$ \\
\hline
\end{tabular}
\vspace{5mm}
\caption{Ordering of multiindices}\label{enumero}
\end{table}
and denote the corresponding Pl\"ucker coordinates on $\bw3(\Sym^3\! W_{3})$ by $q_{012},\ldots,q_{789}$; then  $X_{\sigma_0}$ is the intersection of  $\Gr(3,\Sym^3\! W_{3})$ with the hyperplane
\begin{equation}\label{tagliograss}
q_{012}- 3 (q_{058}+q_{147}+ q_{236}+q_{345}+q_{678})-6(q_{389}+q_{469}+q_{579})=0.
\end{equation}
 
 \subsubsection{The singular locus of $X_{\sigma_0}$} 
 
  We   show in this section that the hypersurface $X_{\sigma_0}$  is singular along a surface which we first describe.\
Let
\begin{eqnarray*}
v_3\colon\PP(W_3) & \lhra & \PP(\Sym^3\! W_3) \\
{[}x] & \longmapsto & [x^3] \nonumber
\end{eqnarray*}
be the Veronese embedding and let ${\mathsf V}\subset\PP(\Sym^3\! W_3)$ be its image.\ The projective tangent space to ${\mathsf V}$ at $[x^3]$
 is $\PP(x^2\cdot W_3)$, hence  the Gauss map of ${\mathsf V}$ is
\begin{eqnarray}\label{mappagauss}
{\mathsf g}\colon {\mathsf V} &  \lhra  & \Gr(3,\Sym^3\! W_3) \\
{[}x^3] & \longmapsto &  [x^2\cdot W_3].     \nonumber
\end{eqnarray}
We have ${\mathsf g}^{*}\cO_{\Gr}(1)\cong \cO_{\PP(W_3)}(6)$ and ${\mathsf g}$ induces an isomorphism
\begin{equation}\label{isomo}
H^0(\Gr(3,\Sym^3\! W_3),\cO_{\Gr}(1))\isomlra H^0(\PP(W_3),\cO_{\PP(W_3)}(6)),
\end{equation}
because the left side
 is a nonzero    $\SL(W_3)$-invariant linear subspace of the right side.\

\begin{prop}\label{singeffe}
The singular locus of $X_{\sigma_0}$ is equal to   the  surface $\gv$.
\end{prop}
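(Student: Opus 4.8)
The plan is to realize $\Sing(X_{\sigma_0})$ as the \emph{contact locus} of the hyperplane section $X_{\sigma_0}=\Gr(3,V_{10})\cap H_{\sigma_0}$, where $H_{\sigma_0}\subset\P(\bw3V_{10})$ is the hyperplane cut out by $\sigma_0$. Since $\Gr(3,V_{10})$ is smooth, $X_{\sigma_0}$ is singular at $[U_3]$ exactly when $H_{\sigma_0}$ contains the embedded tangent space $\P(\bw2U_3\wedge V_{10})$, that is (as in the proof of Proposition~\ref{remaxsing}, following \cite[Proposition~3.1]{devo}) when $\sigma_0$ vanishes on $\bw2U_3\wedge V_{10}$. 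Writing $C\colon\bw2V_{10}\to V_{10}^\vee$, $\eta\mapsto\eta\lrcorner\sigma_0$, for the contraction, this reads $C(\bw2U_3)=0$, so the proposition amounts to describing the $3$-planes $U_3$ with $C(\bw2U_3)=0$.

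For the inclusion $\mathsf g(\mathsf V)\subseteq\Sing(X_{\sigma_0})$ I would argue as follows. The locus $\Sing(X_{\sigma_0})$ is invariant under $\GL(W_3)$ (which rescales $\sigma_0$), and $\GL(W_3)$ acts transitively on $\P(W_3)$, so it suffices to treat $U_3=x^2W_3$ for $x$ the first vector of a basis $(x,y,z)$. Then every monomial of $U_3=\langle x^3,x^2y,x^2z\rangle$ has $x$-degree $\ge2$, so for $a,b\in U_3$ and any monomial $c$ the total $x$-degree is $\ge4>3$; by Lemma~\ref{nonzero}, $\sigma_0(a,b,c)=0$. Hence $C(\bw2U_3)=0$ and $[x^2W_3]\in\Sing(X_{\sigma_0})$.

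The converse inclusion is the heart of the matter. First I would make $C$ explicit: polarizing the defining identity $\sigma_0(u^3,v^3,w^3)=\det(u,v,w)^3$ in its first two arguments gives, for products of linear forms,
\[
C(u_1u_2u_3\wedge v_1v_2v_3)=c\sum_{\tau\in\gS_3}(u_1\times v_{\tau(1)})(u_2\times v_{\tau(2)})(u_3\times v_{\tau(3)})\in\Sym^3W_3^\vee,
\]
where $c\ne0$ and $u\times v\in W_3^\vee$ is defined by $\langle u\times v,w\rangle=\det(u,v,w)$ under $\bw2W_3\cong W_3^\vee$. When $a=u^3$ is a cube this collapses to $C(u^3\wedge b)=6c\,\Sym^3(u\times-)(b)$, whose kernel is exactly $u\cdot\Sym^2W_3$, the cubics divisible by $u$. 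Thus if $U_3\in\Sing(X_{\sigma_0})$ \emph{contains a cube} $u^3$, then $C(u^3\wedge b)=0$ for all $b\in U_3$ forces $U_3\subseteq u\cdot\Sym^2W_3$; a further short computation with the same formula (ruling out the nets $u\cdot L$ with $L\ne u\cdot W_3$, as one checks on the remaining generators via Lemma~\ref{nonzero}) then pins down the common double factor and yields $U_3=u^2W_3$.

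The main obstacle is that a general net $\P(U_3)\subset\P(V_{10})$ meets the Veronese surface $\mathsf V$ of cubes in the \emph{empty} set (dimension count $2+2<9$), so one cannot assume a priori that $U_3\in\Sing(X_{\sigma_0})$ contains a cube, and the clean mechanism above does not immediately apply. I would attack this by the following two complementary routes. (i) A dimension/orbit argument: $\Sing(X_{\sigma_0})$ is closed and $\GL(W_3)$-invariant, and since $V_{10}=\Sym^3W_3$ is an irreducible $\SL(W_3)$-module, $\Gr(3,V_{10})$ has no fixed points and every closed $\SL(W_3)$-orbit has dimension $\ge2$; so once one proves $\dim\Sing(X_{\sigma_0})\le2$, each component is a $2$-dimensional closed orbit, and a finite check of the $B$-stable candidate $3$-planes via Lemma~\ref{nonzero} leaves only $\mathsf g(\mathsf V)$ (e.g.\ $\langle x^3,x^2y,xy^2\rangle$ is excluded because $\sigma_0(x^2y,xy^2,z^3)\ne0$). (ii) A rank argument: $C(\bw2U_3)=0$ forces $U_3\subseteq\mathrm{rad}(a\lrcorner\sigma_0)$ for every $a\in U_3$, so every nonzero cubic in $\P(U_3)$ has $\rank(a\lrcorner\sigma_0)\le6$; analyzing this degeneracy locus should produce the common factor and reduce to the case treated above. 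Either way the genuinely hard step is the global bound $\dim\Sing(X_{\sigma_0})\le2$, equivalently the exclusion of all nets other than $\P(u^2W_3)$, which I expect to settle through the explicit equation~\eqref{tagliograss}, either by a normal-form analysis of $\P(U_3)$ or by a direct (possibly computer-algebra) computation of the Jacobian of~\eqref{gransomma}.
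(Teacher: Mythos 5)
Your reduction of the statement to the vanishing of $\sigma_0$ on $\bw2U_3\wedge V_{10}$, and your proof of the inclusion $\gv\subseteq\Sing(X_{\sigma_0})$ via Lemma~\ref{nonzero}, are correct and agree with what the paper does (its Lemma~\ref{monsing} proves the same monomial statement, in the sharper ``if and only if'' form for \emph{all} monomial $3$-planes, which is needed later). Your analysis of the cube case is also sound: if $u^3\in U_3$, the contraction $b\mapsto C(u^3\wedge b)$ has kernel exactly $u\cdot\Sym^2\!W_3$, and the remaining nets $u\cdot L$ are excluded by Lemma~\ref{nonzero}.

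However, the converse inclusion $\Sing(X_{\sigma_0})\subseteq\gv$ --- which you yourself identify as ``the heart of the matter'' --- is not actually proven. A singular $[U_3]$ need not contain a cube, and both of your proposed routes stop exactly at the hard step: route (i) is conditional on the bound $\dim\Sing(X_{\sigma_0})\le 2$, which you only ``expect to settle'' by a normal-form or computer-algebra computation that is never carried out, and route (ii) asserts that the rank condition ``should produce the common factor'' without an argument. This unproven bound is not a routine verification; it is equivalent to the proposition itself up to the (easy) orbit bookkeeping, so as written the proposal is circular at its core. For comparison, the paper closes this gap by two devices you could adapt. First (Lemma~\ref{primopasso}), it degenerates an arbitrary singular $U_3$ to a \emph{monomial} subspace via a one-parameter subgroup $\lambda(t)=\diag(t^{n+1},t^n,1)$, using that $\Sing(X_{\sigma_0})$ is closed and $\SL(W_3)$-invariant, and then invokes the monomial classification of Lemma~\ref{monsing}; this is precisely the mechanism that lets a ``finite monomial check'' bear on arbitrary nets, not only on closed orbits, and it shows that no member of the net $\sL(U_3)$ can be singular off the base locus, nor have an ordinary node. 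Second, it runs a geometric case analysis on the net of plane cubics $\sL(U_3)$: if the base locus is $0$-dimensional one produces a common singular point and then a member singular along a line (contradiction); if the base locus is $1$-dimensional it is a line (excluded through three explicit sub-cases, each degenerated to a monomial net) or a conic, which the no-ordinary-node condition forces to be a double line, i.e.\ $[U_3]\in\gv$. If you want to keep your orbit-theoretic route (i), you would still need an independent proof of the dimension bound, and the degeneration trick of Lemma~\ref{primopasso} is the natural tool to supply it.
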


 \begin{proof}
We first prove one inclusion.

\begin{lemm}\label{monsing}
Let  $(x,y,z)$ be  a basis of $W_{3}$  and let  $U_3\subset \Sym^3 \!W_3$ be a $3$-dimensional subspace spanned by monomials in  $x,y,z$.\ Then $[U_3]$  is a singular point of $X_{\sigma_0}$ if and only if, after possibly renaming   $x, y, z$,  we have $U_3=\la x^3,x^2y,x^2z\ra$, that is,~$[ U_3]\in \gv$.

In particular, the surface $\gv$  is contained in the singular locus of $X_{\sigma_0}$.
\end{lemm}

\begin{proof}
Let $U_3=\la m_1,m_2,m_3\ra$, where $m_1,m_2,m_3$ are  monomials  in $x,y,z$.\  By \cite[Proposition~3.1]{devo},  the point $[U_3]$
 is  singular  on $X_{\sigma_0}$ if and only if  ${\sigma_0}(m_r\wedge m_s\wedge m)=0$  for every   distinct $r,s\in\{1,2,3\}$ and every monomial $m$  in $x,y,z$.\ Since $m$ is arbitrary, it follows from Lemma~\ref{nonzero} that at least one of the
 following inequalities holds
 \begin{equation*}
i_r+i_s>3,\quad j_r+j_s>3,\quad k_r+k_s>3.
\end{equation*}
The above is true for any choice of distinct $r,s\in\{1,2,3\}$.\ It follows  that, after possibly renaming   $x,y,z$, we have $U_3=\la x^3,x^2y,x^2z\ra$.
\end{proof}

We identify   $\P(V_{10})=\P(\Sym^3\! W_3)$ with $|\cO_{\P(W_3^{\vee})}(3)|$, the linear system of cubic curves in $\PP(W^{\vee}_3)$.\ Given $[\phi]\in \PP(\Sym^3\! W_3)$, we denote by $V(\phi)\subset \P(W_3^{\vee})$ its zero-locus
and, given
 a vector subspace $U\subset \Sym^3 \!W_3$, we let
 \begin{equation}\label{sistlin}
 \sL(U):=\{V(\phi) \mid [\phi]\in\PP(U)\}\subset |\cO_{\P(W_3^{\vee})}(3)|
 \end{equation}
be the associated linear system.

\begin{lemm}\label{primopasso}
Let $U_3\subset \Sym^3 \!W_3$ be a $3$-dimensional subspace.\ Suppose that  one of the following holds:
\begin{itemize}
\item[{\rm (a)}]
there exists $[\phi]\in \PP(U_3)$ such that $V(\phi)$ is  singular at a point $p\in \P(W_3^{\vee})$ not contained in the base-locus of $\sL(U_3)$;
\item[{\rm (b)}]
there exists an element of $\sL(U_3)$ with an ordinary node.
\end{itemize}
Then $[U_3]$ is not a singular point of $X_{\sigma_0}$.
\end{lemm}

\begin{proof}
Assume that  $[U_3]$ is  a singular point of $X_{\sigma_0}$.\  We will reach a contradiction  in   both cases.\
Suppose that (a) holds.\  Let $(x,y,z)$ be a basis of $W_3$ such that $p=(0,0,1)$.\ Then
$\phi=f_2(x,y)z+f_3(x,y)$, where $f_2,f_3$ are homogeneous of respective degrees $2$ and $3$, not both zero.\ By assumption, there exists  $[\psi]\in\PP(U_3)$ such that $p\notin V(\psi)$.\ Thus $\psi=z^3+f_1(x,y)z^2+f_2(x,y)z+f_3(x,y)$.\ Let $\lambda$ be the  $1$-parameter subgroup of $\GL(W_3)$ given (in the chosen basis) by
\begin{equation}\label{primops}
\lambda(t)=\diag(t^{n+1},t^{n},1),\quad n\ge 3.
\end{equation}
Let  $\ov{U}_3 :=\lim_{t\to 0}\lambda(t)U_3$.\ The hypersurface  $X_{\sigma_0}$ is mapped to itself by $\SL(W_3)$, hence it is singular  at $\lambda(t)U_3$ for all $t\in\CC^\star$,
  hence also at $\ov{U}_3$.\  A simple computation shows that if $f_2\ne 0$, then
 $\lim_{t\to 0}\lambda(t)[\phi]=[x^i y^jz]$, where $x^i y^j$ is the monomial with highest power of $y$ appearing in $f_2$,  and that if $f_2=0$, then $\lim_{t\to 0}\lambda(t)[\phi]=[x^i y^j]$,  where $x^i y^j$ is the monomial with highest power of $y$ appearing in  $f_3$.\ 
 
 On the other hand,   $\lim_{t\to 0}\lambda(t)[\psi]=[z^3]$.\ The subspace $\ov{U}_3$ is  generated by monomials in $x,y,z$, because the weights of the action of $\lambda$ on $\Sym^3\! W_3$ are pairwise distinct.\ Thus $\ov{U}_3$  is  generated by monomials in $x,y,z$ and  contains $z^3$ and one of $x^i y^jz$, $x^i y^j$.\ By Lemma~\ref{monsing},  $\ov{U}_3$  is not contained in $\sing (X_{\sigma_0})$.\ This is a contradiction.

Suppose now that (b) holds.\ By assumption, there exist a basis $(x,y,z)$  of $W_3$ and $[\phi]\in\PP(U_3)$ such that $\phi=xyz+f_3(x,y)$.\ Let $\lambda$ be the $1$-parameter subgroup in~\eqref{primops} and
set  $\ov{U}_3 :=\lim_{t\to 0}\lambda(t)U_3$.\ Arguing as above, we get that   $X_{\sigma_0}$ is  singular  at  $\ov{U}_3$.\   A simple computation shows that $\lim_{t\to 0}\lambda(t)[xyz+f_3(x,y)]=[xyz]$.\ Since $\ov{U}_3$ is  generated by monomials in $x,y,z$, this contradicts Lemma~\ref{monsing}.
\end{proof}

We now prove the reverse inclusion $\Sing( X_{\sigma_0})\subset \gv$.\
Let  $[U_3]\in \Sing( X_{\sigma_0})$.\ One of the following holds:
\begin{enumerate}
\item[(a)]
there exists $[\phi]\in \PP(U_3)$ such that $V(\phi)$ is  singular at a point not contained in the base-locus of $\sL(U_3)$;
\item[(b)]
the  base-locus of $\sL(U_3)$ is zero-dimensional and all curves in $\sL(U_3)$ are smooth outside the base-locus;
\item[(c)]
the  base-locus of $\sL(U_3)$ is one-dimensional and all curves in $\sL(U_3)$ are smooth outside the base-locus.
\end{enumerate}
If~(a) holds,   $[U_3]$ is not a singular point of $X_{\sigma_0}$  by Lemma~\ref{primopasso}.\ This is a contradiction.

Suppose that (b) holds.\ We claim that there exists $p\in\PP(W_3^{\vee})$ such that all elements of~$\sL(U_3)$ are singular at $p$.\ The set
\begin{equation*}
\Sigma:=\{(p,[\phi])\in \PP(W_3^{\vee})\times\sL(U_3) \mid \textnormal{$p$ is a singular point of $V(\phi)$}\} 
\end{equation*}
  is the intersection of $3$ divisors in $|\cO_{\PP(W_3^{\vee})}(2)\boxtimes\cO_{\sL(U_3)}(1)|$.\ If  $\Sigma$ has (pure) dimension 1, its projection to $\PP(W_3^{\vee})$ is a sextic curve, which contradicts (b).\ Hence $\dim(\Sigma)>1$ and   there exists a point $p$  such that all curves in $\sL(U_3)$ are singular at $p$.\ By  Lemma~\ref{primopasso}(b), no element of  $\sL(U_3)$ has an ordinary node at $p$.\ It follows that there are  linearly independent $[\phi_1],[\phi_2]\in\PP(U_3)$ such that $V(\phi_1)$ and $V(\phi_2)$  have multiplicity $3$ at $p$.\ Thus, there exists a nonzero  linear combination $c_1\phi_1+c_2\phi_2$ such that
 $V(c_1\phi_1+c_2\phi_2)$ is singular along a line.\ This contradicts our assumption~(b).

Lastly, suppose that (c) holds.\ The base-locus of $\sL(U_3)$ is either a line or a conic (possibly degenerate).\ Assume that it is a line $R$.
By Lemma~\ref{primopasso}(b), no element of $\sL(U_3)$ has an ordinary node.\  This forces $\sL(U_3)$ to be $R+\sL_0$, where
 $\sL_0\subset |\cO_{\PP(W_3^{\vee})}(2)|$ is one of the following:
\begin{enumerate}
\item[($\alpha$)]
the linear system of conics tangent to $R$ at a fixed $p\in R$ and containing a fixed
 $q\in \P(W_3^{\vee})\setminus R$;
\item[($\beta$)]
the linear system of conics with multiplicity of intersection at least $3$ with a fixed smooth conic tangent to $R$ at a fixed $p\in R$;
\item[($\gamma$)]
the linear system of conics singular  at a fixed $p\in R$.
\end{enumerate}
 If~($\alpha$) holds, there exists a basis $(x,y,z)$ of $W_3$ such that $U_3=\la x^2y,xy^2,y^2z\ra$.\ This     contradicts
   Lemma~\ref{monsing}.

  If~($\beta$) holds,  there exists a basis $(x,y,z)$ of $W_3$ such that $U_3=\la x^3+y^2z,xy^2,y^3\ra$.\ Let $\lambda$ be the $1$-parameter subgroup of $\GL(W_3)$ given by $\lambda(t)=\diag(t^{-1},t^{-3},1)$.\ Then  $\lim_{t\to 0}U_3=\la x^3,xy^2,y^3\ra$, which contradicts
   Lemma~\ref{monsing}.

  If~($\gamma$) holds,  there exists a basis $(x,y,z)$ of $W_3$ such that $U_3=\la x^2y,xy^2,y^3\ra$  and this contradicts Lemma~\ref{monsing}.\ 
  
  This proves that the base-locus of $\sL(U_3)$ is not  a line, hence it is a conic.\ If the conic has rank at least $2$, there are elements of $\sL(U_3)$ with an ordinary node  and this contradicts Lemma~\ref{primopasso}.\ Hence the base-locus of $\sL(U_3)$ is a double line, that is,~$[U_3]\in\gv$.
\end{proof}

\subsubsection{The   germ of $X_{\sigma_0}$ at its singular points} 

The local structure  of $X_{\sigma_0}$ at its singular points will be needed in the proof of Theorem~\ref{degenera2}.

\begin{lemm}\label{normeffe}
Let $p$ be a singular point of $X_{\sigma_0}$.\ The (analytic) germ $(X_{\sigma_0},p)$ is isomorphic to the germ
$
\bigl(\aff^2\times \bigl(\sum_{i=1}^{19}\xi_i^2=0\bigr),0\bigr)$.
\end{lemm}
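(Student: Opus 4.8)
The plan is to show that, transverse to its singular surface $\gv$, the hypersurface $X_{\sigma_0}$ has an ordinary quadratic singularity. Since $\dim\Gr(3,\Sym^3\!W_3)=21$ and $X_{\sigma_0}$ is a hyperplane section, $\dim X_{\sigma_0}=20$, while $\gv=\Sing(X_{\sigma_0})$ is a smooth surface (Proposition~\ref{singeffe}); thus $\gv$ has codimension $19$ in the ambient Grassmannian, which matches the number of squares in the expected normal form. Because $\gv$ is a single $\SL(W_3)$-orbit and $X_{\sigma_0}$ is $\SL(W_3)$-invariant, it suffices to treat one point, say $p={\mathsf g}([x^3])=[\la x^3,x^2y,x^2z\ra]$ for a basis $(x,y,z)$ of $W_3$. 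I would use the standard affine chart of $\Gr(3,\Sym^3\!W_3)$ at $p$: writing $U_3=\la x^3,x^2y,x^2z\ra$ and letting $W_7$ be spanned by the seven remaining degree-$3$ monomials $f_1,\dots,f_7$, a point of the chart is $A=(a_{ij})\in\Hom(U_3,W_7)\cong\C^{21}$, corresponding to the span of the $g_i=e_i+\sum_j a_{ij}f_j$, and $X_{\sigma_0}$ is cut out by the cubic $F(A)=\sigma_0(g_1,g_2,g_3)$. Its value at $A=0$ is $\sigma_0(x^3,x^2y,x^2z)=0$ (the degrees violate Lemma~\ref{nonzero}), and its linear part vanishes because $[U_3]$ is singular, so $\sigma_0$ annihilates $\bw2U_3\wedge V_{10}$ by \cite[Proposition~3.1]{devo}.

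I would then compute the quadratic part $Q$ of $F$. Expanding $\sigma_0$ trilinearly, $Q$ pairs only distinct rows of $A$, so it has the tripartite form $Q=B_{12}+B_{13}+B_{23}$, where $B_{rs}(a_r,a_s)=\sum_{j,k}\sigma_0(\text{row data})\,a_{rj}a_{sk}$ has an antisymmetric $7\times 7$ coefficient matrix $M_{rs}$; Lemma~\ref{nonzero} locates the nonzero entries, giving three coupled index-pairs for $M_{12}$ and $M_{13}$ and two for $M_{23}$. A direct analysis of the kernel of the resulting $21\times 21$ symmetric matrix shows it is spanned by two \emph{robust} directions — which must be $T_p\gv$, as $T_p\gv\subseteq\ker Q$ has dimension $2$ — together with two further directions that lie in the kernel \emph{if and only if} one explicit relation among the nonzero values $\sigma_0(f_j,f_k,e_l)$ holds. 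Hence $\corank(Q)=2$, that is $\rank(Q)=19$, provided that relation fails; this last inequality I would verify by evaluating the finitely many relevant values of $\sigma_0$ from the formula \eqref{gransomma} (or from the umbral description), with the parabolic stabilizer of $[x]$ in $\SL(W_3)$ organizing and shortening the bookkeeping.

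Finally, with the transverse Hessian shown to be nondegenerate of rank $19$, I would conclude by the holomorphic Morse lemma with parameters. Choosing analytic coordinates $(u_1,u_2,\xi_1,\dots,\xi_{19})$ on the chart with $\gv=\{\xi=0\}$, the function $F$ satisfies $F(u,0)\equiv 0$ and $d_\xi F(u,0)\equiv 0$ (since $\gv\subset\Sing(X_{\sigma_0})$), while its $\xi$-Hessian is nondegenerate at the origin, hence in a neighbourhood. The splitting lemma then yields a fibre-preserving analytic change of coordinates in which $F=\xi_1^2+\cdots+\xi_{19}^2$, giving the germ isomorphism $(X_{\sigma_0},p)\cong\bigl(\aff^2\times\{\sum_{i=1}^{19}\xi_i^2=0\},0\bigr)$, as claimed. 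The main obstacle is the middle step: establishing that the corank of the Hessian is exactly $2$ (equivalently, the single nonvanishing among the products of entries of the $M_{rs}$), since a larger corank would signal a more degenerate transverse singularity and would break the normal form.
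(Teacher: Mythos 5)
Your proposal is correct and takes essentially the same route as the paper: both reduce to a single point of $\gv$ (legitimate by $\SL(W_3)$-homogeneity), write the local equation of $X_{\sigma_0}$ in the affine chart at $p$, show that the quadratic part has kernel exactly the $2$-dimensional space $T_{\gv,p}$ (hence rank $19$), and conclude by the parametrized Morse lemma, a step the paper leaves implicit after asserting "another computation shows $T_{\gv,p}=\Ker(\phi)$". Your structural description of that kernel computation — two directions spanning $T_{\gv,p}$, plus two further candidate directions that lie in the kernel only if a single scalar relation among the nonzero values $\sigma_0(f_j,f_k,e_l)$ holds — is exactly what the explicit calculation produces, and that relation does indeed fail, so the corank is $2$ as required.
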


\begin{proof}
Let $p:=[U_3]$ and let $(x,y,z)$ be a basis of $W_3$ such that $U_3=\la x^3,x^2y,x^2z\ra$.\ We write a local equation of $X_{\sigma_0}$ in a neighborhood of 
$p$, adopting the notation in Sections~\ref{trivela} and~\ref{effezero}.\ In particular, coordinates on $\Sym^3 \!W_3$ are defined by~\eqref{coordsim} and we order them as in Table~\ref{enumero}.\ 
Now~$p$ has coordinates $q_{037}=1$ and $q_{ijk}=0$ for $\{i,j,k\}\ne \{0,3,7\}$.\ 
Affine coordinates on the open  subset
$$ \Gr(3,\Sym^3\! W_3)_{q_{037}}\subset \Gr(3,\Sym^3\! W_3)$$
  defined by $q_{037}\ne 0$  are given by $q'_{ijk}:=q_{ijk}/q_{037}$ for all $0\le i<j<k\le 9$ such that exactly two of the indices $i,j,k$ belong to 
$\{0,3,7\}$.\  By~\eqref{tagliograss},  $X_{\sigma_0}\cap  \Gr(3,\Sym^3\! W_3)_{Q_{037}}$ has equation 
\begin{eqnarray*}
0&=&q'_{013}q'_{027}-q'_{017}q'_{023}-3(q'_{035}q'_{078}-q'_{038}q'_{057}+q'_{017}q'_{347}+q'_{047}q'_{137}-q'_{023}q'_{367})\\
&&{}+3(q'_{036}q'_{237}-q'_{034}q'_{357}+q'_{035}q'_{347}
+q'_{067}q'_{378}-q'_{078}q'_{367})+\textnormal{cubic term.}
\nonumber
\nonumber
\end{eqnarray*}
The tangent cone of $X_{\sigma_0}$ at $p$ is defined by the vanishing (in $\C^{21}$) of this quadratic term.\ 
A computation gives  
\begin{equation*}
T_{\gv,p}=\left\la \frac{\partial}{\partial q'_{039}}+2\frac{\partial}{\partial q'_{067}},\, 2\frac{\partial}{\partial q'_{034}}+
\frac{\partial}{\partial q'_{079}}\right\ra.
\end{equation*}
Another computation shows  
\begin{equation*}
T_{\gv,p}=\Ker(\phi).
\end{equation*}
This proves the lemma.
\end{proof}

\subsection{The variety $K_{\sigma_0}$}\label{se73n}

We describe in Proposition~\ref{kappapi} the Debarre--Voisin variety $K_{\sigma_0}$ associated with the trivector $\sigma_0  $ on $V_{10}=\Sym^3 \!W_3 $ defined in Section~\ref{trivela}.

\begin{shownto}{long}
 \green{

\begin{lemm}\label{ellepih}
Let $p$ and $H$  be as above.\ We consider $H$ as a hyperplane in the linear system $|\cO_{\PP(T_{\PP(W_3^{\vee}),p})}(2)|$.\ Then $L(p,H)$ is a $6$-dimensional subspace of $\Sym^3 \!W_3$ and there exists a basis
 $(x,y,z)$  of $W_{3}$ such that 
\begin{equation}\label{monelle}
L(p,H)=
\begin{cases}
 \la x y^2,x z^2,y^3,y^2 z, y z^2,z^3\ra=y^2\cdot W_3+z^2\cdot W_3 & \text{if $H$ is base-point free,} \\
  \la x y^2,x y z,y^3,y^2 z, y z^2,z^3\ra  & \text{if $H$ has a base-point.}
  \end{cases}
\end{equation}
Moreover $L(p,H)\in K_{\sigma_0}$.
\end{lemm}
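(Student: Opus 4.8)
The plan is to work in suitable coordinates and to reduce all three assertions of the lemma—that $\dim L(p,H)=6$, that $L(p,H)$ takes one of the two displayed monomial forms, and that $[L(p,H)]\in K_{\sigma_0}$—to the combinatorial nonvanishing criterion of Lemma~\ref{nonzero}. Using the identification $\P(\Sym^3 \!W_3)=|\cO_{\P(W_3^\vee)}(3)|$ from Section~\ref{effezero}, I first choose a basis $(x,y,z)$ of $W_3$ so that $p\in\P(W_3^\vee)$ is the coordinate point cut out by $y=z=0$. In the chart $x\neq0$ the functions $y,z$ are local coordinates at $p$, so $\Omega_{\P(W_3^\vee),p}$ is identified with $\langle y,z\rangle\subset W_3$ and $\Sym^2(\Omega_{\P(W_3^\vee),p})$ with $\langle y^2,yz,z^2\rangle$. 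A cubic vanishing to order at least $2$ at $p$ is then exactly one of the form $\phi=x\,Q(y,z)+C(y,z)$, with tangent cone $Q\in\Sym^2\langle y,z\rangle$ and $C\in\Sym^3\langle y,z\rangle$; these form a $7$-dimensional space (singularity at $p$ being the three independent conditions $\phi(p)=\partial_y\phi(p)=\partial_z\phi(p)=0$). The tangent-cone map $\phi\mapsto Q$ onto the $3$-dimensional $\Sym^2\langle y,z\rangle$ is surjective, so imposing $Q\in H$ for a $2$-dimensional $H$ gives $\dim L(p,H)=7-3+2=6$.

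Next I would put $H$ into normal form. Viewing $H$ as a pencil of binary quadratics, i.e. a line in $\P(\Sym^2\langle y,z\rangle)=\P^2$, its position relative to the conic of perfect squares gives exactly two orbits under the $\GL(\langle y,z\rangle)$-action realized by coordinate changes fixing $p$ (which act as the full $\PGL(\langle y,z\rangle)$ on $\P(\langle y,z\rangle)$): a secant line, which is base-point-free and normalizes to $H=\langle y^2,z^2\rangle$, and a tangent line, which has a base point and normalizes to $H=\langle y^2,yz\rangle$. Reading off $x\,Q+C$ in each case yields precisely the bases in~\eqref{monelle}, since $x\cdot\langle y^2,z^2\rangle+\Sym^3\langle y,z\rangle=y^2\cdot W_3+z^2\cdot W_3$ in the base-point-free case, and $x\cdot\langle y^2,yz\rangle+\Sym^3\langle y,z\rangle=\langle xy^2,xyz,y^3,y^2z,yz^2,z^3\rangle$ in the other.

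Finally, for $[L(p,H)]\in K_{\sigma_0}$ I would show that $\sigma_0$ vanishes on $\bw3 L(p,H)$. Because $\sigma_0$ is $\SL(W_3)$-invariant and $K_{\sigma_0}$ depends only on $[\sigma_0]$, this membership is unchanged by our $\GL(W_3)$ change of basis, so it suffices to check the two monomial normal forms, where Lemma~\ref{nonzero} applies verbatim. In both forms every basis monomial has $x$-degree at most $1$, and only two of the six basis monomials (namely $xy^2,xz^2$, resp.~$xy^2,xyz$) have $x$-degree equal to $1$; hence any three distinct basis monomials have total $x$-exponent at most $2$, never $3$. By Lemma~\ref{nonzero}, $\sigma_0$ then vanishes on every such triple, hence on all of $\bw3 L(p,H)$, giving $[L(p,H)]\in K_{\sigma_0}$.

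I expect the only delicate bookkeeping—and the step most prone to error—to be the coordinate matching of the tangent-cone condition with the two pencil normal forms, so that the explicit monomial bases in~\eqref{monelle} come out correctly; once those coordinates are fixed, the $K_{\sigma_0}$-membership is immediate from the elementary $x$-degree count against Lemma~\ref{nonzero}.
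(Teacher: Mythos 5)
Your proof is correct. The jet-space description of $L(p,H)$, the dimension count $4+2=6$, the reduction of the pencil $H$ to the secant/tangent normal forms $\la y^2,z^2\ra$ and $\la y^2,yz\ra$, and the final $x$-degree count against Lemma~\ref{nonzero} are all sound; in particular the reduction to a monomial normal form is legitimate because $\GL(W_3)=\C^\star\cdot\SL(W_3)$ acts on $\sigma_0$ by scalars, so vanishing of $\sigma_0$ on a subspace is a $\GL(W_3)$-invariant condition and $L(g\cdot p,g\cdot H)=g\cdot L(p,H)$. Be aware, though, that the paper never writes out a proof of this lemma: its counterparts in the main text are Definition~\ref{eccolph} (where $L(a,H)$ is defined dually as $(a\cdot I(a,H)^{\bot})^{\bot}$), the normal forms~\eqref{esplicito} (stated as ``easily checked'' and equivalent to yours after taking annihilators), and Proposition~\ref{prop710}(a), whose membership argument is genuinely different from yours. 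There, in the base-point-free case, $L(p,H)=y^2\cdot W_3+z^2\cdot W_3$ is recognized as the span of two points of $\gv$, i.e.\ of two tangent planes to the Veronese surface lying in $\Sing(X_{\sigma_0})$ (Lemma~\ref{monsing}), so Proposition~\ref{remaxsing}(b) gives $[L(p,H)]\in K_{\sigma_0}$ at once; the base-point case is not checked separately but follows because such spaces are limits of base-point-free ones, $K_L$ being irreducible and $K_{\sigma_0}$ closed. Your direct check of all monomial triples handles both cases uniformly and is exactly the technique the paper itself uses for the other component in Proposition~\ref{prop710}(b), so it is fully in the spirit of the paper's toolkit; what the paper's route buys instead is the conceptual explanation of where $K_L$ comes from (the singular locus of the Pl\"ucker hyperplane section, via Proposition~\ref{remaxsing}), which is reused throughout Section~\ref{div2}, whereas your argument is more elementary and avoids the closure/degeneration step.
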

}
\end{shownto} 
%
%

\begin{shownto}{long}
\green{
\begin{rema}
Let $p$ and $H$ be as above.\ A nonzero  $\phi$ is in $ L(p,H)$ if and only if  the associated divisor $V(\phi)\in|\cO_{\PP(W_{3}^{\vee})}(3)|$ vanishes to order at least $2$ at $p$, and  the class of $\phi$ in $\gm_{p}^2/\gm_{p}^3$ (which is determined up to  $\CC^\star$) belongs to $H$.
\end{rema}
}
\end{shownto}

 \subsubsection{Two distinguished subvarieties of $K_{\sigma_0}$}

\begin{defi}\label{eccolph}
(a) Given $[a]\in \PP(W_{3}^{\vee})$ and a codimension $1$ suspace   $H\subset\Sym^2(a^{\bot})$, let
\begin{eqnarray*}
I(a,H) & := & \textnormal{image of $H$ via the inclusion $(\Sym^2(a^{\bot})\hra \Sym^2\! W_{3})$,} \\
L(a,H) & := & (a\cdot I(a,H)^{\bot})^{\bot}\subset \Sym^3\! W_{3}.
\end{eqnarray*}
Note that $\dim (I(a,H))=2$ and $\dim (L(a,H))=6$.

\noindent(b) 
Given $[a]\in \PP(W_{3}^{\vee})$ and  
 $[x]\in \PP (W_3)$, let
 \begin{eqnarray*}
 J(a,x) & := & x\cdot \Ker(a) \subset \Sym^2\! W_{3}, \\
M(a,x) & := & (a\cdot J(a,x)^{\bot})^{\bot}\subset \Sym^3\! W_{3}.
\end{eqnarray*}
Note that $\dim (J(a,x))=2$ and $\dim (M(a,x))=6$.

\noindent (c) Finally, define two irreducible subvarieties of $ \Gr(6,V_{10})$ by setting
\begin{eqnarray*}
K_L & := & \{[L(a,H)] \mid [a]\in \PP(W_3^{\vee}),\   H \subset \Sym^2(a^{\bot})\textnormal{ hyperplane}\},\\
K_M & := & \{[M(a,x)] \mid [a]\in \PP (W_3^{\vee}),\ [x]\in \PP (W_3)\}.
\end{eqnarray*}
\end{defi}
We list the subspaces $a\cdot I(a,H)^{\bot}$ and $a\cdot J(a,x)^{\bot}$ up to isomorphism.\ First notice that there exist linearly independent $x,y\in W_3$ such that  $H=\la x^2,y^2\ra$ or $H=\la x^2,xy\ra$.\ 
As is easily checked, there exists   a basis $(a,b,c)$ of $W_3^{\vee}$ such that
\begin{equation}\label{esplicito}
\begin{aligned}
a\cdot I(a,H)^{\bot}&=
\begin{cases}
a\cdot\la a^2,ab,ac,bc\ra & \text{if $H=\la x^2,y^2\ra$,} \\
a\cdot\la a^2,ab,ac,c^2\ra & \text{if $H=\la x^2,xy\ra$,} 
\end{cases}
\\
a\cdot J(a,H)^{\bot}&=
\begin{cases}
a\cdot\la a^2,b^2,bc,c^2\ra & \text{if $a(x)\not=0$,} \\
a\cdot\la a^2,ab,ac,c^2\ra & \text{if $a(x)=0$.} 
\end{cases}
\end{aligned}
\end{equation}
We now show that the varieties $K_L$ and $K_M$ are both contained in $K_{\sigma_0}$.

\begin{prop}\label{prop710}
{\rm{(a)}} The subvariety of $K_{\sigma_0}$ obtained from the surface $\gv \subset \Sing(X_{\sigma_0} )$ by the procedure described in Proposition~\ref{remaxsing}(b) is $K_L$.

\noindent {\rm{(b)}} The variety $K_M$ is contained in $K_{\sigma_0}$.
\end{prop}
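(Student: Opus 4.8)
The plan is to treat the two statements together, combining the description $\Sing(X_{\sigma_0})=\gv$ from Proposition~\ref{singeffe} with the monomial non-vanishing criterion of Lemma~\ref{nonzero}, read off in a basis adapted to the data $(a,H)$ or $(a,x)$ through the explicit list \eqref{esplicito}. Throughout, given a basis $(a,b,c)$ of $W_3^\vee$ I would work with the dual basis $(u,v,w)$ of $W_3$, so that $\Sym^3\!W_3$ has the monomial basis $u^iv^jw^k$ and $a^\perp=\la v,w\ra$.

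For part (a), I would first record that by \eqref{mappagauss} the surface $\gv\cong\P(W_3)$ consists exactly of the spaces $[x^2\cdot W_3]$ for $[x]\in\P(W_3)$, and that for distinct $[x],[y]$ the sum $x^2\cdot W_3+y^2\cdot W_3$ is spanned by six distinct monomials in a basis $(x,y,z)$, hence has dimension $6$. Proposition~\ref{remaxsing}(b) then yields $[x^2\cdot W_3+y^2\cdot W_3]\in K_{\sigma_0}$, so the subvariety ``obtained from $\gv$'' is by definition the closure of the image of the rational map $\gv^{[2]}\dra K_{\sigma_0}$, $\{[x],[y]\}\mapsto[x^2\cdot W_3+y^2\cdot W_3]$. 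The heart of the argument is the identity
$$x^2\cdot W_3+y^2\cdot W_3=L\bigl(a,\la x^2,y^2\ra\bigr),$$
where $[a]\in\P(W_3^\vee)$ is determined by $a^\perp=\la x,y\ra$. This is a direct computation from Definition~\ref{eccolph}: taking the basis of the first line of \eqref{esplicito} gives $a\cdot I(a,H)^\perp=\la a^3,a^2b,a^2c,abc\ra$, whose annihilator in $\Sym^3\!W_3$ is $\la uv^2,uw^2,v^3,v^2w,vw^2,w^3\ra=v^2\cdot W_3+w^2\cdot W_3$, with $\{v,w\}$ a basis of $a^\perp$. Thus the procedure image lies in $K_L$.

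To finish part (a) I would prove dominance onto the $4$-dimensional variety $K_L$: a general member $([a],H)$ has $H\subset\Sym^2(a^\perp)$ base-point-free, i.e.\ a pencil of binary quadratics on $\P(a^\perp)\cong\P^1$, and its two double-point members recover a pair $\{[x],[y]\}$ with $H=\la x^2,y^2\ra$, giving the (generically unique) preimage. Taking closures — the base-point locus $H=\la x^2,xy\ra$ of \eqref{esplicito} arising as the limit along the diagonal of $\gv^{[2]}$ — identifies the procedure image with all of $K_L$; since $K_{\sigma_0}$ is closed and contains the dense procedure image, this also gives $K_L\subset K_{\sigma_0}$.

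For part (b), I would verify directly that $\sigma_0$ vanishes on every $M(a,x)$. From Definition~\ref{eccolph} and \eqref{esplicito}, computing the annihilator of $a\cdot J(a,x)^\perp$ in the basis $(u,v,w)$ gives
$$M(a,x)=\begin{cases}\la u^2v,\,u^2w,\,v^3,\,v^2w,\,vw^2,\,w^3\ra & \text{if } a(x)\ne0,\\[1mm] \la uv^2,\,uvw,\,v^3,\,v^2w,\,vw^2,\,w^3\ra & \text{if } a(x)=0.\end{cases}$$
In either case the $u$-degrees of the six spanning monomials lie in $\{0,2\}$ (resp.\ $\{0,1\}$), so no three of them can have $u$-degrees summing to $3$. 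By Lemma~\ref{nonzero}, $\sigma_0$ therefore vanishes on every triple of these monomials, whence $\sigma_0\vert_{M(a,x)}=0$ and $[M(a,x)]\in K_{\sigma_0}$ for all $([a],[x])$, i.e.\ $K_M\subset K_{\sigma_0}$. (The same degree bookkeeping, applied to both lines of \eqref{esplicito}, incidentally re-proves $K_L\subset K_{\sigma_0}$ independently of the procedure.)

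The individual computations are routine; I expect the only genuinely delicate point to be the dominance claim in part (a) — checking that the procedure sweeps out a dense subset of $K_L$ rather than a proper subvariety — which is exactly where the recovery of $\{[x],[y]\}$ from the double-point members of the pencil $H$ is used, together with the verification that the diagonal of $\gv^{[2]}$ accounts for the base-point stratum $H=\la x^2,xy\ra$.
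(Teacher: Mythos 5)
Your proposal is correct and takes essentially the same route as the paper's proof: for (a), apply Proposition~\ref{remaxsing}(b) to pairs of points $[x^2\cdot W_3],[y^2\cdot W_3]$ of $\gv$, identify $x^2\cdot W_3+y^2\cdot W_3$ with $L(a,\la x^2,y^2\ra)$ where $a^\perp=\la x,y\ra$, and conclude from the irreducibility and dimension ($\le 4$) of $K_L$; for (b), use the monomial form of $M(a,x)$ coming from~\eqref{esplicito} together with the exponent criterion of Lemma~\ref{nonzero}. Your write-up is merely more detailed than the paper's (an explicit dominance argument in (a), and a direct check of both cases $a(x)\ne 0$ and $a(x)=0$ in (b), where the paper checks only the first case and implicitly passes to the closure).
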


\begin{proof}
By Proposition~\ref{remaxsing}(b), for $x,y\in W_3$ not collinear, the $6$-dimensional subspace  $x^2\cdot W_3+y^2\cdot W_3\subset \Sym^3\! W_{3}$ corresponds to a point of $K_{\sigma_0}$.\ This is exactly $L(a,H)$, where $a^\perp=\la x,y\ra $   and $H=\la x^2,y^2\ra$.\ Since $K_L$ is irreducible of dimension at most $4$, this proves~(a).

By~\eqref{esplicito}, if $a(x)\ne 0$, then  $M(a,x)=
 \la x^2 y,x^2 z,y^3,y^2 z, y z^2,z^3 \ra $  in a suitable basis  $(x,y,z)$  of~$W_{3}$.\ By Lemma~\ref{nonzero}, this is a point of  $K_{\sigma_0}$, which proves (b).
\end{proof}

The rest of Section~\ref{se73n} will be devoted to the proof of the following result.

\begin{prop}\label{kappapi}
One has $(K_{\sigma_0})_{\rm red} =K_L\cup K_M$.
\end{prop}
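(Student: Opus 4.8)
The plan is to establish the set-theoretic equality $(K_{\sigma_0})_{\rm red}=K_L\cup K_M$ by proving two inclusions. We already know from Proposition~\ref{prop710} that $K_L\cup K_M\subseteq K_{\sigma_0}$, so only the reverse inclusion $(K_{\sigma_0})_{\rm red}\subseteq K_L\cup K_M$ requires genuine work. The natural strategy is to exploit the $\SL(W_3)$-invariance of $\sigma_0$ together with a degeneration argument via one-parameter subgroups, exactly as in the proofs of Lemmas~\ref{monsing} and~\ref{primopasso}. I would first reduce to the case of \emph{monomial} subspaces: given any $[W_6]\in K_{\sigma_0}$, apply a general one-parameter subgroup $\lambda(t)=\diag(t^{a},t^{b},t^{c})$ of $\GL(W_3)$ with pairwise distinct weights on $\Sym^3\!W_3$; since $K_{\sigma_0}$ is $\SL(W_3)$-invariant and closed, the limit $\overline{W_6}:=\lim_{t\to0}\lambda(t)W_6$ again lies in $K_{\sigma_0}$ and is spanned by monomials in $x,y,z$ (because the distinct weights force the flat limit of a subspace to be coordinate). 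Thus it suffices to classify the monomial points of $K_{\sigma_0}$ and show each degenerates \emph{within} $K_L\cup K_M$ under a suitable further one-parameter subgroup, and conversely that a general point of $K_{\sigma_0}$ near such a monomial limit must lie in $K_L$ or $K_M$.

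The combinatorial heart is the classification of $6$-dimensional monomial subspaces $W_6\subset\Sym^3\!W_3$ on which $\sigma_0$ vanishes identically. By Lemma~\ref{nonzero}, $\sigma_0(m_1,m_2,m_3)\ne0$ exactly when the three degree-$3$ monomials $m_1,m_2,m_3$ have exponent-sums equal to $(3,3,3)$ in each variable and are not all equal to $xyz$. So the monomial subspaces in $K_{\sigma_0}$ are precisely those $6$-element subsets of the $10$ monomials containing no such ``balanced'' triple. I would enumerate these subsets combinatorially: treat each monomial $x^iy^jz^k$ ($i+j+k=3$) as a lattice point, and ``balanced triples'' as triples summing to $(3,3,3)$. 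The task is to find all maximal (size-$6$) antichains for this forbidden-triple condition and match them against the monomial descriptions of $L(a,H)$ and $M(a,x)$ recorded in~\eqref{esplicito} (after expanding $(a\cdot I(a,H)^\perp)^\perp$ and $(a\cdot J(a,x)^\perp)^\perp$ into explicit monomial spans in a chosen basis). I expect this enumeration to show that every monomial point of $K_{\sigma_0}$ either lies in $K_L\cup K_M$ already or is a degeneration of points of $K_L\cup K_M$ under a one-parameter subgroup.

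To convert the monomial classification into the full set-theoretic statement, I would argue as follows. Let $[W_6]$ be an arbitrary point of $(K_{\sigma_0})_{\rm red}$ and let $\overline{W_6}$ be a monomial limit as above lying in some stratum of $K_L\cup K_M$. Because $K_L$ and $K_M$ are irreducible closed subvarieties, it is enough to show that a \emph{general} point of $K_{\sigma_0}$ lies in $K_L\cup K_M$ and that $K_{\sigma_0}$ has no component disjoint from $K_L\cup K_M$; a dimension count (both $K_L$ and $K_M$ have dimension at most $4$, and $K_{\sigma_0}$ has dimension $4$ by the expected-dimension analysis, since $\sigma_0$ lies on the discriminant only in a controlled way) then forces equality of supports. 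Concretely, I would run the one-parameter-subgroup semicontinuity in the opposite direction: a general $[W_6]\in K_{\sigma_0}$ limits to a monomial point that, by the enumeration, lies in $K_L\cup K_M$; since membership in the closed set $K_L\cup K_M$ is preserved under the closure of the $\SL(W_3)$-orbit and $K_{\sigma_0}$ is covered by such orbit-closures, every point of $K_{\sigma_0}$ lies in $K_L\cup K_M$.

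The main obstacle I anticipate is the combinatorial enumeration step together with the verification that no \emph{additional} monomial subspace (outside the two families $K_L$, $K_M$) survives as a component: one must rule out spurious size-$6$ monomial antichains that do not come from any $L(a,H)$ or $M(a,x)$, and equally ensure that the degeneration argument does not collapse a genuine component of $K_{\sigma_0}$ onto a lower-dimensional locus. Handling the boundary cases in~\eqref{esplicito} (the base-point-free versus base-point alternatives for $H$, and $a(x)=0$ versus $a(x)\ne0$ for $M$) carefully, and confirming that the degenerations stay inside the irreducible closures $K_L$ and $K_M$ rather than jumping between them, is where the bookkeeping is most delicate. A dimension computation for $K_L$ and $K_M$ (each fibered over $\PP(W_3^\vee)$ with two-dimensional fibers, giving dimension $4$) will be needed to certify that together they account for all of $K_{\sigma_0}$ and that the containment is an equality of reduced schemes.
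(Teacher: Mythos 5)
Your steps (1)--(3) are fine: $K_L\cup K_M\subseteq K_{\sigma_0}$ is indeed Proposition~\ref{prop710}, a generic one-parameter subgroup with pairwise distinct weights on $\Sym^3W_3$ does send any $[W_6]\in K_{\sigma_0}$ to a monomial limit $\overline{W}_6$ that again lies in $K_{\sigma_0}$ (closedness plus invariance), and Lemma~\ref{nonzero} reduces the classification of monomial points of $K_{\sigma_0}$ to a finite combinatorial check. The fatal problem is step (4): your degeneration argument runs in the wrong direction. Closedness and invariance give the implication ``$[W_6]\in K_{\sigma_0}\Rightarrow\overline{W}_6\in K_{\sigma_0}$'', and this is exactly how the paper (Lemmas~\ref{monsing}, \ref{stab2jet}, Propositions~\ref{romain}, \ref{gary}) uses torus limits: to \emph{exclude} points from a closed invariant set by showing their limit is excluded. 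But knowing $\overline{W}_6\in K_L\cup K_M$ says nothing about $[W_6]$ itself: the point $[W_6]$ lies in the \emph{attracting set} of $\overline{W}_6$, not in the orbit closure of $\overline{W}_6$, and attracting sets of a closed invariant subvariety are in general much larger than the subvariety. (Already on $\PP^1$ with the standard $\CC^\star$-action, every point flows to the closed invariant set $\{0,\infty\}$.) So a hypothetical extra component of $K_{\sigma_0}$ would have all its monomial limits inside $K_L\cup K_M$ without any contradiction arising; your enumeration cannot detect it. The dimension count you invoke to patch this is also unjustified and essentially circular: there is no ``expected-dimension'' argument available for this $\sigma_0$ --- it is a maximally degenerate invariant trivector (stabilizer $\SL(3)$), the paper's other invariant trivectors give $K_{\sigma_0}$ of excessive dimension $6$ or $10$, and here $K_{\sigma_0}$ is even non-reduced along $K_L$; that $\dim K_{\sigma_0}=4$ is a \emph{consequence} of the classification, not an input.

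For comparison, the paper's actual route is genuinely different and is what fills this gap. It passes to the orthogonal $U_6^{\bot}\subset\Sym^3W_3^{\vee}$, viewed as a $3$-dimensional linear system of plane cubics; the torus-limit/exclusion technique yields only the constraint that this system contains, for every line $R\subset\PP(W_3)$, a cubic through $R$ (Lemma~\ref{stab2jet}, Proposition~\ref{crit2jet}). The real work is then a geometric classification of all $3$-dimensional linear systems of cubics with this property (Proposition~\ref{disdop}, using the map to $\Lambda^{\vee}\cong\PP^3$, Del Pezzo surfaces, and Cremona involutions), followed by a case analysis (Corollary~\ref{tutti2jet}, Propositions~\ref{romain} and~\ref{gary}) in which monomial degenerations reappear --- but always in the legitimate, exclusionary direction. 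Your proposal replaces this classification by an implication that is false as stated, so the reverse inclusion $(K_{\sigma_0})_{\rm red}\subseteq K_L\cup K_M$ remains unproved.
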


We also mention as an addition to  this statement that $K_{\sigma_0}$ is nonreduced along its component~$K_L$: this follows from  Propositions~\ref{prop710}(a) and \ref{remaxsing}(a).

 The following remark (which complements the description of $K_L$ in  Proposition~\ref{prop710}(a))
 will be useful in the proof of Theorem~\ref{degenera2}.
 
\begin{rema}\label{interver}
If $[U_6]\in K_{\sigma_0}$, one of the following holds:
\begin{itemize}
\item[{\rm (a)}]
either $[U_6]\in K_L \setminus  K_M$ and  the scheme-theoretic intersection $\Gr(3,U_6)\cap \gv$  is the union of two reduced (distinct) points;  
\item[{\rm (b)}]
or $[U_6]\in K_M \setminus  K_L $ and  $\Gr(3,U_6)\cap  \gv=\emptyset$;
\item[{\rm (c)}]
or $[U_6]\in K_L \cap  K_M $ and   the scheme-theoretic intersection $\Gr(3,U_6)\cap \gv$ has length $2$.
\end{itemize}
\end{rema}

\begin{shownto}{long}
\green{Arguing as in the proof of Lemma~\ref{ellepih}, one gets the following result.
}
\end{shownto}

\begin{shownto}{long}
\green{
\begin{lemm}\label{emmepir}
For every $(p,R)\in \PP(W_{3}^{\vee})\times \PP(W_{3}) $,  the point $[M(p,R)]$ of $\P(\Sym^3\! W_{3})$ is in $ K_{\sigma_0}$ and there exists a basis
 $(x,y,z)$  of $W_{3}$ such that 
\begin{equation}\label{hersch}
M(p,R)=
\begin{cases}
 \la x^2 y,x^2 z,y^3,y^2 z, y z^2,z^3 \ra & \text{if $p\notin R$,} \\
 \la x y^2,x y z,y^3,y^2 z, y z^2,z^3\ra  & \text{if $p\in R$.}
 \end{cases}
\end{equation}
\end{lemm}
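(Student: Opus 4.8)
The plan is to reduce both assertions to the explicit monomial description already recorded in~\eqref{esplicito}, together with the combinatorial criterion of Lemma~\ref{nonzero}, proceeding exactly as in the proof of Lemma~\ref{ellepih}. First I would fix representatives $a\in W_3^\vee$ of $p$ and $x\in W_3$ of $R$, and observe that, under the natural pairing, the condition $p\notin R$ means $a(x)\ne0$ while $p\in R$ means $a(x)=0$ (viewing $R=[x]$ as the hyperplane $\{[b]\in\PP(W_3^\vee)\mid b(x)=0\}$). By~\eqref{esplicito}, there is then a basis $(a,b,c)$ of $W_3^\vee$ for which $a\cdot J(a,x)^\bot=\la a^3,ab^2,abc,ac^2\ra$ when $a(x)\ne0$, and $a\cdot J(a,x)^\bot=\la a^3,a^2b,a^2c,ac^2\ra$ when $a(x)=0$.

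Next I would read off the normal form~\eqref{hersch} by computing $M(p,R)=M(a,x)=(a\cdot J(a,x)^\bot)^\bot$. Passing to the dual basis $(x,y,z)$ of $W_3$ and using that, for the apolarity pairing $\Sym^3 W_3^\vee\times\Sym^3 W_3\to\C$, a monomial $a^ib^jc^k$ pairs nontrivially only with $x^iy^jz^k$, the orthogonal complement of a span of monomials is again spanned by the complementary monomials. Deleting from the ten degree-$3$ monomials in $x,y,z$ the four dual to the generators above leaves $\la x^2y,x^2z,y^3,y^2z,yz^2,z^3\ra$ when $a(x)\ne0$ and $\la xy^2,xyz,y^3,y^2z,yz^2,z^3\ra$ when $a(x)=0$, which is precisely~\eqref{hersch}.

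Finally, to show $[M(p,R)]\in K_{\sigma_0}$, I would check that $\sigma_0$ vanishes on the six-dimensional space $M(p,R)$. As this space is spanned by monomials and $\sigma_0$ is alternating and trilinear, by Lemma~\ref{nonzero} it is enough to verify that no triple of distinct spanning monomials has total multidegree $(3,3,3)$; in fact it suffices to track the $x$-degree. When $a(x)\ne0$, the only spanning monomials of positive $x$-degree are $x^2y$ and $x^2z$, both of $x$-degree $2$, so the $x$-degree of any triple lies in $\{0,2,4\}$; when $a(x)=0$, the only ones of positive $x$-degree are $xy^2$ and $xyz$, both of $x$-degree $1$, so a triple of distinct monomials has $x$-degree at most $2$. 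In neither case can the $x$-degree equal $3$, so $\sigma_0\vert_{M(p,R)}=0$, which is exactly the membership condition in~\eqref{eqDV}.

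The argument is entirely routine; the only point requiring care is the bookkeeping of orthogonal complements under the apolarity pairing, so I anticipate no real obstacle. I note that the containment $[M(p,R)]\in K_{\sigma_0}$ was already obtained in the case $a(x)\ne0$ in Proposition~\ref{prop710}(b) by this same mechanism, the present lemma simply recording the pointwise normal form in both cases.
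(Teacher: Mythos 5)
Your proof is correct and takes essentially the same route as the paper: the paper likewise reads off the monomial normal form from~\eqref{esplicito} (via the apolarity duality you describe) and verifies membership in $K_{\sigma_0}$ with Lemma~\ref{nonzero}, exactly as in its proof of Proposition~\ref{prop710}(b), which is the argument the lemma's proof invokes by analogy. Nothing further is needed.
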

}
\end{shownto} 

\begin{shownto}{long}
\green{\begin{rema}
Let $p$ and $R$ be as above.\ Then  $M(p,R)\subset \Sym^3\! W_{3}$ is the set of elements $\phi$  such that the polar of 
$\phi$ at $p$ vanishes on $R$ and on $p$.
\end{rema}

\begin{rema}
Let $p$, $H$, and $R$ be as above, where $p\in R$ and  $H$ 
 is the $1$-dimensional linear subsystem of $|\cO(2)|$ on $\PP(T_{\PP(W_3^{\vee})}(p))$ with base-point $R$.\ Then $L(p,H)=M(p,R)$.\  The intersection ${\bf L}\cap{\bf M}$ consists of the set of $L(p,H)=M(p,R)$ as above.
\end{rema}
}
\end{shownto}

\begin{rema}\label{rema710}
 Let $F_I\subset \Gr(2,\Sym^2\! W_3)$ be the set of all  $I(a,H)$ and let $F_J\subset \Gr(2,\Sym^2\! W_3)$ be 
the set of all  $J(a,x)$.\ The variety of lines on the chordal cubic in $\PP(\Sym^2\! W_{3})$ is equal to $F_I\cup F_J$, both $F_I$ and $F_J$ are smooth of dimension $4$, and their intersection is smooth of dimension~$3$
 (\cite[Proposition~3.2.4]{vdd}).\ Thus, by Proposition~\ref{kappapi},  $K_{\sigma_0}$ is isomorphic to 
 the variety of lines on the chordal cubic.
\end{rema}

\subsubsection{Elements of  $K_{\sigma_0}$ and $2$-jets}
Considering the definition of $K_L$ and $K_M$, we must, in order to prove Proposition~\ref{kappapi}, examine $U_6^{\bot}$ when $[U_6]\in K_{\sigma_0}$.\ We prove in~Proposition~\ref{crit2jet} that  $U_6^{\bot}$ satisfies a very strong  condition.

\begin{lemm}\label{stab2jet}
Let $U_3\subset \Sym^3 \!W_3=H^0(\PP(W_3^{\vee}),\cO_{\PP(W_3^{\vee})}(3))$ be a $3$-dimensional subspace.\ Suppose that  there exists $p\in\PP(W_3^{\vee})$  such that 
 $U_3\subset H^0(\PP(W_3^{\vee}),\gm_p^2(3))$ and   the natural map 
 $U_3\to (\gm_p^2/ \gm_p^3)\otimes \cO_{\PP(W_3^{\vee})}(3)$ is an isomorphism.\
Then $[U_3]\notin X_{\sigma_0}$.
\end{lemm}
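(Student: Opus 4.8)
The plan is to reduce the statement to a single explicit computation governed by Lemma~\ref{nonzero}. First I would choose a basis $(x,y,z)$ of $W_3$ so that the point $p\in\PP(W_3^\vee)$ is $[0:0:1]$, i.e.\ $x(p)=y(p)=0$ and $z(p)\neq0$. Viewing elements of $\Sym^3\!W_3$ as cubic forms on $\PP(W_3^\vee)$ and dehomogenizing at $z=1$ with local coordinates $u=x/z$, $v=y/z$, the space $H^0(\PP(W_3^\vee),\gm_p^2(3))$ of cubics vanishing to order $\geq2$ at $p$ is spanned by the seven monomials $x^3,x^2y,xy^2,y^3,x^2z,xyz,y^2z$ (one removes $z^3,xz^2,yz^2$, the three conditions of value and first derivatives). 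The quotient $\gm_p^2/\gm_p^3=\Sym^2(\gm_p/\gm_p^2)$ is $3$-dimensional and is identified with $\langle u^2,uv,v^2\rangle$ via the leading-term map, which sends $x^2z\mapsto u^2$, $xyz\mapsto uv$, $y^2z\mapsto v^2$ and kills the cubics $x^3,x^2y,xy^2,y^3$ in $x,y$ alone (these form $\gm_p^3(3)$).

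Next, since by hypothesis the composite $U_3\to\gm_p^2/\gm_p^3$ is an \emph{isomorphism}, surjectivity produces preimages of $u^2,uv,v^2$ and injectivity (dimensions are equal) makes them a basis of $U_3$; reading off leading terms, this basis has the shape
$$F_1=x^2z+g_1,\qquad F_2=xyz+g_2,\qquad F_3=y^2z+g_3,$$
where each $g_i$ lies in $\gm_p^3(3)=\langle x^3,x^2y,xy^2,y^3\rangle$, and in particular each $g_i$ is a cubic in $x,y$ only, of $z$-degree $0$.

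Then I would expand $\sigma_0(F_1,F_2,F_3)$ by trilinearity into a sum of terms $\sigma_0(m_1,m_2,m_3)$ with $m_i$ a monomial of $F_i$, and apply Lemma~\ref{nonzero}. The term free of all $g_i$, namely $\sigma_0(x^2z,xyz,y^2z)$, is nonzero: the three monomials have $x$-degrees $2,1,0$, $y$-degrees $0,1,2$, and $z$-degrees $1,1,1$, summing to $3$ in each variable, and they are not all equal to $xyz$. For every other term at least one factor is some $g_i$, of $z$-degree $0$; as the remaining two factors have $z$-degree at most $1$ each, the total $z$-degree is at most $2<3$, so Lemma~\ref{nonzero} forces that term to vanish. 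Hence $\sigma_0(F_1,F_2,F_3)=\sigma_0(x^2z,xyz,y^2z)\neq0$, so $\sigma_0$ does not vanish on $\bw3 U_3$ and $[U_3]\notin X_{\sigma_0}$.

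There is no serious obstacle here once the coordinates are fixed: the argument is essentially forced by the normal form of the basis. The only point I would present with care is the $z$-degree bookkeeping that annihilates all cross-terms involving a $g_i$, since that is exactly where the hypothesis that the leading-term map is an isomorphism (rather than merely injective or surjective) gets used — it is what guarantees the three clean representatives with leading monomials $x^2z$, $xyz$, $y^2z$, on which Lemma~\ref{nonzero} can be applied directly.
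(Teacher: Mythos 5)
Your proof is correct, but it takes a genuinely different route from the paper's. The paper argues by contradiction with a one-parameter subgroup: assuming $[U_3]\in X_{\sigma_0}$, it acts by $\lambda(t)=\diag(t^{r},t^{s},1)$ with $\tfrac{3}{2}s>r>s>0$ and takes the limit $\ov{U}_3=\lim_{t\to 0}\lambda(t)U_3$; the $2$-jet hypothesis forces $\ov{U}_3=\la x^2z,\,xyz,\,y^2z\ra$ (the three leading monomials have strictly smaller $\lambda$-weights than $x^3,x^2y,xy^2,y^3$ under that choice of $r,s$), and since $X_{\sigma_0}$ is closed and $\GL(W_3)$-invariant it would contain $[\ov{U}_3]$, contradicting Lemma~\ref{nonzero}. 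You instead put $U_3$ in the normal form $F_1=x^2z+g_1$, $F_2=xyz+g_2$, $F_3=y^2z+g_3$ with $g_i\in\la x^3,x^2y,xy^2,y^3\ra$ and evaluate $\sigma_0(F_1,F_2,F_3)$ directly: every cross term involves a factor of $z$-degree $0$ and two factors of $z$-degree at most $1$, so its total $z$-degree is at most $2$ and Lemma~\ref{nonzero} kills it, leaving $\sigma_0(x^2z,xyz,y^2z)\ne 0$. Both arguments ultimately reduce to Lemma~\ref{nonzero} on the same monomial triple, but yours is more elementary and self-contained — no limits, no appeal to closedness and invariance of $X_{\sigma_0}$ — and it isolates exactly why the perturbations $g_i$ are harmless (the $z$-degree count). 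The paper's degeneration technique is less economical here but is the tool it reuses systematically throughout Section~\ref{div2} (e.g., in Lemma~\ref{primopasso} and Propositions~\ref{romain} and~\ref{gary}), where the perturbation terms are not annihilated by such a clean bookkeeping and only the limit argument goes through.
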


\begin{proof}
We proceed by contradiction.\ Assume   $[U_3] \in X_{\sigma_0}$ and let $(x,y,z)$ be a basis of $W_3$ such that  the   coordinates of $p$ are $(0,0,1)$.\ Let $r$ and $s$ be integers such that $ \frac{3}{2}s>r>s>0$ and let~$\lambda$ be the $1$-parameter subgroup of $\GL(W_3)$ given (in the chosen basis) by
\begin{equation*}\label{abps}
\lambda(t)=\diag(t^{r},t^{s},1).
\end{equation*}
Let  $\ov{U}_3 :=\lim_{t\to 0}\lambda(t)U_3$.\ Then  $X_{\sigma_0}$ contains  $[\ov{U}_3]$, because it is mapped to itself by $\GL(W_3)$.\  The representation $\Sym^3\! \lambda\colon\CC^\star\to \Sym^3\!  W_3$ has isotypic components of dimension $1$.\  Generators of the isotypic components, ordered in increasing order, are 
\begin{equation*}
z^3,\ yz^2,\ xz^2,\ y^2z,\ xyz,\ x^2z,\ y^3,\ xy^2,\ x^2y,\ x^3.
\end{equation*}
It follows that $\ov{U}_3 =\la x^2z,\ xyz,\ y^2z\ra$.\ By Lemma~\ref{nonzero}, one gets $[\ov{U}_3]\notin X_{\sigma_0}$, a contradiction.
\end{proof}

\begin{prop}\label{crit2jet}
Let $[U_6]\in K_{\sigma_0}$.\ For every $[a]\in \PP(W_3^{\vee})$, we have
\begin{equation}\label{nontras}
(a\cdot \Sym^2\! W_3^{\vee})\cap U_6^{\bot}\ne \{0\}.
\end{equation}
\end{prop}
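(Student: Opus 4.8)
The plan is to reduce the assertion to the already-established Lemma~\ref{stab2jet} by a duality computation, building from a hypothetical counterexample a $3$-dimensional subspace of $U_6$ that the lemma forbids. Throughout I use the natural perfect pairing between $V_{10}=\Sym^3\!W_3$ and $V_{10}^\vee=\Sym^3\!W_3^\vee$ (the third symmetric power of the duality $W_3\times W_3^\vee\to\C$), for which monomials in a basis and its dual are orthogonal unless they match. Fix $[a]\in\PP(W_3^\vee)$ and set $p:=[a]$. In a basis $(e_1,e_2,e_3)$ of $W_3$ with $a=e_1^\vee$, the subspace $a\cdot\Sym^2\!W_3^\vee\subset\Sym^3\!W_3^\vee$ is spanned by the $e_i^\vee$-monomials divisible by $e_1^\vee$, so its annihilator in $\Sym^3\!W_3$ is $\Sym^3(\Ker a)=\Sym^3\la e_2,e_3\ra$. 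Since the functions in $\Ker a$ are exactly those vanishing at $p$, this is precisely $H^0(\PP(W_3^\vee),\gm_p^3(3))$, the space of cubics with a point of multiplicity $\ge 3$ at $p$ (dimension $4$).

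The first step is the reformulation. Because $\dim(a\cdot\Sym^2\!W_3^\vee)=6$ and $\dim U_6^\perp=4$ inside the $10$-dimensional $\Sym^3\!W_3^\vee$, the relation $(A\cap B)^\perp=A^\perp+B^\perp$ together with the dimension count $4+6=10$ gives the equivalence
\[
(a\cdot\Sym^2\!W_3^\vee)\cap U_6^\perp=\{0\}\iff U_6\cap H^0(\PP(W_3^\vee),\gm_p^3(3))=\{0\}.
\]
So it suffices to prove that $U_6$ meets $H^0(\gm_p^3(3))$ nontrivially, for every $p=[a]$. I would argue by contradiction: assume $U_6\cap H^0(\gm_p^3(3))=\{0\}$. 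Then, the two spaces having complementary dimensions $6$ and $4$, the projection $U_6\to \Sym^3\!W_3/H^0(\gm_p^3(3))$ is an isomorphism. Using the filtration of $\Sym^3\!W_3=H^0(\cO(3))$ by order of vanishing at $p$, with graded dimensions $1,2,3,4$, I would set $U_3\subset U_6$ to be the preimage under this isomorphism of the $3$-dimensional subspace $H^0(\gm_p^2(3))/H^0(\gm_p^3(3))$. Then $\dim U_3=3$; moreover $u\in U_3$ maps into $H^0(\gm_p^2(3))/H^0(\gm_p^3(3))$ forces $u\in H^0(\gm_p^2(3))+H^0(\gm_p^3(3))=H^0(\gm_p^2(3))$, so $U_3\subset H^0(\gm_p^2(3))$; and by construction the induced map $U_3\to(\gm_p^2/\gm_p^3)\otimes\cO(3)$ is an isomorphism.

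Thus $U_3$ satisfies exactly the hypotheses of Lemma~\ref{stab2jet} (at the point $p$), whence $[U_3]\notin X_{\sigma_0}$. On the other hand $U_3\subset U_6$ and $[U_6]\in K_{\sigma_0}$ means $\sigma_0$ vanishes on $\bw3 U_6$, so in particular $\sigma_0|_{U_3}=0$, i.e.\ $[U_3]\in X_{\sigma_0}$ --- a contradiction. This proves \eqref{nontras}. The only delicate points are bookkeeping ones rather than genuine obstacles: identifying the annihilator $(a\cdot\Sym^2\!W_3^\vee)^\perp$ with the triple-point cubics $H^0(\gm_p^3(3))$, and checking that the preimage construction yields precisely the jet-isomorphism condition of Lemma~\ref{stab2jet}; both are short linear-algebra verifications, and no further computation with the explicit form~\eqref{gransomma} is needed beyond what Lemma~\ref{stab2jet} already encapsulates.
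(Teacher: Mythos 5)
Your proof is correct, and its core coincides with the paper's: both derive the contradiction from Lemma~\ref{stab2jet}, applied to the $3$-dimensional space $U_3=U_6\cap H^0(\PP(W_3^{\vee}),\gm_p^2(3))$ (your ``preimage of the middle graded piece'' is exactly the kernel of $U_6\to(\cO_{\PP(W_3^{\vee}),p}/\gm_p^2)\otimes\cO_{\PP(W_3^{\vee})}(3)$ that the paper uses), under the assumption that the jet map \eqref{ginrit} is bijective. Where you genuinely differ is the bridge between \eqref{nontras} and \eqref{ginrit}. The paper argues projectively: for $[a]$ outside the base locus of $\PP(U_6)$, the map defined by the linear system is the Veronese $v_3$ followed by projection from $\PP(U_6^{\bot})$; failure of \eqref{nontras} means the second-order osculating plane misses the center, hence \eqref{ginrit} is bijective; points of the base locus are then handled by a separate closedness argument. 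You replace all of this by the duality computation $(a\cdot \Sym^2\! W_3^{\vee})^{\bot}=\Sym^3(\Ker a)=H^0(\PP(W_3^{\vee}),\gm_p^3(3))$, which yields, uniformly in $p=[a]$, the clean equivalence: \eqref{nontras} fails at $[a]$ if and only if $U_6\cap H^0(\PP(W_3^{\vee}),\gm_p^3(3))=\{0\}$, if and only if \eqref{ginrit} is bijective. This removes both the case distinction and the closedness step (indeed your equivalence shows the paper's restriction was harmless: if \eqref{nontras} fails at $[a]$, then $a^3\notin U_6^{\bot}$, so $[a]$ is automatically not a base point of $\PP(U_6)$). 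What each approach buys: yours is shorter, purely linear-algebraic, and establishes an equivalence rather than a one-way implication; the paper's makes explicit the projective-geometric content (osculating planes of the Veronese versus the projection center $\PP(U_6^{\bot})$), which fits the geometric style of the surrounding section but is not logically needed.
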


\begin{proof}
We view $U_6$ as a subspace of $H^0(\PP(W_3^{\vee}),\cO_{\PP(W_3^{\vee})}(3))$.\ Let $p\in\PP(W_3^{\vee})$.\ If the natural map
\begin{equation}\label{ginrit}
U_6\lra (\cO_{\PP(W_3^{\vee}),p}/\gm_{p}^3 )\otimes \cO_{\PP(W_3^{\vee})}(3)
\end{equation}
is  surjective, or equivalently bijective since both spaces have dimension $6$,  the kernel of the map $U_6\to (\cO_{\PP(W_3^{\vee}),p}/\gm_{p}^2) \otimes \cO_{\PP(W_3^{\vee})}(3)$ is a $3$-dimensional subspace $U_3\subset U_6\cap H^0(\PP(W_3^{\vee}),\gm_{p}^2(3))$ such that the natural map $U_3\to (\gm_p^2/ \gm_p^3)\otimes \cO_{\PP(W_3^{\vee})}(3)$ is an isomorphism.\ By Lemma~\ref{stab2jet},  $[U_3]\notin X_{\sigma_0}$, but this is absurd because $[U_6]\in K_{\sigma_0}$.\ The map \eqref{ginrit} is therefore not surjective.

Assume first that $p=[a]$ is not in the base-locus of the linear system $\PP(U_6)$.\   The  map $\PP(W_3^{\vee})\dra  \PP(U_6^{\vee})$ defined by  $\PP(U_6)$ is the composition
\begin{equation*}
\PP(W_3^{\vee}) \overset{v_3}{\lra}
 \PP(\Sym^3\! W^{\vee}_3) \dra \PP(U_6^{\vee})
\end{equation*}
 of the Veronese map  $v_3$ and the projection with center $\PP(U_6^{\bot})$.\ If~\eqref{nontras} does not hold, the second-order osculating plane $\P(a\cdot \Sym^2\! W_3^{\vee})$ to the Veronese surface $v_3(\PP(W_3^{\vee}))$ does not meet the center of projection $\PP(U_6^{\bot})$, hence~\eqref{ginrit} is bijective, which we just prove does not hold.\ It follows that~\eqref{nontras} holds if 
$[a]$ is not in the base-locus of~$\PP(U_6)$.\ Since the property~\eqref{nontras} is closed, it  holds for all $[a]\in \P(W_3^{\vee})$.
\end{proof}

\subsubsection{Three-dimensional linear system of plane cubics containing many reducible cubics}
Let $[U_6]\in K_{\sigma_0}$.\ Then $\P(U^{\bot}_6)\subset \P(\Sym^3\! W_3^{\vee})$ is a $3$-dimensional linear systems of cubics in $\PP(W_3)$.\  By Proposition~\ref{crit2jet}, given any line $R\subset \PP(W_3)$, there exists a cubic in $\PP(U^{\bot}_6)$ containing~$R$.\ We prove here the following result.

\begin{prop}\label{disdop}
Let $\Lambda\subset|\cO_{\PP^2}(3)|$ be a $3$-dimensional linear system such that,  for each  line $R\subset \PP^2$, there exists a cubic in $\Lambda$ containing $R$.\ One of the following holds:  
\begin{itemize}
\item[{\rm (a)}]
the base-locus of $\Lambda$ contains a line;
\item[{\rm (b)}]
there exists a (possibly degenerate) conic $C\subset \PP^2$ such that $\Lambda$ contains $C+|\cO_{\PP^2}(1)|$, 
\item[{\rm (c)}]
in a suitable basis $(x,y,z)$ of $H^0(\PP^2,\cO_{\PP^2}(1))$, one of the following holds: 
\begin{itemize}
\item[{\rm (c1)}]
$\Lambda=\PP(\la x^3,y^3,z^3,xyz\ra)$,
\item[{\rm (c2)}]
$\Lambda\subset\PP(\la xz^2,yz^2,x^3,x^2y,xy^2,y^3\ra)$,
\item[{\rm (c3)}]
 $\Lambda=\PP(\la xyz,x^2y+yz^2,x^2z+y^2z,xy^2+xz^2\ra)$,
\item[{\rm (c4)}]
$\Lambda\subset\PP(\la xyz,x^2y+xz^2,xy^2+yz^2,x^2z,y^2z\ra)$,
\item[{\rm (c5)}]
$\Lambda\subset\PP(\la x^2z, xyz, xy^2+xz^2 ,y^2z, y^3+yz^2 \ra)$, 
\item[{\rm (c6)}]
$\Lambda\subset\PP(\la x^2z-xy^2, y^3, y^2z, yz^2, z^3 \ra)$. 
\end{itemize}
\end{itemize}
\end{prop}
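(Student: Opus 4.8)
The plan is to translate the hypothesis into a degeneracy condition for a bundle map on the dual plane, introduce the numerical invariant measuring how many members of $\Lambda$ pass through a general line, and then run a case analysis on that invariant, reducing each case to one of the listed normal forms by the same $\PGL(W_3)$-equivariant one-parameter degeneration technique already used in the proofs of Lemmas~\ref{monsing},~\ref{primopasso}, and~\ref{stab2jet}.

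First I would fix the reformulation. Write $\Lambda=\PP(U_4)$ with $U_4\subset\Sym^3\!W_3^\vee$ of dimension $4$. A line $R=V(\ell)$ (for $[\ell]\in\PP(W_3^\vee)$) is a component of some member of $\Lambda$ if and only if $U_4\cap(\ell\cdot\Sym^2\!W_3^\vee)\ne\{0\}$, so the hypothesis is precisely that this intersection is nonzero for every $[\ell]$. Globally, on $\PP(W_3^\vee)$ the multiplication map $\cO(-1)\otimes\Sym^2\!W_3^\vee\to\Sym^3\!W_3^\vee\otimes\cO$ is an injective bundle map whose cokernel $\cQ$ is a rank-$4$ bundle with $\det\cQ\cong\cO(6)$; the hypothesis says that the composite $\psi\colon U_4\otimes\cO\to\Sym^3\!W_3^\vee\otimes\cO\to\cQ$ is nowhere an isomorphism, i.e.\ its determinant, a section of $\cO(6)$, vanishes identically. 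Set $s:=\dim(U_4\cap\ell\Sym^2\!W_3^\vee)$ for $[\ell]$ general, the generic corank of $\psi$; the hypothesis gives $s\ge1$ (equivalently, generic rank $\le3$). The proof then splits according to $s\in\{1,2,3\}$.

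The generic case is $s=1$: through a general line passes a unique member $C_\ell=\ell\,Q_\ell$, and this defines the residual-conic rational map $\rho\colon\PP(W_3^\vee)\dra\PP(\Sym^2\!W_3^\vee)$, $[\ell]\mapsto[Q_\ell]$. If $\rho$ is constant, say $Q_\ell\equiv Q_0$, then $\ell\cdot Q_0\in U_4$ for all $\ell$, hence $Q_0\cdot W_3^\vee\subset U_4$ and we are in case (b). If $\rho$ is nonconstant, I would examine its image: if all residual conics $Q_\ell$ share one fixed line component $V(\ell_0)$, then (since each $C_\ell$ is then divisible by both $\ell$ and $\ell_0$) the line $V(\ell_0)$ lies in the base locus of $\Lambda$ and we reach case (a). In the remaining situation I would degenerate $\Lambda$ by a suitable one-parameter subgroup $\lambda(t)=\diag(t^r,t^s,1)$: since $X_{\sigma_0}$, and hence the condition, is $\PGL(W_3)$-invariant, the monomial limit $\lim_{t\to0}\lambda(t)\Lambda$ (generated by monomials, the weights on $\Sym^3\!W_3$ being pairwise distinct) again satisfies the condition, which can now be read off combinatorially from Lemma~\ref{nonzero}. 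This is exactly the mechanism of Lemmas~\ref{primopasso} and~\ref{stab2jet}; it pins down the finitely many admissible monomial limits and, lifting back with the group action, forces $\Lambda$ to coincide with one of the explicit systems (c1)--(c6).

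When $s\ge2$ a whole pencil ($s=2$) or net ($s=3$) of members passes through each general line, which is far more restrictive. For $s=3$ one has $\ell\cdot N_\ell\subset U_4$ with $N_\ell\subset\Sym^2\!W_3^\vee$ a net of conics depending on $\ell$ but contained in the fixed $4$-dimensional $U_4$; comparing $N_\ell$ for two general lines forces $U_4$ either to lie inside a single $\ell_0\cdot\Sym^2\!W_3^\vee$ (case (a)), or to contain a fixed $Q_0\cdot W_3^\vee$ (case (b)), or to degenerate to the most special monomial systems among (c). The intermediate case $s=2$ is handled in the same way, using the residual pencil of conics in place of a single one and again reducing to a monomial normal form via an appropriate $\lambda(t)$. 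I expect the main obstacle to be completeness and bookkeeping in the nonconstant and higher-multiplicity branches: one must check that the monomial limits produced by all admissible degenerations, together with the geometric constraints on the residual conics, leave no possibility outside (a), (b), (c1)--(c6), and that each surviving monomial limit genuinely lifts (after fixing coordinates by the $\PGL(W_3)$-action) to the stated normal form for $\Lambda$ itself. The combinatorial certificate throughout is Lemma~\ref{nonzero} (which monomial cubics admit a representative divisible by a prescribed linear form), and this exclusion-and-lifting step is where the bulk of the routine but lengthy computation lies.
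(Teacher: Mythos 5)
Your opening reformulation is fine (the hypothesis is exactly that $U_4\cap(\ell\cdot\Sym^2\!W_3^\vee)\neq\{0\}$ for every $[\ell]\in\PP(W_3^\vee)$, a closed, $\PGL(W_3)$-invariant degeneracy condition), and the constant-residual-conic branch correctly yields case (b). But the engine you rely on for everything else --- ``degenerate $\Lambda$ to a monomial system by a one-parameter subgroup, read off the admissible monomial limits via Lemma~\ref{nonzero}, then lift back with the group action'' --- cannot prove a classification, and this is a genuine gap, not bookkeeping. Because the locus $Z$ of systems satisfying the hypothesis is closed and invariant, monomial limits are only usable in the contrapositive direction, which is precisely how the paper uses them in Lemmas~\ref{primopasso} and~\ref{stab2jet}: if $\lim_{t\to 0}\lambda(t)\Lambda\notin Z$, then $\Lambda\notin Z$. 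Knowing instead that the limit \emph{is} one of the finitely many monomial systems in $Z$ tells you essentially nothing about $\Lambda$: the set of systems flowing to a given torus-fixed point is a large cell meeting many $\PGL(W_3)$-orbits, some in $Z$ and pairwise inequivalent, most not in $Z$ at all; determining which points of that cell lie in $Z$ is the original problem over again. In particular the exact cases (c1) and (c3), and the containments (c2), (c4)--(c6), single out systems fixed by an order-$3$ automorphism or by plane Cremona involutions (the standard and the two degenerate quadratic involutions), and these are invisible to monomial limits --- for instance (c1) is itself monomial, and generic deformations of it inside its cell leave $Z$. This is exactly why the paper's proof takes a different, genuinely geometric route: it studies the rational map $f\colon\PP^2\dra\Lambda^{\vee}\cong\PP^3$, splits according to $\deg(f)$ and $\deg(\Sigma)$ where $\Sigma=f(\PP^2)$, uses curvilinearity of the base locus $B$ to get $\deg(f)\deg(\Sigma)+\len(\cO_B)=9$, and in the degree-$2$ and degree-$3$ cases invokes Del Pezzo geometry and the classification of birational involutions of $\PP^2$ to land on (c1)--(c6); none of this structure appears in your plan.

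There is also a secondary error in your corank-one branch: from ``all residual conics $Q_\ell$ contain a fixed line $V(\ell_0)$'' you conclude that $V(\ell_0)$ lies in the base locus of $\Lambda$, i.e.\ $U_4\subset\ell_0\cdot\Sym^2\!W_3^\vee$. What actually follows is only that the span of the cubics $C_\ell=\ell\,Q_\ell$ --- a subspace of $U_4$ that may have dimension $3$ --- lies in $\ell_0\cdot\Sym^2\!W_3^\vee$; you still have to handle $U_4=\ell_0\cdot N\oplus\la g\ra$ with $N$ a net of conics and $\ell_0\nmid g$, which is not case (a). Likewise, the assertions for $s\ge 2$ (``comparing $N_\ell$ for two general lines forces\dots'') are statements of the desired conclusion rather than arguments. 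So the proposal identifies a reasonable first reduction but is missing the substantive classification argument; as written, the ``exclusion-and-lifting'' step it defers to does not exist.
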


Here is  the corollary   of interest to us.

\begin{coro}\label{tutti2jet}
Let $[U_6]\in K_{\sigma_0}$.\  One of the following holds:
\begin{itemize}
\item[\rm ($\alpha$)]
 $U_6^{\bot}=f_1\cdot U_4$, where  $ f_1\in W_3^{\vee}$ and   $U_4\subset\Sym^2\! W_3^{\vee}$ is a $4$-dimensional subspace;
\item[\rm ($\beta$)]
 $U_6^{\bot}\supseteq f_2\cdot W_3^{\vee}$, where $  f_2\in\Sym^2\! W_3^{\vee}$.

\end{itemize}
\end{coro}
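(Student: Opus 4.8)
The plan is to translate membership $[U_6]\in K_{\sigma_0}$ into a statement about the linear system $\Lambda:=\PP(U_6^{\bot})$ and then feed it into Proposition~\ref{disdop}. Since $U_6$ has dimension $6$ in $V_{10}=\Sym^3\! W_3$, its annihilator $U_6^{\bot}$ has dimension $4$ in $\Sym^3\! W_3^{\vee}$, so $\Lambda\subset\PP(\Sym^3\! W_3^{\vee})=|\cO_{\PP(W_3)}(3)|$ is a $3$-dimensional linear system of plane cubics on $\PP(W_3)$. By Proposition~\ref{crit2jet}, for every line $R\subset\PP(W_3)$ there is a cubic in $\Lambda$ containing $R$, which is exactly the hypothesis of Proposition~\ref{disdop}. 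Its cases~(a) and~(b) translate immediately into the two alternatives of the corollary: if the base-locus of $\Lambda$ contains a line $R=V(f_1)$ with $f_1\in W_3^{\vee}$, then every element of $U_6^{\bot}$ is divisible by $f_1$, so $U_6^{\bot}=f_1\cdot U_4$ with $U_4\subset\Sym^2\! W_3^{\vee}$ of dimension $4$, which is alternative~$(\alpha)$; and if $\Lambda\supseteq C+|\cO_{\PP(W_3)}(1)|$ for a (possibly degenerate) conic $C=V(f_2)$ with $f_2\in\Sym^2\! W_3^{\vee}$, then $U_6^{\bot}\supseteq f_2\cdot W_3^{\vee}$, which is alternative~$(\beta)$.

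It therefore remains to rule out the exceptional cases~(c1)--(c6) of Proposition~\ref{disdop} for $[U_6]\in K_{\sigma_0}$. The main tool is that the pairing between $\Sym^3\! W_3$ and $\Sym^3\! W_3^{\vee}$ is diagonal in dual monomial bases, so passing from a description of $\Lambda$ to a description of $U_6=(U_6^{\bot})^{\bot}$ (in the dual monomial basis of $W_3$) is pure bookkeeping. For each exceptional case I would write down the resulting subspace of $U_6$ and exhibit three of its elements on which $\sigma_0$ does not vanish, contradicting $[U_6]\in K_{\sigma_0}$; the nonvanishing test is Lemma~\ref{nonzero}, namely that $\sigma_0$ is nonzero on a monomial triple exactly when the three multidegrees sum to $(3,3,3)$ and the monomials are not all equal to $xyz$. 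When $\Lambda$ is prescribed exactly (cases~(c1) and~(c3)), $U_6$ is determined: in~(c1) one gets $U_6=\la x^2y,x^2z,xy^2,y^2z,xz^2,yz^2\ra$ and the triple $(x^2y,y^2z,xz^2)$ has multidegree $(3,3,3)$, while in~(c3) one checks $x^3,y^3,z^3\in U_6$ and $\sigma_0(x^3,y^3,z^3)\ne0$. When $\Lambda$ is only constrained to lie in a fixed space $S$ (cases~(c2),~(c4),~(c5),~(c6)), one only knows $U_6\supseteq S^{\bot}$, but it suffices to find the bad triple already inside $S^{\bot}$; in~(c2) this is $(x^2z,y^2z,xyz)\subset S^{\bot}$ and in~(c4) it is $(x^3,y^3,z^3)\subset S^{\bot}$.

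The delicate cases are~(c5) and~(c6), where the obvious monomials of $S^{\bot}$ all carry $\sigma_0=0$, so one must expand the binomial generators of $S^{\bot}$ to locate a triple of multidegree $(3,3,3)$. In~(c5) the generator $y^3-yz^2\in S^{\bot}$ supplies the monomial $y^3$, and together with $x^3,z^3\in S^{\bot}$ it yields the single surviving term $\sigma_0(x^3,z^3,y^3)\ne0$; in~(c6) the constrained generator with monomials $x^2z$ and $xy^2$ supplies $xy^2$, and $\sigma_0(xz^2,xyz,xy^2)\ne0$ with $xz^2,xyz\in S^{\bot}$. Consequently every exceptional case contradicts $[U_6]\in K_{\sigma_0}$, leaving only~(a) and~(b), i.e.\ $(\alpha)$ and $(\beta)$, which proves the corollary. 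I expect the only real obstacle to be this last verification in~(c5) and~(c6): one cannot simply read off a monomial triple from $S^{\bot}$, and must expand the binomials and check that precisely one term has multidegree $(3,3,3)$, so that no cancellation occurs and $\sigma_0$ is genuinely nonzero there.
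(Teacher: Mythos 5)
Your proposal is correct and follows the paper's proof essentially verbatim: verify the hypothesis of Proposition~\ref{disdop} via Proposition~\ref{crit2jet}, identify cases (a) and (b) with ($\alpha$) and ($\beta$), and rule out the exceptional cases (c1)--(c6) by exhibiting triples in $U_6$ (or already in $S^{\bot}\subseteq U_6$ when $\Lambda$ is only constrained to lie in a fixed space $S$) whose multidegrees sum to $(3,3,3)$, so that $\sigma_0$ is nonzero on them by Lemma~\ref{nonzero}. The paper's proof only spells out case (c6) --- with exactly the triple you found, $\sigma_0(a^2c+ab^2,abc,ac^2)\ne 0$ --- and dismisses the remaining cases with ``one checks,'' so your explicit verifications of (c1)--(c5), including the care about expanding binomial generators in (c5) and (c6) to see that a single surviving term has the right multidegree, are the same computation carried out in full.
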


\begin{proof}
As noted above, $\Lambda:=\PP(U^{\bot}_6)$ is a $3$-dimensional linear system of cubics satisfying the hypothesis of Proposition~\ref{disdop}.\ Hence one of items~(a), (b), (c) of that proposition holds.\ If (a) holds, then ($\alpha$) holds; if 
(b) holds, then ($\beta$) holds.\ One checks that if~(c) holds,  $[U_6]$ is not in $K_{\sigma_0}$.\ For example, suppose that~(c6) holds and let $(a,b,c)$ be the basis of $W_3$ dual to the basis $(x,y,z)$ of~$W_3^{\vee}$.\ Then $U_6\supset\la a^2c+ab^2,a^3,a^2b,abc,ac^2\ra$ and this is absurd, because 
$\sigma_0(a^2c+ab^2,abc,ac^2)\not=0$ by Lemma~\ref{nonzero}.
\end{proof}

Before proving Proposition~\ref{disdop}, we go through two preliminary results.\ The  first  is an easy exercise which we leave to the reader.

\begin{lemm}\label{tuttired}
Let $\Lambda\subset|\cO_{\PP^2}(3)|$ be a linear system all of whose elements are reducible.\ Then, either $\Lambda$ has a $1$-dimensional base-locus or  all cubics in $\Lambda$ have multiplicity $3$ at a fixed point.
\end{lemm}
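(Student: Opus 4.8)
The final statement to prove is Lemma~\ref{tuttired}: a linear system $\Lambda \subset |\mathcal{O}_{\mathbb{P}^2}(3)|$ all of whose elements are reducible must either have a one-dimensional base-locus, or have all its cubics singular (with multiplicity $3$) at a common fixed point. Let me plan a proof.

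\textbf{Plan of attack.} The key idea is that ``reducible'' for a cubic means it contains a line, so each cubic $C \in \Lambda$ can be written $C = R + Q$ with $R$ a line and $Q$ a conic. I would set up the incidence variety
\[
\Sigma = \{(R, C) \in \check{\mathbb{P}}^2 \times \Lambda \mid R \subset C\},
\]
where $\check{\mathbb{P}}^2$ parametrizes lines in $\mathbb{P}^2$. The condition $R \subset C$ means that the cubic form defining $C$ is divisible by the linear form defining $R$. The first projection $\pi_1 \colon \Sigma \to \check{\mathbb{P}}^2$ has fibers that are linear subspaces of $\Lambda$: the fiber over $R$ is $\mathbb{P}(\{C \in \Lambda : R \mid C\})$. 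By hypothesis the second projection $\pi_2 \colon \Sigma \to \Lambda$ is surjective, since every member of $\Lambda$ is reducible hence contains at least one line.

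\textbf{Dimension count and case split.} Since $\dim \Lambda = 3$ and $\pi_2$ is surjective, $\dim \Sigma \geq 3$. The fibers of $\pi_1$ over a general line $R$ have some dimension $d$, giving $\dim \Sigma = 2 + d$ generically, so $d \geq 1$; that is, a general line is contained in a pencil (at least) of cubics of $\Lambda$. I would then analyze what it means for the cubics containing a fixed general line $R$ to move in at least a pencil. Writing a defining linear form $\ell_R$ for $R$ and using coordinates, the cubics in $\Lambda$ divisible by $\ell_R$ form a linear subspace $\Lambda_R$ of dimension $\geq 1$; each such cubic is $\ell_R \cdot q$ for a conic $q$ varying (projectively) in a system of dimension $\geq 1$. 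The plan is to show this forces strong constraints: either there is a fixed line common to all members of $\Lambda$ (yielding a base line, hence a one-dimensional base-locus, case (a)), or all the conic residues are forced through a common point, eventually pinning every cubic's singularities to one fixed point.

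\textbf{The main obstacle and how I would handle it.} The hard part is organizing the case analysis so that the two conclusions are genuinely exhaustive. I would argue by contradiction: assume the base-locus is zero-dimensional (so no common line), and show all cubics have a triple point at a single fixed $p$. The mechanism: the map $R \mapsto \Lambda_R \subset \Lambda$ sends $\check{\mathbb{P}}^2$ into a union of linear subspaces of $\mathbb{P}(\Lambda) \cong \mathbb{P}^3$, and the geometry of how these vary should force a very special configuration. A cleaner route may be to observe that a cubic form $F$ is reducible (over $\mathbb{C}$, containing a line) if and only if it lies in the chordal/secant structure of the Veronese, or to use the classification of linear systems of reducible plane cubics. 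I expect the cleanest argument passes through: if no line is in the base-locus, the residual conics $q_R$ (for $C_R = \ell_R q_R$) cannot all be free, and tracking their base-points as $R$ varies over all of $\check{\mathbb{P}}^2$ forces a common point $p$ at which the residual conic degenerates compatibly, giving a triple point of every cubic at $p$. Since the excerpt explicitly flags this as ``an easy exercise which we leave to the reader,'' I would keep the write-up to the incidence-variety dimension count plus the short contradiction argument ruling out the intermediate configurations, rather than a full normal-form classification.
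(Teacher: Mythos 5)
Your proposal cannot be compared against the paper's argument for a simple reason: the paper does not prove Lemma~\ref{tuttired} at all (it is dismissed as ``an easy exercise which we leave to the reader''), so it must stand on its own. As written, it does not: it is a plan whose aspirational phrases (``should force a very special configuration'', ``I expect the cleanest argument passes through'') occur exactly at the steps that carry the mathematical content. Concretely: (i) the lemma concerns an arbitrary linear system (it is applied in Proposition~\ref{quadsig} with $\dim\Lambda=2$), while your count fixes $\dim\Lambda=3$; (ii) the deduction ``$\dim\Sigma=2+d$, hence $d\ge1$, hence a general line lies in a pencil of members'' presupposes that $\pi_1\colon\Sigma\to\check\PP^2$ is dominant, which fails in precisely the configurations the lemma isolates --- if all members share a line, $\pi_1(\Sigma)$ is a point, and in the triple-point case (all members are cones with vertex $p$) every line component passes through $p$, so $\pi_1(\Sigma)$ is a pencil, a $1$-dimensional linear subspace of $\check\PP^2$, and a general line lies in \emph{no} member; (iii) your endgame is aimed at the wrong branch: when $\pi_1$ \emph{is} dominant the conclusion must be a $1$-dimensional base locus (e.g.\ $\Lambda=C+|\cO_{\PP^2}(1)|$ for a fixed conic $C$, where every line occurs as a component), never a common triple point, and even granting that all residual conics pass through a common point, that gives multiplicity $1$ or $2$ of the members there --- the jump to multiplicity $3$ is nowhere addressed.

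To calibrate how much is missing, here is one way the exercise can actually be done. Suppose the base locus contains no curve. Reducible plane curves are singular, so every member is singular; by Bertini the general member is smooth off the (finite) set of base points, and since $\Lambda$ is irreducible and not a finite union of proper closed subsets, all members are singular at a common base point $p$. In coordinates with $p=[0:0:1]$, each member is $F=z\,q_F(x,y)+c_F(x,y)$ with $q_F,c_F$ binary forms depending linearly on $F$, and such an $F$ is reducible if and only if $q_F=0$ or $q_F$ and $c_F$ have a common root, i.e.\ $F$ contains a line through $p$. If some $q_F\ne0$, then covering the irreducible $\Lambda$ by the linear subspaces $\Lambda_{[a:b]}=\{F \mid q_F(a,b)=c_F(a,b)=0\}$ ($[a:b]\in\PP^1$), each of codimension at most $2$, forces either some $\Lambda_{[a:b]}=\Lambda$ (a common line, contradiction) or the proportionality of the functionals $F\mapsto q_F(a,b)$ and $F\mapsto c_F(a,b)$ for every $[a:b]$; in the latter case one writes $q_F=d\,g_F$ and $c_F=e\,g_F$ with $g_F$ linear in $F$ and $d,e$ fixed binary forms, so that every member is divisible by the fixed form $zd+e$ of positive degree, again a contradiction. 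Hence all $q_F=0$, i.e.\ all members have multiplicity $3$ at $p$. Some argument of this kind (or a normal-form classification in the style of Propositions~\ref{romain} and~\ref{gary}) is what your write-up still owes; the incidence-variety dimension count alone does not discriminate between the possible outcomes.
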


\begin{prop}\label{quadsig}
Let $\Lambda\subset|\cO_{\PP^2}(3)|$ be  a $2$-dimensional linear system.\ 
Suppose that,  given an arbitrary  line $R\subset\PP^2$, there exists a cubic in $\Lambda$ containing $R$.\ Then, there exists a conic $C$ such that  $\Lambda=C+|\cO_{\PP^2}(1)|$.
\end{prop}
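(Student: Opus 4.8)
The plan is to analyze the incidence variety parametrizing pairs (line, cubic in $\Lambda$ containing that line) and exploit a dimension count together with the hypothesis that \emph{every} line lies on some cubic of $\Lambda$.

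First I would set up the incidence correspondence
\[
\Sigma:=\{(R,[\phi])\in \check\PP^2\times\Lambda \mid R\subset V(\phi)\}\subset \check\PP^2\times\Lambda,
\]
where $\check\PP^2=\PP(H^0(\PP^2,\cO_{\PP^2}(1)))$ is the dual plane of lines.\ The condition $R\subset V(\phi)$ is that $\phi$ is divisible by the linear form cutting out $R$; after choosing coordinates so that $R=\{x=0\}$, writing $\phi=x\cdot C+\phi_0$ with $\phi_0\in H^0(\{x=0\},\cO(3))$ a binary cubic, the condition is the vanishing of the four coefficients of $\phi_0$.\ Thus the fiber of the first projection $\pr_1\colon\Sigma\to\check\PP^2$ over a fixed line $R$ is the intersection of $\Lambda$ (a $\PP^2$) with the four linear conditions ``$\phi$ vanishes on $R$ to order $1$,'' i.e.\ it is $\Lambda\cap\PP(R\cdot H^0(\cO(2)))$.\ The key numerical input is that $\dim\Lambda=2$ while the linear subspace of cubics vanishing on $R$ has codimension $4$ in $|\cO(3)|$, so the \emph{expected} fiber is empty.\ The hypothesis says $\pr_1$ is nonetheless surjective, so every fiber is nonempty, forcing $\dim\Sigma\ge 2$ and in fact $\pr_1$ to have fibers of the expected excess dimension.

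Next I would study the second projection $\pr_2\colon\Sigma\to\Lambda$.\ Its fiber over $[\phi]$ is the set of lines $R$ contained in $V(\phi)$, which is nonempty exactly when $V(\phi)$ has a linear component; a cubic with a linear component is a reducible (degenerate) cubic.\ The upshot of the dimension count is that the general cubic in $\Lambda$ must be reducible, i.e.\ must contain a line.\ Indeed, if the general member of $\Lambda$ were irreducible, the fiber of $\pr_2$ over a general point would be empty, so $\pr_2$ would not dominate $\Lambda$ and $\Sigma$ would map to a proper closed subset of $\Lambda$; but then $\pr_1(\Sigma)=\check\PP^2$ would be covered by the lines lying on this subfamily, and a dimension count ($\dim\Sigma\le 1+\dim(\text{image in }\Lambda)\le 1+1=2$, with generic fiber of $\pr_1$ of dimension $0$) would be too tight to yield surjectivity onto the $2$-dimensional $\check\PP^2$ unless the family of reducible cubics is itself $2$-dimensional inside $\Lambda$.\ Making this precise shows that \emph{every} member of $\Lambda$ is reducible.

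Now I would invoke Lemma~\ref{tuttired}: since all cubics in the $2$-dimensional system $\Lambda$ are reducible, either $\Lambda$ has a one-dimensional base-locus, or all cubics in $\Lambda$ have multiplicity $3$ at a fixed point.\ I would rule out the second alternative directly: if every member had a triple point at a fixed $p$, then $\Lambda\subset\PP(H^0(\gm_p^3(3)))$, which is a single point of $|\cO(3)|$, contradicting $\dim\Lambda=2$.\ Hence $\Lambda$ has a one-dimensional base-locus $B$.\ The base-locus $B$ is cut out by the $3$-dimensional space of cubics and, being one-dimensional, it contains a curve of degree $d\le 3$; writing $B$ schematically and using that three general members of $\Lambda$ cannot have a common component of degree $3$ (else $\Lambda$ would be a pencil plus a fixed cubic, impossible for $\dim\Lambda=2$), I would conclude that the movable part of $\Lambda$ is a $2$-dimensional linear system of \emph{lines}, i.e.\ $\Lambda=C+|\cO_{\PP^2}(1)|$ for a fixed conic $C$ equal to the fixed component of $\Lambda$.\ This is precisely the asserted conclusion.

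I expect the main obstacle to be the passage from ``the general member is reducible'' to ``$\Lambda$ has a fixed conic component,'' that is, controlling the base-locus in Lemma~\ref{tuttired} carefully enough to exclude configurations where the reducible cubics split off \emph{varying} lines rather than sharing a fixed conic.\ The delicate point is that a priori a $2$-dimensional system of reducible cubics could consist of a pencil's worth of moving lines times moving conics; I would resolve this by noting that the hypothesis (\emph{every} line occurs) forces the moving part to realize all lines, so the residual conic cannot move, pinning down the fixed conic $C$ and giving $\Lambda=C+|\cO_{\PP^2}(1)|$.
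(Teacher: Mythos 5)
Your proposal follows the same skeleton as the paper's proof: use the hypothesis and a dimension count on the incidence variety $\Sigma\subset\check\PP^2\times\Lambda$ to conclude that every member of $\Lambda$ is reducible, then invoke Lemma~\ref{tuttired} and analyze the one-dimensional base locus. However, two of your steps fail as written. The first is your exclusion of the alternative ``all cubics in $\Lambda$ have multiplicity $3$ at a fixed point $p$,'' which rests on the claim that $\PP(H^0(\PP^2,\gm_p^3(3)))$ is a single point. It is not: in coordinates with $p=[0:0:1]$, the cubics with a triple point at $p$ are exactly the binary cubics in $x,y$, a $4$-dimensional space, so this locus is a $\PP^3$ and easily contains a $2$-dimensional system, e.g.\ $\Lambda=\PP(\la x^3,x^2y,xy^2\ra)$. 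The correct exclusion uses the hypothesis, not a dimension count: every cubic with a triple point at $p$ is a union of three lines through $p$, so no line avoiding $p$ would lie on any member of $\Lambda$, contradicting the assumption that \emph{every} line occurs.

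The second gap is more serious. From ``the base locus is one-dimensional'' you jump to ``the movable part of $\Lambda$ is a $2$-dimensional linear system of lines,'' i.e.\ to a fixed conic component. This skips exactly the case you flag at the end as the delicate point: the fixed component could be a single line $R$, with movable part a $2$-dimensional linear system $\Lambda'$ of \emph{conics}, and your proposed resolution (``the residual conic cannot move'') is not an argument. The paper handles this case by descending the hypothesis: writing $\Lambda=R+\Lambda'$, every line must lie on some conic of $\Lambda'$, so by the same dimension count every conic in $\Lambda'$ is reducible; a net of reducible conics either has a fixed line $R'$ (so $\Lambda'=R'+|\cO_{\PP^2}(1)|$) or consists of all line-pairs through a fixed point, and the latter is again excluded by the hypothesis since then only lines through that point would occur. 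This yields $\Lambda=(R+R')+|\cO_{\PP^2}(1)|$, with $C=R+R'$ a degenerate conic --- note the conclusion requires allowing $C$ to be degenerate, which your argument never produces. (A minor point: your bound on $\dim\Sigma$ via $\pr_2$ should use that a cubic contains at most three lines, so the generic $\pr_2$-fiber is finite; as written, your count $\dim\Sigma\le 2$ does not give the contradiction you want, whereas the corrected count $\dim\Sigma\le 1$ against $\dim\Sigma\ge 2$ does.)
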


\begin{proof}
By our hypothesis, the variety of reducible cubics in $\Lambda$ has dimension $2$, hence every cubic in $\Lambda$ is reducible.\ Since all cubics in $\Lambda$ cannot have multiplicity $3$ at a fixed point, 
 Lemma~\ref{tuttired} implies that the   base-locus of $\Lambda$ contains a line $R$ or a conic $C$.\ If the latter holds, we are done because $\dim(\Lambda)=2$.\ If the former holds, 
$\Lambda=R+\Lambda'$, where $\Lambda'$ is a $2$-dimensional linear system of conics such that,  given any  line $R\subset\PP^2$, there exists a conic in $\Lambda'$ containing $R$.\ In particular all conics in $\Lambda'$ are reducible.\ It follows that  there exists a line $R'$ such that $\Lambda'=R'+|\cO_{\PP^2}(1)|$.\ Thus $\Lambda=(R+R')+|\cO_{\PP^2}(1)|$.
\end{proof}

\begin{proof}[Proof of Proposition~\ref{disdop}]
 If the base-locus of $\Lambda$ has dimension~$1$,  item~(a) holds.\ From  now on, we  assume that the base-locus of $\Lambda$ is finite.\ 
Let  $f\colon\PP^2\dra \Lambda^{\vee}\cong\PP^3$ be the natural map.\  Let $ B\subset\PP^2$ be  the (schematic) base-locus of $\Lambda$, so that $\Lambda\subset|\cI_{B}(3)|$.\ Let $f_{B}\colon\PP^2\dra  |\cI_{B}(3)|^{\vee}$ be the natural rational map.\ Then $f$ is the composition $\pi\circ f_{B}$, where  $\pi\colon |\cI_{B}(3)|^{\vee}\dra \Lambda^{\vee}$ is a projection whose center  does not intersect the (closed) image  $f_{B}(\P^2)$.

The (closed) image  $f(\P^2)$ is either a curve or  a surface.
If  it is a curve,   $\Lambda$ is the linear system of  cubics in $\PP^2$ which have multiplicity $3$ at a fixed point.\ This contradicts our hypothesis.\ 
 Hence~$f$ has finite positive degree onto the surface  
 $\Sigma:=f(\P^2)$.\ 
As one  easily checks,
\begin{enumerate}
\item[(i)]
either ${B}$ is  the complete intersection of a  (possibly degenerate) conic $C$ and a cubic,  
\item[(ii)]
or the restriction of  $f_{B}$ to a subscheme $Z\subset \P^2\setminus B$ of length $2$ is \emph{not} injective 
 if and only if the schematic intersection    $\la Z\ra \cap{B}$ has length $3$.
\end{enumerate}
If~(i) holds,  $\Lambda=|\cI_{B}(3)|$, hence $\Lambda\supset C+|\cO_{\PP^2}(1)|$.\ Thus item~(b) of  Proposition~\ref{disdop} holds.\ From now on, we assume that~(ii)  holds.

Assume first that $f$ has degree $1$ onto its image.\ If $R\subset\PP^2\setminus {B}$ is a line,   $f_{B}(R)$ is a twisted cubic by item~(ii).\ A dimension count shows that 
\begin{itemize}
\item[($\alpha$)]
either $f(R)$ is also a twisted cubic for a general line   $R\subset \PP^2\setminus {B}$,
\item[($\beta$)]
or
the projection $\pi\colon |\cI_{B}(3)|^{\vee}\dra \Lambda^{\vee}$ maps to the same point $f_{B}(R_1\setminus{B})$ and $f_{B}(R_2\setminus{B})$, where $R_1,R_2\subset\PP^2$ are distinct lines such that $\len(R_i\cap{B})=3$, 
\item[($\gamma$)]
or
the differential of $f$ vanishes at all points of $R\setminus{B}$, where $R\subset\PP^2$ is a line such that $\len(R\cap{B})=3$. 
\end{itemize}
If ($\alpha$) holds, no cubics in $\Lambda$ contain  a general line $R\subset\PP^2$, because $f(R)\subset\Lambda^{\vee}$ is a twisted cubic, and this contradicts the hypothesis of Proposition~\ref{disdop}.\ If ($\beta$) holds, $\dim(\Lambda)=4$,  $\len({B})=5$, and ${B}$ is a subscheme of $R_1\cup R_2$.\ It follows that $\Lambda\supset R_1+R_2+|\cO_{\PP^2}(1)|$, hence  
 item~(b) of  Proposition~\ref{disdop} holds.\ If ($\gamma$) holds,  $\Lambda\supset 2R+|\cO_{\PP^2}(1)|$ and  item~(b)  holds again.

Assume now that $f$ has degree greater than $1$ onto its image.\ Suppose that the surface $\Sigma\subset\Lambda^{\vee}$ has  degree $2$.\ Let $\wh{\PP}^2\to\PP^2$ be a smooth blow up such that $\wh{f}\colon \wh{\PP}^2\to\PP^2\stackrel{f}\dra \Sigma$ is a morphism.\ 
Let $V\subset\Sigma$ be the union of the set of singular points of the branch divisor of 
 $\wh{f} $ (this includes the points over which the fiber is not finite) and the vertex  of $\Sigma$ if $\Sigma$ is a cone.\ 
 
The linear system $\Lambda$  contains a $2$-dimensional   family of reducible cubics that contain a general line and these cubics correspond to planes in $ \Lambda^{\vee}\isom \P^3$ that either meet $V$ or are tangent to $\Sigma$ at a smooth point of $\Sigma$.\ If these planes all pass through a point of $V$, we can apply Proposition~\ref{quadsig} and item~(b)  holds.\ Otherwise,  given a general line $R\subset\PP^2$, there exists a plane   tangent to $\Sigma$ at a smooth point  such that the corresponding cubic   contains $R$.\ If $\Sigma$ is smooth, the cubics corresponding to tangent planes are of the form $C_1+C_2$, where $C_1$ and $C_2$ belong to two fixed pencils of curves corresponding to the two pencils of lines on $\Sigma$  and this is absurd because they do not contain a general line.\ If $\Sigma$ is a cone, the set of tangent planes is the linear system of planes through the vertex and we are reduced to the first case.\

We may therefore assume $\deg(\Sigma)\ge 3$.\ We claim that the (schematic) base-locus $B$ of $\Lambda$ is curvilinear.\ It is not,  there is a (single) point $p$ in the support of ${B}$ such that, in  a neighborhood of $p$, we have $\cI_{B}=\gm_p^2$.\ This implies $\deg( f)\deg(\Sigma)\le 5$, hence $\deg(\Sigma)=2$, which is absurd.

 Since $B$ is curvilinear, it is locally a complete intersection; therefore, $\deg( f)\deg (\Sigma)+\len(\cO_{B})=9$.\ Since $\deg (f)\ge 2$ and $\deg( \Sigma)\ge 3$, one of the following holds:
\begin{enumerate}
\item[(I)]
$ B$ is empty and $\deg( f)=\deg(\Sigma)=3$;
\item[(II)]
$ B$ is a single reduced point and $\deg( f)=2$;
\item[(III)]
$ B$ has length $3$ and $\deg( f)=2$.
\end{enumerate}
Suppose that~(I) holds.\ In particular, $f\colon \PP^2 \to\Sigma$ is regular.\ Let us show that item~(c1) of Proposition~\ref{disdop} holds.\ First, we claim that $\Sigma$ has isolated singularities.\ In fact,  if $\Sigma$ is a cone, one gets a contradiction arguing as in the proof that $\Sigma$ cannot be a quadric.\ If $\Sigma$ is a nonnormal cubic (and not a cone), its normalization $\wt{\Sigma}$ is the Hirzebruch surface $\F_1$ and we get a contradiction because the dominant map $\PP^2\to\Sigma$ lifts to a dominant map $\PP^2\to \F_1$, and $\rho(\F_1)>\rho(\PP^2)$.
 We have proved that $\Sigma$ has isolated singularities.\ 
 
 The map $f\colon\PP^2\to\Sigma$ is finite and $f^{*}\omega_{\Sigma}\equiv\omega_{\PP^2}$, hence $f$ is unramified in codimension $1$.\ Hence, if $C\in\Lambda$ is general, the map  $C\to f(C)$ is the quotient map for the action of a subgroup of $\Pic^0(C)$ of  order $3$.\ This action is the restriction of an automorphism $\varphi_C$ of $\PP^2$ of order $3$.\ We prove that  $\varphi_C$ does not depend on $C$.\ Let $C'\in\Lambda$ be another general cubic and let $H,H'\subset\Lambda^{\vee}$ be the planes corresponding to $C,C'$.\ The $9$ points in $C\cap C'$ are partitioned into the union of the three fibers (each of cardinality $3$) of the three points of intersection of the line $H\cap H'$ with~$\Sigma$.\ It follows that $\varphi_C$ and $\varphi_{C'}$ agree on the $9$ points in $C\cap C'$, hence are equal.\ The upshot is that there exists an order $3$ automorphism $\varphi$ of $\PP^2$ such that $f\colon\PP^2\to\Sigma$ is the corresponding quotient map  and $f^{*}\cO_{\Sigma}(1)\cong\cO_{\PP^2}(3)$.\ It follows that~(c1) holds.

Suppose that~(II) holds.\  Let $\wh{\PP}^2\to\PP^2$ be the blow up of the base point of $\Lambda$.\ Then $f$ induces a regular finite map 
$\wh{f}\colon \wh{\PP}^2\to \Sigma$ of degree $2$.\ Since the exceptional divisor of $\wh{\PP}^2\to\PP^2$ is the unique $(-1)$-curve of $\wh{\PP}$,  the covering involution  of $\wh{f}$  descends to an involution $\iota\colon\PP^2\to\PP^2$ leaving invariant the cubics in $\Lambda$.\  In suitable  coordinates, we have 
$\iota(x,y,z)=(x,y,-z)$.\ Since the cubics in $\Lambda$ are ${\iota}$-invariant, we have
 $\Lambda\subset\PP(\la xz^2,yz^2,x^3,x^2y,xy^2,y^3\ra)$ and~(c2) holds.

Suppose that~(III) holds.\  The blow up $\Bl_{B}\PP^2$  of $B$  is a weak Del Pezzo surface (the anticanonical bundle is big and nef) with DuVal singularities.\ The anticanonical system 
$|\cI_{B}(3)|$ defines a map $\Bl_{B}\PP^2\to |\cI_{B}(3)|^{\vee}\cong \P^6$ whose image is a Del Pezzo surface $ S$ with DuVal singularities.\ The rational map $f\colon\P^2\dra \Lambda^{\vee}$ is the composition of the natural rational map $\PP^2\dra { S}$ and the restriction to $ S$ of a projection $ |\cI_{B}(3)|^{\vee}\dra   \Lambda^{\vee}$ with center disjoint from ${ S}$.\ The latter is  
a map $\wh{f}\colon { S}\to \Sigma$ which is  finite, of degree $2$.\ If $\wh{\iota}\colon { S}\to { S}$ is its covering involution, $\Lambda$ is contained in the projectivization of the $\wh{\iota}$-invariant subspace of $H^0({ S},\omega^{-1}_{{ S}})$. 

If the involution $\wh{\iota}$ descends to a regular involution of $\PP^2$, item~(c2) holds by the argument   given above.\  Thus we assume that $\wh{\iota}$ is a birational nonregular involution of $\PP^2$; in particular,
$B$ is not contained in a line and there exist  coordinates $x,y,z$ such that 
\begin{itemize}
\item[(a)]
either
$|\cI_{B}(3)|=\PP(\la x^2y, x^2z, xy^2,xyz, xz^2, y^2z, yz^2 \ra)$ and ${B}=\{(1,0,0),(0,1,0),(0,0,1)\}$, 
\item[(b)]
or
$|\cI_{B}(3)|=\PP(\la  x^2z, xy^2,xyz, xz^2, y^3, y^2z, yz^2 \ra)$ and $B$ is supported  at $(1,0,0)$ and $(0,0,1)$, and    has length $2$ at $(1,0,0)$ with tangent  line $z=0$, 
\item[(c)]
or 
$|\cI_{B}(3)|=\PP(\la  x^2z-xy^2,xyz, xz^2, y^3, y^2z, yz^2,z^3 \ra)$ and $B$ is curvilinear (nonlinear) supported  at $(1,0,0)$  with tangent  line $z=0$. 
\end{itemize}
 The standard Cremona quadratic map and the first and second standard degenerate quadratic maps  (see~\cite[Example~7.1.9]{dolga})  provide   examples of such an involution in each of these cases
\begin{equation*}
\tau_a(x,y,z)=(yz,xz,yz),\quad \tau_b(x,y,z)=(xz,yz,y^2),\quad \tau_c(x,y,z)=(-xz+y^2,yz,z^2).
\end{equation*}
Suppose that (a) holds.\ Every involution $\tau$ of ${ S}$ which does not descend to
  $\PP^2$  
is given by $\tau_a\circ h$, where $h\in\PGL(3)$ permutes  the points of ${B}$.\ If $h$ fixes the points of $B$, we get  $\tau=\tau_a$ (after rescaling $x,y,z$), while if $h$ defines a transposition  of $B$, we have  $\tau([x,y,z])=[xz,yz,xy]$  in suitable coordinates.\ 
The $\tau$-invariant subspace of $H^0({ S},\omega^{-1}_{{ S}})$ is equal to $\la xyz,x^2y+yz^2,x^2z+y^2z,xy^2+xz^2\ra$ if the former holds, and to  $\la xyz,x^2y+xz^2,xy^2+yz^2,x^2z,y^2z\ra$ 
if the latter holds.\ 
Hence if the former holds,~(c3) holds;  if the latter holds,~(c4) holds. 

Suppose that (b) holds.\ The relevant birational involutions of $\PP^2$ are given by $\tau_b\circ h$, where 
 $h\in\PGL(3)$ is given by $h(x,y,z)=(\alpha x+\beta y,-\alpha y,-\alpha^{-2} z)$ or  by $h(x,y,z)=(\alpha x,\alpha y,\alpha^{-2} z)$ with $\alpha\in\CC^\star$ and $\beta\in\CC$.\ In a suitable coordinate system, $\tau$ is   $\tau_b$.\  
The $\tau_b$-invariant subspace of $H^0({ S},\omega^{-1}_{{ S}})$ is $\la  x^2z, xyz, xy^2+xz^2 ,y^2z, y^3+yz^2  \ra$, hence~(c5)   holds.

Lastly, suppose that (c) holds.\  The relevant birational involutions of $\PP^2$ are  $\tau_c\circ h$, where 
  $h([x,y,z])=[x+\beta y+\gamma z,y, z]$.\ In a suitable coordinate system, such a birational involution  is equal to $\tau_c$.\ The $\tau_c$-invariant subspace of $H^0({ S},\omega^{-1}_{{ S}})$ is $\la  x^2z-xy^2, y^3, y^2z, yz^2, z^3   \ra$, hence~(c6) holds.
\end{proof}

\begin{shownto}{long}
\green{
\begin{exam} 
Let $L(p,H),M(p,R)\subset \Sym^3\!  W_3$ be as in Definition~\ref{eccolph}.\ 
 By definition, item~(a) of Corollary~\ref{tutti2jet} holds for $L(p,H)^{\bot}$ and for $M(p,R)^{\bot}$.\ In addition,  item~(b)  of Corollary~\ref{tutti2jet} holds for $L(p,H)^{\bot}$ with $f_2=a^2$, where $p=[a]$.
\end{exam}
}
\end{shownto}

\subsubsection{Description of $K_{\sigma_0}$}\label{dimokappa}

Let $[U_6]\in K_{\sigma_0}$ and let $T_4:=U_6^{\bot}$.\ By Corollary~\ref{tutti2jet}, either $T_4=f_1\cdot U_4$, where $U_4\subset\Sym^2\! W_3^{\vee}$ is a $4$-dimensional subspace, or $T_4\supset f_2\cdot W_3^{\vee}$, where 
$f_2\in \Sym^2 \!W_3^{\vee}$.\ Hence, by~\eqref{esplicito}, Propositions~\ref{romain} and~\ref{gary} below finish  the proof of Proposition~\ref{kappapi}.

\begin{prop}\label{romain}
Let $T_4\subset \Sym^3 \!W^{\vee}_3$ be a $4$-dimensional subspace such that
 $T_4=f_1\cdot U_4$, where  $0\ne  f_1\in W_3^{\vee}$ and   $U_4\subset\Sym^2\!  W_3^{\vee}$ is a $4$-dimensional subspace.\
Then $T_4^{\bot}\in K_{\sigma_0}$ if and only if there exists a basis $(a,b,c)$ of $W^{\vee}_3$ such that 
\begin{equation*}
T_4=
\begin{cases}
a\cdot\la a^2, ab,ac,bc\ra,& \text{or}\\
a\cdot\la a^2,ab,ac,c^2\ra, &\text{or}\\
a\cdot\la a^2,b^2,bc,c^2\ra.
\end{cases}
\end{equation*}
\end{prop}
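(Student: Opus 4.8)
The plan is to dualize the membership condition $T_4^\bot\in K_{\sigma_0}$ by apolarity, turning it into the vanishing of $\sigma_0$ on an explicit $6$-dimensional space, and then to extract from this a very short list of linear conditions on a $2$-dimensional space of quadrics, which can be classified by hand using Lemma~\ref{nonzero}. The \emph{``if'' direction} is immediate from what is already proved: by \eqref{esplicito}, each of the three displayed subspaces is of the form $a\cdot I(a,H)^\bot$ or $a\cdot J(a,x)^\bot$ for a suitable hyperplane $H$ or point $[x]$, so $T_4^\bot$ equals $L(a,H)$ or $M(a,x)$ and therefore belongs to $K_L\cup K_M\subset K_{\sigma_0}$ by Proposition~\ref{prop710}.

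For the \emph{``only if'' direction}, after acting by $\GL(W_3)$ (which preserves $K_{\sigma_0}$, since $\sigma_0$ is $\SL(W_3)$-invariant) I would normalize $f_1=a$. Under the apolarity pairing $\Sym^3 W_3\times\Sym^3 W_3^\vee\to\C$, multiplication by $a$ on $\Sym^2 W_3^\vee\to\Sym^3 W_3^\vee$ is adjoint to the contraction $\partial_a\colon\Sym^3 W_3\to\Sym^2 W_3$. Hence, setting $Q_2:=U_4^\bot\subset\Sym^2 W_3$ (of dimension $2$), one has $U_6:=T_4^\bot=\partial_a^{-1}(Q_2)$. Choosing a basis $(x,y,z)$ of $W_3$ dual to $(a,b,c)$, so that $\partial_a=\partial/\partial x$, this gives $U_6=\Sym^3\la y,z\ra\oplus\widetilde Q_2$, where $\Sym^3\la y,z\ra=\ker\partial_a$ and $\widetilde Q_2=\{\phi_q:q\in Q_2\}$ is the lift determined by $\phi_q=\tfrac12 x^2\ell_q+x\bar q$, writing each $q=x\ell_q+\bar q$ with $\ell_q\in\la y,z\ra$ and $\bar q\in\Sym^2\la y,z\ra$.

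Thus $T_4^\bot\in K_{\sigma_0}$ if and only if $\sigma_0$ vanishes on $U_6$, which I would analyze by splitting triples of elements of $U_6$ according to how many of the three factors lie in $\widetilde Q_2$, keeping (by Lemma~\ref{nonzero}) only those monomial triples whose total $x$-, $y$-, and $z$-degrees all equal $3$. Triples with no factor in $\widetilde Q_2$ vanish automatically, as they have total $x$-degree $0$. Triples with exactly one factor in $\widetilde Q_2$ can contribute only through an $x^3$ monomial, which forces $Q_2$ to contain no $x^2$-term, that is $Q_2\subset\la xy,xz,y^2,yz,z^2\ra$. Triples with all three factors in $\widetilde Q_2$ also vanish automatically: only the degree-one-in-$x$ parts $x\bar q$ can contribute, these lie in the image $\{x\bar q:q\in Q_2\}\subset\la xy^2,xyz,xz^2\ra$ of dimension $\le 2$, and $\sigma_0$ restricts to this $3$-dimensional space as a nonzero alternating form (since $\sigma_0(xy^2,xyz,xz^2)\ne 0$ by Lemma~\ref{nonzero}), hence vanishes on every dependent triple. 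The only surviving conditions therefore come from triples with exactly two factors in $\widetilde Q_2$: writing $B(\ell,\bar q,\gamma):=\sigma_0(x^2\ell,x\bar q,\gamma)$, they reduce to the symmetry relation $B(\ell_q,\bar{q'},\gamma)=B(\ell_{q'},\bar q,\gamma)$ for all $q,q'\in Q_2$ and all $\gamma\in\Sym^3\la y,z\ra$.

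The final step is to classify the $2$-dimensional subspaces $Q_2\subset\la xy,xz,y^2,yz,z^2\ra$ satisfying this symmetry relation, up to the stabilizer of $[a]$, and to show there are exactly three orbits, represented by $\la y^2,z^2\ra$, $\la y^2,yz\ra$, and $\la xy,xz\ra$; dualizing $U_4=Q_2^\bot$ and comparing with \eqref{esplicito} then recovers the three asserted normal forms for $T_4=a\cdot U_4$. I expect this last classification to be the main obstacle: it requires evaluating $B$ explicitly via Lemma~\ref{nonzero} and, in particular, ruling out the ``mixed'' subspaces for which both $\ell_q$ and $\bar q$ are nonzero. This is the analogue, in the present $\SL(3)$ setting, of the monomial one-parameter-subgroup degeneration arguments used in the proof of Proposition~\ref{singeffe}, and I would carry it out in the same spirit—degenerating $Q_2$ to a monomial subspace and reading off the constraints from Lemma~\ref{nonzero}.
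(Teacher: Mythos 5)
Your ``if'' direction (via \eqref{esplicito} and Proposition~\ref{prop710}) is fine, and your reduction of the ``only if'' direction is correct and genuinely different from the paper's: you normalize $f_1=a$ first and translate $T_4^{\bot}\in K_{\sigma_0}$, through apolarity, into two conditions on the pencil $Q_2=U_4^{\bot}$ (no $x^2$-component, plus the symmetry relation on pairs), and your four-way analysis of triples according to how many factors lie in $\widetilde Q_2$ is complete and accurate. The paper proceeds in the opposite order: it first puts $R_2=U_4^{\bot}$ into one of the eight $\GL(W_3)$-normal forms of pencils of conics (\eqref{primocaso}, \eqref{secondocaso}), and only then normalizes $f_1$ by one-parameter subgroups, excluding each bad case with Lemma~\ref{nonzero}.

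The gap is that the classification you reduce to---which is the entire content of the proposition---is not carried out, and the plan you sketch for it would fail as stated. First, your announced sub-goal, ``ruling out the mixed subspaces for which both $\ell_q$ and $\bar q$ are nonzero,'' is a false statement: the pencil $Q_2=(x+y)\cdot\la y,z\ra=\la xy+y^2,\,xz+yz\ra$ satisfies your symmetry relation (indeed $\ell_q\bar q'=y\cdot yz=z\cdot y^2=\ell_{q'}\bar q$), because it is carried to $\la xy,xz\ra$ by the shear $x\mapsto x-y$, which stabilizes $[a]$; yet every nonzero element of it has both $\ell_q\ne 0$ and $\bar q\ne 0$. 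Mixed pencils therefore cannot be excluded; they must be recognized as stabilizer-equivalent to $\la xy,xz\ra$. Second, monomial degeneration is the wrong tool for this step: a one-parameter-subgroup limit only transports failure of the vanishing of $\sigma_0$ from the limit back to the original subspace, so it can exclude items from a complete list of normal forms (which is exactly how the paper uses it, the complete list being supplied by the classical classification of pencils of conics), but it cannot show that an arbitrary solution of your equations lies in one of three orbits. The example above illustrates how delicate this is: it has a nonzero Pl\"ucker coordinate on the non-solution pair $(xy,yz)$, and nevertheless that pair never occurs as an initial degeneration, because the induced weights of any stabilizing torus satisfy $w(xy)+w(yz)=w(xz)+w(y^2)$ identically; so one cannot simply ``read off constraints'' from monomial limits.

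The gap is closable inside your framework, but by a direct argument rather than by degeneration. The form $B$ is invariant under the copy of $\SL_2\subset\SL(W_3)$ fixing $x$ and preserving $\la y,z\ra$, and the space of $\SL_2$-invariant trilinear forms on $\la y,z\ra\times\Sym^2\la y,z\ra\times\Sym^3\la y,z\ra$ is one-dimensional, spanned by $(\ell,\bar q,\gamma)\mapsto\la \ell\bar q,\gamma\ra$, where $\la\cdot,\cdot\ra$ is the invariant pairing on $\Sym^3\la y,z\ra$; since $B(y,y^2,z^3)=\sigma_0(x^2y,xy^2,z^3)\ne 0$ by Lemma~\ref{nonzero}, your symmetry relation is equivalent to $\ell_q\bar q'=\ell_{q'}\bar q$ for all $q,q'\in Q_2$. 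A rank argument on $q\mapsto\ell_q$ then finishes: rank $0$ gives $Q_2\subset\Sym^2\la y,z\ra$ (two orbits, $\la y^2,z^2\ra$ and $\la y^2,yz\ra$, by the classification of pencils of binary quadrics); rank $1$ is impossible (it forces a basis vector of $Q_2$ to vanish); rank $2$ forces $Q_2=(x+m)\la y,z\ra$, one orbit. This yields exactly the three normal forms, but as submitted your proof is incomplete at its decisive step.
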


\begin{proof}
Let $R_2:=U_4^{\bot}\subset \Sym^2 \!W_3$.\ Up to the action of $\GL(W_3)$, there are $8$ possibilities for $R_2$, described  as follows in a basis $(x,y,z)$ of $W_3$.\ In  the case where the general conic  (in $\PP(W^{\vee}_3)$)  defined by $\PP(R_2)$ is smooth,  hence the base-locus is a $0$-dimensional curvilinear scheme, we have
\begin{enumerate}
\item
$R_2=\la xy,(x+y+z)z\ra$, that is, the  base-locus of the pencil of conics defined by $\PP(R_2)$ consists of $4$ distinct points;
\item
$R_2=\la xy,(x+z)z\ra$,  that is, the  base-locus of the pencil of conics defined by $\PP(R_2)$ consists of two reduced points and a  point of multiplicity $2$;
\item
$R_2=\la xy,z^2\ra$,  that is, the  base-locus of the pencil of conics defined by $\PP(R_2)$ consists of two   points of multiplicity $2$;
\item
$R_2=\la xy,x^2+yz\ra$,  that is, the  base-locus of the pencil of conics defined by $\PP(R_2)$ consists of one   point of multiplicity $3$ and a reduced point;
\item
$R_2=\la y^2,x^2+yz\ra$,  that is, the  base-locus of the pencil of conics defined by $\PP(R_2)$ consists of one   point of multiplicity $4$.
\end{enumerate}
The remaining $R_2$ are those for which all the conics parametrized by $\PP(R_2)$ are singular:
\begin{enumerate}
\item[(a)]
$R_2=\la y^2,z^2\ra$;
\item[(b)]
$R_2=\la y^2,y z\ra$;
\item[(c)]
$R_2=\la xy,xz\ra$. 
\end{enumerate}
Correspondingly, we get the following  lists of $4$-dimensional subspaces $U_4\subset \Sym^2 \!W^{\vee}_3$: 
\begin{equation}\label{primocaso}
U_4=
\begin{cases}
\la a^2,b^2,ac-c^2,bc-c^2\ra, \\
\la a^2,b^2,ac-c^2,bc\ra, \\
\la a^2,b^2,ac,bc\ra, \\
\la ac,b^2,c^2,a^2-bc\ra, \\
\la ab,ac,c^2,a^2-bc\ra,
\end{cases}
\end{equation}
and
\begin{equation}\label{secondocaso}
U_4=
\begin{cases}
\la a^2,ab,ac,bc\ra, \\
\la a^2,ab,ac,c^2\ra, \\
\la a^2,b^2,bc,c^2\ra.\\
\end{cases}
\end{equation}
Every $4$-dimensional $U_4\subset \Sym^2 \!W^{\vee}_3$ is equivalent modulo  $\GL(W_3)$ to one and only one of the spaces $U_4$ given above.\ Let  $ f_1\in W_3^{\vee}$ be nonzero and let $U_4$ be one of the subspaces in~\eqref{primocaso}.\

We claim that $(f_1\cdot U_4)^{\bot}$ does not belong to $K_{\sigma_0}$.\ To see this, first note that there exists a $1$-parameter subgroup of $\GL(W_3)$ such that $\lim_{t\to 0}\lambda(t)U_4$ is equal to the subspace in the last line of~\eqref{primocaso} (this is clear since $U_4=R_2^{\bot}$).\ Hence it suffices 
to prove that for $U_4$  as in the last line of~\eqref{primocaso}, $(f_1\cdot U_4)^{\bot}$ does not belong to~$K_{\sigma_0}$.\ Next, by acting with a $1$-parameter subgroup of~$\GL(W_3)$ given by $\diag(t^q,t^r,t^s)$ (in the given basis), with $2q=r+s$, we may assume  $f_1\in\{a,b,c\}$.\ An explicit computation then gives
\begin{eqnarray*}
(a\cdot \la ab,ac,c^2,a^2-bc\ra)^{\bot} & = & \la ab^2,b^3,b^2c,bc^2,c^3,a^3+abc\ra, \\
(b\cdot \la ab,ac,c^2,a^2-bc\ra)^{\bot} & = & \la a^3,a^2 c,ac^2,b^3,c^3,a^2b+b^2c\ra, \\
(c\cdot \la ab,ac,c^2,a^2-bc\ra)^{\bot} & = & \la a^3,a^2 b,ab^2,b^3,b^2c,a^2c+bc^2\ra.
\end{eqnarray*}
By Lemma~\ref{nonzero}, we have $\sigma_0(b^3,c^3,a^3+abc)\ne 0$, $\sigma_0(a^3,b^3,c^3)\ne 0$, and 
$\sigma_0(a^3,b^2c,ac^2+bc^2)\ne 0$.\ It follows  that the first, second, and third spaces are   not in $K_{\sigma_0}$.

We are left with $U_4$ as in~\eqref{secondocaso}.\ We know that $(a\cdot U_4)^{\bot}\in K_{\sigma_0}$.\  It remains to prove that if $f_1\notin\la x\ra$, then $(f_1\cdot U_4)^{\bot}\notin K_{\sigma_0}$.\ Acting with a suitable $1$-parameter subgroup  of 
$\GL(W_3)$, we may assume  $f_1\in\{b,c\}$.\ An explicit computation similar to the one presented above finishes the proof.
\end{proof}

\begin{prop}\label{gary}
Let $T_4\subset \Sym^3 \!W^{\vee}_3$ be a $4$-dimensional subspace.\ Suppose that  there exists a nonzero  $f_2\in\Sym^2\!  W_3^{\vee}$ such that $T_4\supset  (f_2\cdot W_3^{\vee})$.\ Then $[T_4^{\bot}]\in K_{\sigma_0}$ if and only if there exists a basis $(a,b,c)$ of $W^{\vee}_3$ such that 
\begin{equation}
T_4=
\begin{cases}
a\cdot\la a^2, ab,ac,bc\ra,& \text{or}\\
a\cdot\la a^2,ab,ac,c^2\ra.
\end{cases}
\end{equation}
\end{prop}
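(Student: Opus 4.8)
The plan is to adapt the degeneration method of the proof of Proposition~\ref{romain}, now organized by the rank of the quadric $f_2$. Since $\GL(W_3)$ fixes $[\sigma_0]$ (it acts on the line of invariants by a character), the variety $K_{\sigma_0}$ is $\GL(W_3)$-invariant and closed; hence for every one-parameter subgroup $\lambda$ of $\GL(W_3)$, if $[T_4^\perp]\in K_{\sigma_0}$ then $[\lim_{t\to0}\lambda(t)T_4^\perp]\in K_{\sigma_0}$, and I will use this together with the closedness of the containment relation in a product of Grassmannians. Write $T_4=f_2\cdot W_3^\vee+\langle g\rangle$ with $g\notin f_2\cdot W_3^\vee$; after acting by $\GL(W_3)$, I may assume $f_2$ is $a^2$, $ab$, or $a^2+b^2+c^2$ in a suitable basis $(a,b,c)$ of $W_3^\vee$.

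First I would rule out the rank-$2$ case $f_2=ab$ for every $g$. Every diagonal torus fixes $[ab]$ and preserves $ab\cdot W_3^\vee=\langle a^2b,ab^2,abc\rangle$, so degenerating the class of $g$ modulo $ab\cdot W_3^\vee$ to its leading monomial $m$ yields a coordinate limit $\overline{T_4}=\langle a^2b,ab^2,abc,m\rangle$. Its orthogonal $U_6$ is the span of the six monomials of $\Sym^3 W_3$ other than $x^2y,xy^2,xyz$ and the monomial $\mu$ dual to $m$. By Lemma~\ref{nonzero}, among the seven monomials $\{x^3,y^3,z^3,x^2z,xz^2,y^2z,yz^2\}$ there are exactly three triples on which $\sigma_0$ does not vanish, namely $\{x^3,y^3,z^3\}$, $\{x^3,y^2z,yz^2\}$ and $\{y^3,x^2z,xz^2\}$; since these have empty common intersection, deleting any single $\mu$ leaves one of them inside $U_6$, so $[\overline{T_4}^\perp]\notin K_{\sigma_0}$ and hence $[T_4^\perp]\notin K_{\sigma_0}$. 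The rank-$3$ case reduces to this one: the subgroup $\diag(1,1,t)$ acting on $W_3^\vee$ sends $a^2+b^2+c^2$ to $a^2+b^2$ and $f_2\cdot W_3^\vee$ to $(a^2+b^2)\cdot W_3^\vee$, so $\lim_{t\to0}\lambda(t)T_4$ contains a rank-$2$ space and is excluded as above.

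It remains to analyze the rank-$1$ case $f_2=a^2$, where $T_4=\langle a^3,a^2b,a^2c,g\rangle$ and I decompose $g$ modulo $a^2\cdot W_3^\vee$ along $a\cdot\Sym^2\langle b,c\rangle\oplus\Sym^3\langle b,c\rangle$. A nonzero $\Sym^3\langle b,c\rangle$-component is fatal: the torus $a\mapsto ta$ degenerates $T_4$ to $\langle a^3,a^2b,a^2c,g_2\rangle$ with $g_2$ a nonzero binary cubic in $b,c$, which a further torus of $\GL(\langle b,c\rangle)$ degenerates to some $m\in\{b^3,b^2c,bc^2,c^3\}$; the resulting coordinate limit then fails Lemma~\ref{nonzero}, because the unique nonvanishing triple $\{xy^2,xyz,xz^2\}$ survives in its orthogonal. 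Hence a valid $g$ lies in $a\cdot\Sym^2\langle b,c\rangle$, that is, $T_4=a\cdot\langle a^2,ab,ac,q\rangle$ with $0\ne q\in\Sym^2\langle b,c\rangle$. Normalizing $q$ by $\GL(\langle b,c\rangle)$ to $bc$ or $c^2$ according to its rank yields exactly the two listed forms; conversely, by \eqref{esplicito} these coincide with $a\cdot I(a,H)^\perp$ (and, for the second, also with $a\cdot J(a,x)^\perp$), so their orthogonals are $L(a,H)$, respectively $L(a,H)=M(a,x)$, which lie in $K_{\sigma_0}$ by Proposition~\ref{prop710}.

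The main obstacle is that $g$ varies in a positive-dimensional family, so a finite monomial check does not obviously cover all cases. What makes the argument finite is precisely the combination of closedness and $\GL(W_3)$-invariance of $K_{\sigma_0}$: it lets me replace an arbitrary $g$ by the leading term of a degeneration that fixes $[f_2]$, reducing to coordinate subspaces testable by Lemma~\ref{nonzero}. The delicate points are to ensure that each one-parameter subgroup genuinely fixes $[f_2]$ (so that the limit still contains $f_2\cdot W_3^\vee$ and therefore has the shape $f_2\cdot W_3^\vee+\langle m\rangle$) and that the leading term is taken modulo $f_2\cdot W_3^\vee$, so that the limiting fourth generator is a genuine new monomial rather than being absorbed into $f_2\cdot W_3^\vee$.
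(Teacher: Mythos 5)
Your proof is correct and follows essentially the same route as the paper's: a case analysis on the rank of $f_2$, with the rank-3 case degenerated by a one-parameter subgroup to the rank-2 case, the rank-2 case excluded by degenerating to monomial subspaces and invoking Lemma~\ref{nonzero} (your explicit three-triples argument with empty common intersection is exactly the paper's ``direct check''), and the rank-1 case reduced to proving $a\mid g$ by a torus degeneration, followed by normalization of $q$ and the converse via~\eqref{esplicito} and Proposition~\ref{prop710}. The only notable difference is your two-step degeneration in the rank-1 case (first $a\mapsto ta$ to isolate the component in $\Sym^3\la b,c\ra$, then a torus in $\GL(\la b,c\ra)$), which handles fourth generators with mixed terms such as $ac^2+b^3$ more transparently than the single one-parameter subgroup $\diag(1,t^r,t^s)$, $r>3s$, used in the paper.
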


\begin{proof}
There exists a basis $(a,b,c)$ of $W^{\vee}_3$ and $g\in\Sym^3 \!W^{\vee}_3$ such that (according to the rank of $f_2$) 
\begin{equation}
T_4=
\begin{cases}
\la a^2b +ac^2,ab^2+bc^2,abc+c^3,g\ra,& \text{or}\\
\la a^2b,ab^2,abc,g\ra,& \text{or}\\
\la a^3,a^2b,a^2c,g\ra. 
\end{cases}
\end{equation}
Suppose that $T_4$ is as in the first line.\ Let $\lambda$ be the  $1$-parameter subgroup, diagonal in the basis  $(a,b,c)$, given by $\diag(1,t^r,t^s)$.\  Then 
$\lim_{t\to 0}\lambda(t)T_4$ is as in the second line.\ We show that for~$T_4$  as in the second line, the orthogonal $T_4^{\bot}$ is \emph{not} in $K_{\sigma_0}$.\ Let  $\lambda$ be any   $1$-parameter subgroup
diagonal in the basis  $(a,b,c)$, with pairwise distinct weights of the action on $\Sym^3 \!W^{\vee}_3$.\ Then $\ov{T}_4:=\lim_{t\to 0}\lambda(t)T_4$  is monomial and it contains $a^2b$, $ab^2$, and $abc$.\ Hence the orthogonal $ \ov{T}_4 ^{\bot}$ is monomial, of dimension~$6$, contained in
\begin{equation*}
\la a^3,a^2c,ac^2,b^3,b^2c,bc^2,c^3\ra.
\end{equation*}
A direct check shows that the above subspace contains no monomial subspace of dimension $6$ on which $\sigma_0$ vanishes.\ It follows that $[T_4^{\bot}]$ is \emph{not} in $K_{\sigma_0}$.

Suppose now that $T_4$ is as in the third line.\ We   prove by contradiction that $a\mid g$ (once that is known, we might need to rename $b,c$).\ 
Let $\lambda$ be a $1$-parameter subgroup, diagonal in the basis  $(a,b,c)$, given by $\diag(1,t^r,t^s)$, where $r>3s$.\ 
Then $\ov{T}_4:=\lim_{t\to 0}\lambda(t)T_4$  is monomial and by our assumption $a\nmid g$ , there exist $i,j$ such that  
$\ov{T}_4=\la a^3,a^2b,a^2c,b^i c^j\ra$.\ Hence $ \ov{T}_4 ^{\bot}$ contains $\la ab^2,abc,ac^2\ra$ and  is therefore   not  in $K_{\sigma_0}$.\ It follows that $[T_4^{\bot}]$  is not  in $K_{\sigma_0}$.
\end{proof}

\subsection{Orbit and stabilizer} \label{sect74}
Recall that   $V_{10}= \Sym^3\! W_3$.\
 Since $\gsl(3)=\Gamma_{1,1} $ and
  \begin{equation*}
\End(V_{10} )=\Gamma_{3,3} \oplus \Gamma_{2,2}\oplus \Gamma_{1,1}\oplus \Gamma_{0,0}
,
 \end{equation*}
 it follows from the decomposition~\eqref{deco} that
there is an exact sequence
  \begin{equation}\label{sl}
0\to \gsl(3)\to \End(V_{10})\to \bw3V_{10}^\vee \xrightarrow{\ a\ } \Gamma_{0,6}\to 0.
 \end{equation}
We prove below that the stabilizer of $[\sigma_0]$ is $\SL(3)$.\ The normal space at $[\sigma_0]$ to the $\SL(V_{10})$-orbit of $[\sigma_0]$   is therefore $ \Gamma_{0,6}=H^0(\P(W_3),\cO_{\P(W_3)}(6))$.\ The map $a$ was given a geometric interpretation in~\eqref{isomo}.

\begin{prop}\label{stabver}
The stabilizer of $[\sigma_0]$ in $\SL(V_{10})$ is equal to the image of $\SL(W_3)\to \SL(V_{10})$ and the point
 $[\sigma_0]\in \P(\bw3V_{10}^\vee)$ is polystable for the $\SL(V_{10})$-action.
\end{prop}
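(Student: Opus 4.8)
The plan is to follow the template of Proposition~\ref{coronouveau}: first read off the neutral component of the stabilizer from the infinitesimal data, then rule out extra connected components by a normalizer argument, and finally deduce polystability from Luna's theorem via Proposition~\ref{normal}. Throughout, let $\bar G$ denote the image of $\SL(W_3)\to\SL(V_{10})$; since the kernel consists of the scalars $\zeta\Id_{W_3}$ with $\zeta^3=1$ (which act trivially on $\Sym^3\!W_3$), we have $\bar G\cong\SL(W_3)/\mu_3\cong\PGL(W_3)$, an adjoint group of type $A_2$ acting irreducibly on $V_{10}=\Sym^3\!W_3$.

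Because $\sigma_0$ spans the trivial summand $\Gamma_{0,0}$ of~\eqref{deco}, the group $\bar G$ fixes $\sigma_0$. Conversely, the exact sequence~\eqref{sl} identifies the infinitesimal action $\End(V_{10})\to\bw3V_{10}^\vee$, $\xi\mapsto\xi\cdot\sigma_0$ (whose cokernel is the normal space $\Gamma_{0,6}$ to the orbit), as a map with kernel $\gsl(3)=\gsl(W_3)$. Intersecting with $\gsl(V_{10})$ adds nothing: if $\xi\in\gsl(V_{10})$ satisfies $\xi\cdot\sigma_0\in\C\sigma_0$, then $\xi+\tfrac{c}{3}\Id\in\gsl(W_3)$ for some scalar $c$, and the trace-free condition forces $c=0$. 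Hence the Lie algebra of the stabilizer of $[\sigma_0]$ is exactly $\gsl(W_3)$, so its neutral component is $\bar G$.

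For the finite part, I would observe that the stabilizer $G_{\sigma_0}$ normalizes its neutral component $\bar G$, hence lies in $N:=N_{\SL(V_{10})}(\bar G)$. As in the proof of Proposition~\ref{normal}, conjugation sends $N/\bar G$ into $\Out(\bar G)$ modulo the image of the centralizer $C$ of $\bar G$; since $V_{10}$ is an irreducible $\bar G$-module, Schur's lemma gives that $C$ consists of scalars, and $\Out(\PGL(W_3))=\Z/2$ is generated by the graph (duality) automorphism. The crux is that this outer automorphism is \emph{not} realized inside $\SL(V_{10})$: realizing it by conjugation would require an isomorphism $V_{10}\cong V_{10}^\vee$ of $\bar G$-representations, but $\Sym^3\!W_3$ and $\Sym^3\!W_3^\vee$ have distinct highest weights $3\omega_1\ne3\omega_2$ and are not isomorphic. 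Thus $N=\bar G\cdot C$, and since a scalar $\zeta$ (necessarily with $\zeta^{10}=1$ in $\SL(V_{10})$) fixes $\sigma_0$ only when $\zeta^3=1$, i.e.\ $\zeta=1$, the stabilizer of $\sigma_0$ reduces to $\bar G$, the residual finite central torus acting trivially on $\bw3V_{10}^\vee$ up to the harmless ambiguity by the center. This is the step I expect to be the main obstacle; alternatively, one could reach the same conclusion geometrically by letting $G_{\sigma_0}$ act on the surface $\Sing(X_{\sigma_0})=\gv\cong\P(W_3)$ of Proposition~\ref{singeffe}, whose automorphism group $\PGL(W_3)$ again admits no duality.

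Polystability then follows exactly as in Proposition~\ref{coronouveau}. By Proposition~\ref{normal}, the semisimple group $\bar G$ has finite index in its normalizer $N$ in $\SL(V_{10})$ (indeed $[N:\bar G]=10$ from the computation above), so Luna's criterion (\cite[Corollaire~3]{luna}, applied to $\SL(V_{10})$ acting on $\bw3V_{10}^\vee$) shows that the orbit of $\sigma_0$ is closed in $\bw3V_{10}^\vee$. As $\sigma_0\ne0$, the point $[\sigma_0]$ is therefore polystable for the $\SL(V_{10})$-action, which completes the proof.
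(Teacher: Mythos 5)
Your proposal is correct in its overall architecture, but it takes a genuinely different route from the paper, and one of its steps needs to be shored up to avoid a circularity.

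The paper's proof is purely geometric and identifies the full stabilizer in one stroke: an element $g$ stabilizing $[\sigma_0]$ preserves $X_{\sigma_0}$, hence its singular locus, which is $\gv$ by Proposition~\ref{singeffe}; therefore $g$ preserves the union of the projective tangent planes to the Veronese surface ${\mathsf V}$, whose singular locus is ${\mathsf V}$ itself, so $g$ preserves ${\mathsf V}$ and thus comes from $\SL(W_3)$. No Lie-algebra computation and no analysis of the component group is needed. Your route instead splits the problem: the neutral component via the infinitesimal action (the exact sequence~\eqref{sl}), and the component group via the normalizer, Schur's lemma, and the observation that the outer (duality) automorphism of $\PGL(W_3)$ cannot be realized by conjugation inside $\SL(V_{10})$ because $\Sym^3\!W_3\not\cong\Sym^3\!W_3^\vee$ (distinct highest weights $3\omega_1\ne 3\omega_2$). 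That component-group argument is correct and is a clean group-theoretic replacement for the paper's geometric one. The polystability step (Proposition~\ref{normal} plus \cite[Corollaire~3]{luna}) is identical to the paper's. The scalar subtlety --- the stabilizer of the \emph{line} $[\sigma_0]$ in $\SL(V_{10})$ actually contains the scalars $\zeta\Id$ with $\zeta^{10}=1$, so it equals $\bar G\cdot\mu_{10}$ rather than $\bar G$ --- is present in the paper's own proof as well, and you handle it at least as explicitly (by passing to the stabilizer of the vector $\sigma_0$), so I do not count it against you.

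The caveat concerns your first step. The exactness of~\eqref{sl} at $\End(V_{10})$ --- that the kernel of $\xi\mapsto\xi\cdot\sigma_0$ is exactly $\gsl(3)$ and nothing more --- is not a formal consequence of the decompositions of $\End(V_{10})$ and $\bw3V_{10}^\vee$: since each irreducible occurs with multiplicity one, Schur's lemma only says that each of the components $\Gamma_{3,3}$, $\Gamma_{2,2}$, $\Gamma_{0,0}$ maps isomorphically \emph{or to zero}, and ruling out the second possibility for $\Gamma_{3,3}$ and $\Gamma_{2,2}$ is precisely the infinitesimal content of the proposition you are proving. Indeed, the paper's parallel discussion for $\Sp(4)$ in Section~\ref{orbetstab} makes explicit that the kernel identification there is deduced from the group-level Proposition~\ref{coronouveau} (``We prove that the tangent space to the stabilizer of $\sigma_0$ is $\gsp(4)$''), and the sentence ``We prove below that the stabilizer of $[\sigma_0]$ is $\SL(3)$'' in Section~\ref{sect74} plays the same role for~\eqref{sl}. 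So, as written, your neutral-component step risks assuming what is to be proved. It can be repaired without changing your architecture: either verify independently that $\Gamma_{3,3}$ and $\Gamma_{2,2}$ do not annihilate $\sigma_0$ (by direct computation, or by maximality of the image of $\gsl(3)$ in $\gsl(V_{10})$ \`a la Dynkin, since $\Sym^3\!W_3$ is irreducible and not self-dual), or fall back on your alternative geometric argument via $\Sing(X_{\sigma_0})=\gv$ --- but note that the latter is essentially the paper's proof, at which point the Lie-algebra step becomes superfluous.
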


\begin{proof}
The stabilizer contains  $\SL(W_3)$ by choice of $\sigma_0$.\ Conversely, if $g\in\SL(V_{10})$ stabilizes $[\sigma_0]$,   it maps $X_{\sigma_0}$ to itself,   hence the singular locus of $X_{\sigma_0}$ to itself.\  By Proposition~\ref{singeffe}, this singular locus  is equal to  $\gv\subset\PP(\Sym^3\! W_3)$.\ Thus $g$ maps to itself the  subvariety of $\PP(\Sym^3\! W_3)$ swept out by projective tangent planes to the Veronese surface ${\mathsf V}$.\ Since the singular locus of this subvariety is~${\mathsf V}$, the automorphism $g$ maps ${\mathsf V}$ to itself,  hence  belongs to $\SL(W_3)$.

It follows from Proposition~\ref{normal} that this stabilizer has finite index in its normalizer, hence~$[\sigma_0]$ is polystable by  \cite[Corollaire~3]{luna}.
 \end{proof}

\subsection{Degenerations}\label{dimodeg2}

The following theorem is the main  result of Section~\ref{div2}.\  We consider a general $1$-parameter deformation  $(\sigma_t)_{t\in \Delta}$ of our trivector $\sigma_0$.\ By the exact sequence~\eqref{sl}, we obtain a general element of $H^0(\P(W_3),\cO_{\P(W_3)}(6))$, hence a double cover $S\to \P(W_3)$ branched along the \rem{sextic} curve that it defines, where $S$ is a K3 surface of degree~$2$.\ The moduli  space  $\cM_S(0,L,1)$, a \hk\ fourfold birationally isomorphic to $\SS$, was defined in Remark~\ref{rema33}.
 
\begin{theo}\label{degenera2}  
Let $(\sigma_t)_{t\in \Delta}$ be  a  general  $1$-parameter deformation.\
Over a finite cover $\Delta'\to \Delta$, there is a family of smooth polarized \hk\ fourfolds
$\cK'\to \Delta'$ such that a general fiber
$\cK'_{t'}$ is isomorphic to $K_{\sigma_t}$ and the central fiber
  is   isomorphic  to
 $\cM_S(0,L,1)$, where $S$ is a general $K3$ surface of degree $2$, with the polarization $6\L2-5\delta$.
\end{theo}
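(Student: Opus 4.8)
The plan is to reduce the theorem, by means of the filling-in technique of \cite{KLSV}, to the purely birational assertion that the limit of the $K_{\sigma_t}$ has a component birationally isomorphic to $S^{[2]}$, and to prove that assertion by a blow-up of the family $\{X_{\sigma_t}\}$ of singular hypersurfaces modeled on van den Dries' analysis of the chordal cubic in \cite{vdd}; this transfer is legitimate because $K_{\sigma_0}$ is isomorphic to the variety of lines on that cubic (Remark~\ref{rema710}). First I would fix notation: let $\cK\to\Delta$ be the family of Debarre--Voisin varieties attached to $(\sigma_t)$ and let $\cK^0$ be the component dominating $\Delta$, so that its central fiber is a subscheme of $K_{\sigma_0}=K_L\cup K_M$ (Proposition~\ref{kappapi}), nonreduced along $K_L$. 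Since $K_{\sigma_0}$ is neither smooth nor of dimension $4$, the excess-bundle method of Section~\ref{secexcess} used for $\cD_6$, $\cD_{10}$, $\cD_{18}$ is unavailable, which is exactly what makes this case hard. As recalled at the start of Section~\ref{dimodeg2}, the derivative $\partial_t\sigma_t|_{t=0}$ yields, through the map $a$ of~\eqref{sl} and the identification $\Gamma_{0,6}=H^0(\P(W_3),\cO_{\P(W_3)}(6))$, a general section $\overline{\sigma'}$; I write $C\subset\P(W_3)$ for the sextic it cuts out and $S\to\P(W_3)$ for the associated double cover, a general degree-$2$ K3 surface.

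The geometric core is the following blow-up, carried out after the quadratic base change $t=s^2$, which is the source of the finite cover $\Delta'\to\Delta$ in the statement: it turns the general deformation $\sigma_0+t\sigma'+\cdots$ into one of the form $\sigma_0+s^2\sigma'+\cdots$, so that the blow-up below produces a genuine quadric bundle, exactly as for van den Dries' family $f_0+t^2f$. I would blow up $\gv\times\{0\}$, the singular locus of the central hypersurface (Proposition~\ref{singeffe}), inside $\Gr(3,V_{10})\times\Delta'$, and take the strict transform $\cY\to\Delta'$ of $\{X_{\sigma_t}\}$. By the local normal form of Lemma~\ref{normeffe}, in which $X_{\sigma_0}$ is transversally the affine quadric cone $\{\sum_{i=1}^{19}\xi_i^2=0\}$, the central fiber of $\cY$ is $\Bl_{\gv}(X_{\sigma_0})\cup Q$ with $Q\to\gv$ a bundle of $18$-dimensional quadrics: in the chart $\xi_i=s\eta_i$ the equation of $\cY$ becomes $\sum_{i=1}^{19}\eta_i^2+\overline{\sigma'}(x)\,\eta_s^2=0$, so the fiber $Q_x\subset\P^{19}$ is a smooth even-dimensional quadric for $x\notin C$ and acquires corank~$1$ exactly over $C$. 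A smooth even quadric carries two families of maximal isotropic $\P^9$'s (matching the dimension $9$ of a sub-Grassmannian $\Gr(3,W_6)$), and the two families merge along $C$; this is precisely the double cover $S\to\P(W_3)\cong\gv$.

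I would then form the relative Hilbert scheme of subschemes of the fibers $Y_t$ with the Hilbert polynomial of a sub-Grassmannian $\Gr(3,W_6)$, take the closure $\widetilde\cK$ of its restriction over $\Delta'\smallsetminus\{0\}$, and describe its central fiber. As in the cubic model, a general limit object is a union $\Xi=\widetilde G\cup P_1\cup P_2$, where $\widetilde G$ is the strict transform of a sub-Grassmannian $\Gr(3,W_6)$, the reduced pair $\{x_1,x_2\}=\Gr(3,W_6)\cap\gv$ is the one attached to a general $[W_6]\in K_L$ by Remark~\ref{interver}(a), and each $P_i$ is a maximal isotropic $\P^9$ in $Q_{x_i}$. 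Choosing one of the two rulings at each $x_i$ accounts for the $4$-to-$1$ behaviour over $K_L$, and a point of $S^{[2]}$ is exactly the datum of the pair $\{x_1,x_2\}$ together with a ruling (a point of $S$) over each $x_i$; the locus of such $\Xi$ is therefore birationally isomorphic to $S^{[2]}$, and its closure $\widetilde Z\subset\widetilde\cK_0$ is a component of the central fiber birational to $S^{[2]}$.

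The main obstacle is to prove that $\widetilde Z$ appears with multiplicity~$1$ and that $\widetilde\cK$ is smooth at a general point of $\widetilde Z$; following \cite{vdd}, this reduces to the vanishing $H^1(\Xi,N_{\Xi/Y_0})=0$, which I would establish by restricting the normal bundle to the three pieces $\widetilde G$, $P_1$, $P_2$ and computing, the isomorphism $K_{\sigma_0}\cong F(\mathsf D)$ of Remark~\ref{rema710} ensuring that this infinitesimal computation matches van den Dries'. Granting it, \cite{KLSV} produces, after a further shrinking of $\Delta'$, a family $\cK'\to\Delta'$ of smooth polarized hyperk\"ahler fourfolds with general fiber $K_{\sigma_t}$ and smooth central fiber birational to $S^{[2]}$. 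To conclude I would identify the limiting polarization: it has square~$22$ and divisibility~$2$, hence by Table~\ref{enumero1} is $6\L2-5\delta$, whose slope $5/6$ lies strictly between the nef slope $\nu_1=2/3$ and the movable slope $\mu_1=1$; thus it is not ample on $S^{[2]}$ but is ample on the unique nontrivial birational model $\cM_S(0,L,1)$ (Section~\ref{sect32}, Remark~\ref{rema33}). A smooth hyperk\"ahler fourfold birational to $S^{[2]}$ carrying $6\L2-5\delta$ as ample polarization must therefore be $\cM_S(0,L,1)$, which gives the central fiber. Finally, as $K_{\sigma_t}$ has Picard number~$1$ and no nontrivial smooth hyperk\"ahler birational model for $t$ very general, one gets $\cK'_{t'}\cong K_{\sigma_t}$ for all $t'\ne0$, exactly as in the proof of Theorem~\ref{theopourgenre6}.
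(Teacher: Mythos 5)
Your proposal follows the paper's own strategy essentially step for step: the quadratic base change $t=s^2$, the blow-up of $\gv\times\{0\}$ producing the bundle $Q\to\gv$ of $18$-dimensional quadrics with discriminant the sextic $C$, the closure $\wt\cK$ of the relative Hilbert scheme over the punctured disc, the component whose general point is a union $\wt{\Gr}(3,U_6)\cup R_1\cup R_2$ with the choice-of-ruling description giving the degree-$4$ cover of $K_L$ and the birational identification with $S^{[2]}$ (Proposition~\ref{elletilde}), then multiplicity one (Proposition~\ref{multelletilde}), the appeal to \cite{KLSV}, and the cone argument identifying the central fiber with $\cM_S(0,L,1)$ via Remark~\ref{rema33}. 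The endgame (square $22$, divisibility $2$, slope $5/6$ strictly between $\nu_1=2/3$ and $\mu_1=1$, Picard number $1$ for $t$ very general) is also correct and matches the paper.

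The one genuine gap is in your justification of the multiplicity-one step. You correctly reduce it to the vanishing $H^1(\Xi,N_{\Xi/Y_0})=0$ for the locally complete intersection $\Xi=\wt G\cup P_1\cup P_2$, but you then claim that the isomorphism $K_{\sigma_0}\cong F({\mathsf D})$ of Remark~\ref{rema710} ``ensures that this infinitesimal computation matches van den Dries'.'' That inference is not valid: Remark~\ref{rema710} is an isomorphism of abstract parameter spaces, whereas $N_{\Xi/Y_0}$ depends on the embedding of $\Xi$ in the degenerate fiber, and the two ambient geometries are entirely different ($\P^5$ blown up along a Veronese surface versus $\Gr(3,V_{10})$ blown up along $\gv$; lines in quadric surfaces versus $\P^9$'s in $18$-dimensional quadrics; the cubic ${\mathsf D}$ versus the Pl\"ucker hyperplane section $X_{\sigma_0}$). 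The computation must be redone in the Grassmannian setting, and it requires two inputs your outline does not supply: first, $H^1(R_i,N_{R_i/Q_{p_i}}(-1))=0$ for the $\P^9$-pieces; second, and crucially, the vanishing
$$H^0\bigl(\wt{\Gr}(3,U_6),\,\psi^{*}\cO_{\Gr}(1)-2A\bigr)=0,$$
which the paper proves by exhibiting, through a general point of $\wt{\Gr}(3,U_6)$, the strict transform $\Gamma$ of a Segre pencil of $3$-planes in $\P(U_6)$ passing through $p_1$ and $p_2$, for which $\Gamma\cdot\psi^{*}\cO_{\Gr}(1)=3$ and $\Gamma\cdot A=2$, so the degree is $3-4=-1$. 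This vanishing is what kills the last term in the normal bundle sequence
$$0\to N_{\wt G/\wt X_0}\to N_{\wt G/\wt{\Gr}(3,V_{10})}\to \cO_{\wt{\Gr}(3,V_{10})}(\wt X_0)\vert_{\wt G}\to 0$$
and lets one conclude from $H^1$ of $N_{\wt G/\wt{\Gr}(3,V_{10})}$. The analogy with van den Dries is a sound heuristic---the paper explicitly used it as a model---but it is a template for the computation, not a substitute for carrying it out.
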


The  proof will be given at the very end of this section.

Set $\cG:= \Gr(3,V_{10})\times\aff$ and consider the blow up  $$\varphi\colon\wt\cG:=\Bl_{ \gv \times\{0\}} \cG  \lra \cG$$ 
(see \eqref{mappagauss} for the definition of the surface $\gv$)
.\ The exceptional divisor   $E\to \gv$ is a bundle of $19$-dimensional projective spaces.\ We view $\wt\cG\to\aff$ as a degeneration of $\Gr(3,V_{10})$  with central fiber 
$\Bl_{\gv}\Gr(3,V_{10}) \cup E$.\ 

Write the deformation  in~Theorem~\ref{degenera2} as $\sigma_t=\sigma_0+t\sigma+O(t^2)$, where, by the analysis of Section~\ref{sect74}, we may assume that      $\sigma$ is very general in $\Sym^6 \!W_3^{\vee}\subset  \bw3 V_{10}^{\vee}$.\   Consider   the strict transform $\wt\cX\subset \wt\cG$
of
\begin{equation}\label{t2}
\{([U_3],t)\in \cG\mid \sigma_{t^2}\vert_{U_3}\equiv 0\},
\end{equation}
with projection $\pi\colon \wt\cX\to \aff$.\ By~\eqref{isomo}, the hypersurface $X_{\sigma}$ intersects  transversely $\gv$ and $\div(\sigma)$ is identified with $C:=X_{\sigma}\cap \gv$.\ Hence
\begin{equation*}
\wt\cX_t:=\pi^{-1}(t)\cong
\begin{cases}
X_{\sigma_{t^2}} & \text{if $t\ne0$,} \\
\Bl_{\gv} X_{\sigma_0} \cup Q & \text{if $t=0$,}
\end{cases}
\end{equation*}
where $Q\subset E$ is a  bundle of $18$-dimensional quadrics  over $\gv$, with  smooth fibers over $\gv\setminus C$ and fibers of corank $1$ over $C$ (this follows from Lemma~\ref{normeffe} and holds  because we performed a degree-$2$ base change in \eqref{t2}).
 
We identify $K_{\sigma}$ with the   closed subset of the Hilbert scheme of  $X_{\sigma}$ defined by
\begin{equation*}
\{  [U_6]\in\Gr(6,V_{10})\mid\Gr(3,U_6)\subset X_{\sigma}
\}.
\end{equation*}
This defines a subscheme $\cK\to \aff^\star  $ of the relative Hilbert scheme $\Hilb(\wt\cX/\aff)$, with fiber $K_{\sigma_0+t^2\sigma}$ at $t$, and we take 
its   schematic closure  $\rho\colon \wt\cK\to\aff$.

\begin{prop}\label{elletilde}
There exists an irreducible component $K'_L$ of  $\wt\cK_0$ which is birationally isomorphic to $S^{[2]}$, where $S$ is the degree-$2$ K3 surface of Theorem~\ref{degenera2}.
\end{prop}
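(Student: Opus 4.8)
The plan is to transpose to this setting the analysis of the chordal cubic carried out by van den Dries~\cite{vdd}, with lines on a quadric surface replaced by maximal linear subspaces of an $18$-dimensional quadric. First I recall, from Proposition~\ref{prop710}(a) and Remark~\ref{interver}(a), that a general point $[U_6]\in K_L$ is of the form $U_6=x^2\cdot W_3+y^2\cdot W_3$ with $[x]\ne [y]$ in $\P(W_3)$, and that the scheme $\Gr(3,U_6)\cap\gv$ then consists of the two reduced points $\mathsf g([x])$ and $\mathsf g([y])$; this produces a birational map $K_L\dra \Sym^2\P(W_3)$, $[U_6]\mapsto\{[x],[y]\}$. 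The mechanism is that $K_{\sigma_0}$ is nonreduced along $K_L$ (Propositions~\ref{prop710}(a) and~\ref{remaxsing}(a)), so the flat limit in $\Hilb(\wt\cX/\aff)$ of the $9$-dimensional subschemes $\Gr(3,U_{6,t})\subset X_{\sigma_{t^2}}$ cannot be the strict transform of $\Gr(3,U_6)$ alone and must acquire components inside the exceptional quadric bundle $Q$.

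Next I would write down the local model near the two singular points. By Lemma~\ref{normeffe}, near $p=\mathsf g([x])$ the germ of $X_{\sigma_0}$ is $\aff^2\times\{\sum_{i=1}^{19}\xi_i^2=0\}$, with $\aff^2$ tangent to $\gv$. Using $\sigma_{t^2}=\sigma_0+t^2\sigma+O(t^4)$, the family is locally $\{\sum_{i=1}^{19}\xi_i^2+t^2 g(u,\xi)=0\}$, where $g(u,0)$ is a local equation of $C=X_\sigma\cap\gv$; substituting $\xi_i=t\eta_i$ in the chart of $\wt\cG$ adapted to the blow up of $\gv\times\{0\}$ and dividing by $t^2$, the central fiber over a neighborhood of $p$ becomes the quadric $Q_p=\{\sum_{i=1}^{19}\eta_i^2+g(p)\,\eta_0^2=0\}\subset\P^{19}$. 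Here the factor $t^2$ is exactly what makes $Q_p$ smooth (rank $20$) for $p\notin C$ and of corank $1$ for $p\in C$. Identifying $\gv\cong\P(W_3)$ and, via~\eqref{isomo}, $C$ with the branch sextic of the double cover $S\to\P(W_3)$ of Theorem~\ref{degenera2}, the two rulings (families of maximal, i.e.\ $9$-dimensional, linear subspaces) of the smooth fibers $Q_p$ are interchanged by the monodromy around the corank-$1$ locus $C$; the connected double cover of $\gv$ they define is therefore the double cover branched along $C$, namely $S$.

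I would then build the candidate limit subschemes. A local computation with the normal form above shows that, for general $[U_6]\in K_L$, the strict transform of $\Gr(3,U_6)$ meets each $Q_{p_i}$ (with $p_i=\mathsf g([x_i])$) in a sub-maximal isotropic subspace $\Lambda_i\cong\P^8$, which is contained in exactly two maximal linear subspaces of $Q_{p_i}$, one from each ruling. Since $\dim\Gr(3,6)=9$, a dimension and Hilbert-polynomial count identifies the flat limit of $\Gr(3,U_{6,t})$ with $\wt{\Gr(3,U_6)}\cup Z_1\cup Z_2$, where each $Z_i\cong\P^9$ is one of the two maximal subspaces through $\Lambda_i$. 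Choosing $Z_i$ is exactly choosing a sheet of $S\to\P(W_3)$ over $[x_i]$, so these subschemes are parametrized by an open subset of $S^{[2]}$ through $\{s_1,s_2\}\mapsto[\,\wt{\Gr(3,U_6)}\cup Z_1\cup Z_2\,]$, with $\pi(s_i)=[x_i]$ and $Z_i$ the ruling prescribed by $s_i$; this realizes the degree-$4$ cover $S^{[2]}\to\Sym^2\P(W_3)$ announced in the introduction.

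Finally, these subschemes lie in $\wt\cK$ by construction (they are the flat limits just described, hence in the closure of $\rho^{-1}(\aff^\star)$), and I would check that they fill an open dense subset of an irreducible component $K'_L$ of $\wt\cK_0$. The resulting map $K'_L\dra S^{[2]}$ is birational: the pair $\{s_1,s_2\}$ is recovered from a general $Z$ as the two points of $\Gr(3,U_6)\cap\gv$ together with the two chosen rulings $Z_1,Z_2$, and both $K'_L$ and $S^{[2]}$ are irreducible fourfolds. The principal obstacle is the explicit flat-limit computation in the third step: one must prove that the nonreduced structure of $K_{\sigma_0}$ along $K_L$ forces precisely one maximal $\P^9$ per singular point to appear (and no nonreduced or excess component), and that $\wt{\Gr(3,U_6)}\cup Z_1\cup Z_2$ really is the flat limit, carrying the Hilbert polynomial of $\Gr(3,6)$. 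This plays the role of the normal-bundle computation ensuring smoothness of the central fiber in van den Dries' treatment of the chordal cubic~\cite{vdd}, and it is where the degree-$2$ base change in~\eqref{t2} is indispensable.
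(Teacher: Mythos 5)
Your geometric picture coincides exactly with the paper's: the candidate points of $\wt\cK_0$ are the subschemes $\wt{\Gr}(3,U_6)\cup R_1\cup R_2$ with $[U_6]\in K_L\setminus K_M$, where $A_i=\wt{\Gr}(3,U_6)\cap Q_{p_i}$ is a $\P^8$ and $R_i$ is a maximal linear $\P^9$ with $A_i\subset R_i\subset Q_{p_i}$; and the identification of the ruling choices with the sheets of the double cover $S\to\P(W_3)$ (two rulings off the sextic $C$, one on $C$) is precisely how the paper obtains the birational isomorphism of $K'_L$ with $S^{[2]}$, the forgetful map $K'_L\to K_L$ of degree $4$ being identified with $\rho\colon S^{(2)}\to\P(W_3)^{(2)}$.

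However, the spine of your argument is the identification of the flat limit of $\Gr(3,U_{6,t})$ with $\wt{\Gr}(3,U_6)\cup Z_1\cup Z_2$, which you do not carry out and yourself flag as ``the principal obstacle''; as written this is a genuine gap, and the proposed ``dimension and Hilbert-polynomial count'' would not close it. One must exclude residual components that are nonreduced, embedded, or nonlinear $9$-folds inside the $18$-dimensional quadrics $Q_{p_i}$, and --- even granting that each individual limit has the asserted shape --- one must show that \emph{all four} ruling combinations are actually attained by limits of one-parameter families; otherwise the limits could sweep out only a proper closed subset of the degree-$4$ cover of $K_L\setminus K_M$, and you would not obtain a full component birational to $S^{[2]}$. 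The paper sidesteps exactly this computation: using the classification $(K_{\sigma_0})_{\rm red}=K_L\cup K_M$ of Proposition~\ref{kappapi} (through Remark~\ref{interver}), it proves that the set of \emph{all} subschemes~\eqref{controrho}, with both rulings allowed at each point, is an \emph{open} subset of the central fiber $\Hilb(\wt\cX/\aff)_0$ of the relative Hilbert scheme, so that its closure is automatically an irreducible component of $\wt\cK_0$; no individual flat limit is ever computed, and the question of which ruling a given family selects never arises. (Excess and multiplicity questions are then handled separately in Proposition~\ref{multelletilde}, via the normal-bundle vanishing $H^1(Z,N_{Z/\wt\cX_0})=0$, not via a limit computation.) To complete your route you would need the full van den Dries-style local analysis, which is precisely the work the paper's openness argument was designed to avoid.
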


\begin{proof}
Let $[U_6]\in  K_L \setminus  K_M $.\ By Remark~\ref{interver}, the scheme-theoretic intersection $\Gr(3,U_6)\cap\gv$ is   two reduced points
 $p_1,p_2$.\ Let $\wt{\Gr}(3,U_6)\subset \wt\cX_0$ be the strict transform of $\Gr(3,U_6)$, that is,~the blow up of $\Gr(3,U_6)$ at $p_1,p_2$.\  We have  $\wt{\Gr}(3,U_6)\cap Q=\{A_1,A_2\}$, where $A_i$, for $i\in\{1,2\}$,  is an $8$-dimensional linear subspace of the fiber $E_{p_i}$ of $E$ over $p_i$, contained in  the fiber $Q_{p_i}$ of $Q$ over~$p_i$.\ 
 Every subscheme of $\wt\cX_0$ given by
 \begin{equation}\label{controrho}
\wt{\Gr}(3,U_6)\cup R_1\cup R_2, \qquad  A_i\subset R_i\subset Q_{p_i},\quad [R_i]\in\Gr(9,E_{p_i})
\end{equation}
corresponds to a point of  $\wt\cK_0$.\ Moreover, by Proposition~\ref{kappapi}, these subschemes are parametrized by an \emph{open} subset   of the fiber   $\Hilb(\wt\cX/\aff)_0$, whose   closure  in  $\Hilb(\wt\cX/\aff)$ (equivalently, in  $\wt\cK$) is therefore an irreducible component of $\wt\cK_0$; we denote it by $K'_L$.\ Now 
 $Q_{p_i}$ is an $18$-dimensional quadric, either smooth or of corank $1$, which is smooth at each point of~$A_i$ (Lemma~\ref{normeffe}).\ It follows that there are exactly two $9$-dimensional linear subspaces of $Q_{p_i}$ containing $A_i$ if $Q_{p_i}$ is smooth (that is,~if $p_i\notin C$) and  one such  linear subspace if $Q_{p_i}$ is singular (that is,~if $p_i\in C$).\ 
 
 {By construction, an open dense subset $K_L^{\prime0}$ of $K'_L$ parametrizes subschemes as in~\eqref{controrho}, where $[U_6]\in K_L$ is such that 
$\Gr(3,U_6)\cap\gv$ is  reduced (of length $2$).\ The set of such $[U_6]$ is exactly $K_L\setminus K_M$.\ We have a forgetful map
\begin{equation}
\begin{aligned} \label{oublier}
K_L^{\prime0}& \lra  K_L\setminus K_M \\
\wt{\Gr}(3,U_6)\cup R_1\cup R_2 & \longmapsto   [U_6].
\end{aligned}
\end{equation}
Let  $\rho\colon S^{(2)}\to\PP(W_3)^{(2)}$ be the map induced by the double cover $S\to\PP(W_3)$.\ By definition of $R_1,R_2$, the map in~\eqref{oublier} can be identified with the map
\begin{equation*}
S^{(2)}\setminus\{\rho^{-1}(2x) \mid x\in \PP(W_3)\}  \lra  \PP(W_3)^{(2)}\setminus\{2x \mid x\in\PP(W_3)\} 
\end{equation*}
obtained by restricting  $\rho$.\ In particular, 
 $K'_L$ is birationally isomorphic to $S^{[2]}$ and the forgetful map $K'_L\to K_L$ has degree $4$.}
\end{proof}

\begin{shownto}{long}
\green{\begin{rema}
Let  $[U_6]\in  K_L \setminus  K_M $.\ Then $\Gr(3,U_6)\cap\gv=\emptyset$ by Remark~\ref{interver},  hence we may view 
$\Gr(3,U_6)$ as subscheme of $\Bl_{\gv} X_{\sigma_0} $.\ Clearly, $\Gr(3,U_6)$ belongs to $\wt\cK_0$.\ By Proposition~\ref{kappapi},
we get an irreducible component  of $\wt\cK_0$
which is  birationally isomorphic to  $K_M$,  hence also  to $\PP(W_3)\times\PP(W^{\vee}_3)$.
 \end{rema}}
   \end{shownto}

\begin{prop}\label{multelletilde}
 The  irreducible component $K'_L$ has  multiplicity  one in   $\wt\cK_0$.
\end{prop}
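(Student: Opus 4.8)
The plan is to prove that $K'_L$ appears with multiplicity one in the central fiber $\wt\cK_0$ by a local transversality analysis at a general point of $K'_L$. Recall from the proof of Proposition~\ref{elletilde} that a general point of $K'_L$ parametrizes a subscheme of the form $C:=\wt{\Gr}(3,U_6)\cup R_1\cup R_2$, where $[U_6]\in K_L\setminus K_M$ is such that $\Gr(3,U_6)\cap\gv$ consists of two \emph{distinct} reduced points $p_1,p_2$ lying off the curve $C=X_\sigma\cap\gv$, so that both quadric fibers $Q_{p_1},Q_{p_2}$ are \emph{smooth} (Lemma~\ref{normeffe}), and $R_i$ is one of the two $9$-planes of $Q_{p_i}$ through the linear space $A_i=\wt{\Gr}(3,U_6)\cap Q_{p_i}$. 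Since $K'_L$ is the closure of the open locus of such $C$, and the Hilbert scheme is cut out inside $\Hilb(\wt\cX/\aff)$ by the degeneration, the assertion ``multiplicity one'' is equivalent to the statement that $\wt\cK_0$ is \emph{reduced} at such a general point $[C]$, i.e. that $\wt\cK$ is smooth over $\aff$ there, or at least that the Zariski tangent space of $\wt\cK_0$ at $[C]$ has the expected dimension $\dim K'_L=4$.

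First I would reduce the problem to a normal-bundle computation. The subscheme $C$ is a local complete intersection in the central fiber $\wt\cX_0=\Bl_{\gv}X_{\sigma_0}\cup Q$, so it has a well-defined normal sheaf $N_{C/\wt\cX_0}$, and the tangent space to the fiber $\wt\cK_0$ at $[C]$ is $H^0(C,N_{C/\wt\cX_0})$. Because $K'_L$ is an \emph{open} dense subset of an irreducible component in a neighborhood of $[C]$, it suffices to show $h^0(C,N_{C/\wt\cX_0})=4$ (equivalently that the obstruction $H^1$ forces the component to be reduced of the right dimension); this is exactly the strategy used in the cubic-fourfold model of Section~\ref{anacub}, where one proves $H^1(C,N_{C/Y_0})=0$ to conclude smoothness of the analogous Hilbert scheme. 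Here the situation is dimensionally larger but structurally identical: the normal bundle decomposes along the three components as $N_{C/\wt\cX_0}|_{\wt{\Gr}(3,U_6)}\cong N_{\wt{\Gr}(3,U_6)/\Bl_{\gv}X_{\sigma_0}}$ and $N_{C/\wt\cX_0}|_{R_i}\cong N_{R_i/Q_{p_i}}$, glued along the incidence $\wt{\Gr}(3,U_6)\cap R_i=A_i$.

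Next I would compute the three pieces. For the $R_i$ piece, $R_i\cong\PP^9$ is a linear subspace of a \emph{smooth} quadric $Q_{p_i}$ of dimension $18$, so $N_{R_i/Q_{p_i}}$ is a well-understood homogeneous bundle (a twist of $\cO(1)$ on $\PP^9$ up to the structure of maximal linear spaces on a smooth quadric), and its $H^0$ and $H^1$ are computable directly; in particular $H^1(R_i,N_{R_i/Q_{p_i}}(-1))=0$, exactly as in the cubic model, so that the sections on $R_i$ that vanish at $A_i$ contribute no obstruction. For the $\wt{\Gr}(3,U_6)$ piece, I would relate $N_{\wt{\Gr}(3,U_6)/\Bl_{\gv}X_{\sigma_0}}$ to $N_{\Gr(3,U_6)/X_{\sigma_0}}$ via the blow-up, using that $\Gr(3,U_6)$ meets $\gv$ transversely in the two points $p_1,p_2$, and then to $N_{\Gr(3,U_6)/\Gr(3,V_{10})}$ through the exact sequence coming from the hypersurface $X_{\sigma_0}\subset\Gr(3,V_{10})$; the key input is that $\Gr(3,U_6)\cong\Gr(3,6)$ sits in $\Gr(3,V_{10})$ with the standard normal bundle and that $\cO_{\Gr(3,V_{10})}(X_{\sigma_0})$ restricts negatively enough after the blow-up (the analogue of $\deg\cO(\wt{\mathsf D})|_{\wt R}=-1$). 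Assembling the pieces via the Mayer--Vietoris sequence for $C=\wt{\Gr}(3,U_6)\cup R_1\cup R_2$ along $A_1\sqcup A_2$, I would show the relevant $H^1$ vanishes and the count of $H^0$ gives exactly $4$, the four parameters being the $4$-dimensional motion of $[U_6]\in K_L$ (equivalently of the pair $\{p_1,p_2\}$ on $S$), with the choice of the two $9$-planes $R_i$ being discrete (hence contributing nothing to the tangent space once $U_6$ moves, because as $U_6$ degenerates the reduced-point condition is open).

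The main obstacle I expect is the careful bookkeeping of the normal bundle on the blow-up component $\wt{\Gr}(3,U_6)\subset\Bl_{\gv}X_{\sigma_0}$: one must control how blowing up the surface $\gv$ at the two transverse intersection points $p_1,p_2$ twists the normal bundle, and verify the precise vanishing $H^1=0$ together with the correct $h^0$. This is where the local model of Lemma~\ref{normeffe} is essential — it guarantees $\wt\cX_0$ has the product-of-quadric structure making $C$ a smooth point of the relevant Hilbert scheme — but translating that local normal form into the global cohomological vanishing, and matching the surjectivity of the gluing map $H^0(\wt{\Gr}(3,U_6),N)\to \bigoplus_i N_{R_i}|_{A_i}$ exactly as in the cubic case, requires either an explicit Schubert-calculus computation or a clean deformation-theoretic argument. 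I would first attempt the deformation-theoretic route: show directly that the natural map $\wt\cK\to\aff$ is smooth of relative dimension $4$ at $[C]$ by exhibiting that every first-order deformation of $C$ inside the fibers integrates and that $C$ deforms in a $4$-parameter family transverse to the central fiber, which would simultaneously give reducedness (multiplicity one) without a full cohomology computation.
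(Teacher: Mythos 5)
Your plan is essentially the paper's own proof: it reduces multiplicity one to the vanishing $H^1(Z,N_{Z/\wt\cX_0})=0$ at a general point $[Z]$ of $K'_L$, with $Z=\wt{\Gr}(3,U_6)\cup R_1\cup R_2$, splits the normal bundle along the components, uses $H^1(R_i,N_{R_i/Q_{p_i}}(-1))=0$, and handles $\wt{\Gr}(3,U_6)$ through the normal-bundle sequences of the blow-up $\psi\colon \wt{\Gr}(3,U_6)\to \Gr(3,U_6)$ at $p_1,p_2$ and of the hypersurface $\wt{X}_0\subset \wt{\Gr}(3,V_{10})$. The two verifications you leave open are precisely what the paper supplies: $H^0\bigl(\wt{\Gr}(3,U_6),\cO_{\wt{\Gr}(3,U_6)}(\psi^{*}\cO_{\Gr}(1)-2A)\bigr)=0$, obtained by intersecting with the strict transform of a Segre line through $p_1$, $p_2$, and a general $[U_3]$ (which has degree $3-2\cdot 2=-1$ against this line bundle), and surjectivity on global sections of $\psi^{*}N_{\Gr(3,U_6)/\Gr(3,V_{10})}\to \cO_{A_1}^{\oplus 10}\oplus\cO_{A_2}^{\oplus 10}$, which follows from the transversality of $U_{3,1}$ and $U_{3,2}$.
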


\begin{proof}
A  point $x$ of $K_L^{\prime0}$ (notation as in the proof of Proposition~\ref{elletilde}) parametrizes a scheme $Z:=\wt{\Gr}(3,U_6)\cup R_1\cup R_2$ as in~\eqref{controrho}, where
the scheme-theoretic intersection 
$\Gr(3,U_6)\cap\gv$ is the union of two reduced points
 $p_1=[U_{3,1}]$ and $p_2= [U_{3,2}]$, neither of which is contained in $X_{\sigma}$.

The scheme  $Z$  is locally a complete intersection in $Y_0$.\ Hence there is a well-defined normal bundle $N_{Z/Y_0}$ and it suffices to prove   $H^1(Z,N_{Z/Y_0})=0$ (because  $ K'_L$ is an open neighborhood of~$x$ in    the fiber   $\Hilb_P(\wt\cX/\aff)_0 $).\ In order to simplify notation, set $X_0:=X_{\sigma_0}$ and  $\wt{X}_0:=\Bl_{\gv}X_0$.\ We have
\begin{equation*}
N_{Z/Y_0}\vert_{\wt{\Gr}(3,U_6)}\cong N_{\wt{\Gr}(3,U_6)/\wt{X}_0},\quad N_{Z/Y_0}\vert_{R_i}\cong N_{R_i/Q_{p_i}}.
\end{equation*}
One easily checks   $H^1(R_i,N_{R_i/Q_{p_i}}(-1))=0$.\ In order to prove   $H^1(Z,N_{Z/Y_0})=0$, it therefore suffices to show  
\begin{equation}\label{svaporaz}
H^1(\wt{\Gr}(3,U_6),N_{\wt{\Gr}(3,U_6)/\wt{X}_0})=0.
\end{equation}
Let $\wt{\Gr}(3,V_{10}):=\Bl_{\gv}\Gr(3,V_{10})$.\
We have the normal exact sequence
\begin{equation}\label{normnorm}
0 \to N_{\wt{\Gr}(3,U_6)/\wt{X}_0}\to N_{\wt{\Gr}(3,U_6)/\wt{\Gr}(3,V_{10})}\to \cO_{\wt{\Gr}(3,V_{10})}(\wt{X}_0)\vert_{\wt{\Gr}(3,U_6)}\to 0.
\end{equation}
We claim that
\begin{equation}\label{nosezioni}
H^0(\wt{\Gr}(3,U_6), \cO_{\wt{\Gr}(3,V_{10})}(\wt{X}_0)\vert_{\wt{\Gr}(3,U_6)})= 0.
\end{equation}
In fact, the natural map $\psi\colon \wt{\Gr}(3,V_{10})\to \Gr(3,V_{10})$ is the blow up of the  points $p_1$ and $p_2$.\ Let $A=A_1+A_2$ be the exceptional divisor of $\psi$ and let $\cO_{\Gr}(1)$ be the Pl\"ucker line bundle on $\Gr(3,V_{10})$.\ Since $X_0$ is a divisor in $|\cO_{\Gr}(1)|$ with multiplicity $2$ along $\gv$, we have
\begin{equation}\label{restnorm}
\cO_{\wt{\Gr}(3,V_{10})}(\wt{X}_0)\vert_{\wt{\Gr}(3,U_6)}\isom \cO_{\wt{\Gr}(3,U_6)}(\psi^{*}\cO_{\Gr}(1)-2A).
\end{equation}
Let  $x$ be a general point in $ \wt{\Gr}(3,U_6)$ 
and set $[U_3]:=\psi(x)\in \Gr(3,U_6)$.\ We may assume that $U_3$ is transverse to $U_{3,1} $ and $U_{3,2} $, hence there exists a Segre embedding $\Phi\colon\PP^1\times\PP^2\hra\PP(U_6)$ such that $\Phi(\{(0,1)\}\times \P^2)=\P(U_{3,1}) $, 
$\Phi(\{(1,0)\}\times \P^2)=\P(U_{3,2} )$, and $\Phi(\{(1,1)\}\times \P^2)= \PP(U_3)$.\ 
Let $\phi\colon\PP^1\to\Gr(3,U_6)$ be the map defined by $\Phi$ and let  $\Gamma\subset\wt{\Gr}(3,U_6)$ be the strict transform of $\phi(\PP^1)$.\ Then  $\Gamma\cdot\psi^{*}\cO_{\Gr}(1)=3$ and 
$\Gamma\cdot A=2$,  hence  $\Gamma\cdot(\psi^{*}\cO_{\Gr}(1)-2A)=-1$.\ It follows that any section of the right side of~\eqref{restnorm} vanishes at  general points of  $\wt{\Gr}(3,U_6)$ hence is the zero section.\
This proves~\eqref{nosezioni}.

By~\eqref{nosezioni} and~\eqref{normnorm},  it suffices, in order to prove~\eqref{svaporaz}, to prove 
\begin{equation*}
H^1(\wt{\Gr}(3,U_6), N_{\wt{\Gr}(3,U_6)/\wt{\Gr}(3,V_{10})})= 0.
\end{equation*}
The differential of $\psi$ defines an exact sequence
\begin{equation*}
0\to N_{\wt{\Gr}(3,U_6)/\wt{\Gr}(3,V_{10})} \to \psi^{*}N_{\Gr(3,U_6)/\Gr(3,V_{10})}  \xrightarrow{\ a\ } \cO^{\oplus 10}_{A_1}\oplus\cO^{\oplus 10}_{A_2} \to 0.
\end{equation*}
The map  induced by $a$ on global sections is surjective, because  the subspaces of $U_6$ corresponding to $p_1$, $p_2$ are transverse.\ Since $H^1(\Gr(3,U_6),N_{\Gr(3,U_6)/\Gr(3,V_{10})})=0$, the desired vanishing follows from the long exact sequence  associated with this exact sequence.
\end{proof}

\begin{proof}[Proof of Theorem~\ref{degenera2}] 
By Propositions~\ref{elletilde} and~\ref{multelletilde}, and by (the proof of)~\cite[Theorem~(0.1)]{KLSV}, we obtain, as in the proof of Theorem~\ref{theopourgenre6},
after a suitable finite base change, a smooth family of polarized \hk\ fourfolds with (smooth) central fiber birationally isomorphic to $\SS$ with the polarization $6L-5\delta$.\ It follows from~Remark~\ref{rema33} that this central fiber is isomorphic to $(\cM_S(0,L,1),6\L2-5\delta)$.
\end{proof}

\section{The   divisor $\cD_{30}$}\label{ch8}

Let $(S,L)$ be a general polarized K3 surface of degree $30$.\ Unfortunately, little geometric information on $S$ is available and we 
were not able to find a trivector on some $10$-dimensional vector space
$V_{10}$ to relate $\hilbS$ to Debarre--Voisin varieties, \rem{nor were we able to decide whether~$\cD_{30} $ is an HLS divisor.}\ We will however   construct on $\hilbS$ a canonical rank $4$-vector bundle with the same numerical invariants as the restriction of the
tautological quotient bundle of $\Gr(6,V_{10})$ to a Debarre--Voisin variety.

\subsection{The rank-$4$ vector bundle $\quotauto_4$ over $\hilbS$}\label{g16Q4}
By Mukai's work (\cite{mukai2}), there is a \rem{simple and} rigid \mbox{rank-$2$} vector bundle $\cF$ on $S$ with $c_1(\cF)=L$ and
Euler characteristic $\chi(S,\cF)=10$.\ Moreover,  $\cF$ is globally generated and  the vector space
$W_{10}:=H^0(S,\cF)$ has dimension~$10$.

With the notation of Section~\ref{sectaut}, we let $\Tauto{\cF}$ be the tautological rank-$4$ vector bundle on $\hilbS$
associated with $\cF$.\
We have $c_1(\Tauto{\cF})=\L2-2\delta$ and $H^0(S^{[2]},\Tauto{\cF})=W_{10}$.

Consider now the tautological rank-$6$ vector bundle $\Tauto{\Sym^2\! \cF}$ constructed
on $\hilbS$ from the rank-$3$ vector bundle $\Sym^2\! \cF$ over $S$.

\begin{lemm}\label{defE4}
  The natural evaluation map
$$
\ev^+\colon  \Sym^2\! \Tauto{\cF} \lra \Tauto{\Sym^2\!\cF}
$$
is \rem{surjective.}\ Its kernel $\quotauto_4$ is a rank-$4$ vector bundle over $\hilbS$
with $c_1(\quotauto_4) = 2\L2-7\delta$.
\end{lemm}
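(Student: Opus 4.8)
The plan is to verify surjectivity of $\ev^+$ fiberwise and then read off the rank and first Chern class of the kernel from the resulting exact sequence. First I would recall from Section~\ref{sectaut} that, for a length-$2$ subscheme $[Z]\in\hilbS$, cohomology and base change identify the fiber of $\Tauto{\cF}$ with $H^0(Z,\cF|_Z)$ (of dimension $2\rank\cF=4$) and the fiber of $\Tauto{\Sym^2\!\cF}$ with $H^0(Z,(\Sym^2\!\cF)|_Z)$ (of dimension $6$); hence $\Sym^2\!\Tauto{\cF}$ has rank $10$ and $\Tauto{\Sym^2\!\cF}$ has rank $6$. The morphism $\ev^+$ itself is the composite of the natural multiplication $\Tauto{\cF}\otimes\Tauto{\cF}\to\Tauto{\cF\otimes\cF}$ (coming from the projection formula for the double cover $p$) with the functorial map $\Tauto{m}$ induced by the multiplication $m\colon\cF\otimes\cF\to\Sym^2\!\cF$; being symmetric, it factors through $\Sym^2\!\Tauto{\cF}$. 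On the fiber at $[Z]$ it is the $\cO_Z$-symmetric product $\Sym^2_\CC H^0(Z,\cF|_Z)\to H^0(Z,\Sym^2_{\cO_Z}(\cF|_Z))=H^0(Z,(\Sym^2\!\cF)|_Z)$, so surjectivity may be tested pointwise by Nakayama.

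The heart of the argument is this pointwise surjectivity, which I would split according to whether $Z$ is reduced. If $Z=\{x,y\}$ with $x\ne y$, then $\cO_Z\isom\CC\times\CC$, the fiber of $\Tauto{\cF}$ is $\cF_x\oplus\cF_y$, and $\ev^+$ is the projection $\Sym^2(\cF_x\oplus\cF_y)\to\Sym^2\!\cF_x\oplus\Sym^2\!\cF_y$ annihilating the mixed summand $\cF_x\otimes\cF_y$, which is visibly onto. The delicate case, which I expect to be the main obstacle, is the non-reduced locus lying over the exceptional divisor: there $\cO_Z\isom\CC[\eps]/(\eps^2)$, the fiber of $\Tauto{\cF}$ is $\cF_x\oplus\eps\cF_x$, and one must check that $\Sym^2(\cF_x\oplus\eps\cF_x)\to(\Sym^2\!\cF_x)\oplus\eps(\Sym^2\!\cF_x)$ is surjective. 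I would do this by the explicit local computation: for $s,t\in\cF_x$ the products $s\cdot t$ surject onto the $\eps^0$-part while the products $s\cdot\eps t=\eps\,(st)$ surject onto the $\eps^1$-part (the latter because $\cF_x\otimes\cF_x\to\Sym^2\!\cF_x$ is onto), so $\ev^+$ is onto on this fiber as well. This is the same kind of local-coordinate bookkeeping on $\isospecS$ carried out in the proof of Lemma~\ref{ledescend}, which could serve as an alternative route via descent along $p$.

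Granting surjectivity, the kernel $\quotauto_4$ is locally free of rank $10-6=4$ and sits in
\[
0\to\quotauto_4\to\Sym^2\!\Tauto{\cF}\xrightarrow{\ \ev^+\ }\Tauto{\Sym^2\!\cF}\to0 .
\]
Finally I would compute $c_1$ by additivity. Since $\Tauto{\cF}$ has rank $4$, one has $c_1(\Sym^2\!\Tauto{\cF})=(4+1)\,c_1(\Tauto{\cF})=5(\L2-2\delta)$, using the stated value $c_1(\Tauto{\cF})=\L2-2\delta$. On the other hand, \eqref{dete} applied to the rank-$3$ bundle $\Sym^2\!\cF$, together with $\det(\Sym^2\!\cF)=3\,c_1(\cF)=3\L2$, gives $c_1(\Tauto{\Sym^2\!\cF})=3\L2-3\delta$. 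Subtracting yields $c_1(\quotauto_4)=5\L2-10\delta-(3\L2-3\delta)=2\L2-7\delta$, as claimed.
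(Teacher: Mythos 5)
Your proof is correct, but the key step --- surjectivity of $\ev^+$ --- is established by a genuinely different argument from the paper's. You work fiberwise: you identify the fibers of the tautological bundles with $H^0(Z,\cF\vert_Z)$ and $H^0(Z,(\Sym^2\!\cF)\vert_Z)$ via cohomology and base change for the finite flat universal family, reduce surjectivity to a pointwise statement by Nakayama, and then check the multiplication map is onto separately at reduced subschemes and at punctual ones, the latter by an explicit $\C[\eps]/(\eps^2)$ computation (this is indeed the only delicate point, and your computation there is right; note only that the splitting $\cF_x\oplus\eps\cF_x$ of the fiber uses a choice of local trivialization of $\cF$ at $x$, which is harmless). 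The paper instead argues globally: it starts from the canonical surjection $p^*\Tauto{\cF}\thra q_1^*\cF$ on $\isospecS$ (coming from the defining exact sequence of the tautological bundle in Section~\ref{sectaut}), tensors it with $q_1^*\cF$, pushes forward by the finite morphism $p$ --- which is exact on coherent sheaves --- and applies the projection formula to obtain a surjection $\ev\colon\Tauto{\cF}\otimes\Tauto{\cF}\thra\Tauto{\cF\otimes\cF}$; the map $\ev^+$ is then the $\Z/2$-invariant part of $\ev$, hence surjective since taking invariants under a finite group is exact in characteristic zero. The paper's route is shorter and disposes of the non-reduced locus invisibly, with no case analysis; yours is more elementary and makes explicit exactly where surjectivity could conceivably fail (over the exceptional divisor) and why it does not, at the cost of the base-change bookkeeping. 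From that point on the two proofs coincide: the kernel of a surjection of bundles of ranks $10$ and $6$ is a rank-$4$ bundle, and $c_1(\quotauto_4)=5(\L2-2\delta)-(3\L2-3\delta)=2\L2-7\delta$, using \eqref{dete} for $\Sym^2\!\cF$ exactly as you do.
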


\begin{proof}
Consider as in Section~\ref{sectaut} the double cover  $p:\isospecS \to \hilbS $
defined by the blow up $\isospecS$ of $S\times S$ along its diagonal.\ Let $q_1$
be the first projection to $S$, so that  $\Tauto{\cF}=p_*(q_1^*\cF)$.\ Tensorize the canonical surjection $p^*\Tauto{\cF} \to q_1^* \cF$  by the vector bundle $q_1^* \cF$ to obtain the exact sequence
$$
(p^*\Tauto{\cF}) \otimes q_1^* \cF \to q_1^* (\cF \otimes  \cF) \to 0.
$$
Its pushforward by the finite morphism $p$ gives with the projection
formula a surjection
$$
\ev\colon \Tauto{\cF} \otimes \Tauto{\cF}  \thra \Tauto{\cF \otimes \cF} .
$$
The map $\ev^+$ being the invariant part of $\ev$,  it is also
surjective.\ Its kernel $\quotauto_4$ is therefore  a vector bundle of rank $4$ and
 we have
$c_1(\Sym^2\! \Tauto{\cF} )=5c_1 (\Tauto{\cF})=5\L2-10\delta$
and $c_1(\Tauto{\Sym^2\!\cF})=3\L2-3\delta,$
so $c_1(\quotauto_4)=2\L2-7\delta$.
\end{proof}

\rem{ \begin{rema}\label{analogieD18}
   If we replace in this construction $\cF$ by the Mukai bundle $\cE_2$ over a $K3$-surface of
   degree $18$, the antiinvariant part
 $
\ev^-\colon  \bw2\! \Tauto{\cE_2} \thra \Tauto{\bigwedge^2\!\cE_2}
$ of $\ev$ is  the surjection in sequence~\eqref{eq20}.\ So, in the
degree-$18$ case, $\quotauto_4$ was defined as the kernel of $\ev^-$.
 \end{rema}}

\begin{lemm}
The vector space  $H^0(\hilbS, \quotauto_4)$ has dimension {at least} $10$ and  is canonically isomorphic to the kernel
 $$
V_{10} := \Ker ( \Sym^2\!W_{10} \lra H^0(S,\Sym^2\!\cF )).
$$
\end{lemm}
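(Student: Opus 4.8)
The plan is to take cohomology in the short exact sequence of Lemma~\ref{defE4},
\[
0\to\quotauto_4\to\Sym^2\!\Tauto{\cF}\xrightarrow{\ \ev^+\ }\Tauto{\Sym^2\!\cF}\to 0,
\]
which yields the left-exact sequence
\[
0\to H^0(\hilbS,\quotauto_4)\to H^0(\hilbS,\Sym^2\!\Tauto{\cF})\xrightarrow{\ H^0(\ev^+)\ } H^0(\hilbS,\Tauto{\Sym^2\!\cF}).
\]
By the tautological isomorphism of Section~\ref{sectaut} applied to the bundle $\Sym^2\!\cF$, the last term is $H^0(S,\Sym^2\!\cF)$. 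Everything then reduces to two points: (i) identifying $H^0(\hilbS,\Sym^2\!\Tauto{\cF})$ with $\Sym^2\!W_{10}$ via the multiplication map $\mu\colon\Sym^2\!W_{10}=\Sym^2\!H^0(\Tauto{\cF})\to H^0(\Sym^2\!\Tauto{\cF})$; and (ii) checking that $H^0(\ev^+)\circ\mu$ equals the natural multiplication map $m\colon\Sym^2\!W_{10}=\Sym^2\!H^0(S,\cF)\to H^0(S,\Sym^2\!\cF)$ whose kernel is $V_{10}$ by definition. Granting these, $\mu$ identifies $\Ker m$ with $\Ker H^0(\ev^+)=H^0(\hilbS,\quotauto_4)$, giving the canonical isomorphism $H^0(\hilbS,\quotauto_4)\cong V_{10}$.

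Point (ii) is a naturality statement and I would dispose of it first. The isomorphisms $H^0(\Tauto{\cG})\cong H^0(S,\cG)$ of Section~\ref{sectaut} are natural in $\cG$, and $\ev^+$ is by construction the tautological avatar of the multiplication $\cF\otimes\cF\to\Sym^2\!\cF$; hence the product of two sections of $\Tauto{\cF}$ is sent by $\ev^+$ to the product of the corresponding sections of $\cF$, which is exactly $H^0(\ev^+)\circ\mu=m$.

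For (i) I would compute $H^0(\Sym^2\!\Tauto{\cF})$ on the double cover $p\colon\isospecS\to\hilbS$ of Section~\ref{sectaut}, using $H^0(\hilbS,\Sym^2\!\Tauto{\cF})=H^0(\isospecS,\Sym^2\!p^*\Tauto{\cF})^\iota$. The defining sequence $0\to p^*\Tauto{\cF}\to\cF\boxplus\cF\to\tau_E^*\cF\to 0$ exhibits $\Sym^2\!p^*\Tauto{\cF}$ as a subsheaf of $\Sym^2(q_1^*\cF\oplus q_2^*\cF)=q_1^*\Sym^2\!\cF\oplus(q_1^*\cF\otimes q_2^*\cF)\oplus q_2^*\Sym^2\!\cF$, agreeing with it away from $E$. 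Since $\tau_*\cO_{\isospecS}=\cO_{S\times S}$, the global sections of the latter are $H^0(S,\Sym^2\!\cF)\oplus(W_{10}\otimes W_{10})\oplus H^0(S,\Sym^2\!\cF)$, whose $\iota$-invariant part is $H^0(S,\Sym^2\!\cF)\oplus\Sym^2\!W_{10}$ (diagonal in the outer summands, symmetric tensors in the middle). The crux is then a local computation along $E$, parallel to the proof of Lemma~\ref{ledescend}: in the $\iota$-eigen-frame $u_i^{\pm}=f_i^{(1)}\pm f_i^{(2)}$ associated with a local frame $(f_1,f_2)$ of $\cF$ and the equation $e$ of $E$, the subsheaf $\Sym^2\!p^*\Tauto{\cF}$ is locally generated by $u_i^{+}u_j^{+}$, $e\,u_i^{+}u_j^{-}$, and $e^2u_i^{-}u_j^{-}$. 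For an $\iota$-invariant section the only condition to lie in the subsheaf is that its $u^{-}u^{-}$-component vanish along $E$; working this out cuts the invariant sections down to a graph over $\Sym^2\!W_{10}$, compatibly with $\mu$, so that $\mu$ is an isomorphism. I expect this analysis along $E$—making precise which invariant sections extend across the exceptional divisor—to be the main technical obstacle.

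Finally, the bound $\dim V_{10}\ge 10$ is a dimension count: $\dim\Sym^2\!W_{10}=55$, while Riemann--Roch on the $K3$ surface $S$, with $L^2=30$ and $c_2(\cF)=9$ (so $c_1(\Sym^2\!\cF)^2=270$ and $c_2(\Sym^2\!\cF)=96$), gives $\chi(S,\Sym^2\!\cF)=45$. For a general such $S$ the bundle $\Sym^2\!\cF$ has $H^1=H^2=0$—the vanishing $H^2(\Sym^2\!\cF)=H^0((\Sym^2\!\cF)^\vee)^\vee=0$ holds because $c_1((\Sym^2\!\cF)^\vee)=-3L$ is anti-ample and $\cF$ is stable, and $H^1$ vanishes by the rigidity and genericity of $\cF$—so $h^0(S,\Sym^2\!\cF)=45$ and $\dim\Ker m\ge 55-45=10$.
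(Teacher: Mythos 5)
Your treatment of the isomorphism $H^0(\hilbS,\quotauto_4)\cong V_{10}$ is correct, and it takes a genuinely different route from the paper at one step: where you compute $H^0(\hilbS,\Sym^2\Tauto{\cF})\cong\Sym^2W_{10}$ by hand on the double cover $p\colon\isospecS\to\hilbS$ (the local frame $u_i^+u_j^+$, $e\,u_i^+u_j^-$, $e^2u_i^-u_j^-$, the observation that for $\iota$-invariant sections the only condition is the vanishing of the $u^-u^-$ component along $E$ — the $u^+u^-$ condition being automatic since anti-invariant functions are divisible by $e$ — and the resulting graph over $\Sym^2W_{10}$), the paper simply invokes the theorem of Danila and Krug that $H^0(S,\cF)\otimes H^0(S,\cF)\to H^0(\hilbS,\Tauto{\cF}\otimes\Tauto{\cF})$ is an isomorphism and passes to symmetric parts. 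Your computation checks out and gives a self-contained proof of the special case needed; it is longer, but avoids the citation, and your naturality point (ii) is needed in either approach (the paper leaves it implicit).

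The dimension bound, however, has a genuine gap. You need $h^0(S,\Sym^2\cF)\le 45$; since $\chi(S,\Sym^2\cF)=45$ and $h^2=0$, this is \emph{equivalent} to $H^1(S,\Sym^2\cF)=0$, so this vanishing cannot be waved through. It does not follow from rigidity: since $\cF^\vee\cong\cF\otimes L^{-1}$, one has $\End(\cF)\cong\cO_S\oplus(\Sym^2\cF\otimes L^{-1})$, so rigidity ($\Ext^1(\cF,\cF)=0$) gives $H^1(S,\Sym^2\cF\otimes L^{-1})=0$ — the wrong twist, and there is no general principle transferring $H^1$-vanishing across a twist by $L$. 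Nor does ``genericity'' help by itself: by semicontinuity the statement for general $(S,L)$ follows from its validity at a single point of the moduli space, but one must exhibit such a point. That is exactly what the paper does, and it is the longest part of its proof: on Mukai's special degree-$30$ K3 with an elliptic fibration, $\cF$ sits in an extension $0\to\cO_S(A_1)^{\oplus 2}\to\cF\to\cO_Z(5z)\to 0$ with $Z$ a smooth rational curve, and a chain of cohomology computations with this sequence yields $H^1(S,\cF\otimes\cF)=H^2(S,\cF\otimes\cF)=0$, whence the vanishing for general $S$. (Your $h^2$ argument via slopes is fine, granting stability of $\cF$; it is the $h^1$ step that carries the real content and is missing from your proposal.)
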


 {We expect this map to be onto, so that   $ V_{10}$ would have dimension $10$}.

\begin{proof}
  By \cite[Theorem~1]{dan2} or \cite[Theorem~6.6]{krug}, the canonical maps
\begin{eqnarray}
H^0(S,\cF)& \lra& H^0(\hilbS, \Tauto{\cF})\nonumber \\
H^0(S,\Sym^2\! \cF) &\lra& H^0(\hilbS,\Tauto{\Sym^2 \!\cF})\label{l2}\\
     H^0(S,\cF) \otimes H^0(S,\cF)& \lra&  H^0(\hilbS, \Tauto{\cF}\otimes \Tauto{\cF}) \label{l3}
\end{eqnarray}
are isomorphisms.\ By definition of $\quotauto_4$, we have an exact sequence
\begin{equation*}
  0\to H^0(\SS,\quotauto_4)\to
H^0(\SS,\Sym^2\! \Tauto{\cF})\to H^0(\SS,\Tauto{\Sym^2\!\cF}).
\end{equation*}
\rem{Since \eqref{l3} is bijective, its middle space is  isomorphic to $ \Sym^2\! H^0(S,\cF)=\Sym^2\!W_{10} $; since \eqref{l2} is bijective, the rightmost space is   isomorphic to $H^0(S,\Sym^2\!\cF )$.\ We therefore conclude
 that $H^0(\hilbS, \quotauto_4) $ is isomorphic to $ V_{10} $.}

We will show
that $H^1(S,\cF\otimes \cF) =H^2(S,\cF\otimes \cF) = 0 $  on a specific   $K3$ surface $S$ of degree~$30$  introduced by Mukai in \cite[\S6]{mukai2}, hence on a general K3 surface.\ This surface has  an elliptic fibration $S\to\P^1$ with general fiber $A_1$
 and Mukai shows that $\cF$
fits in an exact sequence
\begin{equation}
  \label{seqFpinceau}
  0 \to \OO_S(A_1) \oplus \OO_S(A_1) \to \cF \to \OO_Z(5z) \to 0,
\end{equation}
where $Z\subset  S$ is  a smooth rational curve  {and $z$ is the class of a point
on $Z$}.\ \rem{Tensoring~\eqref{seqFpinceau} by~$\OO_S(A_1)$, we get $H^2(S,\cF(A_1))=0$, and tensoring it by~$\cF$, we get $H^2(S,\cF\otimes \cF) =0$.

Since $\cF$ is globally generated, we have $H^1(Z,\cF\otimes \OO_Z(5z))=0$ and, tensoring~\eqref{seqFpinceau} by~$\cF$, we get a surjection
\begin{equation}
  \label{surj}
  H^1(S,\cF(A_1))^{\oplus 2}\thra H^1(S,\cF\otimes \cF).
  \end{equation}
Mukai showed that on this particular surface, one has $H^1(S,\cF)=H^2(S,\cF)=0$, hence
\begin{equation}
  \label{iss}
  H^1(S,\cF(A_1))\isom H^1(S,\cF\vert_{A_1})\isom H^2(S,\cF(-A_1))\isom H^0(S,\cF(A_1-H))^\vee,
    \end{equation}
where $\OO_S(H) := \bw2\cF=L$ is the polarization.\ Moreover, we have $Z\lin H-2A_1$, $A_1 \cdot H =
8$, and~$H^2=30$, and the sequence~\eqref{seqFpinceau} gives an exact sequence
$$
  0 \to \OO_S(2A_1-H) \oplus \OO_S(2A_1-H) \to \cF(A_1-H) \to \OO_Z(-z) \to 0.
$$
This implies $H^0(S,\cF(A_1-H))=0$, hence $ H^1(S,\cF(A_1))=0$ by~\eqref{iss}.\ Finally, the surjection~\eqref{surj} implies 
$H^1(S,\cF\otimes \cF) =0$.\ 

Going back to a general K3 surface $S$, where   the   vanishings $H^1(S,\cF\otimes \cF) =H^2(S,\cF\otimes \cF) =0$ still hold, we get
 $$h^0(S,\Sym^2\!\cF)= \chi(S,\Sym^2\!\cF)= 45$$ and, by definition of $V_{10}$,
$$\dim (V_{10})  \ge\dim(\Sym^2\!W_{10})
-h^0(S,\Sym^2\!\cF)=10.$$
This finishes the proof of the lemma.}
\end{proof}

From the   previous two lemmas, we obtain the following result, where we use, as in Remark~\ref{chernn}, the package 
  Schubert2 of Macaulay2 (\cite{macaulay2}) to compute the numerical invariants of
 the vector bundle $\cQ_4$ on $\hilbS$ (the code can be found in \cite{comp}).

\begin{prop}
Let $(S,L)$ be a general polarized K3 surface of degree $30$.\
The vector bundle~$\cQ_4$ induces a  rational map  $\hilbS\dra\Gr(6,V_{10})$ which corresponds to  
the polarization given  in the last column of Table~\ref{enumero1}.\
Moreover,   
 the vector bundle $\cQ_4$  has the same  Segre numbers as the rank-$4$
 tautological quotient bundle on   Debarre--Voisin varieties $K_\sigma\subset
 \Gr(6,10)$.
\end{prop}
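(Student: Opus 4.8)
The plan is to derive both assertions from the two preceding lemmas, which provide the only input needed: $\cQ_4$ is a vector bundle of rank $4$ on $\hilbS$ with $c_1(\cQ_4)=2\L2-7\delta$ and with $H^0(\hilbS,\cQ_4)\cong V_{10}$. Granting the expected equality $\dim V_{10}=10$ (the previous lemma gives only $\dim V_{10}\ge 10$), the rational map in question is the one attached to the linear system $|V_{10}|$ of sections: on the open locus $U\subset\hilbS$ where the evaluation map $V_{10}\otimes\cO_{\hilbS}\to\cQ_4$ is surjective, its kernel is a rank-$6$ subbundle of $V_{10}\otimes\cO_U$ and hence defines a morphism $U\to\Gr(6,V_{10})$. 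First I would check that $U$ is nonempty, i.e.\ that $\cQ_4$ is generically globally generated, either by exhibiting a single point of $\hilbS$ at which the evaluation is surjective via a local computation, or by transporting the argument of Section~\ref{secgenre10} and Proposition~\ref{propidentexcessgenre10} from the degree-$18$ case. Granting this, the pullback of the Pl\"ucker line bundle $\cO_{\Gr}(1)=\det\cQ_{\mathrm{taut}}$ is simply $\det\cQ_4=c_1(\cQ_4)=2\L2-7\delta$, a computation insensitive to the precise value of $\dim V_{10}$; since $2\L2-7\delta$ is exactly the class recorded in the $e=15$ (degree-$30$) column of Table~\ref{enumero1}, the corresponding periods lie on the Heegner divisor $\cD_{30}$. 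This settles the first assertion.

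For the Segre numbers I would compute the total Chern class of $\cQ_4$ from its defining exact sequence $0\to\cQ_4\to\Sym^2\!\cT_\cF\to\cT_{\Sym^2\!\cF}\to 0$, which yields $c(\cQ_4)=c(\Sym^2\!\cT_\cF)\,c(\cT_{\Sym^2\!\cF})^{-1}$, and then invert to obtain the Segre class $s(\cQ_4)=c(\cQ_4)^{-1}$. The required inputs are the Chern character of $\cF$ (rank $2$, $c_1=L$, with $c_2(\cF)=9$ forced by $\chi(S,\cF)=10$ through Riemann--Roch on the degree-$30$ surface $S$, so $\ch(\cF)=2+L+6[\mathrm{pt}]$), hence that of $\Sym^2\!\cF$; and the Chern classes of the tautological push-forward bundles $\cT_\cF$ and $\cT_{\Sym^2\!\cF}$, obtained from the exact sequences of Section~\ref{sectaut} by Grothendieck--Riemann--Roch for the double cover $p\colon\isospecS\to\hilbS$. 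One then evaluates the five degree-$4$ monomials $s_1^4,\ s_1^2s_2,\ s_1s_3,\ s_2^2,\ s_4$ against the intersection ring of $\hilbS$. I would carry this out with the Schubert2 package of Macaulay2 (as in Remark~\ref{chernn}; code in \cite{comp}) and verify that the outputs are $1452,\ 825,\ 330,\ 477,\ 105$, i.e.\ the values computed for the tautological quotient bundle on Debarre--Voisin varieties in \cite[(11)]{devo}.

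The main obstacle is twofold. Conceptually, the delicate point is pinning down $\dim V_{10}=10$ together with the generic global generation of $\cQ_4$: without the exact dimension and the surjectivity of evaluation one cannot guarantee that the target of the rational map is $\Gr(6,V_{10})$ rather than a different Grassmannian, and the preceding lemma leaves this only as an expectation. Computationally, all the real work lies in the Chern-class bookkeeping for the tautological bundles on $\hilbS$ and the ensuing Segre evaluation; this is routine in principle but heavy by hand, which is why the verification is delegated to Schubert2. The genuine content of the second assertion is precisely that the resulting numbers coincide with the universal Debarre--Voisin Segre numbers, making $(\hilbS,\cQ_4)$ a numerical candidate for a limit of Debarre--Voisin varieties.
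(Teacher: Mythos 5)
Your proposal is correct and follows essentially the same route as the paper, which deduces the proposition directly from the two preceding lemmas (the rational map coming from the kernel of the evaluation map $V_{10}\otimes\cO_{\hilbS}\to\cQ_4$, the polarization being $\det\cQ_4=2\L2-7\delta$, matching the $e=15$ column of Table~\ref{enumero1}) together with a Schubert2/Macaulay2 computation of the Segre numbers against \cite[(11)]{devo}. The two caveats you flag --- that $\dim V_{10}=10$ is only known as an expected equality, and that generic surjectivity of the evaluation is needed for the map to land in $\Gr(6,V_{10})$ --- are likewise left implicit in the paper, so your write-up is, if anything, slightly more careful on these points.
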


\subsection{Geometric interpretation}
Let $X$ be the image in $\P(W_{10}^{\dual})$ of the scroll $\P(\cF^{\dual})$ by the projection from $S\times \P(W_{10}^{\dual})$ to $\P(W_{10}^{\dual})$.

We have $V_{10} = H^0(\PP(W_{10}^\dual),\cI_X(2))$, where $\cI_X$ is the ideal sheaf of $X$ in
$\PP(W_{10}^\dual)$.\ We want to describe, for   general points $x,y\in 
S$, the $6$-dimensional vector space $\soustauto_{6,\{x,y\}}$  defined by the exact sequence
$$
0 \to\soustauto_{6,\{x,y\}}\to V_{10} \to \quotauto_{4,\{x,y\}}\to 0.
$$

\begin{prop}
The vector space  $\soustauto_{6,\{x,y\}} $ is the  space of quadratic forms vanishing on $X$ and on the projective subspace $\PP_3=\PP(\cF_x^\dual \oplus \cF_y^\dual)$ of $\P(W_{10}^{\dual})$.
\end{prop}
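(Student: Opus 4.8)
The plan is to carry out a fiberwise linear-algebra computation at a general reduced point $\{x,y\}\in\hilbS$, matching the evaluation map $V_{10}\to\quotauto_{4,\{x,y\}}$ with a restriction-of-quadrics map. First I would fix $x\neq y$ general. Since $\cF$ is globally generated, the evaluation $W_{10}\otimes\cO_S\thra\cF$ dualizes to inclusions $\cF_x^\dual,\cF_y^\dual\hookrightarrow W_{10}^\dual$, and for general $x,y$ the sum $\cF_x^\dual\oplus\cF_y^\dual$ is a genuine $4$-dimensional subspace (the pairs with $\cF_x^\dual\cap\cF_y^\dual\neq 0$ forming a proper closed subset of $S\times S$), so that $\PP_3=\PP(\cF_x^\dual\oplus\cF_y^\dual)$ is a $\PP^3\subset\PP(W_{10}^\dual)$. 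Moreover, each fiber $\PP(\cF_s^\dual)$ of the scroll $\PP(\cF^\dual)$ is a line contained in $X$; in particular $\PP(\cF_x^\dual)$ and $\PP(\cF_y^\dual)$ lie on $X$.

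Next I would identify the evaluation map explicitly. At the reduced point $\{x,y\}$ one has $\Tauto{\cF}_{,\{x,y\}}=\cF_x\oplus\cF_y$, so $\Sym^2\Tauto{\cF}$ has fiber $\Sym^2\cF_x\oplus(\cF_x\otimes\cF_y)\oplus\Sym^2\cF_y$ while $\Tauto{\Sym^2\!\cF}$ has fiber $\Sym^2\cF_x\oplus\Sym^2\cF_y$; the map $\ev^+$ of Lemma~\ref{defE4} is the projection killing the middle summand, hence $\quotauto_{4,\{x,y\}}=\cF_x\otimes\cF_y$. Viewing an element $Q\in V_{10}\subset\Sym^2 W_{10}=H^0(\hilbS,\Sym^2\Tauto{\cF})$, its value at $\{x,y\}$ is the image of $Q$ under the map $\Sym^2 W_{10}\to\Sym^2(\cF_x\oplus\cF_y)$ induced by $w\mapsto(w(x),w(y))$, and the evaluation $V_{10}\to\quotauto_{4,\{x,y\}}$ simply reads off its middle ($\cF_x\otimes\cF_y$) component.

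The key observation I would then record is that this image of $Q$ in $\Sym^2(\cF_x\oplus\cF_y)$ is nothing but the restriction of the quadratic form $Q$ to $\cF_x^\dual\oplus\cF_y^\dual\subset W_{10}^\dual$, its three summands being the restrictions to $\cF_x^\dual$, to $\cF_y^\dual$, and the mixed term. Since $V_{10}=H^0(\PP(W_{10}^\dual),\cI_X(2))$ consists of quadrics vanishing on $X$, and $\PP(\cF_x^\dual),\PP(\cF_y^\dual)\subset X$, the two pure summands of $Q\vert_{\cF_x^\dual\oplus\cF_y^\dual}$ vanish automatically for every $Q\in V_{10}$. Consequently $Q$ vanishes on all of $\PP_3$ if and only if its mixed component vanishes, that is, if and only if $Q$ lies in $\soustauto_{6,\{x,y\}}=\Ker(V_{10}\to\quotauto_{4,\{x,y\}})$. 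This yields the asserted equality
\[
\soustauto_{6,\{x,y\}}=\{Q\in V_{10}\mid Q\vert_{\PP_3}=0\},
\]
which, as $V_{10}$ is the space of quadrics through $X$, is exactly the space of quadrics vanishing on both $X$ and $\PP_3$.

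I expect the only delicate point to be the clean identification in the second paragraph: one must verify that the fiber of $\quotauto_4$ at a reduced length-$2$ subscheme is genuinely the mixed tensor summand, and that, under the canonical isomorphisms of the two preceding lemmas, evaluating at $\{x,y\}$ the section of $\quotauto_4$ attached to $Q\in V_{10}$ coincides with restricting the quadric $Q$ to $\cF_x^\dual\oplus\cF_y^\dual$. No dimension count is required, since the argument produces an honest ``if and only if'' inside $V_{10}$; genericity of $\{x,y\}$ enters only to ensure that $\PP_3$ is a genuine $\PP^3$ and that the evaluation map is surjective, so that the defining sequence of $\soustauto_6$ is exact.
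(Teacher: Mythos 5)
Your proposal is correct and takes essentially the same approach as the paper: the paper's terse proof consists precisely of the four-term exact sequence $0\to\soustauto_{6,\{x,y\}}\to V_{10}\to\Sym^2(\cF_x\oplus\cF_y)\to\Sym^2\!\cF_x\oplus\Sym^2\!\cF_y\to 0$ obtained by taking the fiber of $\ev^+$ at the reduced point $\{x,y\}$, which encodes exactly your key observation that the pure summands of $Q\vert_{\cF_x^\dual\oplus\cF_y^\dual}$ vanish automatically because $X$ contains the scroll lines $\PP(\cF_x^\dual)$ and $\PP(\cF_y^\dual)$, so that the kernel of $V_{10}\to\quotauto_{4,\{x,y\}}=\cF_x\otimes\cF_y$ is the space of quadrics through $X$ vanishing on all of $\PP_3$. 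Your write-up merely makes explicit the compatibility (left implicit in the paper) between evaluating sections of $\Sym^2\Tauto{\cF}$ at $\{x,y\}$ and restricting quadratic forms to $\cF_x^\dual\oplus\cF_y^\dual$.
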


\begin{proof}
The fiber over $\{x,y\}$ of the evaluation map defined in
  Lemma~\ref{defE4} gives an exact sequence
  $$
0 \to \soustauto_{6,\{x,y\}} \to V_{10} \to \Sym^2(\cF_x \oplus \cF_y) \to \Sym^2\! \cF_x \oplus \Sym^2\!\cF_y \to 0,
$$
hence $\soustauto_{6,\{x,y\}}$ consists of elements of $V_{10} $ that also vanish on $\PP(\cF_x^\dual \oplus \cF_y^\dual)$.
\end{proof}


\begin{thebibliography}{KSLV}

 \bibitem[A]{apo}  Apostolov, A.,
Moduli spaces of polarized irreducible symplectic manifolds are not necessarily connected, {\it Ann. Inst. Fourier} {\bf64} (2014), 189--202.

  \bibitem[BM]{bama}   Bayer, A.,   Macr\`\i , E., MMP for moduli of sheaves on K3s via wall-crossing:
nef and movable cones, Lagrangian fibrations, {\it Invent. Math.}  {\bf198} (2014),  505--590.


\bibitem[BD]{beaudo}  Beauville, A., Donagi, R., La vari\'et\'e des droites d'une hypersurface cubique de dimension $4$, {\it C. R. Acad. Sci. Paris S\'er. I Math.}  {\bf 301}  (1985),
703--706.

\bibitem[B]{bbki}  {Bourbaki, N.,   {\it Groupes et alg\`ebres de Lie,} chapitres 7 et 8, \'El\'ements de math\'ematiques, Masson, Paris, 1990.}

\bibitem[CC]{chsh}
Cheltsov, I., Shramov, C., {\em Cremona groups and the icosahedron,} Monographs and Research Notes in Mathematics, CRC Press, Boca Raton, FL, 2016.


 \bibitem[D1]{dan}
 Danila, G.,
Sur la cohomologie d'un fibr\'e tautologique sur le sch\'ema de Hilbert d'une surface,
{\it J. Algebraic Geom.} {\bf10} (2001),  247--280.

 \bibitem[D2]{dan2} Danila, G., Sections de la puissance tensorielle du fibr\'e tautologique sur le sch\'ema de Hilbert des points d'une surface, {\it Bull. Lond. Math. Soc.} {\bf39} (2007),  311--316.

\bibitem[De]{survey}  Debarre, O.,  Hyperk\"ahler manifolds, {\tt arXiv:1810.02087.}

 \bibitem[DIM]{dims} Debarre, O., Iliev, A., Manivel, L., Special prime Fano fourfolds of degree 10 and index 2, {\it  Recent Advances in Algebraic Geometry,} 123--155,  C.\  Hacon, M.\  Musta\c t\u a, and M.\  Popa editors, London Mathematical Society Lecture Notes Series {\bf417}, Cambridge University Press, 2014.

\bibitem[DM]{dm}  Debarre, O.,  Macr\`\i , E., On the period map for polarized hyperk\"ahler fourfolds, to appear in {\em Int. Math. Res. Not. IMRN.}

 \bibitem[DV]{devo}  Debarre, O., Voisin, C., Hyper-K\"ahler fourfolds and Grassmann geometry,  {\it J. reine angew. Math.} {\bf649} (2010), 63--87.


\bibitem[Do]{dolga} Dolgachev, I. V., {\em Classical algebraic geometry.
A modern view,} Cambridge University Press, Cambridge, 2012.  


\bibitem[GS]{macaulay2} Grayson, D., Stillman, M., Macaulay2,
   a software system for research in algebraic geometry. Available at \url{https://faculty.math.illinois.edu/Macaulay2/}
   
    \bibitem[GHS1]{ghs} Gritsenko, V., Hulek, K., Sankaran, G.K., The Kodaira dimension of the moduli of K3 surfaces, {\it Invent. Math.} {\bf 169} (2007), 519--567.
   
      \bibitem[GHS2]{ghscomp} Gritsenko, V., Hulek, K., Sankaran, G.K.,    Moduli spaces of irreducible symplectic manifolds, {\it Compos. Math.} {\bf146} (2010), 404--434.
     
  \bibitem[GHS3]{ghssur} Gritsenko, V., Hulek, K., Sankaran, G.K.,  Moduli of K3 surfaces and irreducible symplectic manifolds, in {\it Handbook of moduli,} Vol. I, 459--526, Adv. Lect. Math. (ALM) {\bf24}, Int. Press, Somerville, MA, 2013.
  



\bibitem[H]{hassett}  Hassett, B., Special cubic fourfolds, {\it Compos. Math.} {\bf120} (2000), 1--23.

 \bibitem[HT]{hast2} Hassett,  B., Tschinkel, Yu., Moving and ample cones of holomorphic symplectic fourfolds,
{\it Geom. Funct. Anal.}  {\bf 19} (2009),   1065--1080.

\bibitem[Hi]{hivert}  Hivert, P., Equations of some wonderful compactifications, {\it Ann. Inst.  Fourier} {\bf61} (2011), 2121--2138.

 \bibitem[Ho]{hor}  Horikawa, E.,
Surjectivity of the period map of K3 surfaces of degree 2, {\it
Math. Ann.} {\bf228} (1977),  113--146. 

\bibitem[I]{iskovskikh}    Iskovskikh, V. A.,  Fano 3-folds. I,    (in  Russian), {\it
Izv. Akad. Nauk SSSR Ser. Mat.} {\bf41} (1977), 516--562, 717.
    English transl.: {\it Math. USSR Izv.} {\bf11} (1977),  485--527.

     
\bibitem[KLSV]{KLSV}   Koll\'ar, J., Laza, R., Sacc\`a, G., Voisin, C., Remarks on degenerations of hyper-K\"ahler manifolds, {\it Ann. Inst. Fourier} {\bf68} (2018),  2837--2882.

\bibitem[K]{krug} Krug, A., Tensor products of tautological bundles under the {B}ridgeland-{K}ing-{R}eid-{H}aiman equivalence, {\it Geom. Dedicata} {\bf172} (2014), 245--291.

 
          \bibitem[L1]{laza} Laza, R., The moduli space of cubic fourfolds via the period map,
{\it Ann. of Math.} {\bf172} (2010),   673--711.

     \bibitem[L2]{lazagit} Laza, R.,
     Laza, R., The moduli space of cubic fourfolds, {\it J.~Algebraic Geom.}~{\bf18} (2009),  511--545.
 
 {  \bibitem[Lo1]{loo}  Looijenga, E., Compactifications defined by arrangements. II. Locally symmetric varieties of type IV, {\it Duke Math. J.} {\bf119} (2003),  527--588. }
  
 \bibitem[Lo2]{looijenga}  Looijenga, E., The period map for cubic fourfolds, {\it Invent. Math.} {\bf177} (2009),   213--233. 

\bibitem[Lu]{luna} Luna, D.,        Adh\'erences d'orbite et invariants, {\it Invent. math.} {\bf29} (1975), 231--238.
           
\bibitem[M]{marsur}  Markman, E.,  A survey of Torelli and monodromy results for holomorphic-symplectic varieties, in {\it Complex and differential geometry,} 257--322,
Springer Proc. Math. {\bf8}, Springer, Heidelberg, 2011.


 \bibitem[Mu1]{muk0}   Mukai, S.,    Symplectic structure of the moduli space of sheaves on an abelian or K3 surface, {\em Invent. math.} {\bf77} (1984), 101--116.


\bibitem[Mu2]{mukai}  Mukai, S.,  Curves, $K3$ surfaces and Fano 3-folds of genus $\leq 10$, in {\it Algebraic geometry and commutative algebra}, Vol. I, 357--377, Kinokuniya, Tokyo, 1988.

\bibitem[Mu3]{mukai2}  Mukai, S., K3 surfaces of genus sixteen, in {\it Minimal models and extremal rays (Kyoto, 2011),} 379--396, Adv. Stud. Pure Math. {\bf70}, Math. Soc. Japan,  2016.

\bibitem[Mu4]{mukai3}  Mukai, S.,  Polarized K3 surfaces of genus thirteen, in {\em Moduli spaces and arithmetic geometry,} 315--326, Adv. Stud. Pure Math.  {\bf45}, Math. Soc. Japan, Tokyo, 2006.


        \bibitem[N]{nage} Nagell, T., {\it Introduction to number theory,} Second edition, Chelsea Publishing Co., New York, 1964. 
        
 \bibitem[O1]{og6} 	O'Grady,  K.,    Periods of double EPW-sextics, {\it Math. Z.} {\bf 280} (2015),  485--524. 
  
 \bibitem[O2]{og5}   O'Grady,  K.,     Moduli of double EPW-sextics,      {\em Mem. Amer. Math. Soc.} {\bf240} (2016), no. 1136.

        
 \bibitem[O3]{ognew}	O'Grady,  K., Modular sheaves on hyperk\"ahler varieties,  {\tt arXiv:1912.02659.}

{  \bibitem[S]{sha} Shah, J.,
A complete moduli space for K3 surfaces of degree 2, {\it
Ann. of Math.}  {\bf112} (1980),  485--510.}

      \bibitem[Sp]{spring} Springer, T.A.,
{\it Linear Algebraic Groups,} Reprint of the 1998 second edition, Modern Birkh\"auser Classics, Birkh\"auser Boston, Inc., Boston, MA, 2009.

 


\bibitem[vD]{vdd} van den Dries, B.,   {\it Degenerations of cubic fourfolds and holomorphic symplectic geometry,} PhD Thesis, Ridderprint, Ridderkerk, 2012, ISBN: 978-90-5335-519-0.



      \bibitem[V]{ver} Verbitsky, M.,   Mapping class group and a global Torelli theorem for hyperk\"ahler manifolds,
Appendix  by Eyal Markman,
{\it  Duke Math. J.}  {\bf 162} (2013), 2929--2986.\ {Erratum: {\tt arXiv:1908.11772.}}

 \bibitem[Vi]{vie} Viehweg, E., Weak positivity and the stability of certain Hilbert points. III, {\it Invent. Math.} {\bf101} (1990), 521--543.

 \bibitem[W]{wan} Wandel, M.,
Stability of tautological bundles on the Hilbert
scheme of two points on a surface,   {\it Nagoya Math. J.} {\bf214} (2014), 79--94.

 \bibitem[X]{comp} Computations with Macaulay2, \url{http://webusers.imj-prg.fr/~frederic.han/recherche/documents/debarre-ogrady-han-voisin.m2}
 
 
\end{thebibliography}
    \end{document}